\newtheorem{theorem}{Theorem}[section]
\newtheorem{lemma}[theorem]{Lemma}
\newtheorem{proposition}[theorem]{Proposition}
\newtheorem{corollary}[theorem]{Corollary}
\newtheorem*{theorem*}{Theorem}
\theoremstyle{definition}
\newtheorem{definition}[theorem]{Definition}
\newtheorem{notation}[theorem]{Notation}
\theoremstyle{remark}
\newtheorem{remark}[theorem]{Remark}
\theoremstyle{plain} 
\newcommand{\thistheoremname}{}
\newtheorem*{genericthm}{\thistheoremname}
\numberwithin{equation}{section}
\newtheorem{claim}[theorem]{Claim}
\newcommand{\abs}[1]{\left\lvert#1\right\rvert}
\newcommand{\defeq}{\vcentcolon=}
\DeclarePairedDelimiterX{\inp}[2]{\langle}{\rangle}{#1, #2}
\newcommand\boldbox[1]{\mbox{\boldmath{\boxed{\ensuremath #1}} \unboldmath}}
\DeclareMathOperator{\st}{{\bf st}}
\DeclareMathOperator{\ns}{{\bf Ns}}
\renewcommand{\restriction}{\mathord{\upharpoonright}}
\DeclareMathOperator{\tprs}{\tau_{\mathfrak{P}_r(S)}}
\DeclareMathOperator{\ps}{\mathfrak{P}(S)}
\DeclareMathOperator{\pt}{\mathfrak{P}(T)}
\DeclareMathOperator{\prt}{\mathfrak{P}_r(T)}
\DeclareMathOperator{\prs}{\mathfrak{P}_r(S)}
\DeclareMathOperator{\prsk}{\mathfrak{P}_r(S^k)}
\DeclareMathOperator{\prsinf}{\mathfrak{P}_r(S^{\infty})}
\DeclareMathOperator{\bas}{\mathcal{B}_a(S)}
\DeclareMathOperator{\bask}{\mathcal{B}_a(S^k)}
\DeclareMathOperator{\pbas}{\mathfrak{P}_{\text{Ba}}(S)}
\DeclareMathOperator{\pbasinf}{\mathfrak{P}_{\text{Ba}}(S^{\infty})}
\DeclareMathOperator{\sigmabprs}{\sigma(\mathbb{B}(\prs))}
\DeclareMathOperator{\bprs}{\mathbb{B}(\prs)}
\DeclareMathOperator{\finunbprs}{\left\{\cup_{A \in \mathscr{F}} A: \mathscr{F} \in \mathcal{P}_{\text{fin}}(\mathbb{B}(\prs))\right\}}
\newcommand{\starint}{{\prescript{\ast}{}\int}}
\newcommand{\co}{\colon}
\pgfplotsset{%
    compat=1.8,
    compat/show suggested version=false,
}
\newcommand{\mbp}{{\mathbb P}}
\long\def\@secondofthree#1#2#3{#2}
\long\def\@thirdoffour#1#2#3#4{#3}
      \protected@write\@auxout{}{%
        \string\@restatetheorem{#1}{#2}{\csname the#1\endcsname}{\detokenize\expandafter{\BODY}}%
      }%
    \def\restatethm@getthmcountercsname#1{\def\thethmcsname{#1}}%
        \def\restatethm@getthmcountercsname#1{%
            \def\thethmcsname{\expandafter\expandafter\expandafter\restatethm@ntheorem@getthmcountercsname@helper\csname mkheader@#1\endcsname}}%
        \def\restatethm@ntheorem@getthmcountercsname@helper#1\@thm#2#3#4{#3}
            \def\restatethm@getthmcountercsname#1{\edef\thethmcsname{\expandafter\expandafter\expandafter\@thirdoffour\csname#1\endcsname}}%
            \def\restatethm@getthmcountercsname#1{\edef\thethmcsname{\expandafter\expandafter\expandafter\@secondofthree\csname#1\endcsname}}%
\newcommand{\@restatetheorem}[4]{%
  \expandafter\gdef\csname restatethis@#2\endcsname{%
    \begingroup
    \restatethm@getthmcountercsname{#1}
    \expandafter\def\csname the\thethmcsname\endcsname{#3}%
    \begin{#1}#4\end{#1}%
    \endgroup
  }%
}
\newcommand{\restate}[1]{\csname restatethis@#1\endcsname} 
\newcommand\reallywidehat[1]{%
\savestack{\tmpbox}{\stretchto{%
  \scaleto{%
    \scalerel*[\widthof{\ensuremath{#1}}]{\kern-.6pt\bigwedge\kern-.6pt}%
    {\rule[-\textheight/2]{1ex}{\textheight}}
  }{\textheight}%
}{0.5ex}}%
\stackon[1pt]{#1}{\tmpbox}%
}
\newenvironment{customthm}[1]
  {\innercustomthm}
  {\endinnercustomthm}
\begin{document}

\title[Generalizing the de Finetti--Hewitt--Savage theorem]{Generalizing the de Finetti--Hewitt--Savage theorem}
\author{Irfan Alam}
\address{Department of Computer and Mathematical Sciences, University of Toronto Scarborough, 1050 Military Trail, Toronto, ON, CANADA M1C 1A4}
\email{irfanalamisi@gmail.com}
\urladdr{\href{https://sites.google.com/view/irfan-alam/}{https://sites.google.com/view/irfan-alam/}}

\subjclass[2020]{60G09, 28E05, 28A33, 60B05, 26E35 (Primary); 54J05, 60C05, 28C15, 03H05, 00A30 (Secondary)}

\date{\today}
\keywords{Nonstandard analysis, exchangeability, de Finetti's theorem, topological measure theory, Loeb measures}

\begin{abstract}
A sequence of random variables is called \textit{exchangeable} if its joint distribution is invariant under permutations of indices. de Finetti's theorem is a fundamental result on such random variables that shaped the foundations of Bayesian statistics over the last century. The original formulation of de Finetti's theorem roughly says that any exchangeable sequence of $\{0,1\}$-valued random variables can be thought of as a mixture of independent and identically distributed sequences in a certain precise mathematical sense. Interpreting this statement from a convex analytic perspective, Hewitt and Savage were able to obtain the same conclusion for exchangeable sequences of random variables taking values in more general state spaces under some topological conditions.

This manuscript is an expanded version of a shorter paper in which de Finetti--Hewitt--Savage Theorem was recently generalized. More precisely, using tools from nonstandard analysis we prove that an exchangeable sequence of Radon-distributed random variables taking values in any Hausdorff state space must be representable as a mixture of sequences of independent and identically distributed random variables. 

Our presentation of this work follows the style of \textit{lecture notes} intended for broad graduate-level mathematical audiences --- the main body of the manuscript starts with a historically grounded introduction to the problem, foreshadowing our techniques that are developed via a series of appendices. These techniques are used to provide self-contained proofs of our main results in a short section following the introduction. As an immediate corollary of our main result, it is consistent with ZFC to assume that de Finetti--Hewitt--Savage theorem is true for all completely metrizable state spaces. Furthermore, we also obtain a de Finetti-style theorem for exchangeable sequences of random variables with a tight, but possibly non-Radon, marginal distribution. 

We have provided a self-contained philosophically motivated introduction to nonstandard analysis in the first appendix, thus rendering first courses in measure theoretic probability and point-set topology as the only prerequisites for the work. This introduction aims to develop some new ideologies about the subject that might be of interest to mathematicians, philosophers, and mathematics educators alike. One highlight of the rest of the appendices is a new generalization of Prokhorov's theorem in the setting of the space of all probability measures on arbitrary Hausdorff spaces. 
\end{abstract}

\maketitle

\tableofcontents

\vspace{-1.2869cm}
\section{Introduction}


Let us fix a probability space $(\Omega, \mathcal{F}, \mathbb{P})$, and consider a sequence of random variables $(X_n)_{n \in \mathbb{N}}$ taking values in an arbitrary set $S$ (called \textit{state space}) equipped with a sigma algebra $\mathcal{S}$. This sequence is called \textit{exchangeable} if its finite-dimensional marginal distributions are invariant under permutations of indices. 

In the 1920s, Bruno de Finetti \cite{definetti1} proved the following characterization of exchangeable sequences of Bernoulli random variables, which is the case when $S = \{0,1\}$. 

\begin{theorem}[de Finetti]\label{de finetti theorem}
Let $(X_n)_{n \in \mathbb{N}}$ be an exchangeable sequence of Bernoulli random variables. Then there exists a unique Borel probability measure $\nu$ on the interval $[0,1]$ such that the following holds:
\begin{align}\label{de Finetti equation}
    \mathbb{P}(X_1 = e_1, \ldots, X_k = e_k) = \int_{[0,1]} p^{\sum_{j = 1}^k e_j}(1 - p)^{k - \sum_{j = 1}^k e_j} d\nu(p)
\end{align}
for any $k \in \mathbb{N}$ and $e_1, \ldots, e_k \in \{0,1\}$.
\end{theorem}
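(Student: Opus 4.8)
The plan is to reduce the entire statement to the classical Hausdorff moment problem. First I would note that exchangeability forces the probability of any elementary event $\{X_1 = e_1, \ldots, X_k = e_k\}$ to depend only on $k$ and on the number of ones $j = \sum_{i=1}^k e_i$, since any two binary patterns of the same length with the same number of ones are permutations of one another. Thus it suffices to produce a measure $\nu$ whose mixture integral reproduces this single quantity, which I shall call $c_{k,j}$, for every $0 \le j \le k$, and to show that the integrand $p^{\sum e_j}(1-p)^{k-\sum e_j}$ depends on the pattern only through $j$.

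Next I would isolate the sequence $m_k := \mathbb{P}(X_1 = 1, \ldots, X_k = 1)$ (with $m_0 := 1$) as the fundamental data. Applying inclusion--exclusion to the events $\{X_i = 1\}$ for $i = j+1, \ldots, k$ and using exchangeability to evaluate each resulting term as the probability that some fixed number of coordinates equal $1$, I would obtain
\begin{align}
c_{k,j} = \sum_{t=0}^{k-j} (-1)^t \binom{k-j}{t} m_{j+t},
\end{align}
so that $c_{k,j}$ is exactly the iterated finite difference $(-1)^{k-j}\Delta^{k-j} m_j$ of the sequence $(m_k)$. Since each $c_{k,j}$ is a probability, hence nonnegative, this shows that $(m_k)_{k \ge 0}$ is a \emph{completely monotone} sequence normalized by $m_0 = 1$.

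The key step is then to invoke the Hausdorff moment theorem: a completely monotone sequence with $m_0 = 1$ is precisely the moment sequence $m_k = \int_{[0,1]} p^k \, d\nu(p)$ of a \emph{unique} Borel probability measure $\nu$ on $[0,1]$. Substituting this representation into the difference formula above and expanding $(1-p)^{k-j}$ by the binomial theorem yields
\begin{align}
c_{k,j} = \int_{[0,1]} p^{\,j} (1-p)^{k-j} \, d\nu(p).
\end{align}
Because the integrand depends on $(e_1,\ldots,e_k)$ only through $j = \sum_i e_i$, and the left-hand side equals $\mathbb{P}(X_1 = e_1, \ldots, X_k = e_k)$ for every pattern with $j$ ones, this is exactly \eqref{de Finetti equation}. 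Uniqueness of $\nu$ is inherited directly from the uniqueness clause of the Hausdorff moment theorem, since the moments $m_k$ are visibly determined by the joint law of $(X_n)_{n \in \mathbb{N}}$.

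I expect the only substantive obstacle to be the invocation of the Hausdorff moment theorem---specifically the direction asserting that complete monotonicity is \emph{sufficient} for a representing measure to exist, together with its uniqueness. Everything else is bookkeeping: the reduction via exchangeability and the inclusion--exclusion identity are elementary, and the final verification is a single binomial expansion. An alternative route, closer in spirit to the combinatorial and nonstandard methods developed later in the paper, would be to take $\nu$ to be the law of the almost-sure limit of the empirical frequencies $(X_1 + \cdots + X_n)/n$ (whose existence follows from reverse-martingale convergence) and then identify its moments with the $m_k$; but for the Bernoulli case the moment-problem argument is the most economical.
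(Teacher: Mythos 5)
Your proof is correct, but it takes a genuinely different route from the paper. The paper never proves Theorem \ref{de finetti theorem} in isolation: it is quoted as classical background (with citations to de Finetti's original papers) and is ultimately recovered as a special case of the paper's main result (Theorem \ref{classical most general de Finetti}), whose proof runs through nonstandard analysis---hyperfinite empirical measures $\mu_{\omega, N}$, the combinatorial estimate of Theorem \ref{stronger claim} showing that $\mathbb{P}(X_1 \in B_1, \ldots, X_k \in B_k)$ is infinitesimally close to ${^*}\mathbb{E}\left[\mu_{\cdot, N}(B_1)\cdots\mu_{\cdot, N}(B_k)\right]$, a generalized Prokhorov theorem, and the pushing down of Loeb measures on the space of Radon probability measures. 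Your argument instead reduces everything to the Hausdorff moment problem: exchangeability collapses the joint law to the sequence $m_k = \mathbb{P}(X_1 = \cdots = X_k = 1)$, inclusion--exclusion shows this sequence is completely monotone, and Hausdorff's theorem supplies the unique representing measure, with the mixture identity following from a binomial expansion; the uniqueness claim is also sound, since any mixing measure satisfying \eqref{de Finetti equation} must have moments $(m_k)$, and measures on $[0,1]$ are determined by their moments. What your approach buys is economy: it is self-contained modulo Hausdorff's theorem and avoids all topological measure theory. What it gives up is generality: it exploits the accident that probability measures on $\{0,1\}$ are parametrized by a single point $p \in [0,1]$, so that mixtures of iid laws and moment sequences coincide; no analogue of the moment problem exists over an abstract Hausdorff state space, which is precisely why the paper builds the heavier empirical-measure machinery. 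Incidentally, your suggested alternative---taking $\nu$ to be the law of the almost-sure limit of the empirical frequencies $(X_1 + \cdots + X_n)/n$---is much closer in spirit to the paper's actual method, which realizes the mixing measure as the standard part of the pushforward of the hyperfinite empirical measure $\mu_{\cdot, N}$.
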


Notice that in equation \eqref{de Finetti equation}, the variable of integration, $p$, can be identified with the measure induced on $S = \{0,1\}$ by a coin toss for which the chance of success (with success identified with the state $1$) is $p$. Clearly, all probability measures on the discrete set $\{0,1\}$ are of this form.  Thus, $\nu$ in \eqref{de Finetti equation} can be thought of as a measure on the set of all probability measures on $S$. The integrand in \eqref{de Finetti equation} then represents the probability of obtaining $\textstyle{\sum_{j = 1}^k e_j}$ successes in $k$ independent coin tosses, while the integral represents the expected value of this probability.

With this perspective, it becomes possible to look for generalizations of Theorem \ref{de finetti theorem} in situations when the exchangeable random variables need not be binary valued. Through independent works of de Finetti \cite{definetti2} and Dynkin \cite{Dynkin-exchangeable}, it was known by the early 1950s that the state space $S$ could be taken to be the set $\mathbb{R}$ of real numbers, equipped with its Borel sigma algebra $\mathcal{B}(\mathbb{R})$\footnote{Here, we are considering $\mathbb{R}$ as a topological space under the usual topology. More generally, given a topological space $T$, we shall use the notation $\mathcal{B}(T)$ for the \textit{Borel sigma algebra over 
$T$}, which is the smallest sigma algebra containing all open subsets of $T$.}. 

In 1955, Hewitt and Savage \cite{Hewitt-Savage-1955} published a further generalization, showing that de Finetti's theorem was still true if the state space was an arbitrary compact Hausdorff space equipped with its Baire sigma algebra. In general, they called a measurable space $(S, \mathcal{S})$ \textit{presentable} if exchangeable sequences of $S$-valued random variables satisfied the conclusion of de Finetti's theorem. More concretely: 

\begin{definition}
   For a measurable space $(S, \mathcal{S})$, let $\mathfrak{P}(S)$ be the set of all probability measures on it. A measurable space $(S, \mathcal{S})$ is called \textit{presentable} if for each exchangeable sequence $(X_n)$ of $S$-valued random variables, there exists a unique probability measure $\mathscr{P}$ on $\ps$ such that the following holds for all $k \in \mathbb{N}$:
\begin{align}
    \mathbb{P}(X_1 \in B_1, \ldots, X_k \in B_k) = \int\limits_{\ps} \mu(B_1)\cdot \ldots \cdot \mu(B_k) d\mathscr{P}(\mu) \text{ for all } B_1, \ldots, B_k \in \mathcal{S}, \label{naive generalization}
\end{align}
where the set $\ps$ is equipped with a sigma algebra with respect to which the above integrals make sense.
\end{definition}

Since we want to integrate functions of the type $\mu \mapsto \mu(B)$ on $\ps$, where $B \in \mathcal{S}$, the smallest sigma algebra ensuring the measurability of all such functions is appropriate for this discussion. That minimal sigma algebra, which we denote by $\mathcal{C}(\ps)$, is generated by the so-called \textit{cylinder sets}. These are sets of the type $$\{\mu \in \ps: \mu(B_1) \in A_1, \ldots, \mu(B_k) \in A_k\},$$ where $k \in \mathbb{N}$; $B_1, \ldots, B_k \in \mathcal{S}$; and $A_1, \ldots, A_k \in \mathcal{B}(\mathbb{R})$.

By the early 1960s, Varadarajan \cite[p. 219]{Varadarajan-Borel} had observed that the result of Hewitt and Savage implied also that all \textit{analytic} state spaces are presentable. Here an \textit{analytic space} refers to a measurable space that is isomorphic to $(T, \mathcal{B}(T))$, where $T$ is a subset of a Polish space that can be realized as a continuous image of a Borel subset of a (possibly different) Polish space. Thus, in particular, all Polish spaces equipped with their Borel sigma algebras are presentable, further generalizing the previous works of de Finetti and Dynkin on the presentability of $(\mathbb{R}, \mathcal{B}(\mathbb{R}))$. 

The above observation of Varadarajan is the state of the art for modern treatments of de Finetti's theorem for Borel sigma algebras on topological state spaces. For example, Diaconis and Freedman \cite[Theorem 14, p. 750]{Diaconis-Freedman-Finite} reproved the result of Hewitt and Savage using their approximate de Finetti's theorem for finite exchangeable sequences in any state space (wherein they exploited the topological structure on the state space to be able to take the limit to go from their more general approximate de Finetti's theorem on finite exchangeable sequences to the theorem of Hewitt and Savage). They then concluded (see \cite[p. 751]{Diaconis-Freedman-Finite}) that de Finetti's theorem holds for state spaces that are isomorphic to Borel subsets of a Polish space. Since any Borel subset of a Polish space is also analytic, this observation is a special case of Varadarajan's. In his monograph, Kallenberg \cite[Theorem 1.1]{Kallenberg-symmetries} has a proof of de Finetti's theorem for any state space that is isomorphic to a Borel subset of the closed interval $[0,1]$, a formulation that is contained in the above. 

Recall that a Polish space is a complete separable metric space. By the late 1970s, Dubins and Freedman \cite{Dubins-Freedman} had shown that separability by itself was not sufficient for presentability, as they were able to construct a separable non-complete metric space $S$ and an exchangeable sequence of $S$-valued random variables that failed to satisfy the conclusion of de Finetti's theorem. This negatively settled a question asked by Hewitt and Savage, who were wondering whether all measurable spaces are presentable. 

Since all generalizations of de Finetti's theorem up to this point could be deduced from the work of Hewitt and Savage, a generalization of de Finetti's theorem to new state spaces is sometimes also called a de Finetti--Hewitt--Savage theorem. The counterexample of Dubins and Freedman seemed to put a halt to the program of generalizing de Finetti--Hewitt--Savage theorem through finding more general presentable state spaces. 

However, even if a particular state space is not presentable, it may so happen that many useful sequences of exchangeable random variables taking values in that space still satisfy a de Finetti-type theorem. This was observed by Paul Ressel in the early 1980s, who worked with exchangeable sequences of random variables whose (infinite-dimensional) joint distributions were Radon\footnote{For a Hausdorff space $T$, a Borel probability measure $\mu$ is called \textit{Radon} if for each Borel set $B \in \mathcal{B}(T)$, the following holds:
\begin{align*}
    \mu(B) &= \sup \{\mu(K): K \subseteq B \text{ and } K \text{ is compact}\}.
\end{align*}}. To more precisely state Ressel's contribution, we first make the following definitions.

\begin{definition}\label{weak topology Ressel}
Let $\ps$ and $\prs$ respectively denote the sets of all Borel probability measures and Radon probability measures on a Hausdorff space $S$. The \textit{weak topology} on either of these sets is the smallest topology under which the maps $\mu \mapsto \mathbb{E}_{\mu}(f)$ are continuous for each real-valued bounded continuous function $f \co S \rightarrow \mathbb{R}$.  
\end{definition}

\begin{definition}
Let a sequence of random variables $(X_n)_{n \in \mathbb{N}}$ taking values in a Hausdorff space $S$ be called \textit{jointly Radon distributed} if the pushforward measure induced by the sequence on $(S^{\infty}, \mathcal{B}(S^{\infty}))$ (the product of countably many copies of $S$, equipped with its Borel sigma algebra) is Radon. 
\end{definition}

\begin{definition}\label{Radon presentable definition}
Let a jointly Radon distributed sequence of exchangeable random variables $(X_n)_{n \in \mathbb{N}}$ be called \textit{Radon presentable} if there is a unique Radon measure $\mathscr{P}$ on the space $\prs$ of all Radon measures on $S$ (equipped with the Borel sigma algebra induced by its weak topology) such that the following holds:
\begin{align}
    \mathbb{P}(X_1 \in B_1, \ldots, X_k \in B_k) = \int\limits_{\prs} \mu(B_1)\cdot \ldots \cdot \mu(B_k) d\mathscr{P}(\mu) \nonumber \\
    \text{ for all } k \in \mathbb{N} \text{ and } B_1, \ldots, B_k \in \mathcal{B}(S). \label{Ressel's equation}
\end{align}
\end{definition}

Ressel \cite{Ressel-harmonic} proved in 1985 that all completely regular Hausdorff spaces\footnote{A Hausdorff space $T$ is called \textit{completely regular Hausdorff} if given a closed set $F \subseteq T$ and $x \in T \backslash F$, there is a continuous function $f\co T \rightarrow [0,1]$ such that $f(x) = 0$ and $f(y) = 1$ for all $y \in F$.} are Radon presentable, thus generalizing the de Finetti--Hewitt--Savage theorem in a new manner.\footnote{See Appendix \ref{appendix} for more details on how this generalization by Ressel \textit{includes} the generalization by Hewitt and Savage.} 

Prior to the statement of his theorem, Ressel remarked the following (see \cite[p. 906]{Ressel-harmonic}):
\begin{quote}
    ``It might be true that all Hausdorff spaces have this property.''
\end{quote}

This conjecture of Ressel was confirmed by Winkler \cite{Winkler} in 1990. 
 In the 2000s, Fremlin showed in his treatise \cite{Fremlin-4} that a stronger statement is actually true. Replacing the requirement of being jointly Radon distributed with the weaker requirement of being jointly quasi-Radon distributed (this notion is defined in Fremlin \cite[411H, p. 5]{Fremlin-4}) and marginally Radon distributed, Fremlin \cite[459H, p. 166]{Fremlin-4} showed that all such exchangeable sequences also satisfy \eqref{Ressel's equation}.
 
 The main result of the present paper generalizes this further to situations in which no assumptions on the joint distribution of the sequence of exchangeable random variables are needed, only Radonness of the marginal distribution is sufficient. Viewing $\prs$ as a subspace of $\ps$, we express our main result as follows.

 \begin{customthm}{2.5}
     Let $S$ be a Hausdorff space, with $\mathcal{B}(S)$ denoting its Borel sigma algebra. Let $\ps$ be the space of all Borel probability measures on $S$ and $\mathcal{B}(\ps)$ be the Borel sigma algebra on $\ps$ with respect to the $A$-topology\footnote{This topology, named after A.D. Alexandroff, is studied in Appendix \ref{AppA}. Briefly, it is the coarsest topology on $\ps$ with respect to which the map $\mu \mapsto \mathbb{E}_{\mu}(f)$ is upper-semicontinuous, whenever $f \colon S \to \mathbb{R}$ is a bounded upper-semicontinuous function.} on $\ps$. 

Let $(\Omega, \mathcal{F}, \mathbb{P})$ be a probability space. Let $X_1, X_2, \ldots$ be a sequence of exchangeable $S$-valued random variables such that the common distribution of the $X_i$ is Radon on $S$. Then there exists a probability measure $\mathscr{Q}$ on $(\ps, \mathcal{B}(\ps))$ such that the following holds for all $k \in \mathbb{N}$:
\begin{align*}
    \mathbb{P}(X_1 \in B_1, \ldots, X_k \in B_k) = \int\limits_{\ps} \mu(B_1)\cdot \ldots \cdot \mu(B_k) d\mathscr{Q}(\mu) \nonumber \\
    \text{ for all } B_1, \ldots, B_k \in \mathcal{B}(S). 
\end{align*}

Furthermore, all measures $\mathscr{Q}$ satisfying \eqref{classical generalized de Finetti Borel sets} must have the same restriction to the cylinder sigma algebra $\mathcal{C}(\mathfrak{P}(S))$.
 \end{customthm}
 
Using the idea that all tight measures\footnote{Recall that a Borel measure $\mu$ on a Hausdorff space $T$ is called \textit{tight} if \begin{align}\label{tightness 2}
    \mu(T) = \sup\{\mu(K): K \text{ is a compact subset of } T\}.
\end{align}} are ``close'' to Radon measures in a precise topological sense (see, for instance, Corollary \ref{what to do with tight measures}), we are also able to obtain a de Finetti-style result for any sequence of exchangeable random variables with a tight distribution---see Theorem \ref{deFinettiStyle}.

The existence of a non-Radon measure over a complete metric space is known to be equivalent to the existence of a real-valued measurable cardinal (see, for instance, Pantsulaia \cite[Remark 8, p. 340]{Pantsulaia}). In particular, it is thus consistent with the axioms of ZFC to assume that de Finetti's theorem holds whenever the state space is completely metrizable. This interpretation of our work has an interesting feature that it opens up the possibility to connect de Finetti's ideas, which famously ``brought about the rebirth of Bayesian statistics'' (see Cifarelli and Regazzini \cite[p. 253]{CifarelliRegazzini}), with philosophical aspects in the foundations of mathematics concerning the existence or non-existence of various large cardinals (see, for instance, Maddy \cite{MaddyBelieving}). 

Our work utilizes the framework of \textit{nonstandard analysis}, and is in some sense a refinement of the recent work Alam \cite{Alam-deFinetti}, in which a nonstandard proof of de Finetti's theorem for exchangeable sequences of Bernoulli random variables was obtained. 

More precisely, our proofs rely on technical results from two different topics: topological measure theory by itself, and its interaction with Loeb measure theory from nonstandard analysis, especially in the context of hyperfinite empirical distributions induced by sequences of (identically) Radon-distributed random variables. All requisite as well as supplementary technical details are compiled in the appendices (including Appendix \ref{CompactIntroduction} that provides an introduction to nonstandard methods for newcomers to the subject), allowing us to focus on the proofs of our main results in the next section. For readers interested in building more intuition before jumping to the proofs, the rest of the introduction discusses a heuristic strategy motivated from statistical practice, revealing how such a heuristic strategy naturally leads us to attempt nonstandard methods in this context.

\subsection{Building intuition for this work}
Let $\mathcal{S}$ be a sigma algebra on a state space $S$. Suppose we devise an experiment to sample values from an identically distributed sequence $X_1, \ldots, X_n$ (where $n \in \mathbb{N}$ can theoretically be as large as we please) of random variables from some underlying probability space $(\Omega, \mathcal{F}, \mathbb{P})$ to $(S, \mathcal{S})$. 

In real-world sampling of data, there can be situations in which it is not reasonable to assume that the sampled values are independent, even though it might be reasonable to believe that the joint distribution of the sampled values is invariant under permutations of the values (that is, the sample is \textit{exchangeable}). Depending on the application, one might be interested in the joint distribution of two (or more) of the $X_i$, which is difficult to establish without an assumption of independence.  It turns out that only under an assumption of \textit{exchangeability}, it is not very difficult to prove the following, which, as we describe in more detail in the next paragraph, serves as a foundation for the heuristic of estimating joint probabilities through statistical sampling.\footnote{Theorem \ref{internal version} is a nonstandard strengthening of this statement, with this standard statement having a proof along the same lines---replace the step where we use the hyperfiniteness of $N$ in that proof by an argument about taking limits.}
\begin{align}\label{heuristic idea}
    \mathbb{P}(X_1 \in B_1, \ldots, X_k \in B_k) = \lim_{n \rightarrow \infty} \mathbb{E}(\mu_{\cdot, n}(B_1) \cdot \ldots \cdot \mu_{\cdot, n}(B_k))
\end{align}
$\text{for all } k \in \mathbb{N} \text{ and } B_1, \ldots, B_k \in \mathcal{S}$, where 
\begin{align}\label{Introduction mu}
\mu_{\omega, n}(B) = \frac{\#\{i \in [n]\co X_i(\omega) \in B\}}{n} \text{ for all } \omega \in \Omega \text{ and } B \in \mathcal{S}.    
\end{align}

Here $[n]$ denotes the initial segment $\{1, \ldots, n\}$ of $n \in \mathbb{N}$. In (statistical) practice, for any $k \in \mathbb{N}$ and $B_1, \ldots, B_k \in \mathcal{S}$, we do multiple independent iterations of the experiment. For $j \in \mathbb{N}$, we calculate the product $\mu_{\cdot, n}^{(j)}(B_1) \cdot \ldots \cdot \mu_{\cdot, n}^{(j)}(B_k)$ of the ``empirical sample means'' in the $j^{\text{th}}$ iteration of the experiment. The strong law of large numbers (which we can use because of the assumption that the experiments generating samples of $(X_1, \ldots, X_n)$ are independent) thus implies the following:
\begin{align}\label{SLLN 1}
    \lim_{m \rightarrow \infty} \frac{\sum_{j \in [m]}\mu_{\cdot, n}^{(j)}(B_1) \cdot \ldots \cdot \mu_{\cdot, n}^{(j)}(B_k)}{m} = \mathbb{E}\left(\mu_{\cdot, n}(B_1) \cdot \ldots \cdot \mu_{\cdot, n}(B_k)\right) \text{ almost surely}.
\end{align}
By \eqref{SLLN 1} and \eqref{heuristic idea}, we thus obtain the following $\text{for all } k \in \mathbb{N} \text{ and } B_1, \ldots, B_k \in \mathcal{S}$:
\begin{align}\label{heuristic idea 2}
    \mathbb{P}(X_1 \in B_1, \ldots, X_k \in B_k) = \lim_{n \rightarrow \infty} \lim_{m \rightarrow \infty} \frac{\sum_{j \in [m]}\mu_{\cdot, n}^{(j)}(B_1) \cdot \ldots \cdot \mu_{\cdot, n}^{(j)}(B_k)}{m}.
\end{align}

Thus, only under an assumption of exchangeability of the values sampled in each experiment, as long as we have a method to repeat the experiment independently, we have the following heuristic algorithm to statistically approximate the joint probability $\mathbb{P}(X_1 \in B_1, \ldots, X_k \in B_k)$ for any $B_1, \ldots, B_k \in \mathcal{S}$:
\begin{enumerate}[(i)]
    \item In each iteration of the experiment, sample a large number (this corresponds to $n$ in \eqref{heuristic idea 2}) of values. 
    \item Conduct a large number (this corresponds to $m$ in \eqref{heuristic idea 2}) of such independent experiments. 
    \item The average of the empirical sample means $\mu_{\cdot, n}^{(j)}(B_1) \cdot \ldots \cdot \mu_{\cdot, n}^{(j)}(B_k)$ (as $j$ varies in $[m]$) is then an approximation to $\mathbb{P}(X_1 \in B_1, \ldots, X_k \in B_k)$. 
\end{enumerate}

Going back to \eqref{heuristic idea}, suppose for the moment that we have fixed some sigma algebra on $\ps$ (we will come back to the issue of which sigma algebra to fix) such that the following natural conditions are met:
\begin{enumerate}[(i)]
    \item For each $n \in \mathbb{N}$, the map $\omega \mapsto \mu_{\omega, n}$ is a $\ps$-valued random variable on $\Omega$.
    \item For each $B \in \mathcal{S}$, the map $\mu \mapsto \mu(B)$ is a real-valued random variable on $\ps$.
\end{enumerate}

For each $n \in \mathbb{N}$, this would define a pushforward probability measure $\nu_n$ on $\ps$ that is supported on $\{\mu_{\omega, n} \co \omega \in \Omega\} \subseteq \ps$, such that  
\begin{align}\nonumber
\int_{\ps} \mu(B_1) \ldots \mu(B_k) d\nu_n(\mu) = \int_{\Omega} \mu_{\omega, n}(B_1) \ldots \mu_{\omega, n}(B_k) d\mathbb{P}(\omega)  \\ \text{for all } B_1, \ldots, B_k \in \mathcal{S}. \label{pushforward}
\end{align}

Comparing \eqref{heuristic idea}, \eqref{pushforward}, and \eqref{naive generalization}, it is clear that de Finetti's theorem would hold for an \textit{infinite} sequence of exchangeable $S$-valued random variables $(X_n)_{n \in \mathbb{N}}$ whenever we could guarantee there to be a measure $\nu$ on $\ps$ such that the following is true:
\begin{align}
\lim_{n \rightarrow \infty}\int_{\ps} \mu(B_1) \ldots \mu(B_k) d\nu_n(\mu) = \int_{\ps} \mu(B_1) \ldots \mu(B_k) d\nu(\mu) \nonumber \\ \text{for all } B_1, \ldots, B_k \in \mathcal{S}. \label{heuristic idea 3}
\end{align}

Intuitively, equation \eqref{heuristic idea 3} is a statement of convergence (in some sense) of $\nu_n$ to $\nu$. A naive candidate for $\nu$ could come from \eqref{pushforward} if the following are true:
\begin{enumerate}
    \item\label{hope1} There exists an almost sure set $\Omega' \subseteq \Omega$ such that for each $B \in \mathcal{S}$, the limit $\lim_{n \rightarrow \infty} \mu_{\omega, n}(B)$ exists for all $\omega \in \Omega'$. Up to null sets in $\Omega$, this would thus define a map $\omega \mapsto \mu_{\omega}$ from $\Omega$ to the space of all real-valued functions on $\mathcal{S}$, where $\mu_{\omega}(B) = \lim_{n \rightarrow \infty} \mu_{\omega, n}(B)$. 
    \item\label{hope2} The function $\mu_{\omega} : \mathcal{S} \rightarrow [0,1]$ is actually a probability measure on $(S, \mathcal{S})$. 
\end{enumerate}

Indeed if these two conditions are true, then one may define $\nu$ to be the pushforward on $\ps$ of the map $\omega \mapsto \mu_{\omega}$. A weaker version of \eqref{hope1} is often interpreted as a generalization of the strong law of large numbers for exchangeable random variables---see, for instance, Kingman \cite[Equation (2.2), p. 185]{Kingman-survey}, which can be easily modified to work in the setting of an arbitrary $(S, \mathcal{S})$ to conclude that $\lim_{n \rightarrow \infty} \mu_{\omega, n}(B)$ exists for all $\omega$ in an almost sure set that depends on $B$. Of course, an issue with this idea is that if we have too many (that is, uncountably many) different choices for $B \in \mathcal{S}$, then there is no guarantee that an almost sure set would exist that works for all $B\in \mathcal{S}$ simultaneously. The condition \eqref{hope2} is even more delicate, as showing countable additivity of $\mu_{\omega}$ would require some control on the rates at which the sequences $(\mu_{\omega, n}(B))_{n \in \mathbb{N}}$ converge for different $B \in \mathcal{S}$. 

Thus we seem to have reached a dead end in this heuristic strategy in the absence of having more information about the specific structure of our spaces and measures. 

What helps get us past this apparent dead end is our use of hyperfinite numbers from nonstandard analysis as tools to model large sample sizes. Fix a hyperfinite $N > \mathbb{N}$ and study the map $\omega \mapsto \mu_{\omega, N}$ from ${^*}\Omega$ to ${^*}\ps$. This map induces an internal probability measure (through the pushforward) on the space ${^*}\ps$ of all internal probability measures on ${^*}S$. That is, this pushforward internal measure $Q_N$ (say) lives in the space ${^*}\mathfrak{P}(\ps)$. In view of \eqref{heuristic idea 3} (and the nonstandard characterization of limits), we want to have a standard probability measure $\mathscr{Q}$ on $\mathfrak{P}(\ps)$ that is close to $Q_N$ in the sense that the integral of the function $\mu \mapsto \mu({^*}B_1) \cdot \ldots \cdot \mu({^*}B_k)$ with respect to $Q_N$ is infinitesimally close to its integral with respect to ${^*}\mathscr{Q}$ for any $k \in \mathbb{N}$ and $B_1, \ldots, B_k \in \mathcal{S}$. 

If $S$ has a topological structure, then there are natural ways to topologize $\ps$. The notion of topology most useful to our situation is the so-called $A$-topology, named after A.D. Alexandroff, which is studied in more detail in Appendix \ref{AppA}. Briefly, it is the coarsest topology on $\ps$ with respect to which the map $\mu \mapsto \mathbb{E}_{\mu}(f)$ is upper-semicontinuous, whenever $f \colon S \to \mathbb{R}$ is a bounded upper-semicontinuous function.

Thus $\mathfrak{P}(\ps)$ also comes equipped with its $A$-topology, which lends itself to the straightforward nonstandard strategy, namely to look for conditions with respect to which the internal measure $Q_N$ from the previous paragraph is nearstandard to some $\mathscr{Q} \in \mathfrak{P}(\ps)$---while hoping that the topologies we are considering are rich enough to ensure that this nearstandardness is sufficient for our needs. A technical obstruction that appears in this strategy is that $Q_N$ actually does not belong to ${^*}\mathfrak{P}(\ps)$, since the Borel sigma algebra on $\ps$ induced by its $A$-topology is too large for the map $\omega \mapsto \mu_{\omega, n}$ (for some $n \in \mathbb{N}$ to necessarily be $\mathcal{F}-\mathcal{B}(\ps)$ measurable. So, we work with a slightly smaller sigma algebra on $\ps$ that still does the job for us. Another technical nuance is related to the fact that $\ps$ is not necessarily Hausdorff, while the space $\prs$ of Radon probability measures over a Hausdorff space $S$ is always Hausdorff (under the $A$-topology). Since the random measures $\mu_{\cdot, n}$ (where $n \in \mathbb{N}$) are supported on finitely many points, they are always Radon, and hence it is helpful to view the sample space of the internal measure $Q_N$ as ${^*}\prs$ instead of ${^*}\ps$. We study these hyperfinite empirical measures in Appendix \ref{AppB}.

The main tool in finding a standard measure ``close to'' $Q_N$ is Theorem \ref{Albeverio theorem} (originally from Albeverio et al. \cite[Proposition 3.4.6, p. 89]{Albeverio}). This technique is called ``pushing down Loeb measures'' and is well-known in the nonstandard literature (see, for example, Albeverio et al. \cite[Chapter 3.4]{Albeverio} or Ross \cite[Section 3]{Ross_NATO}). It is often used to construct a standard measure that is close in some sense to an internal (nonstandard) measure. The way we develop the theory of $A$-topology allows us to interpret this classical technique of pushing down Loeb measures as actually taking a standard part in a legitimate nonstandard space (of internal measures). See, for example, Theorem \ref{Landers and Rogge main result}, Remark \ref{pushing down remark}, and Theorem \ref{Radon Landers and Rogge main result}. Similar results were obtained in the context of the topology of weak convergence by Anderson \cite[Proposition 8.4(ii), p. 684]{Anderson-transactions}, and by Anderson--Rashid \cite[Lemma 2, p. 329]{Anderson--Rashid} (see also Loeb \cite{Loeb-1979}). Using Theorem \ref{Albeverio theorem} as described above requires us to first show the existence of large compact sets in $\prs$ in some sense, which is shown to be the case in Theorem \ref{Prokhorov for P_N} using a version of Prokhorov's theorem in this setting (see Theorem \ref{Radon Prokhorov}). 

In some sense, we prove a highly general de Finetti's theorem using the same underlying basic idea that works for the simplest versions of de Finetti's theorem (that being the idea of approximating using empirical sample means), the technical machinery from topological measure theory and nonstandard analysis notwithstanding. 

For a more thorough introduction to exchangeability, see Aldous \cite{Aldous-book}, Kingman \cite{Kingman-survey}, and Kallenberg \cite{Kallenberg-symmetries}. Besides a recent paper of the author on a nonstandard proof of de Finetti's theorem for Bernoulli random variables (see Alam \cite{Alam-deFinetti}), there is some precedence in the use of nonstandard analysis in this field, as Hoover \cite{Hoover-row-column-exchangeabiliy, Hoover-IAS} studied the notions of exchangeability for multi-dimensional arrays using nonstandard methods in the guise of ultraproducts. In view of this work, Aldous \cite[p. 179]{Aldous-book} had also expressed the hope of nonstandard analysis being useful in other topics in exchangeability. Another example is Dacunha-Castelle \cite{Dacunha-Castelle} who also used ultraproducts to study exchangeability in Banach spaces. 

\section{Our generalization of de Finetti--Hewitt--Savage theorem}
Here is a brief synopsis of the notation and conventions that we use.\footnote{For readers completely new to nonstandard analysis, Appendix \ref{CompactIntroduction} provides an introduction aimed at general audiences, and should be consulted before reading any further. This introduction in Appendix \ref{CompactIntroduction} is spilled over through the first two sections of Appendix \ref{AppA} that discuss nonstandard methods in the context of topology and measure theory. Readers familiar with basic nonstandard analysis, say, at the level of the first two chapters of \cite{working-mathematician}, definitely have more than enough background to be able to fruitfully read ahead right away, although they might still benefit from skimming through the beginning of Appendix \ref{AppA} in order to get more familiar with our conventions.} We follow the superstructure approach to nonstandard analysis. In particular, we fix a sufficiently saturated nonstandard extension of a \textit{superstructure} containing all standard mathematical objects under study. The nonstandard extension (or \textit{nonstandard interpretation}) of a set $A$ (respectively a function $f$) is denoted by ${^*}A$ (respectively ${^*}f$). Roughly, any first-order property we can express using symbols denoting objects in the standard universe is true if and only if the corresponding statement in the nonstandard universe holds true for the nonstandard interpretations of those symbols---this is called the \textit{transfer principle}. The nonstandard extension ${^*}\mathbb{R}$ of $\mathbb{R}$ has the same first order properties as $\mathbb{R}$ but contains many more elements, including \textit{infinitesimals} and \textit{hyperfinite} numbers. 

For any set $A$, the notation $\mathcal{P}(A)$ denotes the powerset of $A$. The members of ${^*}\mathcal{P}(A)$ are called the \textit{internal subsets} of ${^*}{A}$. For a set $T$ with a \textit{topology} $\tau \subseteq \mathcal{P}(T)$ on it, an element $y \in {^*}T$ is \textit{nearstandard} to $x \in T$ if $y$ belongs to ${^*}U$ for all open neighborhoods $U$ of $x$ (i.e., for all $U$ satisfying $x \in U \in \tau$). If $T$ is Hausdorff, then an element in ${^*}T$ can be nearstandard to at most one element of $T$, thus leading to the concept of the \textit{standard part} map $\st \colon \ns({^*}T) \to T$, where $\ns({^*}T)$ is the set of those elements in ${^*}T$ that are nearstandard. In the case of the nonstandard extension of the set $\mathbb{R}$ of real numbers, for two nonstandard numbers $x, y \in {^*}\mathbb{R}$, we will write $x \approx y$ to denote that $x - y$ is an infinitesimal. The set of finite nonstandard real numbers will be denoted by ${^*}\mathbb{R}_{\text{fin}}$ and the standard part map $\st\co {^*}\mathbb{R}_{\text{fin}} \rightarrow \mathbb{R}$ takes a finite nonstandard real to its closest real number.

A \textit{measurable space} is a set equipped with a sigma-algebra, while a probability space is a measurable space that is further equipped with a \textit{probability measure}. If $\Gamma$ is an internal set and $\mathcal{A}$ is an internal algebra on $\Gamma$, then given any finitely additive internal function $\mathbb{P} \colon \mathcal{A} \to {^*}[0,1]$, its standard part $\st(\mathbb{P}) \colon \mathcal{A} \to [0,1]$ is a finitely-additive measure on an algebra (it is actually a finitely-additive probability measure on an algebra, provided $\mathbb{P}(\Gamma) \approx 1$), which can always be extended to its corresponding \textit{Loeb measure} $L\mathbb{P}$ which is a \textit{complete} countably additive measure on a sigma algebra $L(\mathcal{A})$ containing $\mathcal{A}$. 

We begin our analysis on exchangeability with a combinatorial observation that, once we consider Loeb measures, a suitable form of de Finetti--Hewitt--Savage theorem can be shown to be generally true for \textit{any} hyperfinite collection of internal exchangeable random variables taking values in \textit{any} internal measurable space. More precisely, we have the following result. 

\begin{theorem}\label{internal version}
Let $\Gamma$ be an internal set, $\mathcal{A}$ be an internal algebra on $\Gamma$, and $\mathbf{P}$ be an internal finitely-additive map $\mathbf{P} \colon \mathcal{A} \to {^*}[0,1]$ such that $\mathbf{P}(\Gamma) \approx 1$. Let $\mathbb{S}$ be a (possibly different) internal set equipped with an internal algebra $\mathfrak{S}$. 

Let $N > \mathbb{N}$---that is, $N \in {^*}\mathbb{N} \backslash \mathbb{N}$. Let us denote by $[N]$ the initial segment of $N$ in ${^*}\mathbb{N}$, and let $\{\mathfrak{X}_i : i \in [N]\}$ be a hyperfinite collection of exchangeable $\mathbb{S}$-valued internal random variables defined on $\Gamma$---that is, each $\mathfrak{X}_i \colon \Gamma \to \mathbb{S}$ in this internal collection is an internal map such that the pre-image ${\mathfrak{X}_i}^{-1}(\mathscr{S}) \in \mathcal{A}$ for all $\mathscr{S} \in \mathfrak{S}$, while for any finitely many sets $\mathscr{S}_1, \ldots, \mathscr{S}_k \in \mathfrak{S}$ and any permutation $\sigma \in S_N$ (the internal symmetric group on $[N]$) we have $\mathbf{P}(\mathfrak{X}_1 \in \mathscr{S}_1, \ldots, \mathfrak{X}_k \in \mathscr{S}_k) = \mathbf{P}(\mathfrak{X}_{\sigma(1)} \in \mathscr{S}_1, \ldots, \mathfrak{X}_{\sigma(k)} \in \mathscr{S}_k)$.

For each $\gamma \in \Gamma$, we define the $N^{\text{th}}$ empirical mean at $\gamma$ to be the internal function $\mu_{\gamma, N} \colon \mathfrak{S} \to {^*}[0,1]$ satisfying the following formula:
   \begin{align}\label{generalEmpiricalDefinition}
       \mu_{\gamma, N}(\mathscr{S}) \defeq \frac{\# \{i \in [N]: \mathfrak{X}_i(\gamma) \in \mathscr{S} \}}{N} \text{ for all } \mathscr{S} \in \mathfrak{S}. 
   \end{align}
   
 Then $\mu_{\gamma, N}$ is an internal finitely additive probability on $(\mathbb{S}, \mathfrak{S})$ for all $\gamma \in \Gamma$. Furthermore, for each $\mathscr{S} \in \mathfrak{S}$, the map $\gamma \mapsto \mu_{\gamma, N}(\mathscr{S})$ is a ${^*}\mathbb{R}$-valued internal random variable such that the following is true:

 \begin{align}\label{generalLoebDeFinetti}
     L\mathbf{P}(\mathfrak{X}_1 \in \mathscr{S}_1, \ldots, \mathfrak{X}_k \in \mathscr{S}_k) = \int_{\Gamma} L\mu_{\gamma, N}(\mathscr{S}_1)\cdots L\mu_{\gamma, N}(\mathscr{S}_k) dL\mathbf{P}(\gamma) \nonumber \\
    \text{ for all } k \in \mathbb{N} \text{ and } \mathscr{S}_1, \ldots, \mathscr{S}_k \in \mathfrak{S}, 
 \end{align}
 where $(\Gamma, L(\mathcal{A}), L\mathbf{P})$ and $(\mathbb{S}, L_{\mu_{\gamma, N}}(\mathfrak{S}), L\mu_{\gamma, N})_{\gamma \in \Gamma}$ are the Loeb spaces induced by $(\Gamma, \mathcal{A}, \mathbf{P})$ and $(\mathbb{S}, \mathfrak{S}, \mu_{\gamma, N})_{\gamma \in \Gamma}$ respectively. 
\end{theorem}

A simple direct proof of Theorem \ref{internal version} is provided in Appendix \ref{AppC}, while a different proof using conditional probabilities (which is closer in spirit to Bayesian ways of thinking) is provided in Appendix \ref{appendix 2}.\footnote{While Theorem \ref{internal version} is a bit more general, the specific instantiation of it that we shall need for our generalization of classical de Finetti--Hewitt--Savage theorem can also be seen as a direct consequence of \textit{transferring} Diaconis--Freedman's finite, approximate version of de Finetti's theorem \cite[Theorem (13), pp. 749-750]{Diaconis-Freedman-Finite} into the hyperfinite setting, and then taking standard parts. Thus, readers familiar with the work of Diaconis--Freedman \cite{Diaconis-Freedman-Finite} may skip our proofs of Theorem \ref{internal version} without compromising much their understanding of the rest of the paper.} In the present section, our concern is to establish how Theorem \ref{internal version} leads to a proof of a de Finetti--Hewitt--Savage theorem for exchangeable sequences of Radon-distributed random variables taking values in any Hausdorff space (i.e., Theorem \ref{classical most general de Finetti}). This is achieved by combining Theorem \ref{internal version} with the nonstandard topological measure theory developed in the appendices. We aim to provide the relevant details of this proof next. 

Throughout the rest of this section, we fix a probability space $(\Omega, \mathcal{F}, \mbp)$ and a sequence $(X_n)_{n \in \mathbb{N}}$ of exchangeable random variables taking values in a topological space $S$. We also fix $N > \mathbb{N}$. Letting $(\Gamma, \mathcal{A}, \mathbf{P}) = ({^*}\Omega, {^*}\mathcal{F}, {^*}\mbp)$ and $(\mathbb{S}, \mathfrak{S}) = ({^*}S, {^*}\mathcal{B}(S))$, where $\mathcal{B}(S)$ denotes the Borel sigma algebra induced by the topology on $S$, an application of Theorem \ref{internal version} immediately yields the following: 
 \begin{align}
     \mathbb{P}(X_1 \in B_1, \ldots, X_k \in B_k) = \int_{{^*}\Omega} L\mu_{\omega, N}({^*}B_1)\cdots L\mu_{\omega, N}({^*}B_k) dL{^*}\mathbb{P}(\omega) \nonumber \\
    \text{ for all } k \in \mathbb{N} \text{ and } B_1, \ldots, B_k \in \mathcal{B}(S). \label{almost there}
 \end{align}

On the other hand, Corollary \ref{most useful corollary} shows that there must exist a Radon probability measure $\mathscr{P}$ on the space the space $\prs$, equipped with its $A$-topology, such that the right side of \eqref{almost there} equals the expected value of the random variable $\mu \mapsto \mu(B_1)\cdots\mu(B_k)$ with respect to $\mathscr{P}$. The idea behind Corollary \ref{most useful corollary} is that the measure $\mathscr{P}$ is obtained from the pushforward of the internal measure-valued internal random variable $\omega \mapsto \mu_{\omega, N}$ for \textit{any} choice of $N > \mathbb{N}$---by further pushing forward the Loeb measure of $\mu_{\omega, N}$ via the standard part on the nearstandard elements of ${^*}S$. In particular, we obtain the following (with the uniqueness due to Theorem \ref{uniqueness of Radon presentable}). 

\begin{restatethis}{theorem}{Finetti}\label{most general de Finetti} 
 Let $S$ be a Hausdorff topological space, with $\mathcal{B}(S)$ denoting its Borel sigma algebra. Let $\prs$ be the space of all Radon probability measures on $S$ and $\mathcal{B}(\prs)$ be the Borel sigma algebra on $\prs$ with respect to the $A$-topology on $\prs$.

Let $(\Omega, \mathcal{F}, \mathbb{P})$ be a probability space. Let $X_1, X_2, \ldots$ be a sequence of exchangeable $S$-valued random variables such that the common distribution of the $X_i$ is Radon on $S$. Then there exists a unique Radon probability measure $\mathscr{P}$ on  $(\prs, \mathcal{B}(\prs))$ such that the following holds for all $k \in \mathbb{N}$:
\begin{align}
    \mathbb{P}(X_1 \in B_1, \ldots, X_k \in B_k) = \int\limits_{\prs} \mu(B_1)\cdot \ldots \cdot \mu(B_k) d\mathscr{P}(\mu) \nonumber \\
    \text{ for all } B_1, \ldots, B_k \in \mathcal{B}(S). \label{generalized de Finetti Borel sets}
\end{align}
\end{restatethis}

Although Theorem \ref{most general de Finetti} is already a generalization of de Finetti's theorem, its conclusion is slightly different from classical statements of de Finetti's theorem that postulate the existence of a probability measure on the space of all probability measures on $S$. This can be very easily remedied by considering the unique Radon mixing measure $\mathscr{P}$ on $(\prs, \mathcal{B}(\prs))$ obtained in Theorem \ref{most general de Finetti} and constructing a corresponding mixing measure $\mathscr{Q} \co \mathcal{B}(\ps) \rightarrow [0,1]$ out of it as follows:
\begin{align}\label{definition of Q}
    \mathscr{Q}(\mathfrak{B}) \defeq \mathscr{P}(\mathfrak{B} \cap \prs) \text{ for all } \mathfrak{B} \in \mathcal{B}(\ps).
\end{align}
By Corollary \ref{non-Radon de Finetti uniqueness}, we can also postulate the uniqueness of a mixing measure on $\ps$ if we restrict to the cylinder sigma algebra $\mathcal{C}(\ps)$. Thus, we have obtained the following theorem. 

\begin{theorem}\label{classical most general de Finetti}
 Let $S$ be a Hausdorff space, with $\mathcal{B}(S)$ denoting its Borel sigma algebra. Let $\ps$ be the space of all Borel probability measures on $S$ and $\mathcal{B}(\ps)$ be the Borel sigma algebra on $\ps$ with respect to the $A$-topology on $\ps$. 

Let $(\Omega, \mathcal{F}, \mathbb{P})$ be a probability space. Let $X_1, X_2, \ldots$ be a sequence of exchangeable $S$-valued random variables such that the common distribution of the $X_i$ is Radon on $S$. Then there exists a probability measure $\mathscr{Q}$ on $(\ps, \mathcal{B}(\ps))$ such that the following holds for all $k \in \mathbb{N}$:
\begin{align}
    \mathbb{P}(X_1 \in B_1, \ldots, X_k \in B_k) = \int\limits_{\ps} \mu(B_1)\cdot \ldots \cdot \mu(B_k) d\mathscr{Q}(\mu) \nonumber \\
    \text{ for all } B_1, \ldots, B_k \in \mathcal{B}(S). \label{classical generalized de Finetti Borel sets}
\end{align}

Furthermore, all measures $\mathscr{Q}$ satisfying \eqref{classical generalized de Finetti Borel sets} must have the same restriction to the cylinder sigma algebra $\mathcal{C}(\mathfrak{P}(S))$.
\end{theorem}

\begin{remark}\label{Dubins-freedman remark}
Dubins and Freedman \cite{Dubins-Freedman} had constructed an exchangeable sequence of random variables taking values in a separable metric space for which the conclusion of de Finetti's theorem does not hold. An indirect consequence of the above theorem is that any random variable in such an example must have a non-Radon distribution. 
\end{remark}

Note that a \textit{Radon space} refers to a topological space all of whose Borel probability measures are Radon. Thus we have the following immediate corollary. 

\begin{corollary}\label{most general corollary}
Let $S$ be a Radon space. Let $(\Omega, \mathcal{F}, \mathbb{P})$ be a probability space. Let $X_1, X_2, \ldots$ be a sequence of exchangeable $S$-valued random variables. Then there exists a probability measure $\mathscr{Q}$ on the space $(\ps, \mathcal{B}(\ps))$ such that \eqref{classical generalized de Finetti Borel sets} holds. Furthermore, all measures $\mathscr{Q}$ satisfying \eqref{classical generalized de Finetti Borel sets} must be the same when restricted to the cylinder sigma algebra $\mathcal{C}(\mathfrak{P}(S))$. 
\end{corollary}

By Remark \ref{Dubins-freedman remark}, we know that separability alone of a metric space is not a sufficient condition for it to be a Radon space. At the same time, it is well known that completeness together with separability of a metric space is sufficient for it to be a Radon space. Hence it is natural to ask how important the separability condition is for the truth of the fact that all Polish spaces are Radon spaces. The following result from Pantsulaia \cite[Remark 8, p. 340]{Pantsulaia} shows that separability is not needed in case there do not exist real-valued measurable cardinals. 

\begin{theorem}[Pantsulaia]\label{Radonwic} \label{Pantsulaia's theorem}
    The following are equivalent:
\begin{enumerate}[(a)]
    \item All complete metric spaces are Radon spaces.
    \item There does not exist a real-valued measurable cardinal.
\end{enumerate}
\end{theorem}

A theorem of Ulam \cite{ulam1930}, which Ulam also attributes to Tarski (cf. Jech \cite[Historical Notes, p. 137]{Jech}), shows that any real-valued measurable cardinal is weakly inaccessible\footnote{See Fremlin \cite[Theorem 1D]{Fremlin-rvmc} or Jech \cite[Corollary 10.15]{Jech} for a proof of this statement.}. It is consistent with ZFC to assume that there are no weakly inaccessible cardinals (see, for instance, Cohen \cite[p. 80]{Cohen}). In other words, if ZFC is consistent, then we cannot prove using ZFC that a weakly inaccessible cardinal exists. Therefore, by Theorem \ref{Radonwic}, it is consistent with ZFC to assume that all complete metric spaces are Radon spaces. 

Note that the results proven using nonstandard methods in this manuscript are consequences of ZFC. Indeed the superstructure formulation of nonstandard analysis that we have used is constructed within ZFC, see for instance the discussion in Chang and Keisler \cite[Section 4.4]{Model_Theory} for more details. Therefore, by Corollary \ref{most general corollary}, we obtain the following result.

\begin{theorem}\label{de Finetti consistent}
    It is consistent with the axioms of ZFC that de Finetti's theorem holds for any sequence of exchangeable random variables taking values in a completely metrizable state space. Furthermore, the existence of a complete metric state space for which de Finetti's theorem fails implies the existence of real-valued inaccessible cardinals.   
\end{theorem}

The above argument rests on the fact that it is relatively consistent with ZFC that weakly inaccessible cardinals do not exist. This fact does not imply that such cardinals ``cannot'' exist, only that their existence is unprovable in ZFC. Although most modern mathematics is done in the framework of ZFC, there is a perennial debate in the foundations of mathematics about which other axioms should also be assumed. The existence (or non-existence) of weakly inaccessible cardinals is a popular candidate for such a new axiom to bolster ZFC with (see, for instance, Maddy \cite{MaddyBelieving}). Although we do not pursue such a philosophical investigation in the present work, Theorem \ref{de Finetti consistent} allows us to possibly interpret de Finetti's theorem as providing perspectives for the foundations of mathematics, and not just for the foundations of Bayesian statistics that de Finetti's name is usually tied with. 

As explained in Remark \ref{tight and outer regular on compact remark}, the way we have used the assumption of the common distribution of the $X_i$ being Radon in this work is by using the fact that all Radon measures are tight and outer regular on compact subsets --- (which are properties that characterize Radon measures). A natural situation in which all Borel probability measures on a Hausdorff space $S$ are clearly outer regular on compact subsets is when $S$ is a Hausdorff $G_{\delta}$ space---that is, when all closed subsets of $S$ are expressible as countable intersections of open sets (as any finite Borel measure on such a space is actually outer regular on all closed subsets, and in particular on all compact subsets). 

In the point-set topology literature, $G_{\delta}$ spaces typically arise in discussions on \textit{perfectly normal spaces}.\footnote{A \textit{perfectly normal space} is a \textit{normal} $G_{\delta}$ space. Here, a space $T$ is called \textit{normal} if any two disjoint closed subsets of $T$ can be separated by open sets---that is, given closed sets $F_1, F_2 \subseteq T$ such that $F_1 \cap F_2 = \emptyset$, there exist disjoint open sets $G_1$ and $G_2$ such that $F_1 \subseteq G_1$ and $F_2 \subseteq G_2$.} Following are some commonly studied examples of spaces that are perfectly normal (as described in Gartside \cite[p. 274]{Gartside-chapter}, these are actually examples of \textit{stratifiable spaces}, which are automatically perfectly normal):
\begin{enumerate}[(i)]
    \item\label{pn1}  All CW complexes are perfectly normal. See Lundell and Weingram \cite[Proposition 4.3, p. 55]{Lundell-Weingram}.
    
    \item\label{pn2} All La\u{s}nev spaces (that is, all continuous closed images of metric spaces, where a continuous map $g\co T \rightarrow T'$ is called \textit{closed} if $g(F)$ is closed in $T'$ whenever $F$ is closed in $T$) are perfectly normal. This, in particular, includes all metric spaces. See Slaughter \cite{Slaughter} for more details.
    
    \item\label{pn3}  If $T$ is a compact-covering image of a Polish space (here, a continuous map $f\co T \rightarrow T'$ is called a \textit{compact-covering} if every compact subset of $T'$ is the image of a compact subset of $T$), then the space $C_k(T)$ of continuous real-valued functions on $T$ (equipped with the compact-open topology) is perfectly normal. In particular, this implies that $C_k(T)$ is perfectly normal whenever $T$ is a Polish space. See Gartside and Reznichenko [Theorem 34, p. 111]\cite{Gartside-Reznichenko}.
\end{enumerate}

The above discussion shows that we could have stated Theorem \ref{classical most general de Finetti} for any exchangeable sequence of tightly distributed random variables taking values in a Hausdorff state space that is either a CW complex, a La\u{s}nev space, or a space of continuous real-valued functions on a Polish space (with the compact-open topology). This, however, would not be a more general statement than that of Theorem \ref{classical most general de Finetti}, as it is easy to see that any tight finite measure on a Hausdorff $G_{\delta}$ space is automatically Radon. It is still instructive to keep in mind these settings where one only needs to verify tightness of the common distribution in order for de Finetti--Hewitt--Savage theorem to hold. 

We conclude our investigation on the de Finetti--Hewitt--Savage theorem with another de Finetti-style result for exchangeable sequences of random variables with a tight, but not necessarily Radon, distribution. The proof of this result uses the fact that any tight probability measure $\mu$ is ``topologically inseparable'' from a particular Radon measure associated with the nonstandard extension of $\mu$. (See Corollary \ref{what to do with tight measures}, whose proof relies on Corollary \ref{not Hausdorff corollary 0}.) 

\begin{theorem}\label{deFinettiStyle} Let $(\Omega, \mathcal{F}, \mathbb{P})$ be a probability space. Let $(X_n)_{n \in \mathbb{N}}$ be a sequence of $S$-valued exchangeable random variables, where $S$ is a Hausdorff topological space, equipped with its Borel sigma algebra. Suppose further that the distribution of $X_1$ is tight. Then there exists a probability measure $\psi$ on the Borel sigma algebra over the set $\ps$ of all Borel probability measures on $S$, such that for all bounded continuous $F \colon S^\infty \to \mathbb{R}$ we have: 
\begin{align}
   \mathbb{E}\left(F(X_1, X_2, \ldots)\right) = \int_{\ps} \left(\int_{S^\infty} F d\mu^{\otimes \infty} \right) d\psi(\mu), 
    \label{deFinettiTight}
\end{align}
where $S^\infty$ is the Cartesian product of countably many copies of $S$, equipped with product topology, and $\mu^{\otimes \infty}$ denotes the product measure induced on $(S^\infty, \mathcal{B}(S^\infty))$ by countably many copies of $\mu$.  
\end{theorem}
\begin{proof}
    Let $\nu$ be the pushforward probability measure induced on the product space $(S^{\infty}, \mathcal{B}(S^{\infty}))$ by the $S^{\infty}$-valued random variable $(X_1, X_2, \ldots)$. It is not very difficult to verify that a probability measure on $(S^{\infty}, \mathcal{B}(S^{\infty}))$ is tight if and only if its pushforward under the projection maps $\pi_i \colon S^{\infty} \to S$ (where $\pi_i(x_1, x_2, \ldots) = x_i$ for all $i \in \mathbb{N}$) are all tight. In particular, $\nu$ is tight, and therefore, by Corollary \ref{what to do with tight measures} there exists a Radon measure $\nu'$ on $(S^{\infty}, \mathcal{B}(S^{\infty}))$ such that the following holds:
    \begin{align}\label{tight to Radon}
        \int_{S^{\infty}} F d\nu = \int_{S^{\infty}} F d\nu' \text{ for all bounded continuous functions } F \colon S^{\infty} \to \mathbb{R}. 
    \end{align}

By Theorem \ref{classical most general de Finetti}, there exists a probability measure $\psi$ on $(\ps, \mathcal{B}(\ps))$ such that the following holds for all $k \in \mathbb{N}$ and $B_1, \ldots, B_k \in \mathcal{B}(S)$:
\begin{align}
    \nu'(B_1 \times \ldots \times B_k) = \int\limits_{\ps} \mu(B_1)\cdot \ldots \cdot \mu(B_k) d\psi(\mu) =  \int\limits_{\ps} \mu^{\otimes k}(B_1 \times \ldots \times B_k) d\psi(\mu). \label{new classical generalized de Finetti Borel sets}
\end{align}

Let $\mathbf{B}(S^{\infty}, \mathbb{R})$ denote the collection of all bounded measurable functions from $S^{\infty}$ to $\mathbb{R}$. Consider the subcollection $\mathcal{C}$ defined as follows:
\begin{align*}
    \mathcal{C} \defeq \{\alpha \mathbbm{1}_{\pi_{(k)}^{-1}(B_1 \times \ldots \times B_k)}: k \in \mathbb{N}, \alpha \in \mathbb{R}, \text{ and } B_1, \ldots, B_k \in \mathcal{B}(S)\} \subseteq \mathbf{B}(S^{\infty}, \mathbb{R}),
\end{align*}
where for each $k \in \mathbb{N}$, the map $\pi_{(k)} \colon S^{\infty} \to S^k$ is the projection onto the first $k$ coordinates. Note that $\mathcal{C}$ is closed under multiplication.

Rewriting \eqref{new classical generalized de Finetti Borel sets} in terms of integrals of indicator functions, it follows that the following collection contains $\mathcal{C}$:
\begin{align*}
    \mathbb{H} \defeq \left\{F \in \mathbf{B}(S^{\infty}, \mathbb{R}):\int_{S^{\infty}} F d\nu' = \int_{\ps}\left(\int_{S^\infty} F d\mu^{\otimes \infty} \right) d\psi(\mu)\right\}.
\end{align*}

The set $\mathbb{H}$ clearly satisfies the assumptions of Theorem \ref{Dellacherie's theorem}, and hence, by that theorem, it contains all bounded functions measurable with respect to $\sigma(\mathcal{C})$, which denotes the smallest sigma algebra with respect to which all functions in $\mathcal{C}$ are measurable. However, $\sigma(\mathcal{C})$ is just $\mathcal{B}(S^{\infty})$. Thus, we have proved the following for all bounded measurable functions $F \colon S^{\infty} \to \mathbb{R}$:
\begin{align}\label{almost there 2}
    \int_{S^{\infty}} F d\nu' = \int_{\ps}\left(\int_{S^\infty} F d\mu^{\otimes \infty} \right) d\psi(\mu).
\end{align}
The proof is now completed by comparing \eqref{tight to Radon} and \eqref{almost there 2}.
\end{proof}

\begin{remark}
   In the above proof, the full strength of Theorem \ref{classical most general de Finetti} was not used. In fact, Winkler \cite{Winkler}, who proved in 1990 a de Finetti--Hewitt--Savage theorem for \textit{jointly} Radon-distributed sequences of exchangeable random variables, could have also obtained Theorem \ref{deFinettiStyle} if they had access to Corollary \ref{what to do with tight measures}. 
\end{remark}

\section{Summary and concluding Remarks}
Starting from a result on an exchangeable sequence of $\{0,1\}$-valued random variables, de Finetti's theorem has had generalizations in several directions. Dubins and Freedman \cite{Dubins-Freedman} had shown that some form of topological condition on the state space is necessary. Our Theorem \ref{classical most general de Finetti} shows that we actually do not need any topological conditions on the state space besides Hausdorffness as long as we focus on exchangeable sequences of Radon distributed random variables. 

The original preprint \cite{Alam-deFinetti-Hewitt-Savage}, on which the present work is based, listed several directions of possible future research. Two such directions have already been fruitfully investigated by other authors, as we describe next:
\begin{enumerate}
\item Towsner \cite{towsner2023aldoushoover} has proved, also using nonstandard methods, an Aldous--Hoover--Kallenberg theorem for exchangeable arrays sampled from a Radon distribution.

    \item  Potaptchik, Roy, and Schrittesser \cite{PotaptchikRoySchrittesser} have proved that any exchangeable sequence of random elements with a Radon (common) distribution must be conditionally independent with respect to the so-called exchangeable sigma algebra. 
\end{enumerate} 

In the present work, we have also obtained two additional results that were not present in the original preprint uploaded to the arXiv in 2020:
\begin{enumerate}[(i)]
    \item Since each tight measure is topologically close to a Radon measure in a sense made precise in Appendix \ref{AppA}, we obtained a de Finetti-style theorem for a sequence of exchangeable random variables with a tight, but not necessarily Radon, distribution (See Theorem \ref{deFinettiStyle}). 
    \item Using known results from the set theory literature, our Theorem \ref{classical most general de Finetti} immediately implies that it would be consistent with the axioms of ZFC to assume that de Finetti's theorem is true for sequences of random variables taking values in any complete metric space. 
\end{enumerate}

This leads us to conclude by pointing out a possible direction for future research which seems to require new tools to approach. Since the existence of real-valued measurable cardinals is a necessary condition for de Finetti--Hewitt--Savage theorem to possibly fail for a completely metrizable state space, it is worthwhile to investigate whether ``de Finetti's theorem for completely metrizable state spaces'' is provable in ZFC. It might very well turn out that the quoted statement is undecidable in ZFC; and indeed we suspect that might be the case, but our current tools are insufficient to fruitfully carry out this investigation. 

\section*{Acknowledgements and history of this manuscript}
This manuscript has evolved over the last five years, the first of those five years being my final year as a PhD student at Louisiana State University. In the initial stages of the work, I greatly benefitted by the support and feedback from my co-advisors Karl Mahlburg and Ambar Sengupta. I am grateful to David Ross for pointing to some references on nonstandard topological measure theory, and to Robert Anderson for helpful comments on the first public draft of the manuscript. 

The philosophical introduction to nonstandard analysis added as an appendix in the 2023 revision of the manuscript came into being because of a practically uncountable number of conversations that I had had with various individuals about ``how to think nonstandardly''. In particular, I am grateful to all six students in my Spring 2023 class MATH 5710/PHIL 6722 (Topics in Logic) at University of Pennsylvania, which was on this topic. I also appreciate Frederik Herzberg at Saarland University, and Soumyashant Nayak at Indian Statistical Institute (Bangalore) for their warm hospitality in Summer 2023. Traveling to and interacting with their departments under the guise of giving talks gave shape to the eventual write-up before it was written. 

The current 2025 revision of the manuscript, drafted at Vector Institute in Toronto whose support I would like to acknowledge, follows (and incorporates into the exposition material from) a shorter 2024 paper \cite{AlamPLMS} that presented the mathematical core of this work in which a key step in the previously announced proof was also corrected---namely what is Theorem \ref{Prokhorov for P_N} in the current revision (which appeared as Theorem 3.11 in the previous 2023 revision). The current revision also corrects the statements of some results in Appendix \ref{AppB}---for instance, in Lemma \ref{measurable} (compare with Lemma B.1 in \cite{AlamPLMS}), we needed to work with a slightly smaller sigma algebra on $\prs$ than the Borel sigma algebra induced by the $A$-topology when discussing the measurability of the empirical measures. This affects only some technical details in the statements in that appendix, most results still being applicable with this modification. 

\begin{appendix}
\section{A philosophically motivated introduction to basic nonstandard analysis}\label{CompactIntroduction}
\subsection{Why is nonstandard analysis relevant for the philosophy of mathematical practice and ethics of mathematics education?}
The long name of this first section of the appendix is a reference to the title of an article of Fenstad \cite{Fenstad1985} from 1985: ``Is nonstandard analysis relevant for the philosophy of mathematics?'' 

Nonstandard analysis is an area of mathematics that provides an alternative way to interpret the abstract object called the \textit{number line} (, the same number line we are used to studying in our classical training in mathematics, where we interpret it as representing the set of so-called real numbers). Under the nonstandard approach to mathematics, in any attempt to (mentally or abstractly) interpret the whole number line as a continuum of numbers, there must also be numbers that are positive but smaller than any positive number we can conceivably mark on the number line. (Such numbers would be called positive \textit{infinitesimals}.) In a rough intuitive sense, the numbers we could physically access as being marked on the number line would correspond to the usual set of real numbers $\mathbb{R}$, while all the numbers (accessible or not) that are identified with points on this abstract line would form a bigger set called ${^*}\mathbb{R}$--- the nonstandard real numbers. 

Because of how our mathematical foundations developed historically and culturally, most people who learn about nonstandard analysis would typically be first trained in their education on the standard real numbers, and they would usually require a further non-trivial amount of training in mathematical logic in order to understand how to think of a \textit{logical extension or completion} of the standard real numbers, which is what the nonstandard real numbers really are in some sense. (Tao \cite{Taocompletion} is a great reference for the intuition behind this perspective.) Since mathematical logic is another subject that does not feature in a significant number of mathematicians' common training, that prerequisite often practically proves to be a stumbling block for many otherwise capable mathematicians trying to learn about nonstandard analysis.  

Thus, writing a research paper (such as this one) on an application of nonstandard analysis in an area of mathematics where the practitioners are not expected to have gone through a training in mathematical logic has several pedagogical challenges. An author could always cite one of the many great introductions to nonstandard analysis available in the library (many of which are cited here: \cite{Albeverio, NATO, Cutland, Goldblatt, combinatorial}). However, there are practical drawbacks of that approach---such an author's work would end up being understandable to more or less only two categories of mathematicians:
\begin{enumerate}
    \item Ones who lie in the intersection of people who already understand nonstandard analysis at a reading level (if not working level) and who are also interested in the specific application in standard mathematics being presented.\footnote{Here, we define \textit{standard mathematics} as the mathematics created using foundations under which ``the number line'' is interpreted to contain no infinitesimals.}
    \item Ones who are sufficiently interested in the topic of the application, and who also lie in the small collection of mathematicians who have the time, energy, and resources to learn an entirely different way of thinking about things as fundamental as the numbers that they have all learnt under a certain (standard) perspective throughout their prior mathematics education and training.
\end{enumerate}

Mathematics is a socio-cultural endeavor, and being able to communicate how one thinks mathematically is a social currency in this enterprise. If there exist mathematicians in the first category above who happen to be more philosophically comfortable with a nonstandard foundation for the continuum after spending some time pondering about their personal philosophy of mathematics, then they would seldom be able to communicate their ways of thinking to most other mathematicians who might usually be interested in applications to standard mathematics, but who might not be intellectually privileged enough to land in the first category above. The mathematicians in this latter class would have only heard about the existence of nonstandard analysis but would seldom have the time, energy, and resources to pursue it after having already made a career in doing mathematics \textit{standardly}. A mathematician is often socio-culturally, as well as economically, tied to a system of doing mathematics where they are expected to keep publishing in the limited aspects of mathematics that they know about in order to practically sustain a career; therefore having the time, energy, and resources to study nonstandard analysis really is a privilege in an academic system where the infinitesimals are otherwise only presented as non-rigorous (even ``not real'') mathematical objects in most mathematicians' personal education and conditioning from a young age. 

While having the opportunities to learn both the standard and nonstandard foundations of numbers is indeed an intellectual privilege, it has a tendency to become the opposite of privilege socio-culturally to a mathematician who recognizes after the fact that they are now only comfortable in thinking nonstandardly about certain key aspects of the essential standard mathematical objects they are interested in. Based on the author's personal lived experiences (that this article is not getting into), the author suspects that there can be nuances in personal psychological traits that can make such a situation quite possible, but performing research at the interface of psychology, philosophy, mathematics, and education, while quite fascinating, is beyond the scope of the current manuscript. 

Unfortunately, as Ely \cite{ely2010nonstandard} demonstrates in a remarkable case study of one of his calculus students, a variation of the above situation might also be possible among students who might be thinking nonstandardly without recognizing the significance behind their thoughts since their educators would be ill-equipped to validate such mathematical thought patterns, given that they themselves have been trained only in the standardized perspective on Calculus. Such students may practically end up giving up on mathematics as the invalidation of their intuitions in their educational career would make them internalize a belief that they do not understand it. Such students might have had the ability to do mathematics if the ways of thinking about numbers in our educational system were more inclusive of their \textit{existing} mathematical intuitions. 

Sarah, the student in Ely's above-cited paper, had developed her own intuition for the number line that was based on a consistent appeal to infinitesimals. She said that she learned ``not from any of her classrooms but that it was her own way of making sense of things.'' She knew that she was ``wrong'' and maintained that she did not ``know the concepts'', primarily because her classroom instruction never vindicated her natural intuitions, which could have been done easily if she was exposed to a course in calculus based in nonstandard foundations (perhaps based on Keisler's landmark book \cite{keisler1976foundations}\footnote{The most recent print of Keisler's book is the 2013 edition \cite{keisler2013elementary}, but Keisler has been continuously updating a freely available online version of the book as well; see \cite{KeislerOnline}.}; see also a recent article of Ely \cite{ely2021teaching} that describes some pedagogical approaches).\footnote{We also provide a possible way to vindicate her intuition briefly later in this appendix, on p. 43.} Her mathematical intuitions about infinitesimals should have empowered Sarah to overcome her mathematical difficulties if our mathematics education system was built in a more intellectually inclusive way --- yet she struggled in mathematics because our system is not built in such an inclusive way. It was only a coincidence that Ely found Sarah and her self-realized nonstandard conceptions as attempts to ``making sense of things''. There could be lots of Sarahs around the world who might have developed similar conceptions while never recognizing that they do not have to be ``wrong'' if one is able to have the right (nonstandard) perspective. 
 
Besides a socio-cultural enterprise, mathematics is also an art. An abstract artist often has their preferred interpretations of the key objects in their work. Different interpretations can be equally valid, and it would be creatively ableist to not encourage students of the artform to discover their own preferred ways of interpreting the fundamental abstract object we call the number line. In the $17^{\text{th}}$ century, Leibniz was artistic enough to come up with a consistent theory of infinitesimals as ``mental fictions'' (we will come back to this later when we discuss Leibniz in a bit more detail), but it took about three centuries before someone (Abraham Robinson) vindicated his intuitions rigorously (meanwhile, the standard interpretation had already become mainstream by then). It is an objectively sad state of affairs that imaginative and artistic students such as Sarah are not vindicated now even after the fact that we now have the resources to vindicate them unlike in the time period before Robinson's seminal work \cite{robinson1961, Robinson}. Instead, such students end up thinking that they are not able to understand mathematics. 

Spoonfeeding one preferred interpretation, despite the presence of an equally valid alternative interpretation, like we do in our current mathematics education, thus makes the socio-cultural enterprise of mathematics underinclusive to equally capable artists who might give up on the art long before their equally valid interpretations could have a chance to be vindicated. In fact, many of them, like Sarah, might not even recognize their mathematical abilities, thus making the question of what mathematical ability means very nuanced. Perhaps we can make a comparison with left-handedness, which for a long time was considered a wrong way to live in our society despite it being possible to live as well with that ``condition'' as someone who is right-handed. Our society and culture in the past were underinclusive to alternative interpretations of how to use our own body parts based on what was personally intuitive to us, despite there being nothing wrong with those interpretations. A similar situation arises at a more abstract level when we standardize the number line in our education.   

Aside from the above issue of intellectual and artistic underinclusiveness, having only one interpretation of the abstract number line in our education might be detrimental to mathematics itself as a practical science. Indeed, those with a utilitarian mindset toward mathematics might find it intriguing to ponder on the possibility that perhaps we might be holding theoretical physics (and hence one of our most fundamental ways of understanding the physical universe) back in case the nonstandard foundation of the continuum is better equipped to modeling our physical theories at very large and very small scales. Fenstad's article \cite{Fenstad1985}, as well as the book \cite{Albeverio} he co-authored with Albeverio, Hoegh-Krohn, and Lindstr\o{}m are great references for readers curious about thinking more along this direction. 

The way we personally think about the (standard) real numbers as mathematics students or practitioners is something that takes shape through several years of unconscious building of intuition from a young age. We are indirectly told at a relatively early age in our textbooks that ``here is a number line---every point on it describes a number that has some sort of measurable magnitude''. The nonstandard interpretation of the number line challenges this belief about the concerned abstract object (that is, the number line) that our educational system tries to promote. Yet, it would be difficult for most people to be open about the alternative nonstandard interpretation even if they do learn about it later on after having spent many years trying to become a mathematician in the standard framework, since they would have already been socio-culturally conditioned to interpret the concerned abstract object one way by then. Thus the topic of nonstandard analysis is intimately connected to an issue of systemic creative suppression, which would perhaps be better discussed in forums devoted to ethics of art and art education, which, while beyond the scope of the current manuscript, would be a good aspect to keep in mind when thinking about the philosophical implications of nonstandard analysis. 

Under the backdrop of the preceding discussion concerning the philosophy of mathematical practice and ethics of mathematics education, a key motivation behind this appendix is quite selfish of the author. The goal is to present just enough nonstandard mathematical thinking at an intuitive and philosophical level that a mathematician who has never seen nonstandard analysis, but who is interested in the standard results of the main body of the paper, can hopefully follow the main body of the paper on their own after reading this appendix. It is a selfish goal because it underlies a human desire to be understood. While most presentations of nonstandard analysis in mathematics research papers focus on the mathematical construction(s) and/or properties of ${^*}\mathbb{R}$ and related (nonstandard) objects, we have (so far) focused on the philosophy of the subject because of the idea that understanding nonstandard analysis effectively seems to require one to fight against years of unconscious socio-cultural conditioning within their mathematics careers---and therefore this author believes that philosophizing about what it is that we are studying is helpful in not succumbing to the intellectually harmful effects of that conditioning. 

As the appendix continues, our focus will soon start morphing from philosophical considerations into the more technical mathematical sophistication needed to understand nonstandard analysis. The emphasis in the mathematics we shall present in this appendix will be on building intuition, and therefore we shall be content with understanding how to think about and use nonstandard analysis, as opposed to worrying about its precise model-theoretic foundations which, otherwise, can be both distracting, and not needed, to the mathematicians interested in the subject for either its artistic, philosophical, or utilitarian implications. 

This is comparable to how a mathematically talented high school student can (and often does) learn a lot about real analysis without learning how the real numbers are precisely defined --- it is enough for them to recognize how what they are learning is describing the reality around them in order to build a good intuition for the axioms of real numbers that they take for granted (oftentimes without precisely knowing what these axioms are, or, indeed, what  \textit{`axioms'} are) in their studies at that level. Such students who become enamored with mathematics at a young age are often pulled in by a sense of beauty that these intuitive mathematical arguments and theories exude. Seeing a precise construction of real numbers before seeing what they intuitively are and how we use them in our mathematical thinking can have a tendency of leaving behind confused students who do not follow what it is that they are constructing. An aim of this exposition, therefore, is to not construct anything and yet build an intuition for why nonstandard analysis can also be a natural way to interpret reality, thus hopefully allowing the patient reader to appreciate nonstandard arguments with the same zeal and vigor of a mathematically interested high school student who gets a thrill out of ``understanding'' real numbers without ever seeing a definition of the real numbers. 


\subsection{Imagine being Leibniz (How to begin thinking about infinitesimals?)}
The topic of nonstandard analysis is often met with apprehension by many practicing mathematicians, as is evident from perhaps the most common questions that a researcher using nonstandard methods faces after presenting their work:

\begin{quote}
    ``That's quite good, but can it be done using only standard analysis? In general, can you prove something using nonstandard analysis that cannot be proven using standard analysis? ''\footnote{For the sake of completion, we mention here the work of Keisler and Henson \cite{Henson--Keisler}, who showed that there do exist results in standard mathematics that can be proven using nonstandard methods (or something equivalent), but not without them. However, in practice, it might be difficult to find an explicit example of such a mathematical result.}
\end{quote}

As explained earlier, a lot of this apprehension can be thought to arise out of our cultural  predisposition to model the geometric (number) line with what we call the \textit{real} numbers. (Fenstad's article \cite{Fenstad1985} is a great further read for this cultural perspective.) We are culturally used to imagining that an abstract straight line can be marked in a way that it can be thought of as \textit{the} set $\mathbb{R}$ --- each element of $\mathbb{R}$ occupying its existence on this abstract line; one point being to the right of another if and only if the corresponding number is greater than the other. 

The intuition that a straight line can be thought of as a continuum of numbers goes back at least to the time of Descartes, if not much earlier. However, ``the continuum of numbers'' is only another abstraction to model the abstract straight line, and one needs to make sense of what we mean by it. The (cultural) practice of imagining that this continuum of numbers \textit{precisely} consists of what we call real numbers is not very old. Indeed, real numbers, as we know them, were not precisely identified until after a lot of work in Calculus had already been done using infinitesimal methods. 

The history and philosophy of numbers is a fascinating topic, but in the interest of not detracting from our goals too much, we will only point out a few ideas that can help us build intuition for the nonstandard numbers that we will soon start working with mathematically. 

At some point in our history, the Greek mathematicians used to think that the continuum of numbers is composed of only the rational numbers. One reasoning behind such a thinking could be that the rational numbers are what we could ever use in our practical measurements in the physical world. (See Arthan \cite{arthan2004eudoxus} for this viewpoint.)

When it was discovered that a right triangle with legs of unit length has a hypotenuse whose length could be shown to be a non-rational number, that was an important moment in our (mathematical) history as it made us feel humbled by the abstraction of the continuum. Earlier we used to think that we \textit{know} it because we could think of numbers in terms of ratios of more tractable whole numbers, but now we recognized that we did not \textit{really} know it, as we could not specify what all the numbers in this continuum are anymore. The famous quote by Bertrand Russell seems to be quite relevant in this conversation: 
\begin{quote}
    ``Mathematics may be defined as the subject where we never know what we are talking about, nor whether what we are saying is true.'' 
\end{quote}

Indeed, we had a general idea of what we were trying to model by the continuum of numbers, but we did not really know what we were talking about if we modeled it by the rational numbers under an assumption that all `numbers' must be of that type. This assumption could almost be thought of as a belief or faith. Under this faith, we thought what we might have been saying about all numbers must be true, but we had no way of knowing it---other than due to an overconfidence in our invented faith. Yet, that faith was important for the development of mathematics and hence of human civilization itself. Readers interested in this discussion might find the essay ``Mathematics and Faith'' \cite{nelsonMathFaith} by Edward Nelson to be thought-provoking. Inspired by Robinson's nonstandard analysis, Nelson notably gave an alternative foundation of mathematics called ``Internal Set Theory'' \cite{NelsonInroduction}, in which the infinitesimals were part of the continuum to begin with. Probability Theory under these foundations was developed in Nelson's remarkable book ``Radically Elementary Probability Theory'' \cite{NelsonREPT}. 

Hrb\'a\u{c}ek \cite{HrbacekFoundations} also developed similar foundations independently of Nelson around the same time. Several other approaches to nonstandard analysis have since been developed. In this paper, we have followed the so-called superstructure formulation (first constructed by Robinson and Zakon \cite{RobinsonZakon}) of Robinson's original nonstandard analysis \cite{robinson1961, Robinson}, which remains the relatively more common approach among practicing mathematicians. An article by Benci, Forti, and di Nasso \cite{eightfold} discusses this and seven other approaches to nonstandard analysis. 

The fact that the superstructure approach that we use in this work is also the one that is practiced most commonly in contemporary applications of nonstandard analysis to standard mathematics is perhaps another cultural artifact---it proceeds within the same framework we typically do standard analysis in, which means there is less to change for the practicing mathematician already culturally trained in standard mathematics. For example, we start out with the set of standard real numbers $\mathbb{R}$, and we create (using tools available within ZFC) a richer structure containing $\mathbb{R}$, which we call a non-standard extension of the real numbers. The nonstandard extensions of other standard structures are obtained in this framework by including those structures as part of the so-called \textit{standard universe} that we extend in this framework. 

Let us take a step back and recognize some of the works prior to Robinson's 1961 breakthrough that led him in the direction of nonstandard analysis. In 1934, Skolem \cite{skolem1934} had constructed nonstandard models of Peano arithmetic. Fourteen years later, Hewitt \cite{HewittRings} constructed the analog of real numbers under such a nonstandard model of arithmetic, which can be thought of as the first construction of a nonstandard extension of the real numbers. Hewitt's construction relied on a technical object called a \textit{non-principal ultrafilter}, which was made possible by Tarski's work \cite{tarski1930} from 1930 that showed that the then-recent set-theoretic foundations of ZFC implied the existence of that object. Hewitt's work anticipated the more general ultrapower construction from about a decade later in the works of {\L o\'{s}} \cite{Los-1955}, and of Frayne, Scott, and Tarski \cite{Frayne-1962, Frayne-1958}. It is this ultrapower construction that most modern treatments of nonstandard analysis use when they present constructions of nonstandard extensions. Originally, Robinson and Zakon \cite{RobinsonZakon} also used such an ultrapower foundation for their superstructure framework for Robinson's nonstandard analysis \cite{robinson1961, Robinson} that we are using in this paper. Notably, such a framework was already anticipated in the lecture notes of Luxemburg \cite{luxemburgCalTech}, who also gave an ultrapower foundation to Robinson's nonstandard analysis one year after Robinson's original announcement of his theory. 

The above was a very short bibliography of  mathematical developments from a very short period in our recent history that led to Robinson's nonstandard analysis. The actual history of infinitesimals is a rather colorful subject that predates these modern works, as mathematicians had been using infinitesimals for many centuries without foundations that were as rigorous, and we shall not attempt to document all those uses here. However, there is one mathematician, Leibniz, whose usage of infinitesimals to discover modern calculus can be thought of as a precursor to Robinon's nonstandard analysis in quite remarkable ways. Therefore before we discuss the mathematical details of Robinson's nonstandard analysis, we shall try to give intuition for it by recalling some of Leibniz's ideas that we build toward next.

As discussed earlier, creating an effective intuition for how to think of nonstandard numbers as modeling the abstract idea of a continuum might require us to try to remove our cultural conditioning of modeling the continuum by what we call real numbers. To that end, let us put ourselves in the shoes of a young student who has not yet been taught about real numbers, but who understands the vague idea of the abstract concept of \textit{numbers}. If you wish, you could mentally travel in time and imagine Leibniz who fits the description, or you could imagine the student Sarah in Ely's mathematics education paper \cite{ely2010nonstandard} cited earlier. 

This student of ours knows that there are things that we call numbers, and she understands examples of some such numbers. She had begun her education by learning about counting numbers (that is, the elements of $\mathbb{N}$). At some point, she started understanding ratios of those counting numbers (namely the positive rational numbers); and yet a bit later in her education, she learnt that we could also talk about \textit{negatives} of the numbers she had seen before. She has seen a number line in her textbook which, she is told, can be used to abstractly represent \textit{all} numbers. But what really are all the numbers? Practically, she can only ever represent a finite number of numbers before finding it difficult to mark more points. For instance, in the following figure, she has marked the rational numbers $\textstyle{\frac{1}{2}, \frac{1}{4},}$ and $\textstyle{\frac{1}{8}}$, along with a few integers:

\begin{center}
 \begin{tikzpicture}
        \begin{axis}[
            axis x line=middle,
            axis y line=none,
            height=50pt,
            width=\axisdefaultwidth,
              axis line style={<->},
            xmin=-2.5,
            xmax=2.5,
        ]
            \addplot coordinates {
                (0.5,0) (0.25,0) (0.125,0)
            };
        \end{axis}
    \end{tikzpicture}
\end{center}

To mark more numbers that are closer to zero, she might use a modern device (or perhaps her imagination) to zoom into an image of her existing number line, and thus have more space to work with. At any point of time, she can only zoom by a certain amount---if she ``zooms $2$x'', for instance, then she gets twice as much space to work with. For any natural number $n$, even if she ``zooms $nx$'', while she might be practically able to ``see'' (and hence mark) more numbers, she would still only be able to mark a finite number of numbers. 

She does know that there are infinitely many numbers, and that there are numbers arbitrarily close to any fixed number. How could she convince herself, even abstractly, that it is sound to assume that \textit{all} numbers that exist can be located on this line \textit{simultaneously} when she knows that practically it is not possible to mark them all as such? 

As discussed earlier, for this student to imagine such a proposition as true is really an exercise in abstract art. Without a description of what we are doing, we merely have a geometric object (namely a straight line) drawn on a piece of paper (or on the screen of an electronic device), which is abstract before interpretation. We have another ``mental abstract object'', the set of all real numbers which, in the standard interpretation, we bijectively identify with the (infinitely many) points that the straight line is imagined to have been made out of. All of this is a purely mental procedure that allows us to interpret the geometric line as a collection of numbers.   

The above was the standard way of interpretation, but a feature of art is that it need not have only one (``correct'' or \textit{consistent}) interpretation, and therefore our student could have reached an alternative interpretation of what a continuum of numbers looks like. To better understand such an alternative interpretation, recall that after having already marked a few points on the number line, to mark a much larger (but finite) number of points simultaneously practically requires us to zoom in at a larger (but finite) level. In this way of thinking, an imagination of \textit{all} numbers that are markable being marked \textit{simultaneously} requires us to be able to imagine a level of zoom larger than all natural numbers---a zoom of infinite magnitude, which, once achieved in some abstract sense, can yield logically consistent ways of zooming in more or less, providing us (at least mentally) a whole spectrum of infinite quantities (``zooms'') of various magnitudes. 

While this description is not rigorous or precise, it could be thought of as being a fairly workable intuition for thinking about infinitesimals. Indeed if one imagines performing an ``infinite amount of zoom while focusing their eyes on a particular number'' on the number line, then the numbers that would be visible to them would be infinitely close to the original particular number. (In other words, these numbers would have infinitesimal distances from the original particular number.) Keep zooming even more, and one would ``see'' even more numbers that are yet closer to the original number. Despite the inaccessible nature of these numbers (in terms of not being able to be ``marked'' by those who can only access finite amounts of zoom---that is, all of us human beings) under this view, the numbers at these scales are still like the other numbers occupying their existence on this line, in a continuum from left to right. One can of course never perform an infinite zoom in reality as we see or (standardly) interpret it, but one can similarly also never simultaneously mark all numbers on the number line in such a standard interpretation of reality. For us to imagine that it can be done in an abstract sense is a type of mental fiction, which is exactly how Leibniz viewed his infinitesimals:

\begin{quote}
    Philosophically speaking, I no more admit magnitudes infinitely small than infinitely great … I take both for mental fictions, as more convenient ways of speaking, and adapted to calculation, just like imaginary roots are in algebra.
(Leibniz to Des Bosses, 11 March 1706; in Gerhardt \cite{gerhardt1960}, II, p. 305).
\end{quote}

The author found the above translation of Leibniz in a seminal article on Leibniz's infinitesimals by Katz and Sherry \cite{KatzSherry}. In that article, Katz and Sherry explain that for Leibniz (as in the intuition we have been developing so far), what we call as real numbers are the ``assignable'' numbers on the abstract number line, while the numbers that are infinitesimals (such as the numbers one would see if one could zoom in at an infinite level while focused at the point representing the number $0$) are mere mental fictions. We might not be able to assign numerical values to these mental fictions, but that does not mean that we cannot work with them, for they follow the same structure as the numbers that are assignable --- both occupy the same abstract number line and are bound by the structure thereof. The fact that they follow the same structure was one of Leibniz's main tools in his development of Calculus. Called the \text{law of continuity}, one of its formulations was summarized by Katz and Sherry \cite[p. 579]{KatzSherry} as follows:
\begin{quote}
    ``The rules of the finite succeed in
the infinite, and conversely.''
\end{quote}

Think of ``rules'' as ``mathematical properties'' or ``mathematical structure'' in some sense that has not been made very precise yet. We can perhaps excuse Leibniz for that as he did not have the language of mathematical logic developed only three hundred years later, which was needed to give a more mathematically precise meaning. Let us interpret ``the rules of the finite'' as intuitively meaning the mathematical facts about the ``finite mathematical universe'', by which we mean the mathematical objects culturally created by mathematicians assuming that whatever the abstract object called ``number'' is can only ever be finite (or accessible, as there would be no infinite or infinitesimal numbers in such a framework). This is essentially \textit{standard} mathematics where the continuum is modeled by standard real numbers---no infinitesimals or other ``mental fictions'' of Leibniz being allowed.

With this perspective, and with the benefit of an understanding of Robinson's nonstandard analysis that developed three hundred years after Leibniz, we can give a possible anthropological interpretation of Leibniz's law of continuity as follows. (By way of this interpretation, we will also begin describing how Leibniz's ideas are rigorized in Robinson's theory, thus providing us an intuition for Robinson's nonstandard analysis before we start filling in more details in the next section(s).)

\subsubsection{Leibniz's philosophy of infinitesimals and implications to the anthropology and ethics of mathematical knowledge}
Unlike now, mathematicians at the time of Leibniz had not invented a standardized way of understanding the mathematical nature of the abstract number line. There was no precise definition of what it meant to be a \textit{real number}. Philosophically, we knew that what we were modeling had connections with the nature of reality, as we had always used numbers to explain the real world, which we could only access a certain part of. There had been mathematicians (see Section 2 of Katz and Sherry \cite{KatzSherry}) who had obtained verifiable results about the real world by assuming that the magnitudes (numbers) we could access need not constitute \textit{all} magnitudes. Perhaps some of those mathematicians might even feel philosophically justified in assuming so because they might have a spiritual or religious belief in the infinitely large, and, thus logically (, by consequently taking reciprocals,) in the infinitely small. 
    
    Leibniz's philosophy entails that even if you do not believe in a metaphysical ontology of the infinitely large or infinitely small, it is possible to logically argue about them as mental fictions. And the logical structure of a mathematical universe in which they are assumed to exist (``the rules of the infinite'') would match that in which they are not assumed to exist (``the rules of the finite''). For Leibniz, the infinite (or nonstandard universe in modern terminology) only existed in the mind and played an instrumental role in describing the finite (or standard universe in modern terminology) because there was a correspondence of truths between the two via his law of continuity. This was made precise later by Robinson and his \textit{transfer principle} (to be discussed in the upcoming section) when the language of mathematical logic had developed sufficiently to describe this phenomenon more rigorously. 
    
    Culturally, mathematics after the time of Leibniz could have proceeded in a manner in which actual numbers of infinite magnitudes (and, consequently, of infinitesimal magnitudes) would become part of the standardized way of thinking about the number line. Fenstad \cite[p. 298]{Fenstad1985} sketches one such idea in which the standard real numbers could have been obtained from the rationals by first extending the rationals to a bigger set consisting of infinitesimals, instead of the other way around that happened in our history. Nelson's internal set theory \cite{NelsonInroduction} is another, perhaps much more radical, example of how our mathematics could have proceeded had the developments in mathematical logic from the $20^{\text{th}}$ century happened earlier than when we had settled on our standard interpretation of the continuum. Joel David Hamkins' recent article \cite{Hamkins_2024} explores such an alternate history of mathematics in much more detail, explaining how the continuum hypothesis could have become ``a fundamental axiom of set theory, one furthermore necessary for mathematics and indeed indispensable for making sense of the core ideas underlying calculus,'' if only Newton and Leibniz were able to provide ``somewhat fuller accounts of their ideas about infinitesimals.''

    In particular, the law of continuity shows that there could be two types of ontologies of the real number line (one in which we think of it as the standard $\mathbb{R}$, and another where we think of an ordered field containing $\mathbb{R}$ but also containing infinitesimals). \textit{Believing} in either of these types of ontologies is a choice that one makes --- a personal choice to not believe in infinitesimals does not preclude one from still discovering new mathematics by treating the infinitesimals as mental fictions like Leibniz did, and like Robinson did as well.\footnote{The following quote from Katz and Sherry \cite[p. 572]{KatzSherry} captures Robinson's stance on this matter:
    ``Like Leibniz, Robinson denies that infinitary entities are real, yet he promotes the development of mathematics by means of infinitary concepts (Robinson 1966, p. 282; 1970, p. 45). Leibniz’s was a remarkably modern insight that mathematical expressions need not have a referent, empirical or otherwise, in order to be meaningful.''}

    Once Robinson was able to provide rigor to Leibniz's idea in his nonstandard analysis, Nelson \cite{NelsonInroduction, NelsonREPT} saw it for what it really was---an alternate interpretation of what our numbers are, and how we are perhaps epistemologically incapable of actually knowing which ontology is the ``real'' one. He showed that one could truly believe in either ontologies and still arrive at essentially the same knowledge, and yet one could actually never be sure of either of those beliefs without faith  at different levels. Nelson \cite{nelsonMathFaith} talks about faith at the level of a belief in the consistency of the foundations of mathematics itself---but even someone who does have faith in the general foundations would then still require some amount of faith in their chosen ontology of the numbers, as they can never be sure that they are not living in a universe where infinitesimals are ``real'' but just not accessible (this is an intuition that we will develop in the next section). Leibniz's law of continuity essentially says that it does not matter which ontology of numbers you personally believe in --- if you already believe in the infinite and infinitesimal, then you can attempt to learn more about it by transferring the ``rules of the finite'' into the realm you cannot access; on the other hand, even if you do not believe in that realm in any metaphysical sense, you have to accept the fact that the truths about your standard accessible universe are still bound by the logical rules of the mental fiction of the infinitesimals.
        
We should be clear that our emphasis in this article on the socio-cultural nature of how mathematics developed through history and how that impacts the mathematics that we currently work on as mathematicians does not mean that we are debating about the absolute nature of mathematical truths or any such thing. We are not advocating either a platonist or a non-platonist viewpoint of mathematics. We are, in fact, not claiming to know whether infinitesimals exist or not in any real sense. 

Indeed, those are messy philosophical debates that one does not have to have strong opinions on in order to still recognize the important fact that mathematics has been an important tool for creating theories that help us better understand the universe throughout human history. Being too overconfident about our current ways of understanding the universe has never been conducive to the seemingly eternal quest of humanity to understand the universe it inhabits. In our past, we used to believe in lots of erroneous things that held us back in our understanding of the universe until new mathematical ideas were needed to get us closer to truth (a belief in geocentrism in various cultures comes to mind). In a sense, the development of Calculus and the ``rules of the infinite'' have changed the ways in which we understand our universe for the better, yet we are not doing ourselves any favor by standardizing infinity and thus limiting access to the imaginative people such as Ely's student Sarah \cite{ely2010nonstandard}---such people could have been empowered in their lives and could have empowered mathematics back had mathematics as a socio-cultural endeavor been more inclusive to their equally valid ways of thinking. 

Henson and Keisler \cite{Henson--Keisler} show that there are truths in arithmetic that can be proven using nonstandard thinking (or something equivalent) but not without. While this proof-theoretic strength of nonstandard analysis is a great feature to have, in an inclusive system of mathematics education it should not matter even if nonstandard analysis was incapable of proving ``new'' things. The ``things'' that a mathematics student studies at much lower levels are still things that they struggle with and we, the mathematics educators, have a moral obligation to uphold and empower our students' personal intuitions if it is possible to do so. 

We shall provide a more mathematical discussion of Robinson's nonstandard analysis in the remainder of this appendix, but, as we have been trying to emphasize, the ideas that shall be made precise were already philosophically anticipated by Leibniz's law of continuity. Indeed, the basic premise of nonstandard analysis is the existence of a standard universe which is contained inside a bigger and richer (the technical term will be \textit{saturated}) nonstandard universe that structurally behaves the same in some precise model theoretic sense (a property we shall call the \textit{transfer principle}).
    


\subsection{The ``alien intuition'' for Robinson's nonstandard analysis}
As already alluded in the previous paragraph above, Robinson's nonstandard analysis can be thought of as a theory in which the central object of study is a metamathematical correspondence between two abstract mathematical constructions: one called the standard universe and one called the non-standard universe. Using such a correspondence (together with an access to infinitesimals in the latter universe) will allow us to interpret certain standard mathematical concepts (such as those in calculus) in an arguably more intuitive manner allowing us to obtain new results in the standard universe. Philosophically, we are not claiming whether infinitesimals exist or not in a metaphysical sense. Let us now make all this a bit more precise mathematically.

Consider the so-called \textit{standard universe} $\mathfrak{S}$, which \textit{(at least)} includes all the standard mathematical objects that appeared in the standard theorems in the main body of this manuscript; or in general, it may include all mathematical objects that appear in any given standard discourse. Here, we are using the word ``standard'' as an adjective. Thus, for instance, a \textit{standard discourse} is the type of mathematics that does not involve any nonstandard analysis.

When we write down a mathematical statement about our standard universe, the mathematical symbols used to describe our statement are philosophically just abstract symbols whose meanings depend on how we (mathematically) interpret them. The point we are trying to emphasize will be clearer from a thought experiment. 

Consider the following sentence in formal logic:
\begin{align}\label{square root}
    \forall x \in \mathbb{R} ((x > 0) \rightarrow \exists y \in \mathbb{R} (y = x \cdot x)),
\end{align}
Imagine communicating it to an alien civilization from a different universe (which we call the \textit{nonstandard universe}). For the sake of discussion, let us pretend that these aliens somehow know how to read formal logic symbols (`$\forall$' (``for all''), `$\exists$' (``there exists''), `$\lor$' (``or''), `$\land$' (``and''), `$\rightarrow$ (``implies''), `$=$' (``equals''), and punctuation using parentheses `:', `$($' and `$)$'), and suppose further that they know the language of set theory (which just has one additional symbol `$\in$' (``belongs to'')). While they know how to interpret these basic foundational symbols, they do not know our standard mathematical practices. In other words, they do not know which set the symbol $\mathbb{R}$ stands for, but they can infer from the sentence \eqref{square root} that it must stand for some set on which there is a binary relation $>$ and a function $\cdot \colon \mathbb{R} \times \mathbb{R}$. They can also notice that there is at least one element $0$ with the property that anything related to it via the relation $>$ can always be expressed as a square. 

If this alien civilization sees this sentence from us and nothing else, then they might think: ``Hmm. We also know of many sets in our universe that have this property. For all we know, the set $\mathbb{R}$ could just be a singleton that contains some element that the humans are calling $0$.'' But then they would quickly recognize their thought as incorrect, as they would soon see some other sentences we have sent about $\mathbb{R}$ that make it clear that it is not just a singleton. For instance, perhaps they next see all the ordered field axioms that $\mathbb{R}$ satisfies. The aliens would recognize that whatever $\mathbb{R}$ is, it must be an ordered field \footnote{The aliens would probably not call it an ``ordered field'' literally, as they would have come up with a different name in their own natural language for the concept of an ``ordered field'', but we shall not make this pedantic distinction explicit henceforth.}, which is a concept they can define using the language of set theory that they know. (And since they remember our first sentence \eqref{square root} that we sent, they further know that $\mathbb{R}$ is an ordered field in which every positive element has a square root.) 

They still do not know what the humans meant by $\mathbb{R}$, but they now have a better intuition of how it might \textit{look} like because they understand its \textit{structure} better. 

Even though this is not physically possible, imagine for the sake of this discussion that we are able to send \textit{all} of the sentences that we can express about objects in $\mathbb{R}$ (and about other standard objects of interest). In particular, this would include statements about specific numbers such as the following:
\begin{align}
    3.9 &< 4 \nonumber \\ 
    3.99 &< 4 \nonumber \\ 
    3.999 &< 4 \ldots \label{Sarah inequality}
\end{align}
To make sure that these are sentences about elements of the same set $\mathbb{R}$ which we were talking about earlier, there would also be a sentence $c \in \mathbb{R}$ for each of the uncountably many real numbers $c$. 

Suppose it turns out that their (mathematical) universe is actually rich enough to have an interpretation of \text{all} of our standard mathematical symbols in a consistent way. So, it is not just that they have at least one set in their universe that is an ordered field with the square root property, but they have a set, denoted by ${^*}\mathbb{R}$, which satisfies this and all other properties of $\mathbb{R}$ that we may write as sentences in our standard mathematical language. 

Well, perhaps then we might think they have also studied our set of real numbers. However, some general results in model theory guarantee that this is not always the case (as one can show the existence of nonstandard universes with saturation and transfer property that were informally alluded to at the end of the previous section). 

Philosophically, both the aliens and the humans are modeling the same abstract idea: the continuum of the number line, but their models will be different because they have different notions of \textit{infinities}. The aliens somehow have access to actual numbers that are bigger than all standard numbers (like how Leibniz had, at least mentally, in his formulation of Calculus), yet the set of all of the aliens' numbers behaves structurally similar to $\mathbb{R}$. Doing nonstandard analysis in modern mathematical practice can be thought of as being a neutral observer living in neither the standard nor the nonstandard universe, but being aware of the facts about both universes, traversing between them as needed. The fact that this traversal of truth between different ideologies of numbers is possible at all is a remarkable achievement of the $20^{\text{th}}$-century developments in modern mathematical logic. 

Psychologically, it might be better to not think of the aliens' universe in the above intuition as a different universe than ours. We could actually imagine that these aliens live in the same physical universe as us, but their cognitive abilities are infinitely higher than us in ways that lets them perceive and measure the world at infinitely finer scales than us. This is perhaps comparable to the popular intuition that a hypothetical being who lives and perceives only in a two-dimensional plane might never know what lies outside their plane of existence, even though there is a three-dimensional world containing that plane. Along similar lines, we can imagine beings (namely the aliens in our intuition) who experience the physical universe at scales incomprehensible by us. When they count something, they obtain a number in ${^*}\mathbb{N}$ and that number may as well be infinite to us (if it is bigger than all of our natural numbers) even though it is finite to them. Just like us, they can think of the set of all counting numbers, but they can never isolate the set $\mathbb{N}$ of \textit{limited counting numbers}, as they do not understand the \textit{limited} adjective that requires a different perception of reality in order to make sense. To them, all elements of ${^*}\mathbb{N}$ are limited.

The discussion in the previous paragraph can be thought of as an intuition for an important nonstandard analytic concept called \textit{internal sets}. Internal sets are what the aliens can construct if they perceive the world in the manner that we have described above. 

For instance, for each of \textit{our} counting numbers $n$, (that is, for each $n \in \mathbb{N}$), we can think of, and hence ``construct'', the set $[n]$ of the first $n$ counting numbers. In the same manner, each element $N$ of ${^*}\mathbb{N}$ is just a number up to which the aliens can count, and so they can internally think of the set $[N] \defeq \{n \in {^*}\mathbb{N} : n \leq N\}$. In this sense, the sets $[N]$ (for each $N \in {^*}\mathbb{N}$) are internal in the nonstandard universe, while the set $\mathbb{N}$ is not, as the aliens are not capable of mentally visualizing this set at all. This intuition is reflected in the technique called \textit{overflow}, one version of which says that an internal subset of ${^*}\mathbb{N}$ that contains all finite natural numbers must ``overflow'' and also contain an element from ${^*}\mathbb{N} \backslash \mathbb{N}$, which must necessarily be infinite from the standard/human perspective because of the following simple argument based on the transfer principle that we have informally built so far. The motto for us to understand, like for Leibniz, will be that ``the rules of the finite succeed in the infinite, and conversely''! 

\begin{proposition}\label{infinite natural0}
If $N \in {^*}\mathbb{N} \backslash \mathbb{N}$, then $N$ is infinite in the sense that $N > n$ for all $n \in \mathbb{N} \subseteq {^*}\mathbb{N}$. We express this by writing $N > \mathbb{N}$, and calling $N$ a \textit{hyperfinite natural number}.
\end{proposition}

\begin{proof}
    Suppose, if possible, that $N \in {^*}\mathbb{N} \backslash \mathbb{N}$, yet there exists $n_0 \in \mathbb{N} \subseteq {^*}\mathbb{N}$ such that $n \leq n_0$. It is true in the standard interpretation of numbers that a counting number less than or equal to the particular counting number $n_0$ must equal one of the finitely many elements of $[n_0]$. By transfer principle, the same truth must hold in the nonstandard universe. Namely:
    \begin{align}\label{transfer illustration}
        \forall n \in {^*}\mathbb{N} \left\{(n \leq n_0) \rightarrow \left[(n = 1) \lor \ldots \lor (n = n_0) \right] \right\}.
    \end{align}
This, when applied to $n = N$, contradicts our assumption that $N \not\in \mathbb{N}$.  
\end{proof}

Let us analyze the above proof a bit to build intuition for the transfer principle that we will explore in more detail in the upcoming section. The reason that \eqref{transfer illustration} is true is because we knew that the following sentence was true about the standard universe:

\begin{align}\label{transfer illustration2}
        \forall n \in \mathbb{N} \left\{(n \leq n_0) \rightarrow \left[(n = 1) \lor \ldots \lor (n = n_0) \right]\right\}.
    \end{align}

Thus, all we had to do was to identify where we were quantifying over a standard object in \eqref{transfer illustration2}, and then quantify it over the nonstandard interpretation of that object instead: truths about $\mathbb{N}$ expressible in our standard mathematical language using formal logic symbols get transferred and remain true even if our universe of counting numbers is now ${^*}\mathbb{N}$ --- the rules of the finite are succeeding in the infinite as Leibniz said!

Note that the fact that $n_0$ was a particular finite number fixed earlier was useful in this proof as well, as otherwise the ``$\ldots$'' in the logical expressions \eqref{transfer illustration} and \eqref{transfer illustration2} could not be made more precise, and so we could not really think of either of those expressions as \textit{well-formed sentences} in our formal language. The intuition to keep in mind is that the statements about the standard universe that we are claiming to be equiveridical to their nonstandard interpretations must be first writable in our standard formal language in a precise and unambiguous way; otherwise exactly what would we be reinterpreting? This would exclude sentences that use imprecise expressions such as  ``$\ldots$'', unless we can make them precise using finitely many symbols in the specific contexts we are interested in. This is what happened in the above proof.

For instance, if $n_0 = 3$, then the sentence \eqref{transfer illustration2} is merely an abbreviation of 
\begin{align}\label{transfer illustration3}
        \forall n \in \mathbb{N} \left\{(n \leq 3) \rightarrow \left[(n = 1) \lor (n = 2) \lor (n = 3) \right]\right\},
    \end{align}
and is hence a valid sentence whose truth gets transferred to the nonstandard universe (with a similar reasoning holding for all fixed $n_0 \in \mathbb{N}$). 

Proposition \ref{infinite natural0} showed that if the aliens can perceive any new counting number in their mathematical universe, then the transfer principle would imply that any such number would also be infinite from \textit{our} perspective. We built the intuition for this by being artistic and imagining that these aliens somehow perceive reality at a different scale than us, thus seeing lots of new numbers that are accessible to them, but not to us. However, this is clearly not a mathematical argument. The foundational mathematical reason we can always use this intuition is because we can assume there exist nonstandard universes that are \textit{saturated}, a concept we shall explain in the next section where we convert the intuition presented so far into a bit more rigorous mathematical exposition.

Yet, we do not need to understand saturation to do basic Calculus if we believe in an existence of a bigger nonstandard universe and understand the intuition of transfer principle between the two universes that we presented so far. We will thus be able to finish this section by finally giving an exposition of how to think about concepts from Calculus using the actual infinitesimals provided by nonstandard analysis. For brevity of exposition, we shall focus on understanding limits and continuity from a nonstandard perspective, with the aim of providing a bit more flavor of how transfer principle can be applied to various situations.

Let ${^*}\mathbb{R}_{\text{fin}}$ denote the set of those nonstandard real numbers that are not infinite. One can show that this is a \textit{ring}, but not a \textit{field} (as the multiplicative inverses of nonzero infinitesimals are not in this set). The next result says that one can think of a finite nonstandard real number $z$ as having a real part, and an infinitesimal part. Intuitively, each finite nonstandard real number must arise by ``performing an infinite zoom while focusing'' at a specific real number which we call its standard part.

\begin{proposition}\label{standard part map}
For all $z \in {^*}\mathbb{R}_{\text{fin}}$, there is a unique $x \in \mathbb{R}$ (called the \textit{standard part} of $z$) such that $(z - x)$ is infinitesimal. We write $\st(z) = x$ or $z \approx x$. 
\end{proposition}
\begin{proof}
     Fix $z \in {^*}\mathbb{R}_{\text{fin}}$. Use the least upper bound property of $\mathbb{R}$ in order to find the supremum $x$ of the set $\{y \in \mathbb{R}: y \leq z\}$. We claim that this suffices, but leave the routine verification to the curious reader.
\end{proof}

\begin{remark}\label{st part morphism}
    It is not difficult to check that $\st(ax + by) = \st(a)\st(x) + \st(b)\st(y)$ for all $a,b,x,y \in {^*}\mathbb{R}_{\text{fin}}$. For those who are algebraically inclined, we may thus think of the map $\st \colon \mathbb{R}_{\text{fin}} \to \mathbb{R}$ as a surjective ring homomorphism with the infinitesimals as the kernel. This expresses the field of real numbers as a quotient of the ring of finite nonstandard numbers. 
\end{remark}

For $x, y \in {^*}\mathbb{R}$, we use the notation $x \approx y$ to mean that $(x - y)$ is an infinitesimal. We also say ``$x$ and $y$ are infinitely/infinitesimally close''. 

Recall that for a standard sequence $\{a_n\}_{n \in \mathbb{N}}$ of real numbers, we say that $L \in \mathbb{R}$ is a \textit{limit} if for each $\epsilon \in \mathbb{R}_{>0}$, there exists $n_{\epsilon} \in \mathbb{N}$ such that $\abs{a_n - L} < \epsilon$ for all $n \in \mathbb{N}_{>n_\epsilon}$. The following nonstandard characterization not only provides rigor to how we intuitively think of limits (basically through the motto that whenever $n$ is ``very large'', $a_n$ must be ``very close to its limit''), but also has the uniqueness of the limit of a sequence (something that one proves separately in the standard way of thinking about limits) essentially built into the characterization.

The key idea is that any sequence of real numbers $(a_n)_{n \in \mathbb{N}}$ is really just a function, say $a \colon \mathbb{N} \to \mathbb{R}$, and hence has a nonstandard interpretation which is now a function $^\ast a \colon {^*}\mathbb{N} \to {^*}\mathbb{R}$. This nonstandard interpretation agrees with the original interpretation when restricted to the standard natural numbers, because given the real numbers $(a_n)_{n \in \mathbb{N}}$, the sentences `$a(n) = a_n$' about the function $a$ transfer and yield `${^*}a(n) = a_n$' in the nonstandard universe for each fixed $n \in \mathbb{N}$. Because of this consistency with the original sequence, we can actually drop the asterisk, and just write $a_n$, always interpreting it as ${^*}a(n)$ without any ambiguity regardless of whether $n$ is finite or hyperfinite. (Recall by Proposition \ref{infinite natural0} that any $N \in {^*}\mathbb{N} \backslash \mathbb{N}$ is hyperfinite which we denote by writing $N > \mathbb{N}$.) 

\begin{proposition}\label{limits of sequences}
Let $\{a_n\}_{n \in \mathbb{N}}$ be a sequence of real numbers. Then, $\lim a_n = L \iff a_N \approx L$ for all $N > \mathbb{N}$. 
\end{proposition}

\begin{proof}
  First, suppose $\lim a_n = L$. Fix $\epsilon \in \mathbb{R}_{>0}$. By the definition of limit, we find an $n_{\epsilon} \in \mathbb{N}$ such that the following holds:
    \begin{align*}
        \forall n \in \mathbb{N}((n \geq n_{\epsilon}) \rightarrow (|a_n - L| < \epsilon)).
    \end{align*}
Interpreting the truth of this sentence in the nonstandard universe (or transferring), we obtain:
     \begin{align*}
        \forall n \in {^*}\mathbb{N}((n \geq n_{\epsilon}) \rightarrow (|a_n - L| < \epsilon)).
    \end{align*}
Since any hyperfinite $N$ also satisfies $N > n_{\epsilon}$ for all $\epsilon \in \mathbb{R}_{>0}$, it follows that $0 \leq \abs{a_N - L} < \epsilon$ for all $\epsilon \in \mathbb{R}_{>0}$ and $N > \mathbb{N}$. Hence $a_N \approx L$ for all $N > \mathbb{N}$.

Conversely, suppose $a_N \approx L$ for all $N > \mathbb{N}$. Then, for any given $\epsilon \in \mathbb{R}_{>0}$, the truth of the following sentence in the nonstandard universe is witnessed by any hyperfinite $m$ (and transfer principle then yields the appropriate sentence in the standard universe that we need for this $\epsilon$):
\begin{align*}
    \exists m \in {^*}\mathbb{N} ~~\forall n \in {^*}\mathbb{N} \left((n \geq m) \rightarrow (|a_n - L| < \epsilon) \right).
\end{align*}
\end{proof}

Note that in the above proof, the function $\abs{\cdot}$ that appeared in the statements for the nonstandard universe was really the nonstandard interpretation ${^*}\abs{\cdot} \colon {^*}\mathbb{R} \to {^*}\mathbb{R}$ of the usual standard absolute value function on $\mathbb{R}$, where we had suppressed the asterisk for brevity of exposition, something that was formally permissible because the nonstandard interpretation must agree with the original function when restricted to the original domain (by transfer). It is a useful fact (an easy verification can be done using transfer) that ${^*}\abs{\cdot}$ agrees with the natural absolute value map on ${^*}\mathbb{R}$ as an ordered field---otherwise, if that were not the case, then dropping this asterisk could have been a source of confusion.

A similar application of transfer in a different context provides us an intuitive nonstandard characterization of continuous functions on the real line. Recall that a function $f \colon \mathbb{R} \to \mathbb{R}$ is called \textit{continuous} at some point $x \in \mathbb{R}$ if for each $\epsilon \in \mathbb{R}_{>0}$, there exists $\delta \in \mathbb{R}_{>0}$ such that $\abs{y - x} < \delta$ implies $\abs{f(y) - f(x)} < \epsilon$.  

The nonstandard characterization says that $f \colon \mathbb{R} \to \mathbb{R}$ is continuous at $x \in \mathbb{R}$ if and only if whenever we take an input infinitesimally close to $x$, we obtain an output that is infinitesimally close to $f(x)$. (This is a sensible statement because we can work with the nonstandard or alien's interpretation ${^*}f \colon {^*}\mathbb{R} \to {^*}\mathbb{R}$ of the original function, which would agree with $f$ when restricted to $\mathbb{R}$.) More formally, we have:

\begin{proposition}\label{real-valued continuous}
    A function $f \colon \mathbb{R} \to \mathbb{R}$ is continuous at $x \in \mathbb{R}$ if and only if ${^*}f(y) \approx f(x)$ whenever $y \approx x$. 
    
    In view of the map $\st \colon {^*}\mathbb{R}_{\text{fin}} \to \mathbb{R}$, we may rewrite this characterization of continuity at $x$ as saying that ${^*}f(\st^{-1}(x)) \subseteq \st^{-1}(f(x))$.
\end{proposition}

\begin{proof}
    Suppose $f$ is continuous at $x \in \mathbb{R}$. Fix $y \in \st^{-1}(x)$. We want to show that ${^*}f(y) \approx f(x)$. To that end, we fix an arbitrary $\epsilon \in \mathbb{R}_{>0}$ and we now aim to show that $\abs{{^*}f(y) - f(x)} < \epsilon$. Given this $\epsilon \in \mathbb{R}_{>0}$, the standard definition of continuity yields a number $\delta \in \mathbb{R}_{>0}$ such that the following is true:
    \begin{align*}
        \forall z \in \mathbb{R} \{\left(\abs{z - x} < \delta \right) \rightarrow \left(\abs{f(z) - f(x)} < \epsilon \right)\}.
    \end{align*}
Transferring its truth to the nonstandard universe yields (noting that $x$ and $f(x)$ are just real constants, so they are interpreted ``as they are''):
\begin{align*}
     \forall z \in {^*}\mathbb{R} \{\left(\abs{z - x} < \delta \right) \rightarrow \left(\abs{{^*}f(z) - f(x)} < \epsilon \right)\}.
\end{align*}
Since $\abs{y - x} < \delta$ is true (in fact, $\abs{y - x} \approx 0$ by assumption, so $\abs{y-x}$ is smaller than all fixed positive real numbers), we obtain $\abs{{^*}f(y) - f(x)} < \epsilon$. Since $\epsilon \in \mathbb{R}_{>0}$ was arbitrary, we obtain ${^*}f(y) \approx f(x)$, as desired.

Conversely, suppose ${^*}f(\st^{-1}(x)) \subseteq \st^{-1}(f(x))$. Fix $\epsilon \in \mathbb{R}_{>0}$. Then the following is true in the nonstandard universe by assumption (as any positive infinitesimal $\delta$ witnesses its truth):
\begin{align*}
    \exists \delta \in {^*}\mathbb{R}_{>0} \left[\forall z \in {^*}\mathbb{R} \{\left(\abs{z - x} < \delta \right) \rightarrow \left(\abs{{^*}f(z) - f(x)} < \epsilon \right)\} \right].
\end{align*}
Transferring this sentence to the standard universe shows that there is a standard $\delta \in \mathbb{R}_{>0}$ as well with the requisite property. 
\end{proof}

Note that we can, of course, give analogous characterizations if the domain of the function is not all of $\mathbb{R}$. For a subset $A \subseteq \mathbb{R}$, a function $f \colon A \to \mathbb{R}$ is continuous at some $a \in A$ if and only if ${^*}f(y) = f(a)$ for all $y \in {^*}A$ such that $y \approx a$. In fact, we can change both the domain and the co-domain to general topological spaces, and nonstandard interpretations will still allow us to think infinitesimally in that setting despite not having a natural notion of distance or metric (see Proposition \ref{general continuity characterization}). To work with nonstandard ideas in such generality, we need to be more precise about the standard universe we are extending. We build toward that generality by first fleshing out some more mathematical details of our alien intuition next.

\subsection{More mathematical details of our alien intuition}
In technical model theory terms, one thinks of the nonstandard universe as a \textit{saturated elementary extension} of the standard universe in some sense we have not made precise in this article. But we shall not need to understand such model theoretic terminology rigorously in order to understand the gist of what is happening. We shall be able to continue the ``alien intuition'' from the previous section, while still able to be more rigorous about what is truly happening mathematically!    

Here is the gist of transfer principle---if the humans use a symbol $A$ for some specific standard set, whose formal properties have been written among the set of formal sentences that are true in the standard universe, then the aliens would identify a set ${^*}A$ in their universe that would satisfy all the same formal properties. 

Let us consider a toy example. For each particular real number $c$, there is a sentence ``$c \in \mathbb{R}$'' which is true in our standard theory. By transfer, therefore, the sentence ``$c \in {^*}\mathbb{R}$'' is true in the nonstandard universe. In this way, ${^*}\mathbb{R}$, called the set of nonstandard real numbers, contains $\mathbb{R}$. Similarly, we have $\mathbb{N} \subseteq {^*}\mathbb{N}$, etc. \footnote{If this is unsatisfactory because of the usage of the symbol $c$ that is seemingly interpreted the same way in both universes, here is a way to intuitively think about what is happening. For any two constants  for which we use the symbols $a, b$ in $\mathbb{R}$ in the standard universe, we must have communicated to the aliens which one of $a = b$, $a < b$, and $a > b$ is true, and thus (, because ${^*}\mathbb{R}$ has the same formal properties as $\mathbb{R}$,) there should be a unique pair $(^*a, ^*b) \in {^*}\mathbb{R} \times {^*}\mathbb{R}$ corresponding to each standard pair $(a,b) \in \mathbb{R} \times \mathbb{R}$ such that ${^*}a$ and ${^*}b$ satisfy the same sorts of relations (interpreted appropriately in the nonstandard universe) with each other and with the rest of the elements of ${^*}\mathbb{R}$ as what $a$ and $b$ have with each other and with the rest of the elements of $\mathbb{R}$. Thus, for example, if $a = b^2$, then so is ${^*}a = {^*}b^2$. 

For all intents and purposes, we might as well make the identification that ${^*}a = a$ for any standard real number $a$. That is, the set 
\begin{align}
    \{{^*}a : a \in \mathbb{R}\} \subseteq {^*}\mathbb{R}
\end{align}
is identified as the copy of $\mathbb{R}$ inside ${^*}\mathbb{R}$.}

Among the properties of $\mathbb{R}$ that we can write down also include second-order properties (that is, those that talk about universal or existential properties of subsets, as opposed to those of elements of a particular set), but disguised as first-order sentences. For instance, we can quantify over the elements of the power set $\mathcal{P}(\mathbb{R})$ if we want to express a statement about subsets of $\mathbb{R}$. (Recall that we do not have `$\subseteq$' as an allowed symbol in our formal language, but we do have the set-membership relation symbol `$\in$' in it.) Why are we being pedantic about this aspect? Isn't it true that if we can write down a statement about all subsets of $\mathbb{R}$, then transfer should imply that the nonstandard interpretation is true for all subsets of ${^*}\mathbb{R}$? This is actually not true, and understanding why leads us to thinking about what internal sets (introduced more intuitively in the previous section) actually are. 

To build intuition before the definition comes, here is a simple statement that says that all elements of $\mathcal{P}(\mathbb{R})$ are \textit{subsets} of $\mathbb{R}$, but without using the `$\subseteq$' symbol:
\begin{align}\label{subset sentence}
    \forall A \in \mathcal{P}(\mathbb{R}) \left(\forall x ((x \in A) \rightarrow (x \in \mathbb{R})) \right).
\end{align}

If ${^*}(\mathcal{P}(\mathbb{R}))$ is the actual object in the nonstandard universe that the aliens must interpret in order to transfer the standard universe truths expressed using the symbol $\mathcal{P}(\mathbb{R})$, then transfering the above standard sentence expresses the following truth about the nonstandard universe:
\begin{align}\label{subset sentence 2}
    \forall A \in {^*}(\mathcal{P}(\mathbb{R})) \left(\forall x ((x \in A) \rightarrow (x \in {^*}\mathbb{R})) \right).
\end{align}
Thus, whatever object ${^*}(\mathcal{P}(\mathbb{R}))$ is, its elements are subsets of ${^*}\mathbb{R}$. We often drop the additional parentheses and abbreviate ${^*}(\mathcal{P}(\mathbb{R}))$ as ${^*}\mathcal{P}(\mathbb{R})$, so that we can rewrite the sentence \eqref{subset sentence 2} as saying that ${^*}\mathcal{P}(\mathbb{R}) \subseteq \mathcal{P}({^*}\mathbb{R})$. Let us try to understand the structure of ${^*}\mathcal{P}(\mathbb{R})$ a bit more. 

Here is a slightly more complicated statement which says that every non-empty subset of the real numbers that is bounded above has a least upper bound:
    \begin{align}\label{lub sentence}
    &\forall A \in \mathcal{P}(\mathbb{R}) \nonumber \\
    &\langle ~[{\exists c \in \mathbb{R}(c \in A)} \land {\exists x \in \mathbb{R}} {(\forall y \in \mathbb{R}} \{{(y \in A) \rightarrow (y \leq x)} \})] \rightarrow \nonumber \\
&{ \exists z \in \mathbb{R}} \nonumber \\
&\{{(\forall y \in \mathbb{R}} ~ [ {(y \in A) \rightarrow (y \leq z)} ]) \nonumber \\
&\hspace{2.7cm} \land {{[\forall w \in \mathbb{R} {( \forall y \in \mathbb{R}} { \{ [} {(y \in A) \rightarrow (y \leq w)} {]}} \rightarrow ({z \leq w})\}} { )} {]}\} ~\rangle. 
\end{align}


By transferring its truth, all non-empty members of ${^*}\mathcal{P}(\mathbb{R})$ that have an upper bound in ${^*}\mathbb{R}$ also have a least upper bound in ${^*}\mathbb{R}$. Note that the boundedness in ${^*}\mathbb{R}$ is being discussed in terms of the binary relation ${^*}<$ on ${^*}\mathbb{R}$ that is interpreted by the aliens whenever they encounter the standard $<$ symbol written by the humans as a binary relation on $\mathbb{R}$. (As is customary for reasons of brevity, we still use the same symbol `$<$' to denote what really is `${^*}{<}$' if it is clear from context which universe we are interpreting this symbol in.)  

In order to think about nonstandard interpretations of relations when we started with an assumption that all sets have such interpretations, we merely need to be more pedantic and recognize that any particular binary relation on $\mathbb{R}$ is just a subset of the Cartesian product $\mathbb{R} \times \mathbb{R}$, and therefore each standard relation on $\mathbb{R}$ does give rise to its nonstandard interpretation as a subset of ${^*}(\mathbb{R} \times \mathbb{R}) = {^*}\mathbb{R} \times {^*}\mathbb{R}$ (and hence as a relation on ${^*}\mathbb{R}$).

The above fact that ${^*}$ commutes over Cartesian products can also be seen as a simple consequence of transfer! Indeed, if $A$ and $B$ are standard sets and $\pi_A \colon A \times B \to A$ and $\pi_B \colon A \times B \to B$ are the corresponding projection maps, then the following is true in the standard universe: 
\begin{align*}
    \forall x \left\{x \in (A \times B) \leftrightarrow [\left(\pi_A(x) \in A\right) \land \left(\pi_B(x) \in B\right)]\right\},
\end{align*}
which transfers to the following:
\begin{align*}
    \forall x \left\{x \in {^*}(A \times B) \leftrightarrow [\left({^*}(\pi_A)(x) \in {^*}A\right) \land \left({^*}(\pi_B)(x) \in B\right)]\right\}.
\end{align*}
Here is what is really happening in the above sentence. We can pedantically identify any function with its \textit{graph}, similarly to how we identified relations as subsets of an appropriate Cartesian product. For instance, since $\pi_A \colon A \times B \to A$ is really just the subset $\{(x, y): x \in A \times B, y = \pi_A(x)\} \subseteq (A \times B) \times A$, there is a nonstandard interpretation ${^*}\pi_A$ of this set, which, due to transfer, can be seen to be (the graph of) a function from ${^*}(A \times B)$ to ${^*}A$. In fact, we can further use transfer to show that ${^*}(\pi_A) = \pi_{{^*}A}$\footnote{Just transfer the sentence that says that any element $x \in A \times B$ is equal to $(\pi_A(x), \pi_B(x))$.} (and similar statement for $B$), which is what finally allows us to conclude the distributivity of ${^*}$ over Cartesian products that we were trying to prove. 

Let us take a step back and re-evaluate what we proved about the nonstandard interpretation of the usual order on $\mathbb{R}$ when we transferred the sentence \eqref{lub sentence}. We showed that those non-empty members of ${^*}\mathcal{P}(\mathbb{R})$ that happen to be bounded above in ${^*}\mathbb{R}$ must also have a least upper bound in ${^*}\mathbb{R}$. As soon as we have a mathematical reason to believe that ${^*}\mathbb{R}$ contains at least one infinite element $N$ (that is, $N > n$ for all $n \in \mathbb{N}$), we would be able to conclude that the set $\mathbb{N}$ is a counterexample to the least upper bound property in ${^*}\mathbb{R}$ (as $\mathbb{N}$ would then be bounded above by $N$, but evidently subtracting one from any upper bound of $\mathbb{N}$ would still yield an upper bound of $\mathbb{N}$ due to transfer, thus there being no least upper bound of $\mathbb{N}$). 

Therefore, it would follow that $\mathbb{N} \in \mathcal{P}({^*}\mathbb{R}) \backslash {^*}\mathcal{P}(\mathbb{R})$. The elements of ${^*}\mathcal{P}(\mathbb{R})$, are indeed special subsets of ${^*}\mathbb{R}$ over which even second-order properties of $\mathbb{R}$ transfer, and not all subsets of ${^*}\mathbb{R}$ are as nice as long as ${^*}\mathbb{R}$ has infinite numbers.

So far, nothing non-trivial has happened, as we do not really know if ${^*}\mathbb{R}$ contains even one new element, even if transfer principle is true (recall that our proof of Proposition \ref{infinite natural0} relied on an \textit{assumption} that there was a new element), and it could be that the aliens' nonstandard mathematical universe is also the same as our standard universe. Indeed, merely an assumption of transfer principle is not enough to guarantee new elements, much less infinite ones. For instance, using transfer, we were able to prove Proposition \ref{infinite natural0} only under the assumption that that there was an $N \in {^*}\mathbb{N} \backslash \mathbb{N}$, but transfer alone cannot prove that there indeed does exist such an $N$. Fortunately, there are results in model theory (see, for instance, Chang and Keisler \cite[Lemma 5.1.4, p. 294 and Exercise
5.1.21, p. 305]{Model_Theory}) that guarantee that there are nonstandard universes that not only satisfy the transfer principle, but are also \textit{saturated}, which is a compactness-type property that is defined as follows:

\begin{definition}
A set in a nonstandard universe is called \textit{internal} if it belongs to ${^*}\mathcal{P}(S)$ for some standard set $S$ (, in which case, it is an internal subset of ${^*}S$, by a reasoning analogous to \eqref{subset sentence 2}). A nonstandard universe is called saturated \footnote{In the literature, what we are defining is sometimes called \textit{polysaturation}.} if for any index set $I$ in the standard universe, and any collection $\{A_i : i \in I\}$ of internal sets in the nonstandard universe that have the finite intersection property (that is, for any finitely many $i_1, \ldots, i_n \in I$, we have $A_{i_1} \cap \ldots \cap A_{i_n} \neq \emptyset$), we have $\cap_{i \in I}A_i \neq \emptyset$.
\end{definition}

Note that when we transferred second-order sentences earlier, their truths were only preserved when working with internal sets. For instance, all non-empty internal subsets of ${^*}\mathbb{R}$ that have an upper bound have a least upper bound. Also, in particular, the set ${^*}\mathbb{R}$ itself is internal (a fact revealed by transferring the sentence `$\mathbb{R} \in \mathcal{P}(\mathbb{R})$'). Similarly, ${^*}A$ is an internal subset of ${^*}\mathbb{R}$ for all $A \in \mathcal{P}(\mathbb{R})$. There are internal subsets of ${^*}\mathbb{R}$ that are not of this type, but in order to understand how they might look like, we must first finally prove that any saturated nonstandard universe does contain lots of new elements. 

\begin{proposition}\label{existence of infinites}
In a saturated nonstandard universe, ${^*}\mathbb{R}$ contains infinite numbers (that is, numbers that are bigger than all real numbers, which are viewed as elements of ${^*}\mathbb{R}$), as well as non-zero infinitesimal elements (which are elements whose absolute values are smaller than all positive real numbers).
\end{proposition} 
\begin{proof}
Any element in the non-empty intersection $\cap_{n \in \mathbb{N}} \{x \in {^*}\mathbb{R}: x>n\}$ (which is non-empty by saturation) must be infinite. The multiplicative inverse of any infinite element is infinitesimal.
\end{proof}

Technical details aside, we want to emphasize the intuition that if there is a \textit{standard collection} (that is, a collection indexable by a set in the standard universe) of statements/conditions that a typical element may or may not satisfy, then just verifying that any finite number of those conditions are satisfiable (by possibly different elements depending on the finite subset of conditions) is enough to justify the existence of a single object that satisfies \textit{all} of the conditions simultaneously. This is very powerful and is indeed the philosophical reason why nonstandard analysis is successful in applications, as it allows one to go from the finite to the infinite (or vice versa) in a logically precise way. 

In the above proof, for each $n \in \mathbb{N}$, the set $\{x \in {^*}\mathbb{R} : x > n\}$ was internal because we can write a formal logic sentence that says that for each $n \in \mathbb{N}$, there is a unique set consisting of those real numbers $x$ that are bigger than $n$. By transfer, for each $n \in {^*}\mathbb{N}$, there is a unique \textit{internal} subset of ${^*}\mathbb{R}$ whose membership criterion can be expressed as `$x > n$'. For any finitely many $n_1, \ldots, n_k \in \mathbb{N}$, the number $n_1 + \ldots + n_k$ is in the finite intersection  $A_{n_1} \cap \ldots \cap A_{n_k}$, which is why we could apply saturation.

The details in the preceding paragraph are completely pedantic. The rule of thumb to keep in mind is the following:

\boldbox{\text{Rule of thumb (Internal Definition Principle)}}: \\If there is a formal logic formula (which is just a grammatically correct finite string of symbols) $\phi(x, v_1, \ldots, v_k)$ in which $x, v_1, \ldots, v_k$ are variables and the rest of the symbols appearing in this formula are standard mathematical symbols, then for any fixed members $a_1, \ldots, a_k$ of some internal set, the set of those $x$ (in the nonstandard universe) for which $\phi(x, a_1, \ldots, a_k)$ is true is an internal set. We can think of $\phi$ as the property that defines this internal set with \textit{parameters} $a_1, \ldots, a_k$. All internal sets arise this way.

The internal definition principle is rigorously proved using induction on how long the formula $\phi$ is, but we shall only be content with understanding the intuition of how it allows us to build more internal sets out of the internal sets that are easier to see. For instance, in the proof of Proposition \ref{existence of infinites}, we used the formula $\phi(x, n) \defeq \text{`}x > n\text{'}$ in order to define an internal set for each $n \in {^*}\mathbb{N}$ (though we only used those internal sets when $n$ took values in $\mathbb{N}$, which is contained in ${^*}\mathbb{N}$). Note that our earlier (alien) intuition of the internal set as being a set that the aliens can ``perceive'' given that they perceive the universe at scales infinitely finer than us still works. Just like we can define (``perceive'') the set of all numbers greater than a fixed number we have access to, so can they in their richer mathematical universe. We will see a more complicated application of the idea of using known internal sets to define other internal sets again when we study overflow and underflow in Proposition \ref{Over and under}.

When one works using nonstandard analysis, as long as we have saturation, the actual nonstandard universe that one has usually does not matter, as any individual formal properties about the standard universe still transfer over to the nonstandard universe (while saturation guarantees that we do get new elements, thus allowing us to work with a richer mathematical structure to prove things in, and hopefully transfer back our conclusions to the standard universe). 

In any application of (Robinson's) nonstandard analysis to standard mathematics (and hence in this paper), we fix a saturated nonstandard universe that satisfies the transfer principle, which is what we shall do going forward. This can be made more precise if desired (with the superstructure framework of the next section a step in that direction), but this level of understanding is sufficient to start working with nonstandard analysis. 

For the sake of completion, we note the following consequence of saturation which shows that the hypothesis in Proposition \ref{infinite natural0} is always true for saturated nonstandard universes.  

\begin{proposition}\label{infinite natural}
The set ${^*}\mathbb{N} \backslash \mathbb{N}$ is nonempty.
\end{proposition}

\begin{proof}
    The collection $\{{^*}\mathbb{N} \backslash \{n\}: n \in \mathbb{N}\}$ is a countable collection of internal sets satisfying the finite intersection property. Hence by saturation, there exists $N \in {^*}\mathbb{N} \backslash \mathbb{N}$. 
\end{proof}

As discussed earlier, this implies that $\mathbb{N}$ is not an internal set in the nonstandard universe (as it is bounded by any infinite number, but does not have a least upper bound). While we assumed that $\mathbb{N} \subseteq {^*}\mathbb{N}$ by making the identification that ${^*}n = n$ for each $n \in \mathbb{N}$, we cannot do the same identification when we talk at the level of (nonstandard extensions of) sets in the standard universe. Indeed, for any $A \in \mathcal{P}(\mathbb{N})$, we can only use transfer to conclude that ${^*}A \in {^*}\mathcal{P}(\mathbb{N})$, where ${^*}A$ contains $A$ but may in general be much bigger than $A$ (it is the same as $A$ if and only if $A$ is finite --- another fact that can be proved by transfer and saturation). Thus, we do not have $\mathcal{P}(\mathbb{N}) \subseteq {^*}\mathcal{P}(\mathbb{N})$ in the literal sense, but we do have $\{{^*}A: A \in \mathcal{P}(\mathbb{N})\} \subseteq {^*}\mathcal{P}(\mathbb{N})$. The reason this was not an issue in saying that $\mathbb{N} \subseteq {^*}\mathbb{N}$ or $\mathbb{R} \subseteq {^*}\mathbb{R}$ was because we hid some technical details under the rug when we identified ${^*}c$ with $c$ for each $c \in \mathbb{R}$, something we shall touch upon a bit when we discuss the superstructure framework in the next section. 

In particular, we have been working with a standard universe and a saturated nonstandard universe at an intuitive level, but we have yet not made mathematically precise what the standard universe is. We postpone that until the next section when we talk about the superstructure framework.

Note that if $N > \mathbb{N}$ (which is our shorthand for $N \in {^*}\mathbb{N}\backslash\mathbb{N}$ in view of Proposition \ref{infinite natural0}), then
$[N] = \{n \in {^*}\mathbb{N}: n \leq N\}$ is an example of an internal subset of ${^*}\mathbb{N}$ that is not of the type ${^*}A$ for any $A \in \mathcal{P}(\mathbb{N})$.     

If $\phi$ is a property that defines an internal set (possibly with some fixed parameters $a_1, \ldots, a_k$), then that internal set cannot contain all natural numbers without containing a hyperfinite number $N > \mathbb{N}$ (as the set $\mathbb{N}$ is not internal). More roughly, one way to think of this is to recognize the intuition that ``if something is true for arbitrarily large $n \in \mathbb{N}$, then it must be true for a hyperfinite $N > \mathbb{N}$ (and vice versa).'' This or other variations of this technique is called \textit{overflow} (or \textit{underflow} for the reverse direction), a technique that is quite useful in applications.

\begin{proposition}\label{Over and under} Let $A$ be an internal set.
\begin{enumerate}[(i)]
    \item\label{overflow}[\textbf{Overflow}] If $\mathbb{N} \subseteq A$, then there is an $N > \mathbb{N}$ such that $$[N] \subseteq A.$$ 
    \item\label{underflow}[\textbf{Underflow}] If $A$ contains all hyperfinite natural numbers, then there is an $n_0 \in \mathbb{N}$ such that ${^*}\mathbb{N}_{\geq n_0} \defeq \{n \in {^*}\mathbb{N}: n \geq n_0\} \subseteq A$.
\end{enumerate}
\end{proposition}  
\begin{proof}
  We only prove overflow, as the proof of underflow can be thought of as the dual of this proof, and is hence left as an exercise for the interested reader. Assume $\mathbb{N} \subseteq A$. Since $A$ is internal, and since $[n]$ is internal for each $n \in {^*}\mathbb{N}$ the set $\{n \in {^*}\mathbb{N}: \forall x \in [n] (x \in A)\}$ is internal by the internal definition principle. We now obtain an element $N > \mathbb{N}$ in this set due to the fact that $\mathbb{N}$ is not internal (and is yet a subset of this internal set).
\end{proof}

    \subsection{Yet more mathematical details: The superstructure framework}
    The goal of this last section of our introduction is to provide an intuitive sketch of the key features of the superstructure framework of nonstandard analysis, which is the framework that we used in the mathematical applications in this manuscript, as it allows us to talk about nonstandard versions of structures other than just the number line (so, we can talk about nonstandard extensions of topological spaces, etc.).  

In general, we fix a ``ground'' set $\mathbb{S}$ consisting of atoms (that is, we view each element of $\mathbb{S}$ as an ``individual'' without any structure, set-theoretic or otherwise), and extend what is called the superstructure $V(\mathbb{S})$ of $\mathbb{S}$, which is defined inductively as follows:

\begin{equation}
    \label{superstructure} 
    \begin{array}{rcl} 
     V_0({\mathbb{S}}) &\defeq &  \mathbb{S}, \\
     V_{n}(\mathbb{S}) &\defeq & \mathcal{P}(V_{n-1}(\mathbb{S})) \text{ for all } n \in \mathbb{N}, \\
     V(\mathbb{S}) &\defeq &  \bigcup_{n \in \mathbb{N} \cup \{0\}} V_n(\mathbb{S}).
     \end{array}
\end{equation}
    
It is the superstructure $V(\mathbb{S})$ that we may think of as what we have so far been informally calling the standard universe $\mathfrak{S}$. For some set $\mathbb{S}'$, We extend the standard universe via a \textit{nonstandard map},
$${^*}\co V(\mathbb{S}) \rightarrow V(\mathbb{S}'),$$
which, by definition, is any map satisfying the following axioms that were described in the previous section(s):
    \begin{enumerate}
        \item[(i)] ${^*}\mathbb{S} = \mathbb{S}'$,
        \item[(ii)] Transfer principle holds.
        \item[(iii)] Saturation holds.
    \end{enumerate}

For any $A \in V(\mathbb{S})$, we call ${^*}A$ the nonstandard interpretation or extension of $A$. Thus, $V({^*}\mathbb{S})$ is our nonstandard universe. We assume that the ground set $\mathbb{S}$ contains (at least) all real numbers as elements, so as to have a rich enough universe to extend. In fact, choosing $\mathbb{S}$ suitably, the superstructure $V(\mathbb{S})$ can be made to contain all mathematical objects relevant for a particular study (say, all standard objects that appeared in the main body of this paper). Indeed, since $\mathbb{R} \subseteq \mathbb{S}$, all collections of subsets of $\mathbb{R}$ live as objects in $V_2(\mathbb{S}) \subseteq V(\mathbb{S})$. For a finite subset consisting of $k$ objects from $V_m(\mathbb{S})$, the ordered $k$-tuple of those objects is an element of $V_n(\mathbb{S})$ for some larger $n$; and hence the set of all $k$-tuples of objects in $V_m(\mathbb{S})$ lies as an object in $V_{n+1}(\mathbb{S})$. For example, if $x, y \in V_m(\mathbb{S})$, then the ordered pair $(x, y)$ is just the set $\{\{x\}, \{x,y\}\} \in V_{m+2}(\mathbb{S})$. Identifying functions and relations with their graphs, $V{(\mathbb{S})}$ also contains, for each $n \in \mathbb{N}$, all functions from $\mathbb{R}^n$ to $\mathbb{R}$, all relations on $\mathbb{R}^n$, etc. If we wish to study a topological space $T$, we can just include its elements as part of $\mathbb{S}$, and then $T \in \mathcal{P}(S)= V_1(S)$ will have a nonstandard interpretation ${^*}T$. 

Since an element $a$ of $\mathbb{S}$ is not assumed to have any additional structure, we can safely identify ${^*}a$ with $a$. \footnote{What we are describing is still a somewhat intuitive picture that captures how we think. Strictly speaking, if we are working in the language of set theory (ZFC) as we have been, then the objects in our (super)structure should also be sets, so how can we think that the elements of the ground set have no set-theoretic structure without going out of the framework of ZFC? Well, this is only a pedantic issue that can be easily worked around by replacing any desired ground set by one that mimics the properties we want in a way that we might as well assume that the new ground set consists of atoms without loss of generality. The property we really require from $\mathbb{S}$ is for it to be a so-called \textit{base set}, which is defined as a set for which $\emptyset \not\in \mathbb{S}$, and such that for all $x \in \mathbb{S}$, we have $x \cap V(\mathbb{S}) \neq \emptyset$. Chang and Keisler \cite[Section 4.4]{Model_Theory} outline how to replace any given set with a base set of the same cardinality, which is why we are intuitively allowed to assume without loss of generality that the elements of the ground set $\mathbb{S}$ have no further set-theoretic structure.} On the other hand, once we start looking at a set $A$ from the standard universe, we can no longer identify $A$ with ${^*}A$ materially (though we can indeed say that ${^*}A$ satisfies the same formal properties, while containing $A$)---the additional set-theoretic structure of $A$ (along with saturation) forces ${^*}A$ to be a bigger set whenever $A$ is infinite.

\begin{remark}
    As discussed earlier, in the superstructure framework any standard function, relation, etc. can be identified with a set at an appropriate level of the iterated power sets of the ground set $\mathbb{S}$, and hence we can safely talk about nonstandard extensions of functions, relations, etc. In general, just like internal sets are those sets that are also elements of the nonstandard extension of a standard power set, we can talk about internal functions, internal relations, etc. Indeed, since standard sets have nonstandard extensions, we can see (using transfer) that the nonstandard extension of the graph of a standard function (which is what we are identifying a function with as an element of the superstructure) is again the graph of some function in the nonstandard universe, which we call the \textit{nonstandard extension} of the original function (as it matches with the original function when restricted to that domain).
\end{remark}

  In practice, we are almost never as pedantic as in the previous remark. Using the internal definition principle, the informal rule of thumb to keep in mind is that all standard functions have nonstandard extensions, and that in general, there can be other types of internal functions, which can be identified if we are able to define them in terms of an appropriate ``defining condition'' or ``formula''. 
    
    In fact, without emphasizing it earlier, we had already been working with internal functions when we saw that ${^*}\mathbb{R}$ is an ordered field containing the ordered subfield $\mathbb{R}$ (as, for example, the function $+ \colon \mathbb{R} \times \mathbb{R} \to \mathbb{R}$ gets extended to a function ${^*}+ \colon {^*}(\mathbb{R} \times \mathbb{R}) \to {^*}\mathbb{R}$ such that the restriction $({^*}+) \vert_{\mathbb{R} \times \mathbb{R}} = +$. Note that for standard sets $A$ and $B$, the set ${^*}(A \times B)$ can be seen to be the same as ${^*}A \times {^*}B$ by transfer (we sketched this on p. 85), so that $+$ (where we drop the asterisk for notational brevity) really is a function on ${^*}\mathbb{R} \times {^*}\mathbb{R}$. 

    An important class of examples of internal functions that are not the nonstandard extensions of any standard functions are obtained by considering the so-called hyperfinite sets. For each $N \in {^*}\mathbb{N}$, the initial segment $[N]$ is internal by the internal definition principle. If $N > \mathbb{N}$, then the initial segment $[N]$ is technically an infinite set (for instance, it contains $\mathbb{N}$), but it behaves like a finite set and there is a sense in which we can think it has $N$ elements. Let us make this precise in a more general setting. 

 For any standard set $A$, let $\mathcal{P}_{\text{fin}}(A)$ denote the set of all finite subsets of $A$. Then there is a map $\# \colon \mathcal{P}_{\text{fin}}(A) \to \mathbb{N} \cup \{0\}$ which we call the cardinality function. The nonstandard extension ${^*}\# \colon {^*}\mathcal{P}_{\text{fin}}(A) \to {^*}\mathbb{N} \cup \{0\}$ will be called the \textit{internal cardinality function}. We typically suppress the asterisk and use the same notation $\#$ in both universes. The members of ${^*}\mathcal{P}_{\text{fin}}(A)$ are called the \text{hyperfinite subsets} of ${^*}A$. (Note that a hyperfinite subset can also be finite.)

 By transfer of the corresponding statement for each $n \in \mathbb{N}$, we can thus conclude that $\#([N]) = N$. By a slightly more involved, but still routine, transfer argument, we can also show that an internal set $H$ is hyperfinite if and only if there is an $N \in {^*}\mathbb{N}$ and an internal bijection $f\co H \rightarrow \{1, \ldots, N\}$. 

There is a ``sum function'' that takes any finite tuple of real numbers as an input and produces the sum of those real numbers. By transfer, we can thus abstractly make sense of ``hyperfinite sums'' (that is, the sum of hyperfinitely many nonstandard real numbers). For nonstandard real numbers $a_i$, this is the sense in which we interpret objects such as $\sum_{i = 1}^N a_i$ where $N \in {^*}\mathbb{N}$ (or in general, $\sum_{i \in H} a_i$, where $H$ is a hyperfinite set). 

Armed with the knowledge of hyperfinite sums that formally behave just like finite sums, we can go back to the inequalities in \eqref{Sarah inequality} and recognize that (, in the alien intuition whereby we are communicating our standard mathematical facts,) there would also be a sentence that would include all of those inequalities in a single statement as follows:
\begin{align}
    \forall n \in \mathbb{N} ~\left[\left(3 + \sum_{i = 1}^n \frac{9}{10^i}\right) < 4\right].
\end{align}

In the nonstandard or alien interpretation, this remains true for all $n \in {^*}\mathbb{N}$. If $N > \mathbb{N}$, then one can prove (using transfer) that $\textstyle{\left[4 - (3 + \sum_{i = 1}^N \frac{9}{10^i})\right]}$ equals $\textstyle{\left[1 - \frac{9}{10}\cdot\frac{1 - \frac{1}{10^N}}{\frac{9}{10}} \right] = \frac{1}{10^N}}$. Had nonstandard thinking been more standardized in our mathematics education, this is how Ely's student Sarah could have rigorized her intuition of $4$ and ``$3.999$ repeating forever'' being ``infinitely close'' \cite[p. 128]{ely2010nonstandard}. \footnote{One might incorrectly think that Sarah would have never been able to rigorize her intuition this way because she would have never reached a level of mathematics education where she can see nonstandard analysis as a ``research topic'' in order to learn these concepts we just described. While one would be, unfortunately, almost certainly correct in this asssessment of Sarah's practical mathematical career in the current system of mathematics education, it would still be an incorrect assessment because it is possible to teach mathematics with infinitesimals rigorously to students at Sarah's (students taking their first Calculus course) level, as Ely \cite{ely2021teaching} and several other educators cited there have recognized.}



    

The superstructure framework allows us to use nonstandard thinking in settings much more general than just real analysis---we shall illustrate this next when we work with topology and probability theory (and the interplay between them) using nonstandard methods. But we shall first need to provide some background in the basic notation and conventions from both subjects that we will be using, and hence we conclude our introduction to basic nonstandard methods here, resuming with more advanced nonstandard methods in the next appendix.

    \section{On topological measure theory, guided by nonstandard perspectives}\label{AppA}
Given a nonempty set $\Omega$, an \textit{algebra} $\mathcal{A}$ on it is a nonempty collection of subsets of $\Omega$ that is closed under finite unions and complements (and hence under finite intersections by de Morgan's laws). A \textit{sigma algebra} $\mathcal{F}$ on it is an algebra that is also closed under countable unions, in which case we call the pair $(\Omega, \mathcal{F})$ a \textit{measurable space}. A \textit{finitely additive probability space} is a triple $(\Omega, \mathcal{A}, \mathbb{P})$ where $\Omega$ is a set, $\mathcal{A}$ is an algebra on it, and $\mathbb{P} \colon \mathcal{A} \to [0,1]$ satisfies the following conditions:
\begin{enumerate}
    \item[(P1)] $\mathbb{P}(\Omega) = 1$.
    \item[(P2)] $\forall A_1, A_2 \in \mathcal{A} ~\left( (A_1 \cap A_2 = \emptyset) \rightarrow \mathbb{P}(A_1 \cup A_2) = \mathbb{P}(A_1) + \mathbb{P}(A_2)\right)$.
\end{enumerate}
A \textit{probability space} is a triple $(\Omega, \mathcal{F}, \mathbb{P})$ where $(\Omega, \mathcal{F})$ is a measurable space and $\mathbb{P} \colon \mathcal{F} \to [0,1]$ satisfies both axiom (P1) and the following stronger form of (P2) that makes it a \textit{countable additive measure} as opposed to only a finitely additive one:
   \begin{enumerate}
       \item[(P2')] For each $\{A_n : n \in \mathbb{N}\} \subseteq \mathcal{A}$, we have  \begin{align*}\left( \left[\forall i, j \in \mathbb{N} (i \neq j) \rightarrow (A_i \cap A_j = \emptyset)\right] \rightarrow \mathbb{P}\left(\bigcup_{n \in \mathbb{N}}A_n\right) = \sum_{n \in \mathbb{N}}\mathbb{P}(A_n)\right).
       \end{align*}
   \end{enumerate}
By a \textit{probability measure} without any further qualifiers, unless otherwise specified, we shall mean a countably additive probability function on a measurable space, although we will often sometimes work with \textit{finitely additive probability measures} defined on an algebra of sets. 

Given a topological space $(T, \tau)$ (that is, $T$ is a non-empty set, while $\tau$ is a non-empty collection of subsets of $T$ that is closed under finite intersections and arbitrary unions --- the members of $\tau$ being called \textit{open sets}, while their complements being called \textit{closed sets}), we use $\mathcal{B}(T)$ to denote the \textit{Borel sigma algebra of $T$}, which is defined as the smallest sigma algebra on $T$ that contains all open subsets of $T$. By a Borel probability measure on $T$, we shall mean a countably additive probability measure on $(T, \mathcal{B}(T))$. We are assuming readers to have familiarity with basic point-set topological notions such as Hausdorffness, compactness, etc, and also with basic measure theoretic constructions such as integrals (or \textit{expected values} in the probabilistic settings).

We now recall the definitions of some important classes of probability measures.  

\begin{definition}\label{tightness definition}
For a Hausdorff space $T$, a (finitely or countably additive) Borel probability measure $\mu$ is called \textit{tight} if given any $\epsilon \in \mathbb{R}_{>0}$, there is a compact subset $K_{\epsilon}$ such that the following holds:
\begin{align}\label{tightness 1}
    \mu(K_{\epsilon}) > 1 - \epsilon.
\end{align}
\end{definition}

An alternative way to write the above condition for tightness is the following:
\begin{align}\label{tightness 2}
    \mu(T) = \sup\{\mu(K): K \text{ is a compact subset of } T\}.
\end{align}

If a measure $\mu$ satisfies \eqref{tightness 2} with the occurrence of $T$ replaced by any Borel subset of $T$, then we call it a Radon measure. More formally we make the following definition (the second line following from the first since we are only considering probability, and in particular finite, measures).

\begin{definition}\label{definition of Radon probability measures}
For a Hausdorff space $T$, a (finitely or countably additive) Borel probability measure $\mu$ is called \textit{Radon} if for each Borel set $B \in \mathcal{B}(T)$, the following holds:
\begin{align*}
    \mu(B) &= \sup \{\mu(K): K \subseteq B \text{ and } K \text{ is compact}\} \\
    &= \inf \{\mu(G): B \subseteq G \text{ and } G \text{ is open}\}.
\end{align*}
\end{definition}

\begin{remark}\label{Schwartz remark}
    Note that the Hausdorffness of the topological space $T$ was assumed in the previous definitions so as to ensure that the compact sets appearing in them were Borel measurable (as a compact subset of any Hausdorff space is automatically closed). While not typically done (as many results do not generalize to those settings), these definitions can be made for arbitrary topological spaces if we replace the word ``compact'' by ``closed and compact''. See Schwartz \cite[pp. 82-88]{Schwartz} for more details on this generalization. (Schwartz uses the phrase `quasi-compact' instead of `compact' in this discussion.) In this manuscript, we will always have an underlying assumption of Hausdorffness of $T$ during any discussions involving tight or Radon measures.
\end{remark}

\begin{remark}\label{tightness and Radon}
It is clear that all Radon measures are tight. Note that any countably additive Borel probability measure on a $\sigma$-compact Hausdorff space (that is, a Hausdorff space that can be written as a countable union of compact spaces) is tight. Vakhania--Tarladze--Chobanyan \cite[Proposition 3.5, p. 32]{Vakhania-Tariladze-Chobanyan} constructs a non-Radon Borel probability measure on a particular compact Hausdorff space (the construction being attributed to Dieudonn\'e). Thus, not all tight measures are Radon. 
\end{remark}

With these basic definitions from measure theory and topology out of the way, we now resume our introduction to nonstandard methods started in the previous appendix by providing reviews (in the next two sections respectively) of the basic ideas from Loeb measure theory and nonstandard topology that we shall need. 

\subsection{Loeb probability theory}
Naively, internal sets have the same formal properties as standard sets, and therefore internal functions (which are just functions whose graphs are internal) have the same formal properties as standard functions. In general, this type of naive thinking can guide us when encountering internal versions of more complicated structures. 

In the standard universe, a probability space is a triple $(\Omega, \mathcal{F}, \mbp)$ where $\Omega$ is a non-empty set, $\mathcal{F}$ is a sigma algebra on $\Omega$, and $\mbp \colon \mathcal{F} \to [0,1]$ is a  function satisfying the well-known probability axioms. 

Thus, formally, an internal probability space is a triple $(\mathfrak{T}, \mathcal{F}, \nu)$, where $\mathfrak{T}$ is a non-empty internal set, $\mathcal{F}$ is an internal sigma algebra on $\mathfrak{T}$, and $\nu \colon \mathcal{F} \to {^*}[0,1]$ is an internal function satisfying the nonstandard interpretations of the standard axioms for probability measures. 

There are several complications with the above description. Firstly, one of the axioms for sigma algebras requires one to check closure under countable unions. By transfer, verifying that a collection is an internal sigma algebra requires us to verify closure under \textit{hypercountable unions} --- more formally, if there is an internal function $f \colon {^*}\mathbb{N} \to \mathcal{F}$ (which is the nonstandard analog of taking a countable sequence of sets in the standard universe), then we require $\cup_{n \in {^*}\mathbb{N}} f(n) \in \mathcal{F}$. Secondly, the countable additivity axiom for probabilities now gets interpreted as a statement saying that the measure of a hypercountable disjoint union is the appropriate hypercountable sum, where the latter can be defined by interpreting the definition of countable sums in the nonstandard universe. All of this can be too cumbersome to work with, but thanks to Peter Loeb, we can do usual countably additive probability theory in the nonstandard universe without worrying about the complicated axioms that an internal probability space must satisfy. 

Loeb \cite{Loeb-1975} found a way to convert an internal finitely additive probability on an internal algebra \footnote{In analogy with our approach of understanding internal objects as having the same formal properties as the corresponding standard objects, we can think of an internal algebra as an internal set $\mathcal{A}$ consisting of internal subsets of some nonstandard sample space $\mathfrak{T}$ such that $\mathcal{A}$ is an algebra of sets in the standard sense. An internal finitely additive probability $\nu$ is then an internal function $\nu \colon \mathcal{A} \to {^*}[0,1]$ such that: \begin{enumerate}[(i)]
    \item $\nu(A \cup B) = \nu(A) + \nu(B) \text{ if } A \cap B = \emptyset.$
    \item $\nu(\Omega) = 1$.
\end{enumerate}
Note that, by transfer, an internal finitely additive measure $\nu$ is actually \textit{hyperfinitely} additive in the sense that it maps the internal union of a hyperfinite collection of mutually disjoint internal measurable sets to the hyperfinite sum of the individual measures of those sets.} into a legitimate countably additive probability measure on a sigma algebra containing the internal algebra. Loeb's method relies on the following consequence of saturation:

\begin{proposition}\label{countable union}
A countable union of disjoint internal sets is internal if and only if all but finitely many of them are empty.
\end{proposition}
\begin{proof}
Suppose $\{A_i\}_{i \in \mathbb{N}}$ is a countable collection of disjoint internal sets. Let $A = \cup_{i \in \mathbb{N}}A_i$. If all but finitely many of the $A_i$ are empty, then $A$ being a finite union of internal sets is also internal due to transfer.

Conversely, if $A$ is internal, then $A \backslash A_i$ is internal for each $i \in \mathbb{N}$ by transfer. In that case, if all but finitely many of the $A_i$ are not empty, then the collection $\{A \backslash A_i\}_{i \in \mathbb{N}}$ would satisfy the finite intersection property. By saturation, this would lead to $\cap_{i \in \mathbb{N}} (A \backslash A_i) \neq \emptyset$, which is absurd. This completes the proof by contradiction.
\end{proof}

More precisely, consider an internal finitely additive probability space $(\mathfrak{T}, \mathcal{A}, \nu)$. Then, using Remark \ref{st part morphism}, the function $\st(\nu) \colon \mathcal{A} \to [0,1]$ is a finitely additive measure on an algebra. Due to Proposition \ref{countable union}, the hypothesis of Carath\'eodory's extension theorem is trivially satisfied for the finitely additive measure $\st(\nu)$ on the algebra $\mathcal{A}$. Therefore, by that theorem, there exists a unique countably additive probability measure $L\nu$ (called the \textit{Loeb measure induced by $\nu$}) on a sigma algebra $L(\mathcal{A})$ containing $\mathcal{A}$ such that $(\mathfrak{T}, L(\mathcal{A}), L\nu)$ is a complete probability space---that is, a probability space in which all subsets of measure zero sets are measurable.\footnote{Loeb's method works for any internal finitely additive finite measure, although we only need it in the case of probability measures.}

We next illustrate an alternative, more explicit, construction of Loeb measures, that is often more useful in practice. If $(\mathfrak{T}, \mathcal{A}, \nu)$ is an internal finitely additive probability space, then the corresponding inner and outer measures on all subsets of $\mathfrak{T}$ are defined as follows:
\begin{align}
    \underline{\nu}(A) &\defeq \sup\{\st(\nu(B)): B \in \mathcal{A} \text{ and } B \subseteq A\}, \text{ and } \nonumber \\
     \overline{\nu}(A) &\defeq \inf\{\st(\nu(B)): B \in \mathcal{A} \text{ and } A \subseteq B\}. \label{inner and outer}
\end{align}

The collection of sets for which the inner and outer measures agree form a sigma algebra called the $\textit{Loeb sigma algebra}$ $L(\mathcal{A})$. The common value $ \underline{\nu}(A) =  \overline{\nu}(A)$ in that case is defined as the \textit{Loeb measure} of $A$, written $L\nu(A)$. We call $(\mathfrak{T}, L(\mathcal{A}), L\nu)$ the \textit{Loeb space} of $(\mathfrak{T}, \mathcal{A}, \nu)$. More formally, we have:
\begin{align}
    L(\mathcal{A}) \defeq \{A \subseteq \mathfrak{T}:  \underline{\nu}(A) =  \overline{\nu}(A)\}, \label{loeb measurable}
\end{align}
 and
\begin{align}
    L\nu(A) \defeq \underline{\nu}(A) =  \overline{\nu}(A) \text{ for all } A \in L(\mathcal{A}). \label{loeb measure}
\end{align}

When the internal finitely additive measure $\nu$ is clear from context, we will frequently write `\text{Loeb measurable}' (in the contexts of both sets and functions) to mean measurable with respect to the corresponding Loeb space $(\mathfrak{T}, L(\mathcal{A}), L\nu)$. Note that the Loeb sigma algebra $L(\mathcal{A})$, as defined above, does on the choice of $\nu$---we will use appropriate notation such as $L_\nu(\mathcal{A})$ to indicate this dependence if there is any chance of confusion regarding the original measure inducing the Loeb sigma algebra. If we use the notation $L(\mathcal{A})$, then it is understood that a specific internal finitely additive measure $\nu$ has been fixed on $(\mathfrak{T}, \mathcal{A})$ during that discussion. 


For each finitely additive probability space $(\Omega, \mathcal{A}, \mathbb{P})$ in the standard universe, there is a corresponding $\mathbb{R}$-vector space $L^1(\mathbb{P})$ of integrable functions from $\Omega$ to $\mathbb{R}$, and a corresponding linear map $\int \colon L^1(\mathbb{P}) \to \mathbb{R}$. We often write $\int_{\Omega} f d\mathbb{P}$ instead of just $\int f$, in order to emphasize the dependence on the underlying space. We shall also use $\mathbb{E}(f)$ or $\mathbb{E}_{\mathbb{P}}(f)$ to denote the same.

Interpreting this formally in the nonstandard universe, for each internal finitely additive probability space $(\mathfrak{T}, \mathcal{A}, \nu)$, there is a corresponding internal ${^*}\mathbb{R}$-vector space ${^*}L^1(\nu)$ of \textit{internally integrable} functions, and a corresponding linear map $\starint \colon {^*}L^1(\nu) \to {^*}\mathbb{R}$. Analogously, to the standard situation, we often write $\starint_{\mathfrak{T}} f d\mathbb{P}$ instead of just $\starint f$, in order to emphasize the dependence on the underlying internal probability space. We shall also use ${^*}\mathbb{E}(f)$ or ${^*}\mathbb{E}_{\nu}(f)$ to denote the same.

Let us fix an internal finitely additive probability space $(\mathfrak{T}, \mathcal{A}, \nu)$. If we work with an internally integrable function $f \colon \mathfrak{T} \to {^*}\mathbb{R}$ such that ${^*}\mathbb{E}_{\nu}(f) \in {^*}\mathbb{R}_{\text{fin}}$, then $L\nu(f \in {^*}\mathbb{R}_{\text{fin}})$ is the same as $\lim_{n \in \mathbb{N}}(1 - L\nu(\abs{f} > n))$, which can be shown to be equal to one by transfer of Chebyshev's inequality. In other words, $f$ being internally integrable implies that it takes only finite values Loeb almost surely, in which case $\st(f)$ is well-defined Loeb almost surely. In that situation, it is interesting to see when we have the following equality (which, if true, allows us to use ordinary probability theory methods on nonstandard spaces!):
\begin{align}\label{S-integrable}
    \st\left(\starint_{\mathfrak{T}} f d\nu \right) \overset{?}{=}  \int_{\mathfrak{T}} \st(f) dL\nu.
\end{align}

The internally integrable functions for which the above equality holds are known as \textit{S-integrable} functions. In applications, $S$-integrability is usually verified by checking one of the other characterizations of \eqref{S-integrable} (see Ross \cite[Theorem 6.2, p.110]{Ross_NATO}), one useful characterization being the following condition:
\begin{align}\label{tail S-integrability}
    \st\left(\starint_\mathfrak{T} \abs{f} \mathbbm{1}_{\{\abs{f} > M\}}d \nu \right) = 0 \text{ for all } M > \mathbb{N}.
\end{align}

For the purposes of the applications in this paper, it suffices to observe (using \eqref{tail S-integrability}) that any internal function that is \textit{standardly bounded} (that is, there exist real numbers $a$ and $b$ such that the range of the function is contained in ${^*}[a,b]$) is $S$-integrable. 

In the context of the present paper, we have often worked in the situation in which $\mathfrak{T}$ is the nonstandard extension of a topological space $T$. That is, $\mathfrak{T} = {^*}T$, and $\mathcal{A}$ is the algebra ${^*}\mathcal{B}(T)$ of internally Borel subsets of ${^*}T$. Before we review the relevant topological measure theoretic background in that setting, we conclude our tutorial on basic nonstandard analysis by illustrating nonstandard ways to think about topological spaces next. 

\subsection{A glimpse of nonstandard topological thinking} If $x \in {^*}\mathbb{R}$ and $y \in \mathbb{R}$ are such such that $x \approx y$, then one way to think of this situation is that in the nonstandard universe $x$ cannot be separated from $y$ by (the nonstandard interpretation of) any open neighborhood of $y$. That is, if $\tau_y$ denotes the set of open neighborhoods of $y$, then:
\begin{align}\label{topological closeness}
    x \approx y \iff x \in {^*}U \text{ for all } U \in \tau_y. 
\end{align}
Indeed, if $x \approx y$, then $\abs{x - y} < \epsilon$ for all $\epsilon \in \mathbb{R}_{>0}$, so that whenever $U$ is an open neighborhood of $y$, we can first find an $\epsilon \in \mathbb{R}_{>0}$ small enough for which $(y - \epsilon, y + \epsilon) \subseteq U$, so that by transfer ${^*}(y - \epsilon, y + \epsilon) \subseteq {^*}U$, where the former set equals $\{z \in {^*}\mathbb{R}: y - \epsilon < z < y + \epsilon\}$ (by transfer) and hence contains $x$ by assumption. The converse direction is easier to see by letting $U$ vary over intervals around $y$ with arbitrarily small real radii.

Even though the standard part of a (finite) nonstandard real number $x$ was originally defined as the unique real number at an infinitesimal distance from $x$, the characterization \eqref{topological closeness} allows us to generalize this thinking about infinitesimal closeness in the setting of nonstandard extensions of topological spaces, even if no notion of distance or metric is assumed!

For a topological space $T$ in the standard universe, a point $x \in {^*}T$ is said to be \textit{nearstandard} to some $y \in T$ if $x$ belongs to ${^*}O$ for all open sets $O$ containing $y$. For a subset $A \subseteq T$, define its ``standard inverse'' as the set of points nearstandard to elements of $A$. That is, define
\begin{align}\label{standard inverse definition}
    \st^{-1}(A) \defeq \{x \in {^*}T: x \textit{ is nearstandard to } y \text{ for some } y \in A\}.
\end{align}
When the set $A$ is a singleton $\{x\}$, we write $\st^{-1}(x)$ in place of $\st^{-1}(\{x\})$. We denote $\st^{-1}(T)$ by $\ns({^*}T)$, as these are the nearstandard points in the nonstandard universe.  

The following characterization of \textit{Hausdroff spaces} follows almost immediately from this definition. 

\begin{lemma}\label{Hausdorff lemma}
 A topological space $T$ is Hausdorff if and only if for any disjoint collection $(A_i)_{i \in I}$ of subsets of $T$ (indexed by some set $I$), we have \begin{align}\label{disjoint standard inverse}
    \st^{-1}\left(\bigsqcup_{i \in I}A_i\right) = \bigsqcup_{i \in I} \st^{-1}(A_i),
\end{align}
where $\sqcup$ denotes a disjoint union.
\end{lemma}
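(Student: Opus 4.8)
The goal is to prove Lemma~\ref{Hausdorff lemma}: for a topological space $T$, Hausdorffness is equivalent to the statement that the set function $\st^{-1}$ distributes over arbitrary disjoint unions, i.e.\ $\st^{-1}\left(\bigsqcup_{i \in I} A_i\right) = \bigsqcup_{i \in I} \st^{-1}(A_i)$ for every disjoint family $(A_i)_{i \in I}$. The plan is to reduce this to Lemma~\ref{Hausdorff original characterization}, which already characterizes Hausdorffness via the disjointness of standard inverses of distinct \emph{points}. The inclusion $\bigcup_{i} \st^{-1}(A_i) \subseteq \st^{-1}\left(\bigsqcup_i A_i\right)$ is immediate from the definition \eqref{standard inverse of B} and requires no separation hypothesis at all: if $y \in \st^{-1}(x)$ for some $x \in A_i$, then certainly $x \in \bigsqcup_i A_i$, so $y \in \st^{-1}\left(\bigsqcup_i A_i\right)$. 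Likewise the reverse \emph{set-theoretic} inclusion (ignoring disjointness of the union on the right) is immediate. So the entire content of \eqref{disjoint standard inverse} is that the union on the right-hand side is actually \emph{disjoint}, i.e.\ that $\st^{-1}(A_i) \cap \st^{-1}(A_j) = \emptyset$ whenever $i \neq j$.

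For the forward direction, I would assume $T$ is Hausdorff and take a disjoint family $(A_i)_{i\in I}$. Suppose toward a contradiction that some $y \in {^*}T$ lies in both $\st^{-1}(A_i)$ and $\st^{-1}(A_j)$ with $i \neq j$. Unwinding \eqref{standard inverse of B}, there exist $x_1 \in A_i$ and $x_2 \in A_j$ with $y \in \st^{-1}(x_1) \cap \st^{-1}(x_2)$. Since $A_i$ and $A_j$ are disjoint, $x_1 \neq x_2$, so Lemma~\ref{Hausdorff original characterization} forces $\st^{-1}(x_1) \cap \st^{-1}(x_2) = \emptyset$, contradicting the existence of $y$. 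Hence the right-hand union is disjoint, and combined with the elementary set equality noted above, \eqref{disjoint standard inverse} holds.

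For the converse, I would argue contrapositively: if $T$ is not Hausdorff, I must produce a disjoint family for which \eqref{disjoint standard inverse} fails. By Lemma~\ref{Hausdorff original characterization}, non-Hausdorffness gives distinct points $x_1, x_2 \in T$ with $\st^{-1}(x_1) \cap \st^{-1}(x_2) \neq \emptyset$. Take the simplest possible disjoint family, namely the two singletons $A_1 = \{x_1\}$ and $A_2 = \{x_2\}$ (which are disjoint precisely because $x_1 \neq x_2$). Then $\st^{-1}(A_1) \cap \st^{-1}(A_2) = \st^{-1}(x_1) \cap \st^{-1}(x_2) \neq \emptyset$, so the union on the right-hand side of \eqref{disjoint standard inverse} is not disjoint, and the claimed equality of a (disjoint-)union notation fails. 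This completes both directions.

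I do not anticipate a serious obstacle here; the lemma is essentially a bookkeeping upgrade of Lemma~\ref{Hausdorff original characterization} from points to arbitrary disjoint families. The only point requiring a little care is conceptual rather than technical: making sure that the symbol $\bigsqcup$ on the right-hand side of \eqref{disjoint standard inverse} is being read as an \emph{assertion} of disjointness (so that the real content is $\st^{-1}(A_i) \cap \st^{-1}(A_j) = \emptyset$ for $i \neq j$) rather than merely as notation for an ordinary union. Once that is understood, the forward direction is a one-line reduction to Lemma~\ref{Hausdorff original characterization} via the disjointness of the $A_i$, and the converse is witnessed by the two-singleton family. No saturation or deeper nonstandard machinery beyond the definition \eqref{standard inverse of B} and the already-proven point characterization is needed.
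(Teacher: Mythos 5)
Your proof is correct and matches the paper's intent: the paper gives no explicit proof, stating only that Lemma~\ref{Hausdorff lemma} is an immediate modification of Lemma~\ref{Hausdorff original characterization}, which is precisely the reduction you carry out (the set equality is automatic, the content is pairwise disjointness of the $\st^{-1}(A_i)$, and both directions follow from the point characterization, with the two-singleton family witnessing the converse). Your reading of $\bigsqcup$ as asserting disjointness is also the one the paper relies on in its later applications of the lemma.
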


If $\tau$ is the topology on $T$, then we know by transfer (of the sentences $O \in \tau$, one for each such $O$) that ${^*}O \in {^*}\tau$ for all $O \in \tau$. But these are not all of the elements of ${^*}\tau$. For instance, one can show by transfer that if $\tau$ is the usual topology on $\mathbb{R}$, then all intervals under the order on ${^*}\mathbb{R}$ are members of ${^*}\tau$, even those with infinitesimal lengths. The members of ${^*}\tau$ are sometimes called \textit{${^*}$-open} or \textit{internal open sets}.

In the rest of this section, unless otherwise specified, $(T, \tau)$ is a topological space in the standard universe. Also, unless otherwise specified, for each $x \in T$, we will denote by $\tau_x$ the set of all open neighborhoods of $x$. While $\st^{-1}(x)$ may in general be non-internal, one can approximate it from inside via internal open sets. This is a very useful feature of saturation as we show next.

\begin{lemma}[Approximation Lemma]
    For each $x \in T$, there exists an internal open set $U \in {^*}\tau$ such that $x \in U \subseteq \st^{-1}(x)$.
    \end{lemma}

\begin{proof}
   Note that we have $\st^{-1}(x) = \cap_{O \in \tau_x}{^*}O$ by definition. For each $O \in \tau_x$, consider the collection:
    \begin{align*}
      \mathcal{G}_O \defeq \{V \in {^*}\tau_x : V \in {^*}\mathcal{P}(O)\}  
    \end{align*}

    Recall that ${^*}\mathcal{P}(O)$ is the set of all internal subsets of of ${^*}O$. As a set in the nonstandard universe, $\mathcal{G}_O$ is internal for each $O \in \tau_x$ by the internal definition principle. The collection $\{\mathcal{G}_O\}_{O \in \tau_x}$ satisfies the finite intersection property. (Indeed, if $O_1, \ldots, O_n \in \tau_x$ are finitely many open neighborhoods around $x$, then ${^*}(O_1 \cap \ldots \cap O_n) \in {^*}\tau_x \cap {^*}\mathcal{P}(O_i)$ for each $i \in [n]$.) Now, any $U \in \cap_{O \in \tau_x} \mathcal{G}_O$ (which is non-empty by saturation) suffices.  
\end{proof}

Using the approximation lemma allows us to think about general continuous functions nonstandardly in an intuitive manner. Roughly, a function is continuous at a point $x$ if points ``close'' to $x$ are mapped to points ``close'' to $f(x)$ (while being ``close'' to a standard point is made precise via the $\st^{-1}$ operation just like in the case when the topological spaces were $\mathbb{R}$ in Proposition \ref{real-valued continuous}).

\begin{proposition}\label{general continuity characterization}
    Suppose $f \colon (T_1, \tau_1) \to (T_2, \tau_2)$ is a function between two topological spaces. Then, $f \colon T_1 \to T_2$ is continuous at $x \in T_1$ if and only if ${^*}f(\st^{-1}(x)) \subseteq \st^{-1}(f(x))$. (Note that we are using the same symbol $\st^{-1}$ for the different set-valued functions on both $T_1$ and $T_2$, the usage being unambiguous from context.)
\end{proposition}

\begin{proof}
    First suppose that $f$ is continuous at $x \in T_1$ and let $y \in \st^{-1}(x)$. If $V$ is an open neighborhood of $f(x)$ in $T_2$, then continuity at $x$ implies that there exists an open neighborhood $U$ of $x$ in $T_1$ such that $f(U) \subseteq V$. Since $\st^{-1}(x) \subseteq {^*}U$, we obtain ${^*}f(\st^{-1}(x)) \subseteq {^*}f({^*}U) = {^*}(f(U)) \subseteq {^*}V$. Since $y \in \st^{-1}(x)$ and the open neighborhood $V$ of $f(x)$ were arbitrary, this implies that ${^*}f(y) \in \st^{-1}(f(x))$ for all $y \in \st^{-1}(x)$, as desired. 

    For the converse direction, suppose that ${^*}f(\st^{-1}(x)) \subseteq \st^{-1}(f(x))$, where $x \in T_1$. Let $V$ be an open neighborhood of $f(x)$ in $T_2$. Then $\st^{-1}(f(x)) \subseteq {^*}V$, so that ${^*}f(\st^{-1}(x)) \subseteq {^*}V$ by assumption. Also, by the approximation lemma, there exists an internal open set $U \subseteq \st^{-1}(x)$ such that $x \in U$. Such a $U$ thus witnesses the truth of the following sentence in the nonstandard universe:
    \begin{align*}
        \exists W \in {^*}\tau_1 \left\{(x \in W) \land ({^*}f(W) \in {^*}\mathcal{P}(V)) \right\}.
    \end{align*}
    Transferring this sentence to the standard universe completes the proof. 
\end{proof}

We define the set of \textit{nearstandard points} of ${^*}T$ as follows: $$\ns({^*}T) \defeq \st^{-1}(T).$$
Thus, by Lemma \ref{Hausdorff lemma}, if $T$ is Hausdorff then $\st\co \ns({^*}T) \rightarrow T$ is a well-defined map. 

If $T$ is a topological space and a subset $T'$ is viewed as a topological space equipped with the subspace topology (i.e., a subset $G' \subseteq T'$ is open in $T'$ if and only if $G' = T' \cap G$ for some open subset $G$ of $T$), then there could be multiple ways to interpret the notation $\st^{-1}(A)$. In general, this may be an issue whenever we have two topological structures with respect to which we could be taking standard inverses. We will use `$\st$' and `$\st^{-1}$' whenever the underlying topological space is clear from context. If it is not clear from context, then we mention the relevant space in a subscript. Thus in the above situation where $T' \subseteq T$, we denote by $\st_{T}^{-1}$ and $\st_{T'}^{-1}$ the corresponding set functions on subsets of $T$ and $T'$ respectively. The following relation is immediate from the fact that the nonstandard extension of a finite intersection of sets is the same as the intersection of the nonstandard extensions.

\begin{lemma}\label{useful standard inverse relation}
Let $T$ be a topological space and let $T' \subseteq T$ be viewed as a topological space under the subspace topology. For a subset $A \subseteq T' \subseteq T$, we have:
\begin{align*}
    {^*}T' \cap \st_{T}^{-1}(A) = \st_{T'}^{-1}(A).
\end{align*}
\end{lemma}

The following  intuitive nonstandard characterizations of various topological properties are well-known.

\begin{proposition}\label{appendix topological characterizations}
Let $T$ be a topological space.
\begin{enumerate}[(i)]
    \item\label{a-open} A set $G \subseteq T$ is open if and only if $\st^{-1}(G) \subseteq {^*}G$.
    \item\label{a-closed} A set $F \subseteq T$ is closed if and only if for all  $x \in {^*}F \cap \ns({^*}T)$, the condition $x \in \st^{-1}(y)$ implies that $y \in F$.
    \item\label{a-compact} A set $K \subseteq T$ is compact if and only if ${^*}K \subseteq \st^{-1}(K)$.
\end{enumerate}
\end{proposition}

\begin{proof}
    \underline{Proof of \ref{a-open}}:
Suppose $G$ is open and $y \in \st^{-1}(G)$. Then there exists an $x \in G$ such that $y \in \st^{-1}(x) = \cap_{O \in \tau_x}{^*}O$. Since $G \in \tau_x$, we thus immediately obtain $y \in {^*}G$, as desired. 

Conversely, suppose $G \subseteq T$ is such that $\st^{-1}(G) \subseteq {^*}G$. Let $x \in G$ be arbitrary. In order to show that $G$ is open, it suffices to show that $G$ contains an open neighborhood of $x$. However, this follows from the combined use of approximation lemma and transfer. Indeed, by approximation lemma, there exists an internal open set $U \in {^*}\tau$ such that $x \in U \subseteq \st^{-1}(x) \subseteq \st^{-1}(G) \subseteq {^*}G$, so that the following statement is true in the nonstandard universe (whose transferred version is what we were looking for):
\begin{align*}
    \exists V \in {^*}\tau_x (V \in {^*}\mathcal{P}(G)).
\end{align*}

\underline{Proof of \ref{a-closed}}: Suppose $F$ is closed. Suppose, if possible, that there exists an $x \in {^*}F \cap \ns({^*}T)$ such that $x \in \st^{-1}(y)$ for some $y \in T \backslash F$. Since $T \backslash F$ is open, the nonstandard characterization \ref{a-open} for open sets implies that:
\begin{align*}
    x \in \st^{-1}(y) \subseteq \st^{-1}(T \backslash F) \subseteq {^*}(T \backslash F) = {^*}T \backslash {^*}F,
\end{align*}
where the last set equality is true by transfer. However, this is a contradiction since we started by assuming that $x \in {^*}F$.

Conversely, suppose $F \subseteq T$ has the property that for all $x \in {^*}F \cap \ns({^*}T)$, the condition $x \in \st^{-1}(y)$ implies $y \in F$. We shall show that $T \backslash F$ is open by verifying that it satisfies the nonstandard characterization \ref{a-open} for open sets. To that end, we must show that $\st^{-1}(T \backslash F) \subseteq {^*}(T \backslash F) = {^*}T \backslash {^*}F$. Suppose, if possible, that $x \in \st^{-1}(T \backslash F)$ but $x \not\in {^*}T \backslash {^*}F$. Then there exists $y \in T \backslash F$ such that $x \in \st^{-1}(y)$. But $x \in {^*}F \cap \ns({^*}T)$ and $y \not\in F$, which is a contradiction. 

\underline{Proof of \ref{a-compact}}: Suppose $K$ is a compact subset of $T$, and let $y \in {^*}K$. If $y \notin \st^{-1}(x) \text{ for all } x \in K$, then for each $x \in K$, there is $U_x \in \mathcal{\tau}_x$ with $y \not\in {^*}U_x$, that is, $y \in {^*}(K \backslash U_x)$. By compactness, $K \subseteq \cup_{x \in B}U_{x}$ for some $B \in \mathcal{P}_{\text{fin}}(K)$. Thus, ${^*}K \subseteq {^*}(\cup_{x \in B}U_{x}) = \cup_{x \in B}{^*}U_{x}$\footnote{The finiteness of the set $B$ is useful in obtaining this last set equality via transfer.}, which implies $\cap_{x \in B} {^*}(K \backslash U_x) = \emptyset$, a contradiction since $y$ belongs to this intersection.

For the converse direction, we shall again prove its contrapositive. if $K$ is not compact, there is an open cover $\{U_i : i \in I\}$ of $K$ that does not admit a finite subcover. Thus, for each finite $J \subseteq I$, there is $y_J \in {^*}(K \backslash \cup_{j \in j}U_j)$. By saturation applied to the collection $\{{^*}K \backslash U_i\}_{i \in I}$, we know that there exists $y \in \cap_{i \in I}{^*}K \backslash {^*}U_i$. Then $y \notin \st^{-1}(x)$ for any $x \in K$, as desired.
\end{proof}

The above characterization of compactness is sometimes called Robinson's characterization of compactness. In this author's opinion, Robinson's characterization of compactness provides one of the most poignant illustrations of the alien intuition for nonstandard analysis we built in the previous appendix. All points in the alien's interpretation of a compact space must be nearstandard to some point in the original space. Thus, as a closing motto, a topological space is compact if and only if even the aliens cannot make it too inaccessible!

\subsection{Pushing down Loeb measures defined on nonstandard extensions of topological spaces}\label{Nonstandard review}
One of the aims of this appendix is to describe the method of \textit{pushing down} Loeb measures. This will allow us to precisely talk about when an internal measure on the nonstandard extension of a topological space is, in a reasonable sense, close to a standard measure. (This idea will be made more precise at the end of our discussion on the $A$-topology in the next subsection; see, for example, Theorem \ref{Landers and Rogge main result} and Remark \ref{pushing down remark}.)

For the remainder of this section, we work with an internal measurable space $(\mathfrak{T}, \mathcal{T})$ that arises by nonstandardly interpreting the Borel measurable space induced by a topological space---that is, $\mathfrak{T}$ is the nonstandard extension of a topological space $T$, while $\mathcal{T}$ is the internal sigma algebra ${^*}\mathcal{B}(T)$ of internally Borel subsets of ${^*}T$. 

The following technical consequence of Proposition \ref{appendix topological characterizations} will be useful in Appendix \ref{AppB} (specifically in the proof of Theorem \ref{Prokhorov for P_N}), so we prove it next.  
\begin{lemma}\label{compactness lemma}
Suppose $(F_i)_{i \in I}$ is a collection of closed subsets of a Hausdorff space $T$ (where $I$ is an index set in the standard universe). Suppose that $K \defeq \cap_{i \in I} F_i$ is compact. Then for any open set $G$ with $K \subseteq G$, we have:
\begin{align}\label{K observation}
{^*}K \subseteq \left[ \left(\bigcap_{i \in I}{^*}{F_i}\right) \cap \ns({^*}T) \right] \subseteq {^*}G.
\end{align}
\end{lemma}
\begin{proof}
The first inclusion in \eqref{K observation} is true since ${^*}K \subseteq {^*}F_i$ for all $i \in I$ (which follows by transfer, because $K \subseteq F_i$ for all $i \in I$), and since $K$ is compact (so that all elements of ${^*}K$ are nearstandard by Proposition \ref{appendix topological characterizations}\ref{a-compact}). 

To see the second inclusion in \eqref{K observation}, suppose we take $x \in \cap_{i \in I}\left({^*}F_i \cap \ns({^*}T)\right)$. Since $T$ is Hausdorff, $x \in \ns({^*}T)$ has a unique standard part, say $\st(x) = y \in T$. Since $F_i$ is closed for each $i \in I$, it follows from Proposition \ref{appendix topological characterizations}\ref{a-closed} that $y \in F_i$ for all $i \in I$. As a consequence, $y \in K \subseteq G$. Hence, by Proposition \ref{appendix topological characterizations}\ref{a-open}, it follows that $x \in \st^{-1}(y) \subseteq {^*}G$, thus completing the proof.
\end{proof}

Given an internal probability space $({^*}T, {^*}\mathcal{B}(T), \nu)$, where $T$ is a Hausdorff space, if we know that $\st^{-1}(B)$ is Loeb measurable with respect to the corresponding Loeb space $({^*}T, L({^*}\mathcal{B}(T)), L\nu)$ for all Borel sets $B \in \mathcal{B}(T)$, then one can define a Borel measure on $(T, \mathcal{B}(T))$ by defining the measure of a Borel set $B$ as $L\nu(\st^{-1}(B))$. The fact that this defines a Borel measure in this case is not difficult to check. This measure is a probability measure only in the case that the set of {nearstandard points} $\ns({^*}T) \defeq \st^{-1}(T)$ is Loeb measurable with Loeb measure equaling one. 

Thus, an internal probability space $({^*}T, {^*}\mathcal{B}(T), \nu)$ naturally induces a probability measure on $(T, \mathcal{B}(T))$ if the following conditions two are met:
\begin{enumerate}[(i)]
    \item\label{verify i} The set $\st^{-1}(B)$ must be Loeb measurable for any Borel set $B \in \mathcal{B}(T)$.
    \item\label{verify ii} It must be the case that $L\nu(\ns({^*}T)) = 1$.
\end{enumerate}

Verifying when $\st^{-1}(B)$ is Loeb measurable for all Borel sets $B \in \mathcal{B}(T)$ is a tricky endeavor in general, and has been studied extensively. It is interesting to note that if the underlying space $T$ is regular\footnote{Recall that a topological space $T$ is called \textit{regular} if any closed set and a point outside that closed set can be separated via open sets. That is, given a closed set $F \subseteq T$ and given $x \in T \backslash F$, there exist disjoint open sets $G_1$ and $G_2$ such that $x \in G_1$ and $F \subseteq G_2$. }, then this condition is equivalent to the Loeb measurability of $\ns({^*}T)$ (this was investigated by Landers and Rogge as part of a larger project on universal Loeb measurability---see \cite[Corollary 3, p. 233]{Landers-Rogge}; see also Aldaz \cite{Aldaz}). Prior to Landers and Rogge, the same result was proved for locally compact Hausdorff spaces by Loeb \cite{Loeb-1979}. Also, Henson \cite{Henson} gave characterizations for measurability of $\st^{-1}(B)$ when the underlying space is either completely regular or compact. See also the discussion after Theorem 3.2 in Ross \cite{Ross_NATO} for other relevant results in this context. We will, however, not assume any additional hypotheses on our spaces, and hence we must study sufficient conditions for \ref{verify i} and \ref{verify ii} that work for any Hausdorff space. 

The results in Albeverio et al. \cite[Section 3.4]{Albeverio} are appropriate in the general setting of Hausdorff spaces. Their discussion is motivated by the works of Loeb \cite{Loeb-1976, Loeb-1979} and Anderson \cite{Anderson-thesis, Anderson-transactions}. The main 
result we borrow from their work is as follows (see \cite[Theorem 3.4.6, p. 89]{Albeverio} for its proof).\footnote{\label{Albeverio footnote}In the previous iterations of this work, we had only stated a simpler special case of the this theorem, the case in which $\nu$ is an internal measure on $({^*}T, {^*}\mathcal{B}(T))$. We had stated that we only needed that simpler version in our proof of de Finetti--Hewitt--Savage theorem. However, it has now become clear with the current revision that we do need the more general result from Albeverio et al. For more details, see Theorem \ref{Prokhorov for P_N} and Footnote \ref{main footnote}.}

\begin{theorem}\label{Albeverio theorem}
Let $T$ be a Hausdorff space. Suppose $\mathscr{T}$ is a base for the topology on $T$ that is closed under finite unions. Let $({^*}T, \mathcal{A}, \nu)$ be an internal, finitely additive probability space such that ${^*}O \in \mathcal{A}$ for all $O \in \mathscr{T}$. Let $({^*}T, L(\mathcal{A}), L\nu)$ denote the corresponding Loeb space.  

Assume further that for each $\epsilon \in \mathbb{R}_{>0}$, there is a compact set $K_{\epsilon}$ with
\begin{align}\label{nonstandard tightness assumption}
    \alpha_{K_{\epsilon}} \defeq \inf\{L\nu({^*}O): K_{\epsilon} \subseteq O \text{ and } O \in \mathscr{T}\} \geq 1 - \epsilon.
\end{align}
Then $L\nu \circ \st^{-1}$ is a Radon probability measure on $T$ such that $(L\nu \circ \st^{-1})(K) = \alpha_K$ for all compact sets $K$. 
\end{theorem}

The following corollary now follows from the definition of tightness.

\begin{corollary}\label{tightness corollary}
Let $T$ be a Hausdorff space and let $\mu$ be a tight probability measure on it. Then $L{^*}\mu \circ \st^{-1}$ is a Radon probability measure on $T$.
\end{corollary}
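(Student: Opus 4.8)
The plan is to deduce this directly from Theorem \ref{Albeverio theorem}, applied to the internal probability space $({^*}T, {^*}\mathcal{B}(T), {^*}\mu)$. Since $T$ is already Hausdorff by hypothesis, the only thing left to verify is the nonstandard tightness hypothesis \eqref{nonstandard tightness assumption} for the internal measure ${^*}\mu$, and the key observation that makes this routine is that the Loeb measure of the nonstandard extension of a standard Borel set agrees with the original measure of that set.

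First I would record this key identity. For any Borel set $B \in \mathcal{B}(T)$, the set ${^*}B$ belongs to the internal algebra ${^*}\mathcal{B}(T)$, so by the definition of the Loeb measure (of a set already lying in the internal algebra) we have $L{^*}\mu({^*}B) = \st({^*}\mu({^*}B))$. By transfer, ${^*}\mu({^*}B) = {^*}(\mu(B))$, and since $\mu(B)$ is a standard real number this equals $\mu(B)$; hence $L{^*}\mu({^*}B) = \mu(B)$. In particular this holds for every open set $O$ in the topology $\tau$ on $T$.

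Next I would invoke the tightness of $\mu$ (Definition \ref{tightness definition}): given $\epsilon \in \mathbb{R}_{>0}$, fix a compact set $K_{\epsilon} \subseteq T$ with $\mu(K_{\epsilon}) > 1 - \epsilon$. For any open set $O$ with $K_{\epsilon} \subseteq O$, monotonicity of $\mu$ gives $\mu(O) \geq \mu(K_{\epsilon}) > 1 - \epsilon$, and by the previous paragraph $L{^*}\mu({^*}O) = \mu(O)$. Taking the infimum over all such open $O$ therefore yields
\[
\inf\{L{^*}\mu({^*}O): K_{\epsilon} \subseteq O \text{ and } O \in \tau\} \geq 1 - \epsilon,
\]
which is precisely the hypothesis \eqref{nonstandard tightness assumption} of Theorem \ref{Albeverio theorem} for the internal measure ${^*}\mu$.

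Since $T$ is Hausdorff, Theorem \ref{Albeverio theorem} then applies and concludes that $L{^*}\mu \circ \st^{-1}$ is a Radon probability measure on $T$, completing the argument. I do not expect any genuine obstacle here: the entire content is the identity $L{^*}\mu({^*}O) = \mu(O)$ for standard open sets, together with the transfer of standard tightness into the nonstandard tightness condition \eqref{nonstandard tightness assumption}; the substantive work has already been carried out in establishing Theorem \ref{Albeverio theorem} (and the underlying Lemma \ref{Albeverio Lemma}).
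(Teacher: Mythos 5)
Your proposal is correct and is essentially the proof the paper intends: the corollary is stated as an immediate consequence of Theorem \ref{Albeverio theorem}, with tightness of $\mu$ transferring (via the identity $L{^*}\mu({^*}O) = \st({^*}\mu({^*}O)) = \mu(O)$ for standard open $O$) into the hypothesis \eqref{nonstandard tightness assumption} for the internal measure ${^*}\mu$. Your write-up just makes explicit the routine verification the paper leaves to the reader.
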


In the next section, we will study a useful topology on the space of all Borel probability measures over a topological space $T$. It will turn out that under the assumptions of Theorem \ref{Albeverio theorem}, the internal measure $\nu$ on $({^*}T, {^*}\mathcal{B}(T))$ is nearstandard to $L\nu \circ \st^{-1}$ with respect to this topology (see Theorem \ref{Landers and Rogge main result}). 

\subsection{The Alexandroff topology on the space of probability measures on a topological space}\label{A-topology}

Consider the set $\pt$ of all Borel probability measures on a topological space $T$. For each bounded measurable $f\co T \rightarrow \mathbb{R}$, define the map $E_f \co \pt \rightarrow \mathbb{R}$ by
\begin{align}\label{definition of E_f}
    E_f(\mu) \defeq \mathbb{E}_\mu(f) = \int_T f d\mu.
\end{align}

Then, the \textit{weak topology} on $\pt$ is defined to be the coarsest topology on $\pt$ which makes the maps $\mu \mapsto E_f(\mu)$ continuous whenever $f \colon T \to \mathbb{R}$ is bounded continuous. In the 1960s, Billingsley \cite{Billingsley1968} and Parthasarathy \cite{Parthasarathy1967Book} extensively studied the weak convergence of probability measures over metric spaces, obtaining many fundamental results in this subject. 

The weak topology on $\pt$, however, is interesting only when there are many real-valued continuous functions on $T$ to work with. If $T$ is not completely regular then the weak topology may actually be too coarse to be of any interest. Indeed, identifying the most general conditions on a topological space $T$ that guarantee the existence of at least one non-constant continuous real-valued function was part of Urysohn's research program (see \cite{Urysohn} where he posed this question). Hewitt \cite{Hewitt} and later Herrlich \cite{Herrlich} both showed that regularity of the space $T$ is generally not sufficient for this purpose. 

Also in the 1960s, Varadarajan \cite{varadarajan} studied the space of signed measures over a given topological space by interpreting it as a dual of the space of bounded continuous functions on the original topological space. 

Tops{\o}e's PhD thesis, \cite{Topsie}, was inspired by these works of Billingsley, Parthasarathy, and Varadarajan, as well as by Schwartz's announcement in \cite{SchwartzAnnouncement} that these works generalize to the setting of Radon measures over arbitrary Hausdorff spaces\footnote{Schwartz's own work \cite{Schwartz} in this context got published only a few years after Tops{\o}e's.} (see Tops{\o}e \cite[p. V]{Topsie} for more details). In this work, Tops{\o}e investigated the space $\prt$ of all \textit{Radon measures} over an arbitrary Hausdorff space $T$, aiming to generalize the known results about weak convergence to this setting. The topology on $\prt$ that Tops{\o}e studied extensively is the coarsest topology with respect to which the map $E_f$ is \textit{upper semicontinuous}\footnote{For a topological space $T$ and a function $f\co T \rightarrow \mathbb{R}$, we say that $f$ is \textit{upper semicontinuous at $x_0 \in T$} if the set $\{x \in T: f(x) < \alpha\}$ is open for all $\alpha \in \mathbb{R}$. We say that $f$ is \textit{upper semicontinuous} if it is upper semicontinuous at every point in $T$. 
}
whenever $f \colon T \to \mathbb{R}$ is a bounded upper semicontinuous function. Although Tops{\o}e still called this the ``weak topology'', in order to avoid confusion with the more common usage of that term, we are following Bogachev \cite[8.10(iv), p. 226, vol. 2]{Bogachev-measure} in calling this the \textit{$A$-topology} on $\prt$, with the ``$A$'' referring to A.D. Alexandroff \cite{ALexandroff}. 

More generally, if $T$ is any topological space, one may also consider the $A$-topology on the space $\pt$ of \textit{all} Borel probability measures on $T$ analogously---it being the coarsest topology on $\pt$ with respect to which the map $\mu \mapsto E_f(\mu)$ is upper semicontinuous whenever $f \colon T \to \mathbb{R}$ is a bounded upper semicontinuous function. In the case when $T$ is Hausdorff, the $A$-topology on $\prt$ defined in the previous paragraph coincides with the subspace topology induced by the $A$-topology on $\pt$. 

For a topological space $T$, let us denote the set of all bounded upper semicontinuous functions on $T$ by $USC_b(T)$. Similarly, $LSC_b(T)$ will denote the set of all bounded lower semicontinuous functions on $T$, which are precisely those functions $f$ for which $-f$ is bounded upper semicontinuous. For a bounded Borel measurable function $f\co T \rightarrow \mathbb{R}$ and $\alpha \in \mathbb{R}$, define the following sets:
\begin{align}
    \mathfrak{U}_{f, \alpha} &\defeq \{\mu \in \pt: \mathbb{E}_\mu(f) < \alpha\}, \label{U_f}\\
    \text{ and }   \mathfrak{L}_{f, \alpha} &\defeq \{\mu \in \pt: \mathbb{E}_\mu(f) > \alpha\} \label{L_f}.
\end{align}

Thus, the collection $\{\mathfrak{U}_{f, \alpha}: f \in USC_b(T), \alpha \in \mathbb{R}\}$ is a subbasis for the $A$-topology on $\pt$. (And furthermore, this collection \textit{coincides} with the collection $\{\mathfrak{L}_{f, \alpha}: f \in LSC_b(T), \alpha \in \mathbb{R}\}$.) Lemma \ref{characterization of weak topology 2} strengthens this observation by showing that focusing on the collection of indicator functions of either closed or open sets also suffices for constructing a subbasis for the $A$-topology. While readers may find a proof in Fremlin \cite[437J (d) and (f), p. 61]{Fremlin-4}\footnote{Fremlin works with measures that are not necessarily probability measures, and he defines a topology on the space of measures that he calls the \textit{narrow topology}, a terminlogy also used by Schwartz \cite{Schwartz}. The narrow topology on the space of Borel probability measures over a topological space is the same as what we are calling the $A$-topology.}, we shall provide our own nonstandard proof of this fact that relies on the following intuitive result from probability theory.

\begin{lemma}\label{tail expectation}
Suppose $\mathbb{P}_1$ and $\mathbb{P}_2$ are probability measures on the same space and $X$ is a bounded random variable such that 
\begin{align}\label{greater than relation}
    \mathbb{P}_1(X > x) \geq \mathbb{P}_2(X > x) \text{ for all } x \in \mathbb{R}.
\end{align}
Then, we have $\mathbb{E}_{\mathbb{P}_1}(X) \geq \mathbb{E}_{\mathbb{P}_2}(X)$.
\end{lemma}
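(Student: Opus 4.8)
The plan is to reduce the comparison of the two expectations to the pointwise comparison of tail probabilities supplied by the hypothesis, via the layer-cake (distribution-function) representation of the expectation of a bounded random variable.

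First, since $X$ is bounded, I would fix some $M \in \mathbb{R}_{>0}$ with $|X| \leq M$ everywhere, so that $X + M$ is nonnegative and bounded above by $2M$. For each $i \in \{1,2\}$ the standard layer-cake formula then gives
\begin{align*}
\mathbb{E}_{\mathbb{P}_i}(X + M) = \int_0^{2M} \mathbb{P}_i(X + M > t)\, dt = \int_{-M}^{M} \mathbb{P}_i(X > s)\, ds,
\end{align*}
where the second equality is the substitution $s = t - M$ together with the equivalence $X + M > t \Longleftrightarrow X > s$. Subtracting the constant $M$ from both sides (using $\mathbb{E}_{\mathbb{P}_i}(X + M) = \mathbb{E}_{\mathbb{P}_i}(X) + M$) expresses $\mathbb{E}_{\mathbb{P}_i}(X)$ purely in terms of the tail function $s \mapsto \mathbb{P}_i(X > s)$ on the finite interval $[-M, M]$.

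Next, I would subtract the two resulting identities to obtain
\begin{align*}
\mathbb{E}_{\mathbb{P}_1}(X) - \mathbb{E}_{\mathbb{P}_2}(X) = \int_{-M}^{M} \left[\mathbb{P}_1(X > s) - \mathbb{P}_2(X > s)\right]\, ds.
\end{align*}
By the hypothesis \eqref{greater than relation}, the integrand is nonnegative for every $s \in \mathbb{R}$, so the integral is nonnegative, which yields exactly the desired conclusion $\mathbb{E}_{\mathbb{P}_1}(X) \geq \mathbb{E}_{\mathbb{P}_2}(X)$.

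The only point that genuinely requires justification — and thus the closest thing to an obstacle — is the layer-cake identity itself, namely the interchange of the expectation with the Lebesgue integral in $t$. Since $X$ is bounded, the map $(\omega, t) \mapsto \mathbbm{1}_{\{X(\omega) + M > t\}}$ is a nonnegative jointly measurable function on the product of the underlying probability space with $[0, 2M]$ under Lebesgue measure, so Tonelli's theorem applies with no integrability concerns; and the tail function $s \mapsto \mathbb{P}_i(X > s)$ is monotone, hence Borel measurable, so the final integral is well-defined. Everything else in the argument is routine.
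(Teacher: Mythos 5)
Your proof is correct, and it takes a cleaner route than the paper's, though both rest on the same underlying idea of representing expectations via tail probabilities. The paper uses the two-sided formula
\begin{align*}
\mathbb{E}_{\mathbb{P}}(X) = \int_{(0, \infty)} \mathbb{P}(X > x)\, d\lambda(x) - \int_{(-\infty, 0)} \mathbb{P}(X < x)\, d\lambda(x),
\end{align*}
which involves lower-tail probabilities on the negative axis; since the hypothesis \eqref{greater than relation} only concerns upper tails, the paper must first convert it into the lower-tail domination $\mathbb{P}_1(X < x) \leq \mathbb{P}_2(X < x)$, which it does by writing $\{X \geq x\} = \bigcap_{n} \{X > x - \tfrac{1}{n}\}$ and passing to the limit using continuity of the measures. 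Your shift trick — replacing $X$ by the nonnegative variable $X + M$ so that only upper tails $\mathbb{P}_i(X > s)$ ever appear — makes that conversion step unnecessary: the hypothesis plugs in directly and the conclusion follows from monotonicity of the integral. What you give up is negligible (you must fix a bound $M$ and justify the layer-cake identity yourself via Tonelli, where the paper cites a reference for its representation formula); what you gain is a shorter, self-contained argument with one fewer limiting step.
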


\begin{proof}
With $\lambda$ denoting the Lebesgue measure on $\mathbb{R}$, we have the following representation of the expected value of any bounded random variable $X$ (see, for example, Lo \cite[Proposition 2.1]{demystifying}):
\begin{align}\label{tail formula}
    \mathbb{E}_{\mathbb{P}}(X) = \int_{(0, \infty)} \mathbb{P}(X > x) d\lambda(x) -\int_{(-\infty, 0)} \mathbb{P}(X < x) d\lambda(x).  
\end{align}

Let $\mathbb{P}_1$, $\mathbb{P}_2$ and $X$ be as in the statement of the lemma. Then, using \eqref{greater than relation}, we obtain the following for each $x \in \mathbb{R}$:
\begin{align}
\mathbb{P}_1(X < x) &= 1 - \mathbb{P}_1(X \geq x) \nonumber \\
&=  1 - \mathbb{P}_1 \left(\bigcap_{n \in \mathbb{N}} \left\{X > x - \frac{1}{n}\right\} \right) \nonumber \\
&=  1 - \lim_{n \rightarrow \infty} \mathbb{P}_1 \left( X > x - \frac{1}{n} \right) \nonumber \\
&\leq 1 - \lim_{n \rightarrow 
\infty} \mathbb{P}_2 \left( X > x - \frac{1}{n} \right) \nonumber \\
&= \mathbb{P}_2(X < x)\label{less than relation}.
\end{align} 

Using \eqref{tail formula}, \eqref{greater than relation} and \eqref{less than relation}, we thus obtain:
\begin{align*}
     \mathbb{E}_{\mathbb{P}_1}(X) &= \int_{(0, \infty)} \mathbb{P}_1(X > x) d\lambda(x) -\int_{(-\infty, 0)} \mathbb{P}_1(X < x) d\lambda(x) \\
     &\geq \int_{(0, \infty)} \mathbb{P}_2(X > x) d\lambda(x) -\int_{(-\infty, 0)} \mathbb{P}_2(X < x) d\lambda(x) \\
     &= \mathbb{E}_{\mathbb{P}_2}(X),
\end{align*}
completing the proof. 
\end{proof}

\begin{lemma}\label{characterization of weak topology 2}
For each Borel set $B \in \mathcal{B}(T)$, let 
\begin{align}
    \mathfrak{U}_{B, \alpha} &\defeq \{\mu \in \pt: \mu(B) < \alpha\}
    \text{ and }   \mathfrak{L}_{B, \alpha} \defeq \{\mu \in \pt: \mu(B) > \alpha\} \label{L_B}.
\end{align}
Then the $A$-topology on $\pt$ is generated by $\{\mathfrak{U}_{F, \alpha}: \alpha \in \mathbb{R} \text{ and }  F \text{ is closed}\}$ (which also equals the collection $\{\mathfrak{L}_{G, \alpha}: \alpha \in \mathbb{R} \text{ and } G \text{ is open}\}$) as a subbasis.
\end{lemma}

\begin{proof}
If $G$ is an open subset of $T$ and $\alpha \in \mathbb{R}$, then we have
\begin{align}
    \mathfrak{L}_{G, \alpha} = \bigcup_{\epsilon \in \mathbb{R}_{>0}} \mathfrak{U}_{ T\backslash G, 1 - \alpha + \epsilon}.
\end{align}

Since the complement of an open set is closed, this shows that a basic open set in the topology on $\pt$ generated by $\{\mathfrak{L}_{G, \alpha}: \alpha \in \mathbb{R} \text{ and } G \text{ is open}\}$ as a subbasis, is a finite intersection of sets that are unions of elements in the collection $\{\mathfrak{U}_{F, \alpha}: \alpha \in \mathbb{R} \text{ and }  F \text{ is closed}\}$. That is, a basic open set in the topology on $\pt$ generated by $\{\mathfrak{L}_{G, \alpha}: \alpha \in \mathbb{R} \text{ and } G \text{ is open}\}$ as a subbasis, is also open in the topology on $\pt$ generated by $\{\mathfrak{U}_{F, \alpha}: \alpha \in \mathbb{R} \text{ and }  F \text{ is closed}\}$ as a subbasis. A similar argument shows that a basic open set in the latter topology is also open in the former topology, thus proving that the two topologies are equal.

Let $\tau_1$ be the $A$-topology and $\tau_2$ be the topology induced by $\{\mathfrak{L}_{G, \alpha}: G \text{ open}, \alpha \in \mathbb{R}\}$ as a subbasis. From the discussion preceding this lemma, it is clear that $\tau_2 \subseteq \tau_1$. Conversely, let $U \in \tau_1$ and $\nu \in U$. By Lemma \ref{characterization of weak topology 1}, there exist finitely many $f_1, \ldots f_k \in LSC_b(T)$ and $\beta_1, \ldots, \beta_k \in \mathbb{R}$ such that the following holds:
\begin{align}\label{nu in these sets}
    \nu \in \cap_{i = 1}^{k} \mathfrak{L}_{f_i, \beta_i} \subseteq U.
\end{align}

Let $\mathbb{E}_{\nu}(f_i) = \delta_i > \beta_i$  for all $i \in \{1, \ldots, k\}$. For each $i \in \{1, \ldots, k\}$ and $\alpha \in \mathbb{R}$, let $G_{i, \alpha} = \{x \in T: f_i(x) > \alpha\}$, which is an open set by Lemma \ref{characterization of upper and lower semicontinuity}. Define 
\begin{align}\label{bad sets}
    \mathfrak{L}_{\alpha, \epsilon} \defeq  \cap_{i = 1}^k \mathfrak{L}_{G_{i, \alpha}, \nu(G_{i, \alpha}) - \epsilon} \text{ for all } \alpha \in \mathbb{R} \text{ and } \epsilon \in \mathbb{R}_{>0}.
\end{align}

Note that $\nu \in \mathfrak{L}_{\alpha, \epsilon}$ for all $\alpha \in \mathbb{R}$ and  $\epsilon \in \mathbb{R}_{>0}$, where $\mathfrak{L}_{\alpha, \epsilon}$ is a subbasic set for the topology $\tau_2$. Thus it is sufficient to prove the following claim.

\begin{claim}\label{weird claim}
There exists $n \in \mathbb{N}$ and $\alpha_1, \ldots, \alpha_n \in \mathbb{R}$, $\epsilon_1, \ldots, \epsilon_n \in \mathbb{R}_{>0}$ such that $$\cap_{j = 1}^n \mathfrak{L}_{\alpha_j, \epsilon_j} \subseteq \cap_{i = 1}^{k} \mathfrak{L}_{f_i, \beta_i} \subseteq U.$$
\end{claim}

\begin{proof}[Proof of Claim \ref{weird claim}]\renewcommand{\qedsymbol}{}
Suppose, if possible, that the claim is not true. Then for each $n \in \mathbb{N}$ and any $\alpha_1, \ldots, \alpha_n \in \mathbb{R}$ and $\epsilon_1, \ldots, \epsilon_n \in \mathbb{R}_{>0}$, there must exist some $\mu \in \pt$ such that $\mu \in \cap_{i = 1}^k \mathfrak{L}_{G_{i, \alpha_j}, \nu(G_{i, \alpha_j}) - \epsilon_j}$ for all $j \in \{1, \ldots, n\}$, but $\mu \not\in  \cap_{i = 1}^{k} \mathfrak{L}_{f_i, \beta_i}$. By transfer, the following internal set is non-empty for each $n \in \mathbb{N}$, $\vec{\alpha} = (\alpha_1, \ldots, \alpha_n) \in \mathbb{R}^n$ and  $\vec{\epsilon} \defeq (\epsilon_1, \ldots, \epsilon_n) \in (\mathbb{R}_{>0})^n$.
\begin{align}
    B_{\vec{\alpha}, \vec{\epsilon}} \defeq \{\mu \in {^*}\pt : &~\mu({^*}G_{i, \alpha_j}) > \nu(G_{i, \alpha_j}) - \epsilon_j \text{ for all } i \in \{1, \ldots, k\},j \in \{1, \ldots, n\} \nonumber \\ 
    &\text{ but }  {^*}\mathbb{E}_{\mu}({^*}f_i) \leq \beta_i \text{ for some } i \in \{1, \ldots, k\}
    \}. \label{bad internal set}
\end{align}

By the same argument (after concatenating different finite sequences of $\vec{\alpha}$\hspace{0.3mm}'s and $\vec{\epsilon}$\hspace{0.5mm}'s, we note that the collection $\cup_{n \in \mathbb{N}}\{B_{\vec{\alpha}, \vec{\epsilon}} : \vec{\alpha} \in \mathbb{R}^n, \vec{\epsilon} \in (\mathbb{R}_{>0})^n\}$ has the finite intersection property. By saturation, there exists $\mu \in {^*}\pt$ such that the following holds:
\begin{align}
\exists i_o \in \{1, \ldots, k\} \text{ such that } {^*}\mathbb{E}_{\mu}({^*}f_{i_0}) \leq \beta_{i_0} < \mathbb{E}_\nu(f_{i_0}) \text{ but } \nonumber  \\
    \mu({^*}G_{i_0, \alpha}) > \nu(G_{i_0, \alpha}) - \epsilon \text{ for all } \alpha \in \mathbb{R}, \epsilon \in \mathbb{R}_{>0}. \label{peculiar properties of mu}
\end{align}
But this implies that 
$L\mu({^*}G_{i_0, \alpha}) \geq L{^*}\nu({^*}G_{i_0, \alpha})$ for all $\alpha \in \mathbb{R}_{>0}$, which yields:
\begin{align}
    L\mu(\st({^*}f_{i_0}) > \alpha) &\geq \lim_{\epsilon \rightarrow 0} L\mu({^*}f_{i_0} > \alpha + \epsilon) \nonumber \\ 
    &\geq \lim_{\epsilon \rightarrow 0} L{^*}\nu({^*}f_{i_0} > \alpha + \epsilon) \nonumber \\
    &=  L{^*}\nu(\st({^*}f_{i_0}) > \alpha)\label{toward a contradiction}.
\end{align}
By Lemma \ref{tail expectation} and \eqref{toward a contradiction}, we thus obtain:
\begin{align}\label{toward a contradiction 2}
  \mathbb{E}_{L\mu}(\st({^*}f_{i_0})) \geq  \mathbb{E}_{L{^*}\nu}(\st({^*}f_{i_0})).
\end{align}

However, using the fact that finitely bounded internally measurable functions are $S$-integrable and that $\beta_{i_0}$ and $\mathbb{E}_{\nu}(f_{i_0})$ are real numbers, taking standard parts in the first inequality of \eqref{peculiar properties of mu} yields
\begin{align*}
  \mathbb{E}_{L\mu}(\st({^*}f_{i_0})) <  \mathbb{E}_{L{^*}\nu}(\st({^*}f_{i_0})),
\end{align*}
which directly contradicts \eqref{toward a contradiction 2}, completing the proof. 
\end{proof}
\end{proof}

Given a topological space $T$, we can thus describe a base $\mathbb{B}(\pt)$ for the $A$-topology on $\pt$ as follows:
\begin{align}\label{definition of basic open sets}
    \mathbb{B}(\pt) = \left\{\bigcap_{i \in [n]} \mathfrak{L}_{G_i, \alpha_i} : n \in \mathbb{N}, \alpha_i \in \mathbb{R} \text{ and } G_i \text{ open} \text{ for all } i \in [n] \right\}.
\end{align}
Let $\sigma(\mathbb{B}(\pt))$ denote the smallest sigma algebra containing $\mathbb{B}(\pt)$. Using Dynkin's $\pi-\lambda$ theorem, Lemma \ref{characterization of weak topology 2} almost immediately implies that the evaluation maps from $\pt$ to $[0,1]$ are measurable when $[0,1]$ is equipped with its Borel sigma algebra and $\pt$ is equipped with the sigma algebra $\sigma(\mathbb{B}(\pt))$. Of course $\sigma(\mathbb{B}(\pt)) \subseteq \mathcal{B}(\pt)$, and thus the evaluation maps are also measurable when $\pt$ is equipped with its Borel sigma algebra, a fact crucial for ensuring that the statement of Theorem \ref{most general de Finetti} in the main body of the paper is meaningful.

\begin{lemma}\label{evaluation of Borel sets'}
Let $\pt$ be the space of all Borel probability measures on a topological space $T$. Let $\pt$ be equipped with the $A$-topology, and let $\mathbb{B}(\pt)$ be the base for this topology as defined in \eqref{definition of basic open sets}. Then, for each Borel set $B \in \mathcal{B}(T)$, the evaluation map $$e_B\co \left(\pt, \sigma(\mathbb{B}(\pt))\right) \rightarrow \left([0,1], \mathcal{B}([0,1])\right),$$ defined by $e_B(\mu) \defeq \mu(B)$ is measurable. 
\end{lemma}

\begin{proof}
Consider the collection
\begin{align*}
    \mathcal{B} = \{B \in \mathcal{B}(T): e_B \text{ is Borel measurable}\}.
\end{align*}

This collection contains $T$, since $f_T$ is the constant function $1$, which is continuous. It is also closed under taking relative complements. That is, if $A \subseteq B$ and $A, B \in \mathcal{B}$ then $B \backslash A \in \mathcal{B}$ as well, since $e_{B \backslash A} = e_B - e_A$ in that case. Finally, $\mathcal{B}$ is closed under countable increasing unions. That is, if $(B_n)_{n \in \mathbb{N}} \subseteq \mathcal{B}$ is a sequence of sets such that $B_n \subseteq B_{n+1}$ for all $n \in \mathbb{N}$, then $B \defeq \cup_{n \in \mathbb{N}}B_n \in \mathcal{B}$ as well (this is because $e_B = \lim_{n \rightarrow \infty} e_{B_n}$ is a limit of Borel measurable functions in that case). Thus, $\mathcal{B}$ is a Dynkin system. 

Furthermore, $\mathcal{B}$ contains all open sets since for any open set $G \subseteq T$, the set $\{\mu \in \pt: \mu(G) > \alpha\}$ is Borel measurable (in fact, open) for all $\alpha \in \mathbb{R}$. Thus, by Dynkin's $\pi\text{-}\lambda$ theorem, it contains, and hence is equal to, $\mathcal{B}(T)$, completing the proof. 
\end{proof}

Going back to the space $\prt$ over a Hausdorff space $T$, one reason why that space is more convenient to work with than the space $\pt$ is that $\prt$ is Hausdorff, while $\pt$ is not always Hausdorff (a fact we are able to deduce a bit later in Corollary \ref{not Hausdorff corollary}). While a proof of the Hausdorffness of $\prt$ may be found in Tops{\o}e \cite[Theorem 11.2, p.49]{Topsie}, we provide below a proof for illustrating the key ideas.

\begin{theorem}\label{Topsoe theorem}
If $T$ is a Hausdorff space, then $\prt$ is also Hausdorff. 
\end{theorem}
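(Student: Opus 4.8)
The plan is to separate two distinct Radon measures $\mu \neq \nu$ by producing two explicit disjoint subbasic open sets, using the subbasis from Lemma \ref{subbasis in Radon topology}\ref{i subbasis}, whose members have the form $\{\rho \in \prt : \rho(G) > \alpha\}$ with $G$ open. First I would reduce to a statement about open sets: since both $\mu$ and $\nu$ are outer regular (Definition \ref{definition of Radon probability measures}), each is determined by its values on open sets, so $\mu \neq \nu$ forces the existence of an open $G \subseteq T$ with, after relabeling, $\mu(G) > \nu(G)$. I would fix $\delta \defeq \mu(G) - \nu(G) > 0$.

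Next I would pass to two disjoint compact sets on which $\mu$ and $\nu$ respectively concentrate, using the inner regularity of Radon measures on all Borel sets. Applying inner regularity of $\mu$ to the open set $G$ gives a compact $K_1 \subseteq G$ with $\mu(K_1) > \mu(G) - \delta/4$, and applying inner regularity of $\nu$ to the closed set $T \setminus G$ gives a compact $K_2 \subseteq T \setminus G$ with $\nu(K_2) > \nu(T \setminus G) - \delta/4 = 1 - \nu(G) - \delta/4$. Since $K_1 \subseteq G$ and $K_2 \subseteq T \setminus G$, the sets $K_1, K_2$ are disjoint, and the crucial estimate is $\mu(K_1) + \nu(K_2) > \bigl(\nu(G) + 3\delta/4\bigr) + \bigl(1 - \nu(G) - \delta/4\bigr) = 1 + \delta/2 > 1$.

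I would then invoke the standard separation of two disjoint compact sets in a Hausdorff space (proved by first separating a point from a compact set and then taking a finite subcover) to obtain disjoint open sets $O_1 \supseteq K_1$ and $O_2 \supseteq K_2$. Then $\mu(O_1) + \nu(O_2) \geq \mu(K_1) + \nu(K_2) > 1$ while $O_1 \cap O_2 = \emptyset$. Choosing reals $\beta_1 < \mu(O_1)$ and $\beta_2 < \nu(O_2)$ with $\beta_1 + \beta_2 = 1$ (possible since the excess $\mu(O_1)+\nu(O_2)-1$ is positive), I set $U_\mu \defeq \{\rho \in \prt : \rho(O_1) > \beta_1\}$ and $U_\nu \defeq \{\rho \in \prt : \rho(O_2) > \beta_2\}$. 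These are subbasic open sets containing $\mu$ and $\nu$ respectively; they are disjoint because any $\rho \in U_\mu \cap U_\nu$ would satisfy $\rho(O_1) + \rho(O_2) > \beta_1 + \beta_2 = 1$, contradicting $\rho(O_1 \cup O_2) \leq 1$ for the disjoint pair $O_1, O_2$. This exhibits the required separation and proves $\prt$ is Hausdorff.

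The main obstacle is precisely that a general Hausdorff space need not be regular or normal, so one cannot separate the compact set carrying $\mu$'s mass from the disjoint closed set $T\setminus G$ by open sets, and the naive strategy of sandwiching an open set inside a closed set (to force a nesting $\rho(O_1)\le\rho(F)$) is unavailable. The device that circumvents this is to work with the \emph{two disjoint compact sets} $K_1, K_2$, which are separable in any Hausdorff space, and to replace the nesting argument by the total-mass constraint $\rho(O_1) + \rho(O_2) \le 1$; this lets a pair of ``lower'' subbasic sets do the separation without ever needing regularity of $T$.
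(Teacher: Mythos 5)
Your proof is correct, and it takes a genuinely different route from the paper's. The paper argues asymmetrically with a \emph{single} compact set: inner regularity of $\mu$ alone gives a compact $K \subseteq G$ with $\mu(K)$ large while $\nu(K) \leq \nu(G)$ is small; the neighborhood of $\nu$ is the subbasic set $\mathfrak{V} \defeq \{\gamma \in \prt : \gamma(K) < \alpha + \tfrac{\beta-\alpha}{4}\}$ (legitimate since $K$ is closed in the Hausdorff space $T$), and the neighborhood of $\mu$ is then manufactured as the complement of the closed set $\bigcap_{\gamma \in \mathfrak{V}}\{\theta \in \prt : \theta(G_{\gamma}) \leq \alpha + \tfrac{\beta-\alpha}{2}\}$, where each open set $G_{\gamma}$ with $K \subseteq G_{\gamma} \subseteq G$ comes from outer regularity of the individual measure $\gamma \in \mathfrak{V}$. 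You instead symmetrize: inner regularity of \emph{both} measures (of $\mu$ on $G$, of $\nu$ on the closed set $T \backslash G$) produces disjoint compacta $K_1, K_2$ with $\mu(K_1) + \nu(K_2) > 1$, and the purely topological fact that disjoint compact subsets of a Hausdorff space have disjoint open neighborhoods $O_1, O_2$ makes both separating sets honest subbasic neighborhoods $\{\rho : \rho(O_i) > \beta_i\}$ from Lemma \ref{subbasis in Radon topology}, with disjointness forced by finite additivity via $\rho(O_1) + \rho(O_2) \leq 1$. Your version buys three things: it needs regularity only of the two given measures, whereas the paper invokes outer regularity of every member of the (generally uncountable) family $\mathfrak{V}$; both separating sets are explicit subbasic opens rather than a complement of an intersection of closed sets; and, importantly, it sidesteps the delicate point on which the paper's final step rests --- for $\mathfrak{V} \cap \mathfrak{U} = \emptyset$ one needs $\gamma'(G_{\gamma}) \leq \alpha + \tfrac{\beta-\alpha}{2}$ for \emph{all pairs} $\gamma, \gamma' \in \mathfrak{V}$, while the construction in \eqref{gamma G gamma} controls only the diagonal terms $\gamma(G_{\gamma})$, so that step needs repair exactly where your total-mass device does the work. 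The price you pay, the compact--compact separation lemma, requires nothing beyond Hausdorffness of $T$, so your closing diagnosis --- that the constraint $\rho(O_1)+\rho(O_2)\leq 1$ substitutes for the nesting/regularity argument unavailable in a general Hausdorff space --- is exactly right.
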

\begin{proof}
Let $T$ be a Hausdorff space. Suppose $\mu, \nu$ are two distinct elements of $\prt$. Since they are distinct Borel measures, there exists an open set $G \subseteq T$ such that $\alpha \defeq \nu(G)$ and $\beta \defeq \mu(G)$ are distinct. Without loss of generality, assume $\alpha < \beta$. Since $\mu$ and $\nu$ are Radon measures, we can find a compact set $K$ such that $K \subseteq G$ and the following holds:
\begin{align}\label{use mu and nu are Radon}
    \nu(K) \leq \nu(G) = \alpha < \alpha + \frac{3(\beta - \alpha)}{4} < \mu(K) \leq \beta = \mu(G).
\end{align}

Since $T$ is Hausdorff, all compact subsets of $T$ are closed. In particular, $K$ is closed. Consider the subbasic open set $\mathfrak{V}$ defined by:
\begin{align*}
    \mathfrak{V} \defeq \left\{\gamma \in \prt: \gamma(K) < \alpha + \frac{\beta - \alpha}{4}\right\}. 
\end{align*}

By \eqref{use mu and nu are Radon}, it is clear that $\nu \in \mathfrak{V}$ and $\mu \not\in \mathfrak{V}$. For each $\gamma \in \mathfrak{V}$, by Radonness, there exists an open set $G_{\gamma}$ such that $K \subseteq G_{\gamma} \subseteq G$ and we have:
\begin{align}\label{gamma G gamma}
    \gamma(G_{\gamma}) < \alpha + \frac{\beta - \alpha}{2} \text{ for all } \gamma \in \mathfrak{V}.
\end{align}

Thus the following set, being the complement of a closed set (owing to the fact that an arbitrary intersection of closed sets is closed), is open:
\begin{align*}
    \mathfrak{U} \defeq \prt \backslash \left( \bigcap_{\gamma \in \mathfrak{V}} \left\{\theta \in \prt: \theta(G_{\gamma}) \leq \alpha + \frac{\beta - \alpha}{2} \right\} \right).
\end{align*}

By \eqref{use mu and nu are Radon}, it is clear that 
\begin{align*}
    \mu(G_{\gamma}) \geq \mu(K) > \alpha + \frac{3(\beta - \alpha)}{4} > \alpha + \frac{\beta - \alpha}{2} \text{ for all } \gamma \in \mathfrak{V}.
\end{align*}

As a consequence, we have $\mu \in \mathfrak{U}$. Furthermore, by \eqref{gamma G gamma}, it is clear that $\mathfrak{V} \cap \mathfrak{U} = \emptyset$, thus completing the proof. 
\end{proof}

Most topological constructions related to $\pt$ can be immediately adapted to $\prt$ by simply restricting them. The following analog of Lemma \ref{evaluation of Borel sets'} compiles some such results for convenience.

\begin{lemma}\label{Radon evaluation of Borel sets'}
Let $\prt$ be the space of all Radon probability measures on a Hausdorff space $T$. Then the following collection is a base for the $A$-topology on $\prt$:
   \begin{align}\label{Radon definition of basic open sets}
    \mathbb{B}(\prt) = \left\{\bigcap_{i \in [n]} \{\mu \in \prt: \mu(G_i) > \alpha_i\} : n \in \mathbb{N}, \forall i \in [n] ((\alpha_i \in \mathbb{R}) \land (G_i \text{ open})) \right\}.
\end{align}
Furthermore, for each Borel set $B \in \mathcal{B}(T)$, the evaluation map $$e_B\co \left(\prt, \sigma(\mathbb{B}(\prt))\right) \rightarrow \left([0,1], \mathcal{B}([0,1])\right)$$ defined by $e_B(\mu) \defeq \mu(B)$ is measurable.
\end{lemma}

As we shall see in Appendix \ref{AppB}, the sigma algebra $\sigma(\mathbb{B}(\prt))$ will be more convenient to work with (in comparison to the typically larger Borel sigma algebra $\mathcal{B}(\prt))$ when we study the natural $\prt$-valued map induced by empirical distributions arising out of a sequence of random variables. 

A crucial tool in that eventual study of hyperfinite empirical distributions in Appendix \ref{AppB} will be the following generalization of a theorem of Prokhorov\footnote{Prokhorov \cite[Theorem 1.12]{Prokhorov} originally proved that a collection $\mathfrak{A}$ of Borel probability measures on a Polish space $T$ (that is, a complete and separable metric space) is relatively compact (that is, the closure $\bar{\mathfrak{A}}$ of $\mathfrak{A}$ is compact) if and only if $\mathfrak{A}$ satisfies the following property that is now known as \textit{tightness} (being a property that is uniformly satisfied by all measures in $\mathfrak{A}$, it is sometimes called ``uniform tightness'' to avoid confusion with ``tightness of a particular measure'').
\begin{gather*}
    \text{(Tightness of $\mathfrak{A}$)} : \text{For each $\epsilon \in \mathbb{R}_{>0}$, there exists a compact set $K_{\epsilon} \subseteq T$ such that } \\
     \mu(K_{\epsilon}) \geq 1 - \epsilon \text{ for all } \mu \in \mathfrak{A}.
\end{gather*}}, which shows that tightness of a set of Radon probability measures over a Hausdorff space $T$ implies its relative compactness in $\prt$ equipped with the $A$-topology. This was independently proved by Tops{\o}e \cite[Theorem 9.1(iii), p. 43]{Topsie} (see also \cite{Topsoe-compactness}) and Schwartz \cite[Theorem 3, pp. 379-381]{Schwartz}).

\begin{theorem}[Schwartz and Tops{\o}e]\label{Radon Prokhorov}
Let $T$ be a Hausdorff space and let $\prt$ be the space of all Radon probability measures on $T$, equipped with the $A$-topology. Let $\mathfrak{A} \subseteq \prt$ be such that for any $\epsilon \in \mathbb{R}_{>0}$, there exists a compact set $K_{\epsilon} \subseteq T$ for which
\begin{align}\label{Radon tightness condition}
    \mu(K_{\epsilon}) \geq 1 - \epsilon \text{ for all } \mu \in \mathfrak{A}.
\end{align}
Then the closure of $\mathfrak{A}$ in $\prt$ is compact. 
\end{theorem}

The next section studies the nonstandard extension of $\pt$ equipped with its $A$-topology. The results we obtain in that study are of fundamental importance for the rest of the manuscript. In addition, this study naturally leads to a Prokhorov's theorem on the space of probability measures over an arbitrary Hausdorff space. Such a generalization of Prokhorov's theorem seems new to the literature (as both Tops{\o}e and Schwartz worked with the space of Radon measures), and it immediately implies Theorem \ref{Radon Prokhorov} which we need. Therefore, for the sake of comprehensiveness, we shall take a small digression at the end of the next subsection to establish our generalization of Prokhorov's theorem.  

\subsection{Standard parts with respect to the $A$-topology, and a generalization of Prokhorov's theorem}\label{A-topology standard parts}

Returning to the theme of Loeb measures, we are in a position to show that for any internal probability $\nu$ on $({^*}T, {^*}\mathcal{B}(T))$, if $L\nu \circ \st^{-1}$ is a legitimate Borel probability measure on $(T, \mathcal{B}(T))$, then $\nu$ is infinitesimally close to $L\nu \circ \st^{-1}$ in the sense that the former is nearstandard to the latter in ${^*}\pt$ (see Theorem \ref{Landers and Rogge main result}). This will generalize similar results obtained in the context of the topology of weak convergence by Anderson \cite[Proposition 8.4(ii), p. 684]{Anderson-transactions}, and by Anderson--Rashid \cite[Lemma 2, p. 329]{Anderson--Rashid} (see also Loeb \cite{Loeb-1979}). Our main result in this context is as follows:

\begin{theorem}\label{Landers and Rogge main result}
Let $T$ be a Hausdorff space. Suppose $({^*}T, {^*}\mathcal{B}(T), \nu)$ is an internal probability space, and let $({^*}T, L({^*}\mathcal{B}(T)), L\nu)$ be the associated Loeb space. If $L\nu \circ \st^{-1} \co \mathcal{B}(T) \rightarrow [0,1]$ is a Borel probability measure on $T$, then $\nu$ is nearstandard in ${^*}\mathfrak{P}(T)$ to $L\nu \circ \st^{-1}$. That is,
\begin{align}\label{nu nearstandard}
    \nu \in \st^{-1}(L\nu \circ \st^{-1}).
\end{align}
\end{theorem}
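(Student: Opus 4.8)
The plan is to write $\mu_0 \defeq L\nu \circ \st^{-1}$, which is a Borel probability measure on $T$ by hypothesis, and to show directly that $\nu$ lies in $\st^{-1}(\mu_0)$. Unwinding the definition of the standard-inverse in \eqref{standard inverse} and \eqref{standard inverse of B} applied to the space $\pt$ (with its $A$-topology), the assertion $\nu \in \st^{-1}(\mu_0)$ means precisely that $\nu \in {^*}\mathcal{G}$ for every $A$-open neighborhood $\mathcal{G}$ of $\mu_0$ in $\pt$. First I would reduce this to a subbasic check: every open neighborhood of $\mu_0$ contains a basic one, which is a \emph{finite} intersection of subbasic open sets each containing $\mu_0$; since the nonstandard extension commutes with finite intersections, it suffices to prove that $\nu$ belongs to the transfer of every subbasic open set containing $\mu_0$.

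For the subbasis I would use the open-set description from Lemma \ref{characterization of weak topology 2}, namely $\mathfrak{L}_{G, \alpha} = \{\mu \in \pt : \mu(G) > \alpha\}$ for $G \subseteq T$ open and $\alpha \in \mathbb{R}$. So I fix an open $G$ and a real $\alpha$ with $\mu_0 \in \mathfrak{L}_{G, \alpha}$, i.e.\ $\mu_0(G) > \alpha$; by transfer ${^*}\mathfrak{L}_{G, \alpha} = \{\mu \in {^*}\pt : \mu({^*}G) > \alpha\}$, so the goal becomes the single inequality $\nu({^*}G) > \alpha$. The key ingredient is the nonstandard characterization of open sets, Theorem \ref{topological characterizations}\ref{open}, which gives the inclusion $\st^{-1}(G) \subseteq {^*}G$. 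Here $\st^{-1}(G)$ is Loeb measurable because $L\nu \circ \st^{-1}$ is assumed to be a well-defined Borel measure (so $\st^{-1}$ of every Borel set is Loeb measurable), and ${^*}G$ is Loeb measurable as an internal set. Monotonicity of $L\nu$ together with the elementary identity $L\nu({^*}G) = \st(\nu({^*}G))$ then yields $\st(\nu({^*}G)) = L\nu({^*}G) \geq L\nu(\st^{-1}(G)) = \mu_0(G) > \alpha$. Since $\alpha$ is a standard real strictly below $\st(\nu({^*}G))$, the infinitesimal gap between $\nu({^*}G)$ and its standard part forces $\nu({^*}G) > \alpha$, which is exactly $\nu \in {^*}\mathfrak{L}_{G, \alpha}$. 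Running this over all subbasic neighborhoods completes the argument.

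The step I expect to be the main obstacle is getting the inequality in the correct direction while keeping all the sets Loeb measurable; this is precisely why I would prefer the open-set subbasis $\mathfrak{L}_{G,\alpha}$, for which the inclusion $\st^{-1}(G) \subseteq {^*}G$ points the right way and no control of the non-nearstandard points is needed. One could equally work with the closed-set subbasis $\mathfrak{U}_{F,\alpha} = \{\mu : \mu(F) < \alpha\}$ of Lemma \ref{characterization of weak topology 2}: there the relevant inclusion is ${^*}F \cap \ns({^*}T) \subseteq \st^{-1}(F)$ from Theorem \ref{topological characterizations}\ref{closed}, and one additionally uses that $L\nu(\ns({^*}T)) = \mu_0(T) = 1$ (a consequence of $\mu_0$ being a probability measure) to discard the non-nearstandard part before comparing $\st(\nu({^*}F)) = L\nu({^*}F) \leq L\nu(\st^{-1}(F)) = \mu_0(F) < \alpha$. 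Either route gives the conclusion; the only genuine care required is the reduction to a subbasis and the standard-part comparison, both of which are routine once the correct subbasic form is chosen.
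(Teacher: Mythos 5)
Your proposal is correct and follows essentially the same route as the paper's own proof: both reduce to (sub)basic neighborhoods from the open-set subbasis $\{\mu : \mu(G) > \alpha\}$, apply the inclusion $\st^{-1}(G) \subseteq {^*}G$ from the nonstandard characterization of open sets, use monotonicity of $L\nu$ to get $\st(\nu({^*}G)) \geq L\nu(\st^{-1}(G)) = (L\nu \circ \st^{-1})(G) > \alpha$, and conclude $\nu({^*}G) > \alpha$ from the fact that $\alpha$ is a standard real. The only cosmetic difference is that the paper works directly with a finite intersection of subbasic sets rather than factoring through a single subbasic set, which is the same argument.
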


\begin{proof}
Let $\nu$ be as in the statement of the theorem. Thus, $L\nu \circ \st^{-1} \in \pt$, which implicitly also requires that $\st^{-1}(B) \in L({^*}\mathcal{B}(T))$ for all $B \in \mathcal{B}(T)$. For brevity, denote $L\nu \circ \st^{-1}$ by $\mu$. Let $\mathfrak{U}$ be any open neighborhood of $\mu$ in $\pt$. By Lemma \ref{characterization of weak topology 2}, there exist finitely many open sets $G_1, \ldots, G_n$ and $\alpha_1, \ldots, \alpha_n \in \mathbb{R}$ such that:
\begin{align}\label{definition of U}
    \mu \in \bigcap_{i = 1}^n\{\gamma \in \pt: \gamma(G_i) > \alpha_i\} \subseteq \mathfrak{U}.
\end{align}

By Proposition \ref{appendix topological characterizations}\ref{a-open}, we thus obtain:
\begin{align*}
    L\nu({^*}G_i) \geq L\nu(\st^{-1}(G_i)) = \mu(G_i) > \alpha_i \text{ for all } i \in \{1, \ldots, n\}.
\end{align*}

Since the $\alpha_i$ are real, it thus follows that 
\begin{align*}
    \nu({^*}G_i) > \alpha_i \text{ for all } i \in \{1, \ldots, n\}.
\end{align*}

Since the nonstandard extension of a finite intersection is the intersection of the nonstandard extensions, it is thus clear from \eqref{definition of mu} that 
\begin{align}
    \nu \in\bigcap_{i = 1}^n{^*}\{\gamma \in \pt: \gamma(G_i) > \alpha_i\} \subseteq {^*}\mathfrak{U}.
\end{align} 

Since $\mathfrak{U}$ was an arbitrary neighborhood of $\mu$, it thus follows  that $\nu \in \st^{-1}(\mu)$, completing the proof.
\end{proof}

\begin{remark}\label{pushing down remark}
For an internal probability measure $\nu$ on ${^*}T$, whenever $L\nu \circ \st^{-1}$ is a probability measure on the underlying topological space $T$, we typically call the measure $L\nu \circ \st^{-1}$ as being obtained by ``pushing down'' the Loeb measure $L\nu$. In fact, Albeverio et al. \cite[Section 3.4]{Albeverio} denotes $L\nu \circ \st^{-1}$ by ``$\st(L\nu)$'', calling it the standard part of $\nu$. Theorem \ref{Landers and Rogge main result} makes this more precise by showing that, in this case, the measure $L\nu \circ \st^{-1}$ is indeed nearstandard to $\nu \in {^*}\pt$ when we equip the space of probability measures $\pt$ with the $A$-topology. 
\end{remark}

Since the subset $\prt$ of Radon probability measures on $T$ is Hausdorff, the above theorem allows us to show that if $\nu \in {^*}\prt$ is such that $L\nu \circ \st^{-1} \in \prt$ (a sufficient condition for which is provided by Theorem \ref{Albeverio theorem}), then $L\nu \circ \st^{-1}$ is actually \textit{the} standard part of $\nu$ as an element of ${^*}\prt$. Indeed, Lemma \ref{useful standard inverse relation} may be invoked to obtain this result as a consequence of Theorem \ref{Landers and Rogge main result}. More precisely, we have the following corollary.  

\begin{corollary}\label{Radon Landers and Rogge main result}
Let $T$ be a Hausdorff space. Suppose $({^*}T, {^*}\mathcal{B}(T), \nu)$ is an internal probability space, and let $({^*}T, L({^*}\mathcal{B}(T)), L\nu)$ be the associated Loeb space. If $L\nu \circ \st^{-1} \co \mathcal{B}(T) \rightarrow [0,1]$ is a Radon probability measure on $T$, then $\nu$ is nearstandard in ${^*}\mathfrak{P}_r(T)$ to $L\nu \circ \st^{-1}$. That is,
\begin{align}\label{Radon nu nearstandard}
    \st(\nu) = L\nu \circ \st^{-1} \in \prt.
\end{align}
\end{corollary}

Theorem \ref{Landers and Rogge main result}, applied together with Corollary \ref{tightness corollary}, shows also that the nonstandard extension of any tight measure is nearstandard to a Radon measure. Thus, each tight measure is roughly close to a Radon measure from a topological point of view. More precisely, for each tight measure, there is a Radon measure such that the former belongs to each open neighborhood of the latter. Intuitively, the process of pushing down the Loeb measure induced by the nonstandard interpretation of a tight measure ``regularizes'' the original tight measure without taking it too far topologically. We record this as a corollary.

\begin{corollary}\label{not Hausdorff corollary 0}
Let $T$ be a Hausdorff space and $\mu$ be a tight probability measure on it. Then there exists a Radon measure $\mu'$ on $T$ such that $\mu \in \mathfrak{U}$ for all open neighborhoods $\mathfrak{U}$ of $\mu'$ in $\pt$. 
\end{corollary}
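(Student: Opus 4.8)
The plan is to realize the desired $\mu'$ as the push-down of the Loeb measure of the nonstandard extension ${^*}\mu$, and then to invoke the nearstandardness result of Theorem~\ref{Landers and Rogge main result}. Concretely, I would consider the internal probability space $({^*}T, {^*}\mathcal{B}(T), {^*}\mu)$ and set $\mu' \defeq L{^*}\mu \circ \st^{-1}$. Since $T$ is Hausdorff and $\mu$ is tight, Corollary~\ref{tightness corollary} immediately guarantees that $\mu'$ is a Radon (and in particular Borel) probability measure on $T$. This is exactly the hypothesis that Theorem~\ref{Landers and Rogge main result} requires as input, so the two preparatory results fit together with no further work.

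With $\nu = {^*}\mu$, Theorem~\ref{Landers and Rogge main result} then yields that ${^*}\mu$ is nearstandard in ${^*}\pt$ to $\mu'$; that is, ${^*}\mu \in \st^{-1}(\mu')$, where the standard inverse is taken with respect to the $A$-topology on $\pt$. Unwinding the definition of $\st^{-1}$ (the analogue of \eqref{standard inverse} applied to the space $\pt$ in place of $T$), this says precisely that ${^*}\mu \in {^*}\mathfrak{U}$ for every open neighborhood $\mathfrak{U}$ of $\mu'$ in $\pt$. The last step is to transfer this internal statement back to $\mu$ itself: since $\mu$ is a standard point of $\pt$ and each such $\mathfrak{U}$ is a standard subset of $\pt$, one has the transfer-equivalence $\mu \in \mathfrak{U} \iff {^*}\mu \in {^*}\mathfrak{U}$. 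Because the right-hand side holds for every neighborhood $\mathfrak{U}$ of $\mu'$, so does the left-hand side, which is exactly the assertion of the corollary.

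I expect essentially no genuine obstacle here, as the two substantial inputs---that the push-down of a tight measure is Radon, and that an internal measure is nearstandard to its well-defined push-down---are already available as Corollary~\ref{tightness corollary} and Theorem~\ref{Landers and Rogge main result}. The only point demanding a little care is the final translation: Theorem~\ref{Landers and Rogge main result} delivers a statement about the internal extension ${^*}\mu$ membership in ${^*}\mathfrak{U}$, and one must remember to pass back to the standard measure $\mu$. This is harmless precisely because $\mu \in \pt$ is standard, so $\mu \in \mathfrak{U}$ and ${^*}\mu \in {^*}\mathfrak{U}$ are transfer-equivalent; notably, no saturation argument and no additional topological hypothesis on $\pt$ (such as Hausdorffness, which indeed fails in general by Corollary~\ref{not Hausdorff corollary}) are needed, since we are only asserting membership in each neighborhood, not uniqueness of a standard part.
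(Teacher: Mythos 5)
Your proposal is correct and follows essentially the same route as the paper's own proof: define $\mu' \defeq L{^*}\mu \circ \st^{-1}$, invoke Corollary~\ref{tightness corollary} for Radonness and Theorem~\ref{Landers and Rogge main result} for the nearstandardness ${^*}\mu \in \st^{-1}(\mu')$, and then transfer the membership ${^*}\mu \in {^*}\mathfrak{U}$ back to $\mu \in \mathfrak{U}$. Your closing remark that no saturation or Hausdorffness of $\pt$ is needed is a correct and welcome clarification, but it does not change the argument, which matches the paper's step for step.
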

\begin{proof}
By Corollary 
\ref{tightness corollary} and Theorem \ref{Landers and Rogge main result}, we have that $\mu' \defeq L{^*}\mu \circ \st^{-1}$ is a Radon probability measure such that ${^*}\mu \in \st^{-1}(\mu')$. Also, by definition of $\st^{-1}$, we have that ${^*}\mu \in {^*}\mathfrak{U}$ for any open neighborhood $\mathfrak{U}$ of $\mu'$ in $\pt$. By transfer, we have that $\mu \in \mathfrak{U}$ for any open neighborhood $\mathfrak{U}$ of $\mu'$ in $\pt$.
\end{proof}

This, in particular, shows that the $A$-topology is not always Hausdorff, as it is well-known that there exist Borel measures that are tight but not Radon\footnote{See Vakhania--Tarildaze--Chobanyan\cite[Proposition 3.5, p.32]{Vakhania-Tariladze-Chobanyan} for such an example/construction.}. 

\begin{corollary}\label{not Hausdorff corollary}
There exists a topological space $T$ such that the $A$-topology on its space of Borel probability measures $\pt$ is not Hausdorff. 
\end{corollary}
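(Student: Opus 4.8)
The plan is to exploit Corollary \ref{not Hausdorff corollary 0} together with the existence of a tight measure that fails to be Radon, recorded in Remark \ref{tightness and Radon}. The intuition is that Corollary \ref{not Hausdorff corollary 0} produces, for any tight $\mu$, a Radon measure $\mu'$ that $\mu$ is ``infinitely close'' to in the sense of belonging to every open neighborhood of $\mu'$; if we can arrange $\mu \neq \mu'$, then these two points cannot be separated by disjoint open sets, so the $A$-topology cannot be Hausdorff.

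Concretely, I would first choose $T$ to be the compact Hausdorff space carrying a non-Radon Borel probability measure $\mu$, whose existence (attributed to Dieudonn\'e) is cited in Remark \ref{tightness and Radon}. The key observation is that on a compact space every Borel probability measure is automatically tight: taking $K_\epsilon = T$ in Definition \ref{tightness definition} gives $\mu(K_\epsilon) = \mu(T) = 1 > 1 - \epsilon$. Hence this $\mu$ is tight but, by construction, not Radon.

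Next I would apply Corollary \ref{not Hausdorff corollary 0} to this tight $\mu$ to obtain a Radon probability measure $\mu'$ on $T$ such that $\mu \in \mathfrak{U}$ for every open neighborhood $\mathfrak{U}$ of $\mu'$ in $\pt$. Since $\mu'$ is Radon while $\mu$ is not, we have $\mu \neq \mu'$, so these are genuinely two distinct points of $\pt$. To see that $\pt$ is not Hausdorff, suppose toward a contradiction that it were; then there would exist disjoint open sets $\mathfrak{U}_1 \ni \mu$ and $\mathfrak{U}_2 \ni \mu'$. But $\mathfrak{U}_2$ is an open neighborhood of $\mu'$, so by the defining property of $\mu'$ we must have $\mu \in \mathfrak{U}_2$ as well, whence $\mu \in \mathfrak{U}_1 \cap \mathfrak{U}_2$, contradicting disjointness. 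Therefore no such separating pair exists and the $A$-topology on $\pt$ is not Hausdorff.

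I do not expect any serious obstacle here, as the argument is essentially a direct packaging of Corollary \ref{not Hausdorff corollary 0}; the only point requiring care is the verification that the chosen non-Radon measure is nevertheless tight, which reduces to the compactness of the ambient space $T$. The conceptual crux is simply recognizing that the gap between tightness and Radonness (guaranteed by Remark \ref{tightness and Radon}) is exactly what produces an inseparable pair of measures under the ``nearstandardness'' conclusion of Corollary \ref{not Hausdorff corollary 0}.
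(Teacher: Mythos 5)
Your proposal is correct and follows essentially the same route as the paper: take Dieudonn\'e's tight-but-not-Radon measure $\mu$ on a compact Hausdorff space (Remark \ref{tightness and Radon}), apply Corollary \ref{not Hausdorff corollary 0} to get a Radon $\mu' \neq \mu$ lying in no open set disjoint from neighborhoods of $\mu$, and conclude non-Hausdorffness. Your write-up merely makes explicit two points the paper leaves implicit — that compactness of $T$ gives tightness of $\mu$, and the contradiction argument showing inseparability — which is fine.
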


As a consequence of Corollary \ref{not Hausdorff corollary 0}, we obtain the following result that is helpful in further generalizing a de Finetti--Hewitt--Savage theorem for Radon distributed exchangeable random variables to an appropriate result for tightly distributed exchangeable random variables in the main body of this manuscript. 

\begin{corollary}\label{what to do with tight measures}
    Let $T$ be a Hausdorff space and let $\mu$ be a tight probability measure on it. Then there exists a Radon measure $\mu'$ on $T$ such that $\int_T f d\mu = \int_T f d\mu'$ for all bounded continuous functions $f \colon T \to \mathbb{R}$.
\end{corollary}

\begin{proof}
    Let $\mu'$ be as in Corollary \ref{not Hausdorff corollary 0}. Let $f \colon T \to \mathbb{R}$ be a bounded continuous function. Suppose $\int_T f d\mu' = \alpha$. Thus, $\mu' \in \left\{\nu \in \pt: \int_T fd\nu > \alpha - \frac{1}{n}\right\}$ for all $n \in \mathbb{N}$, where the latter set is open since all continuous functions are, in particular, lower semincontinuous. By Corollary \ref{not Hausdorff corollary 0}, it follows that $\int_T f d\mu > \alpha - \frac{1}{n}$ for all $n \in \mathbb{N}$. Similarly, using the upper semicontinuity of $f$, it follows that  $\int_T f d\mu < \alpha + \frac{1}{n}$ for all $n \in \mathbb{N}$. Letting $n \to \infty$ now completes the proof. 
\end{proof}

We now show how Theorem \ref{Landers and Rogge main result} leads to a Prokhorov's theorem for the space of probability measures on any Hausdorff space, as promised earlier. 

\begin{theorem}[Prokhorov's theorem for the space of probability measures on any Hausdorff space]\label{Prokhorov}
Let $T$ be a Hausdorff space, and let $\pt$ be the space of all Borel probability measures on $T$, equipped with the $A$-topology. Let $\mathfrak{A} \subseteq \pt$ be such that for any $\epsilon \in \mathbb{R}_{>0}$, there exists a compact set $K_{\epsilon} \subseteq T$ for which
\begin{align}\label{tightness condition}
    \mu(K_{\epsilon}) \geq 1 - \epsilon \text{ for all } \mu \in \mathfrak{A}.
\end{align}
Then the closure of $\mathfrak{A}$ in $\pt$ is compact. 
\end{theorem}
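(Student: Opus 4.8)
The plan is to verify the nonstandard criterion for compactness from Theorem \ref{topological characterizations}\ref{compact} applied to $\bar{\mathfrak{A}}$: I will show that ${^*}\bar{\mathfrak{A}} \subseteq \st^{-1}(\bar{\mathfrak{A}})$, i.e. that every internal probability measure lying in ${^*}\bar{\mathfrak{A}}$ is nearstandard to some point of $\bar{\mathfrak{A}}$. Since $\pt$ need not be Hausdorff by Corollary \ref{not Hausdorff corollary}, I cannot simply invoke uniqueness of standard parts, nor read off relative compactness from nearstandardness of ${^*}\mathfrak{A}$ alone; this is why I must argue directly with the closure $\bar{\mathfrak{A}}$ rather than with $\mathfrak{A}$.

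First I would check that $\bar{\mathfrak{A}}$ inherits the tightness condition \eqref{tightness condition} with the same compact sets $K_\epsilon$. Given $\mu \in \bar{\mathfrak{A}}$, Theorem \ref{net closure'} supplies a net $(\mu_i)_{i \in D}$ in $\mathfrak{A}$ with $(\mu_i)_{i \in D} \to \mu$. As $T$ is Hausdorff, $K_\epsilon$ is compact and hence closed, so Theorem \ref{A-Portmanteau}\ref{iv} gives $\limsup_{i \in D} \mu_i(K_\epsilon) \leq \mu(K_\epsilon)$; since $\mu_i(K_\epsilon) \geq 1 - \epsilon$ for every $i$, the definition \eqref{limsup} forces $\limsup_{i \in D} \mu_i(K_\epsilon) \geq 1 - \epsilon$, whence $\mu(K_\epsilon) \geq 1 - \epsilon$.

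Next, I would fix an arbitrary $\nu \in {^*}\bar{\mathfrak{A}}$. Transfer of the tightness of $\bar{\mathfrak{A}}$ just established gives $\nu({^*}K_\epsilon) \geq 1 - \epsilon$ for each standard $\epsilon \in \mathbb{R}_{>0}$. Because $K_\epsilon$ is compact, Theorem \ref{topological characterizations}\ref{compact} yields ${^*}K_\epsilon \subseteq \st^{-1}(K_\epsilon)$, and Lemma \ref{Albeverio Lemma} guarantees that $\st^{-1}(K_\epsilon)$ is Loeb measurable with $L\nu(\st^{-1}(K_\epsilon)) = \inf\{L\nu({^*}O) : K_\epsilon \subseteq O \text{ and } O \text{ open}\}$. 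Monotonicity of the Loeb measure then gives $L\nu(\st^{-1}(K_\epsilon)) \geq L\nu({^*}K_\epsilon) = \st(\nu({^*}K_\epsilon)) \geq 1 - \epsilon$. This is precisely the hypothesis \eqref{nonstandard tightness assumption} of Theorem \ref{Albeverio theorem}, so $\mu_0 \defeq L\nu \circ \st^{-1}$ is a Radon, and in particular Borel, probability measure on $T$. Theorem \ref{Landers and Rogge main result} then yields $\nu \in \st^{-1}(\mu_0)$, so $\nu$ is nearstandard to $\mu_0$; and since $\bar{\mathfrak{A}}$ is closed while $\nu \in {^*}\bar{\mathfrak{A}} \cap \st^{-1}(\mu_0)$, the nonstandard characterization of closed sets (Theorem \ref{topological characterizations}\ref{closed}) forces $\mu_0 \in \bar{\mathfrak{A}}$. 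As $\nu$ was arbitrary, this establishes ${^*}\bar{\mathfrak{A}} \subseteq \st^{-1}(\bar{\mathfrak{A}})$, so $\bar{\mathfrak{A}}$ is compact.

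The conceptual heart of the argument, and the only step requiring genuine care, is the recognition that the uniform tightness hypothesis \eqref{tightness condition}, after transfer, delivers for each individual internal measure $\nu$ exactly the large-compact-set condition \eqref{nonstandard tightness assumption} needed to push its Loeb measure down to a Radon standard part. The surrounding steps are essentially bookkeeping: the passage to the closure (unavoidable because $\pt$ is non-Hausdorff) and the verification that the pushed-down standard part lands back in $\bar{\mathfrak{A}}$. I expect the adaptation to the Radon setting (Theorem \ref{Radon Prokhorov}) to run identically, replacing Theorem \ref{A-Portmanteau} and Theorem \ref{Landers and Rogge main result} by their Radon analogues Theorem \ref{Radon A-Portmanteau} and Theorem \ref{Radon Landers and Rogge main result}.
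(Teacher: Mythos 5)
Your proof is correct, and its skeleton is the same as the paper's: reduce compactness of $\bar{\mathfrak{A}}$ to the nonstandard criterion ${^*}\bar{\mathfrak{A}} \subseteq \st^{-1}(\bar{\mathfrak{A}})$, use closedness of $\bar{\mathfrak{A}}$ (Theorem \ref{topological characterizations}\ref{closed}) to reduce to nearstandardness of each $\nu \in {^*}\bar{\mathfrak{A}}$, establish $L\nu({^*}K_\epsilon) \geq 1-\epsilon$, and push $L\nu$ down via Theorem \ref{Albeverio theorem} and Theorem \ref{Landers and Rogge main result}. The one place you genuinely diverge is in how the bound $L\nu({^*}K_\epsilon) \geq 1-\epsilon$ is obtained. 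The paper argues by contradiction on the nonstandard side: if $\nu({^*}K_\epsilon) < 1-\epsilon$, then $\nu \in {^*}\mathfrak{U}$ for the subbasic open set $\mathfrak{U} = \{\gamma \in \pt : \gamma(K_\epsilon) < 1-\epsilon\}$, and transferring the fact that every open set containing a point of $\bar{\mathfrak{A}}$ meets $\mathfrak{A}$ produces a standard $\mu \in \mathfrak{U} \cap \mathfrak{A}$, contradicting \eqref{tightness condition}. You instead first prove the standard lemma that $\bar{\mathfrak{A}}$ inherits \eqref{tightness condition} with the same sets $K_\epsilon$ (via Theorem \ref{net closure'} and Theorem \ref{A-Portmanteau}\ref{iv}) and then transfer it. These are dual formulations of one topological fact---upper semicontinuity of $\gamma \mapsto \gamma(K_\epsilon)$ is exactly what makes $\mathfrak{U}$ open and what yields the Portmanteau inequality for closed sets---but your version isolates a reusable standard statement (uniform tightness passes to $A$-closures) at the cost of invoking the net machinery, whereas the paper's contradiction argument needs only the subbasis description of the topology and the definition of closure. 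One small simplification to your bookkeeping: to verify hypothesis \eqref{nonstandard tightness assumption} you do not need Lemma \ref{Albeverio Lemma}, since $K_\epsilon \subseteq O$ gives ${^*}K_\epsilon \subseteq {^*}O$, so monotonicity alone yields $\inf\{L\nu({^*}O) : K_\epsilon \subseteq O,\ O \text{ open}\} \geq L\nu({^*}K_\epsilon) \geq 1-\epsilon$.
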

\begin{proof}
Let $\mathfrak{A}$ be as in the statement of the theorem. Let $\bar{\mathfrak{A}}$ be its closure in $\pt$. By the nonstandard characterization of compactness, it suffices to show that ${^*}\bar{\mathfrak{A}} \subseteq \st^{-1}(\bar{\mathfrak{A}})$. Since $\bar{\mathfrak{A}}$ is closed, any nearstandard element in ${^*}\bar{\mathfrak{A}}$ must be nearstandard to an element of $\bar{\mathfrak{A}}$. Thus, it suffices to show that all elements in ${^*}\bar{\mathfrak{A}}$ are nearstandard. Toward that end, let $\nu \in {^*}\bar{\mathfrak{A}}$. For each $\epsilon \in \mathbb{R}_{>0}$, let $K_\epsilon$ be as in the statement of the theorem. The following claim now completes the proof, in view of Theorems \ref{Albeverio theorem} and \ref{Landers and Rogge main result}. 

\begin{claim}\label{Prokhorov claim}
$L\nu({^*}K_{\epsilon}) \geq 1 - \epsilon \text{ for all } \epsilon \in \mathbb{R}_{>0}$.
\end{claim}
\begin{proof}[Proof of Claim \ref{Prokhorov claim}]\renewcommand{\qedsymbol}{}
Suppose, if possible, that there is some $\epsilon \in \mathbb{R}_{>0}$ such that $L\nu({^*}K_{\epsilon}) < 1 - \epsilon$. Since $\epsilon \in \mathbb{R}_{>0}$, this implies that $\nu({^*}K_{\epsilon}) < 1 - \epsilon$ as well. By transfer, we conclude that $\nu$ belongs to ${^*}\mathfrak{U}$, where $\mathfrak{U}$ is the following open subset of $\pt$.
\begin{align}\label{Prokhorov subbasic}
    \mathfrak{U} \defeq \{\gamma \in \pt: \gamma(K_{\epsilon}) < 1 - \epsilon\}.
\end{align}

Note that $\mathfrak{U}$ is indeed open in $\pt$, since $K_{\epsilon}$, being a compact subset of the Hausdorff space $T$, is closed in $T$. By the definition of closure, we know that any open neighborhood of an element in the closure of $\mathfrak{A}$ must have a nonempty intersection with $\mathfrak{A}$. By transfer, we thus find an element $\mu \in \mathfrak{U} \cap \mathfrak{A}$. But this is a contradiction (in view of \eqref{tightness condition} and \eqref{Prokhorov subbasic}), thus completing the proof. 
\end{proof} 
\end{proof}

To conclude this section, we show how the above proof immediately generalizes to a proof of Theorem \ref{Radon Prokhorov}.

\begin{proof}[Proof of Theorem \ref{Radon Prokhorov}]
Let $\mathfrak{A} \subseteq \prt$ be as in the statement of the theorem. It suffices to show that ${^*}\bar{\mathfrak{A}} \cap {^*}\prt \subseteq \st_{\prt}^{-1}(\bar{\mathfrak{A}})$, where $\bar{\mathfrak{A}}$ is the closure of $\mathfrak{A}$ in the space $\prt$. This follows immediately from Theorem \ref{Prokhorov} and the fact that $\st_{\prt}^{-1}(\bar{\mathfrak{A}}) = {^*}\prt \cap \st_{\pt}^{-1}(\bar{\mathfrak{A}})$ (which can be seen by Lemma \ref{useful standard inverse relation}).
\end{proof}

\subsection{Some uniqueness results pertaining to mixing measures}
We compile in the final section of this appendix some results pertaining to the uniqueness of the mixing measure in the context of Theorem \ref{most general de Finetti}. For technical reasons, we first recall the following generalization of the monotone class theorem (see Dellacherie and Meyer \cite[Theorem 21, p. 13-I]{Dellacherie} for a proof of this result).

\begin{theorem}\label{Dellacherie's theorem}
Let $\mathbb{H}$ be an $\mathbb{R}$-vector space of bounded real-valued functions on some set $\mathcal{S}$ such that the following hold:
\begin{enumerate}[(i)]
    \item $\mathbb{H}$ contains the constant functions.
    \item $\mathbb{H}$ is closed under uniform convergence. 
    \item For every uniformly bounded increasing sequence of nonnegative functions $f_n \in \mathbb{H}$, the function $\lim_{n \rightarrow \infty} f_n$ belongs to $\mathbb{H}$.
\end{enumerate}

If $\mathcal{C}$ is a subset of $\mathbb{H}$ which is closed under multiplication, then the space $\mathbb{H}$ contains all bounded functions measurable with respect to $\sigma(\mathcal{C})$ - the smallest sigma algebra with respect to which all functions in $\mathcal{C}$ are measurable. 
\end{theorem}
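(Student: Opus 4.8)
The plan is to run a functional monotone class (multiplicative system) argument. The one point requiring care is that $\mathbb{H}$ is only assumed to be a vector space that is closed under uniform and bounded monotone limits, not an algebra; hence products must be kept inside the multiplicative system $\mathcal{C}$ rather than formed inside $\mathbb{H}$.

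First I would show that $\mathbb{H}$ is closed under composition with continuous functions of finitely many elements of $\mathcal{C}$: for any $f_1, \ldots, f_n \in \mathcal{C}$ and any continuous $\phi \co \mathbb{R}^n \to \mathbb{R}$, the function $\phi(f_1, \ldots, f_n)$ lies in $\mathbb{H}$. Since each $f_i$ is bounded, the range of $(f_1, \ldots, f_n)$ lies in a compact box $K \subseteq \mathbb{R}^n$, and by the Stone--Weierstrass theorem $\phi$ is a uniform limit on $K$ of polynomials $p_m(x_1, \ldots, x_n)$. Each monomial of $p_m$ evaluated at the $f_i$ is a scalar multiple of a product of elements of $\mathcal{C}$, which lies in $\mathcal{C}$ by multiplicative closure; so $p_m(f_1, \ldots, f_n)$, being a finite linear combination of such products together with a constant, lies in $\mathbb{H}$ by (i) and the vector space structure. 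As $p_m(f_1, \ldots, f_n) \to \phi(f_1, \ldots, f_n)$ uniformly on $\mathcal{S}$, hypothesis (ii) gives $\phi(f_1, \ldots, f_n) \in \mathbb{H}$.

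Next I would produce indicators. Fix $f_1, \ldots, f_n \in \mathcal{C}$ and reals $a_1, \ldots, a_n$, and for each $k$ let $\psi_k$ be the piecewise-linear ramp that is $0$ on $(-\infty, 0]$, equals $1$ on $[1/k, \infty)$, and is increasing in $k$; set $\phi_k(x_1, \ldots, x_n) \defeq \prod_{i=1}^n \psi_k(x_i - a_i)$, a continuous function valued in $[0,1]$. Then $\phi_k(f_1, \ldots, f_n) \in \mathbb{H}$ by the previous step, the sequence is nonnegative, uniformly bounded by $1$, and increases to $\mathbbm{1}_{\{f_1 > a_1\} \cap \cdots \cap \{f_n > a_n\}}$; so hypothesis (iii) places this indicator in $\mathbb{H}$. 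Consequently the collection $\mathcal{D} \defeq \{A \subseteq \mathcal{S} : \mathbbm{1}_A \in \mathbb{H}\}$ contains the $\pi$-system $\mathcal{P}$ of all finite intersections $\{f_1 > a_1\} \cap \cdots \cap \{f_n > a_n\}$, and $\sigma(\mathcal{P}) = \sigma(\mathcal{C})$ since the half-spaces $\{f > a\}$ already generate $\sigma(\mathcal{C})$.

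Finally I would verify that $\mathcal{D}$ is a Dynkin ($\lambda$) system: $\mathcal{S} \in \mathcal{D}$ because $\mathbbm{1}_{\mathcal{S}} = 1 \in \mathbb{H}$ by (i); it is closed under complements since $\mathbbm{1}_{A^c} = 1 - \mathbbm{1}_A$ and $\mathbb{H}$ is a vector space; and it is closed under countable increasing unions because $\mathbbm{1}_{A_m} \uparrow \mathbbm{1}_{\bigcup_m A_m}$ is a uniformly bounded increasing sequence of nonnegative functions, so (iii) applies. Dynkin's $\pi$-$\lambda$ theorem then yields $\sigma(\mathcal{C}) = \sigma(\mathcal{P}) \subseteq \mathcal{D}$, i.e. $\mathbbm{1}_A \in \mathbb{H}$ for every $A \in \sigma(\mathcal{C})$. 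Any bounded $\sigma(\mathcal{C})$-measurable function is a uniform limit of simple functions built from such indicators, each of which lies in $\mathbb{H}$ as a finite linear combination; hypothesis (ii) then shows the function itself belongs to $\mathbb{H}$, completing the proof. The main obstacle is organizational rather than deep: because $\mathbb{H}$ is not an algebra, the products needed to reach finite intersections of half-spaces cannot be formed by multiplying indicators directly, and must instead be manufactured through the multivariate Stone--Weierstrass step of the first paragraph.
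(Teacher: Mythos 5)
Your proposal follows the standard functional monotone class argument, which is the right approach; note that the paper itself does not prove this statement but quotes it from Dellacherie--Meyer with a citation for the proof, so the only question is whether your argument is sound. Your Stone--Weierstrass step (keeping all products inside the multiplicative class $\mathcal{C}$, exactly the point that needs care), the ramp-function construction of the indicators $\mathbbm{1}_{\{f_1 > a_1\} \cap \cdots \cap \{f_n > a_n\}}$, the identification $\sigma(\mathcal{P}) = \sigma(\mathcal{C})$, and the final uniform approximation by simple functions are all correct.

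There is, however, one genuine (though easily repaired) gap: the three properties you verify for $\mathcal{D} \defeq \{A \subseteq \mathcal{S} : \mathbbm{1}_A \in \mathbb{H}\}$ --- that it contains $\mathcal{S}$, is closed under complements, and is closed under countable increasing unions --- do not constitute a Dynkin ($\lambda$) system, so Dynkin's $\pi$-$\lambda$ theorem cannot be invoked from them as stated. A $\lambda$-system requires either closure under proper differences (together with increasing unions) or closure under countable \emph{disjoint} unions (together with complements), and your three properties imply neither: on $\mathcal{S} = \{1,\ldots,6\}$, the collection $\{\emptyset, \mathcal{S}, \{1,2\}, \{3,4,5,6\}, \{3,4\}, \{1,2,5,6\}\}$ contains $\mathcal{S}$, is closed under complements, and is trivially closed under increasing unions (any increasing sequence within a finite collection stabilizes), yet it contains the $\pi$-system $\{\emptyset, \{1,2\}, \{3,4\}\}$ while omitting $\{1,2\} \sqcup \{3,4\} = \{1,2,3,4\}$, which lies in the generated sigma algebra; so the conclusion of the $\pi$-$\lambda$ theorem genuinely fails for collections satisfying only your three axioms. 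The repair uses exactly the vector space structure you already have in hand: for $A, B \in \mathcal{D}$ with $A \subseteq B$ one has $\mathbbm{1}_{B \setminus A} = \mathbbm{1}_B - \mathbbm{1}_A \in \mathbb{H}$, and for disjoint $A, B \in \mathcal{D}$ one has $\mathbbm{1}_{A \sqcup B} = \mathbbm{1}_A + \mathbbm{1}_B \in \mathbb{H}$. Either observation, combined with your increasing-union step, shows that $\mathcal{D}$ is a genuine $\lambda$-system, after which the rest of your argument goes through unchanged.
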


As an application, we have the following uniqueness result, which is different from the related uniqueness result of Hewitt--Savage \cite[Theorem 9.4, p. 489]{Hewitt-Savage-1955} in two ways. Firstly, we are focusing on the space of Radon probability measures (as opposed to the space of Baire probability measures), and secondly, we are working with the Borel sigma algebra induced by the $A$-topology (as opposed to the cylinder sigma algebra induced by Baire sets).

\begin{theorem}\label{uniqueness of Radon presentable}
Let $T$ be a Hausdorff space and let $\prt$ be the space of all Radon probability measures on $T$ under the $A$-topology. Suppose that $\mathscr{P}, \mathscr{Q} \in \mathfrak{P}_r(\prt)$ are such that the following holds:
\begin{align}
    \int_{\prt} \mu(B_1) \cdot \ldots \cdot \mu(B_n) d\mathscr{P}(\mu) = \int_{\prt} \mu(B_1) \cdot \ldots \cdot \mu(B_n) d\mathscr{Q}(\mu) \nonumber \\
    \text{ for all } n \in \mathbb{N} \text{ and } B_1, \ldots, B_n \in \mathcal{B}(T). \label{two measures}
\end{align}
Then it must be the case that $\mathscr{P} = \mathscr{Q}$.
\end{theorem}

\begin{proof}
For $m \in \mathbb{N}$, let $\mathbf{B}([0,1]^m, \mathbb{R})$ denote the space of all bounded Borel measurable functions $f\co [0,1]^m \rightarrow \mathbb{R}$. For each $m \in \mathbb{N}$, consider the following collection of functions:
\begin{align*}
    \mathcal{G}_m \defeq \{f \in \mathbf{B}([0,1]^m, \mathbb{R}): \mathbb{E}_{\mathscr{P}}\left[ f \left(\mu(B_1), \ldots, \mu(B_m) \right) \right] = \mathbb{E}_{\mathscr{Q}}\left[ f \left(\mu(B_1), \ldots, \mu(B_m) \right) \right] \\  \text{ for all } B_1, \ldots, B_m \in \mathcal{B}(S)
    \}.
\end{align*}

Note that the expected values in the definition of $\mathcal{G}_m$ are well-defined because of Lemma \ref{evaluation of Borel sets'}. It is clear that for each $m \in \mathbb{N}$, the collection $\mathcal{G}_m$ contains all polynomials over $m$ variables. Indeed, the collection $\mathcal{G}_m$ is an $\mathbb{R}$-vector space (that is, closed under finite linear combinations), and for a monomial $f\co [0,1]^m \rightarrow \mathbb{R}$ of the type $f(x_1, \ldots, x_m) = {x_1}^{a_1} \cdot \ldots \cdot {x_m}^{a_m}$ (where $a_1, \ldots, a_m \in \mathbb{Z}_{\geq 0}$), the expectation $\mathbb{E}_{\mathscr{P}}\left[ f \left(\mu(B_1), \ldots, \mu(B_m) \right) \right]$ is equal to $\mathbb{E}_{\mathscr{Q}}\left[ f \left(\mu(B_1), \ldots, \mu(B_m) \right) \right]$ by \eqref{two measures}. That $\mathcal{G}_m$ satisfies the conditions in Theorem \ref{Dellacherie's theorem} is also clear by dominated convergence theorem. It is straightforward to verify that the smallest sigma algebra on $[0,1]^m$ with respect to which all polynomials are measurable is the Borel sigma algebra on $[0,1]^m$. Since the set of polynomials over $m$ variables is closed under multiplication, it thus follows from Theorem \ref{Dellacherie's theorem} that for each $m \in \mathbb{N}$, the collection $\mathcal{G}_m$ contains all bounded Borel measurable functions $f \co [0,1]^m \rightarrow \mathbb{R}$. 

Let $\mathcal{G}$ be the collection of those Borel subsets of $\prt$ that are assigned the same measure by $\mathscr{P}$ and $\mathscr{Q}$. More formally, we define:
\begin{align}\label{where P and Q agree}
    \mathcal{G} \defeq \{\mathfrak{B} \in \mathcal{B}(\prt): \mathscr{P}(\mathfrak{B}) = \mathscr{Q}(\mathfrak{B})\}.
\end{align}

Taking $f$ to be the indicator function of a measurable rectangle in $[0,1]^m$, we have thus shown that $\mathcal{G}$ contains the following collection of cylinder sets:
\begin{align}\label{definition of cylinder sets}
    \mathcal{C} \defeq \{ C_{(B_1, \ldots, B_m), (A_1, \ldots A_m)}: m \in \mathbb{N}; B_1, \ldots, B_m \in \mathcal{B}(T); A_1, \ldots, A_m \in \mathcal{B}(\mathbb{R}) \}, 
\end{align}
where 
\begin{align*}
    C_{(B_1, \ldots, B_m), (A_1, \ldots A_m)} \defeq \{\mu \in \prs: \mu(B_1) \in A_1, \ldots, \mu(B_m) \in A_m\} \\
    \text{ for all } m \in \mathbb{N}; B_1, \ldots, B_m \in \mathcal{B}(T); A_1, \ldots, A_m \in \mathcal{B}(\mathbb{R}).
\end{align*}

It is clear that the collection $\mathcal{C}$ contains the basic open subsets with respect to the subbasis $\{\{\mu \in \prt: \mu(G) > \alpha\}: G \text{ an open subset of $T$} \text{ and } \alpha \in \mathbb{R}\}$ for the $A$-topology on $\prt$. Since $\mathcal{G}$ is a sigma algebra, all finite unions of these basic open sets are in $\mathcal{G}$. (In fact, all countable unions are in $\mathcal{G}$, but we do not need this fact here.) 

Let $\mathfrak{C}$ be a compact subset of $\prt$ and let $\epsilon \in \mathbb{R}_{>0}$ be given. Since $\mathscr{P}$ and $\mathscr{Q}$ are Radon measures, there exists an open subset $\mathfrak{U}$ of $\prt$ such that $\mathfrak{C} \subseteq \mathfrak{U}$ and 
\begin{align}\label{used Radonness of P and Q}
    \mathscr{P}(\mathfrak{U} \backslash \mathfrak{C}) < \epsilon \text{ and }  \mathscr{Q}(\mathfrak{U} \backslash \mathfrak{C}) < \epsilon.
\end{align}
Cover $\mathfrak{C}$ by finitely many basic open subsets contained in $\mathfrak{U}$ and let $\mathfrak{V}$ be the union of these basic open subsets. Then, we have (using \eqref{used Radonness of P and Q}):
\begin{align}\label{used Radonness of P and Q 2}
    \mathscr{P}(\mathfrak{V} \backslash \mathfrak{C}) < \epsilon \text{ and }  \mathscr{Q}(\mathfrak{V} \backslash \mathfrak{C}) < \epsilon.
\end{align}
Being a finite union of basic open sets, we have $\mathfrak{V} \in \mathcal{G}$, or in other words:
\begin{align}\label{agreed on V}
    \mathscr{P}(\mathfrak{V}) = \mathscr{Q}(\mathfrak{V}).
\end{align}
Using \eqref{used Radonness of P and Q 2} and \eqref{agreed on V} (and the triangle inequality), we thus obtain:
\begin{align}\label{used triangle inequality}
    \abs{\mathscr{P}(\mathfrak{C}) - \mathscr{Q}(\mathfrak{C})} < \epsilon.
\end{align}

Since $\mathfrak{C}$ was an arbitrary compact subset of $\prt$ and $\epsilon \in \mathbb{R}_{>0}$ was arbitrary, this shows that the measures $\mathscr{P}$ and $\mathscr{Q}$ agree on all compact subsets of $\prt$. Since they are Radon measures, it is thus clear now that they agree on all Borel subsets of $\prt$, completing the proof. 
\end{proof}

In the above proof, the only place where Radonness was used was in extending the uniqueness result from the cylinder sigma algebra on $\prt$ to the Borel sigma algebra on $\prt$. In particular, the same argument shows that without working with Radon measures, one still has uniqueness if we focus on measures over the smallest sigma algebra generated by cylinder sets. More precisely, we have the following corollary. 

\begin{corollary}\label{non-Radon de Finetti uniqueness}
Let $T$ be a topological space and let $\pt$ be the space of all Borel probability measures on $T$ under the $A$-topology. Let $\mathcal{C}(\pt)$ be the smallest sigma algebra such that for any $B \in \mathcal{B}(T)$, the evaluation function $e_B\co \ps \rightarrow \mathbb{R}$, defined by $e_B(\nu) = \nu(B)$, is measurable. Then $\mathcal{C}(\pt) \subseteq \mathcal{B}(\pt)$. 

Suppose $\mathscr{P}, \mathscr{Q}$ are two probability measures on $(\pt, \mathcal{C}(\pt))$ such that the following holds:
\begin{align*}
    \int_{\pt} \mu(B_1) \cdot \ldots \cdot \mu(B_n) d\mathscr{P}(\mu) = \int_{\pt} \mu(B_1) \cdot \ldots \cdot \mu(B_n) d\mathscr{Q}(\mu) \\
    \text{ for all } n \in \mathbb{N} \text{ and } B_1, \ldots, B_n \in \mathcal{B}(T). 
\end{align*}
Then it must be the case that $\mathscr{P} = \mathscr{Q}$.
\end{corollary}

\section{On empirical measures induced by hyperfinitely many random variables sampled from a Radon distribution}\label{AppB}
Let $(\Omega, \mathcal{F}, \mbp)$ be a probability space. Let $S$ be a Hausdorff space equipped with its Borel sigma algebra $\mathcal{B}(S)$. Suppose $X_1, X_2, \ldots $ is a sequence of identically distributed $S$-valued random variables on $\Omega$---that is, the pushforward measure $\mathbb{P} \circ {X_i}^{-1}$ on $(S, \mathcal{B}(S))$ is the same for all $i \in \mathbb{N}$.

Throughout this appendix, we will further assume that the common distribution of the $X_i$ is Radon. By tightness, there exists an increasing sequence of compact subsets $(K_n)_{n \in \mathbb{N}}$ of $S$ such that:

\begin{align}\label{K_n}
   \mathbb{P}(X_1 \in K_n) > 1 - \frac{1}{n^3} \text{ for all } n \in \mathbb{N}.
\end{align}

For each $\omega \in \Omega$ and $n \in \mathbb{N}$, define the \textit{empirical measure} $\mu_{\omega, n}$ on $\mathcal{B}(S)$ as follows:
\begin{align}\label{definition of mu}
    \mu_{\omega, n}(B) \defeq \frac{\# \{i \in [n]: X_i(\omega) \in B \}}{n} \text{ for all } B \in \mathcal{B}(S). 
\end{align}

Although we are calling $\mu_{\omega, N}$ a \textit{hyperfinite} empirical measure because $N \in {^*}\mathbb{N}$, we do not need to assume $N > \mathbb{N}$ for the results obtained in this appendix. In fact, the results we obtain would also be true if we started with any ${^*}$-finite collection of internatl random variables sampled from a common nearstandard internal Radon distribution, but we avoid that generality as a matter of convenience.  

Also, we are abusing notation by using $(X_i)$ to denote both the standard sequence $(X_i)_{i \in \mathbb{N}}$ of random variables and the nonstandard extension of this sequence, the intended usage being always clear from context. More precisely, if $\mathscr{X} \co \Omega \times \mathbb{N} \rightarrow S$ is defined by $\mathscr{X}(\omega, i) \defeq X_i(\omega)$ for all $\omega \in \Omega$ and $i \in \mathbb{N}$, then for any $i \in {^*}\mathbb{N}$, we shall also use $X_i$ to denote an internal function $X_i \co {^*}\Omega \rightarrow {^*}S$ is defined as follows:
\begin{align*}
    X_i(\omega) = {^*}\mathscr{X}(\omega, i) \text{ for all } \omega \in {^*}\Omega \text{ and } i \in {^*}\mathbb{N}.
\end{align*}

This appendix, which studies the structure of hyperfinite empirical measures as elements in the space of all Radon probability measures on $S$, is divided into four subsections. Section \ref{3.1} deals with some basic properties that are satisfied by almost all hyperfinite empirical measures. Section \ref{3.2} deals with the study of the pushforward measure induced on the space ${^*}\prs$ of internal Radon measures on ${^*}S$ by the map $\omega \mapsto \mu_{\omega, N}$. The goal of Section \ref{3.3} is to show in a precise sense that the standard part of a hyperfinite empirical measure evaluated at a Borel set is almost surely given by the standard part of the measure of the nonstandard extension of that Borel set (see Theorem \ref{commutativity theorem}). Section \ref{3.4} synthesizes the theory built so far in order to express some Loeb integrals on the space of all internal Radon probability measures in terms of the corresponding integrals on the space of Radon probability measures on $S$.

\subsection{Hyperfinite empirical measures as random elements in the space of all internal Radon measures}\label{3.1}
Being supported on a finite set, it is clear that $\mu_{\omega, n}$ is, in fact, a Radon probability measure on $S$ for all $\omega \in \Omega$ and $n \in \mathbb{N}$. For technical reasons it would be desirable to work with a sigma algebra on $\prs$ that ensures that the maps $\omega \mapsto \mu_{\omega, n}$ (where $n \in \mathbb{N}$) are measurable as functions from the measurable space $(\Omega, \mathcal{F})$. 

It turns out that the Borel sigma algebra $\mathcal{B}(\prs)$ is too large and unwieldy for this purpose, and we instead work with $\sigma(\mathbb{B}(\prs))$, the smallest sigma algebra containing the base $\mathbb{B}(\prs)$ for the $A$-topology on $\prs$ defined as follows:
\begin{align}\label{Bprs}
     \mathbb{B}(\prs) = \left\{\bigcap_{i \in [k]} \{\mu \in \prs: \mu(G_i) > \alpha_i\} : k \in \mathbb{N}, \forall i \in [n] ((\alpha_i \in \mathbb{R}) \land (G_i \text{ open})) \right\}.
\end{align}

Lemma \ref{Radon evaluation of Borel sets'} shows that the sigma algebra $\sigma(\mathbb{B}(\prs))$ is large enough to ensure that the evaluation maps are all measurable, while the next lemma shows that it is small enough to ensure that the empirical distributions $\mu_{\cdot, n} \co \Omega \to \prs$ are also measurable. 

\begin{lemma}\label{measurable}
For each $n \in \mathbb{N}$, the map $\mu_{\cdot, n} \co (\Omega, \mathcal{F}) \rightarrow (\prs, \sigmabprs)$ defined by \eqref{definition of mu} is measurable. In particular, for any $B \in \mathcal{B}(S)$, the map $\mu_{\cdot, n}(B) \co (\Omega, \mathcal{F}) \rightarrow ([0,1], \mathcal{B}([0,1])$ (that is, $\omega \mapsto \mu_{\omega, n}(B)$) is measurable for each $n \in \mathbb{N}$. 
\end{lemma}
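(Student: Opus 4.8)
The plan is to treat the two assertions in reverse order, settling the pointwise statement first and then bootstrapping it, via the subbasis description of the $A$-topology, to the measurability of the $\prs$-valued map.

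For the second statement, fix $B \in \mathcal{B}(S)$ and $n \in \mathbb{N}$. I would simply write the empirical mass as a finite sum of indicators,
\begin{align*}
\mu_{\omega, n}(B) = \frac{1}{n}\sum_{i = 1}^{n} \mathbbm{1}_B(X_i(\omega)),
\end{align*}
and note that each summand $\omega \mapsto \mathbbm{1}_B(X_i(\omega))$ is $\mathcal{F}$-measurable, since $\mathbbm{1}_B \co S \to \mathbb{R}$ is $\mathcal{B}(S)$-measurable (as $B$ is Borel) and each $X_i$ is a random variable. A finite sum of $\mathcal{F}$-measurable functions is $\mathcal{F}$-measurable, giving the claim.

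For the first statement, the natural route is to reduce measurability of $\mu_{\cdot, n}$ to the pointwise measurability just obtained. I would consider the collection $\mathcal{D} \defeq \{\mathfrak{B} \subseteq \prs : \mu_{\cdot, n}^{-1}(\mathfrak{B}) \in \mathcal{F}\}$, which is a sigma algebra because preimages commute with complements and countable unions. For a subbasic open set $\{\mu \in \prs : \mu(G) > \alpha\}$ coming from subbasis \ref{i subbasis} of Lemma \ref{subbasis in Radon topology} (with $G$ open and $\alpha \in \mathbb{R}$), its preimage is exactly $\{\omega \in \Omega : \mu_{\omega, n}(G) > \alpha\}$, which belongs to $\mathcal{F}$ by the second statement applied to the Borel set $B = G$. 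Hence $\mathcal{D}$ contains the entire subbasis, and therefore the sigma algebra it generates.

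The step I expect to be the main obstacle is upgrading this to measurability with respect to the full Borel sigma algebra $\mathcal{B}(\prs)$, rather than merely the sigma algebra generated by the subbasis; by the Dynkin argument underlying Theorem \ref{Radon evaluation of Borel sets'}, the latter is the evaluation sigma algebra, which can be strictly smaller than $\mathcal{B}(\prs)$ when $S$ is not metrizable, as Theorem \ref{relation between various sigma algebras} illustrates. To close this gap cleanly I would factor $\mu_{\cdot, n} = q \circ (X_1, \ldots, X_n)$, where $q \co S^n \to \prs$ sends $(s_1, \ldots, s_n)$ to $\frac{1}{n}\sum_{i = 1}^n \delta_{s_i}$. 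A direct computation shows that the $q$-preimage of a subbasic open set $\{\mu : \mu(G) > \alpha\}$ is the set of tuples having at least $\lfloor n\alpha \rfloor + 1$ coordinates in $G$, i.e. a finite union of finite intersections of the open sets $\{s_i \in G\}$; this is open in $S^n$, so $q$ is continuous and hence Borel measurable. Measurability of the tuple $(X_1, \ldots, X_n) \co \Omega \to S^n$ would then finish the argument. The genuinely delicate point hiding here is the usual discrepancy between the product sigma algebra $\mathcal{B}(S)^{\otimes n}$ (into which the tuple is automatically measurable) and the Borel sigma algebra $\mathcal{B}(S^n)$; the argument goes through precisely when $q^{-1}(\mathcal{B}(\prs)) \subseteq \mathcal{B}(S)^{\otimes n}$, which is where the finite-support structure of $\mu_{\omega, n}$ must be exploited and where one should avoid silently invoking second countability of $S$. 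This coincidence is automatic in the metrizable cases recorded in Theorem \ref{relation between various sigma algebras}, and is the crux to handle with care in full Hausdorff generality.
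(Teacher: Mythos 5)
Your handling of the second assertion coincides exactly with the paper's proof: the paper's entire argument for Lemma \ref{measurable} is the summation form \eqref{summation form} plus the remark that everything is then immediate from measurability of the $X_i$. For the first assertion, your proposal is actually more careful than the paper, which never addresses the passage you isolate: measurability of the sets $\{\omega \in \Omega : \mu_{\omega,n}(G) > \alpha\}$ only yields measurability of $\mu_{\cdot,n}$ with respect to the sigma algebra generated by the subbasis of Lemma \ref{subbasis in Radon topology}, i.e.\ the evaluation sigma algebra $\mathcal{C}(\prs)$, whereas an $A$-open subset of $\prs$ is an arbitrary, in general uncountable, union of basic open sets. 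Your attempted repair through the continuous map $q \co S^n \rightarrow \prs$ does not close this, as you yourself note: continuity only places $q^{-1}(\mathcal{B}(\prs))$ inside $\mathcal{B}(S^n)$, while $(X_1, \ldots, X_n)$ is only guaranteed to be measurable into the product sigma algebra $\mathcal{B}(S) \otimes \cdots \otimes \mathcal{B}(S)$. So, as a proof of the lemma as stated, your proposal is incomplete.

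The important point, however, is that the inclusion you say is needed, namely $q^{-1}(\mathcal{B}(\prs)) \subseteq \mathcal{B}(S)^{\otimes n}$, genuinely fails, and in fact the first assertion of the lemma is false at this level of generality; the defect is in the lemma and the paper's one-line proof, not in your route. Since $S$ is Hausdorff, the set $\mathfrak{U}$ defined as the union of $\{\nu \in \prs : \nu(G_1) > 0\} \cap \{\nu \in \prs : \nu(G_2) > 0\}$ over all pairs of disjoint open $G_1, G_2 \subseteq S$ is $A$-open, and for $n = 2$ one has $q^{-1}(\mathfrak{U}) = (S \times S) \setminus \Delta$, where $\Delta$ is the diagonal; but $\Delta \notin \mathcal{B}(S) \otimes \mathcal{B}(S)$ whenever $|S| > 2^{\aleph_0}$ (any set in the product sigma algebra lies in a sigma algebra generated by countably many rectangles, and a product-measurable subset of $\Delta$ forces the corresponding countable family to separate each of its points from all other points of $S$). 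This even survives passing to completions: take $S = \{0,1\}^{\kappa}$ with $\kappa > 2^{\aleph_0}$, $\mu$ its Haar measure (which is Radon), $c \neq 0$, $\Omega = S \times S$ with the coordinate maps $X_1, X_2$ (repeated to get an identically distributed sequence), and $\mathbb{P}(A) \defeq \frac{1}{2}\mu(\{x : (x,x) \in A\}) + \frac{1}{2}\mu(\{x : (x,x+c) \in A\})$ on the product sigma algebra; both marginals equal $\mu$, yet $\Delta$ has inner measure $0$ (a product-measurable subset of $\Delta$ corresponds to a Borel set of cardinality at most $2^{\aleph_0}$, which is Haar-null by the Steinhaus--Weil theorem and inner regularity) and outer measure at least $\frac{1}{2}$, so ${\mu_{\cdot,2}}^{-1}(\mathfrak{U}) = \{X_1 \neq X_2\}$ is not measurable even in the completed space. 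What is true in general---and what both your argument and the paper's actually establish---is measurability of $\mu_{\cdot,n}$ into $(\prs, \mathcal{C}(\prs))$; the full Borel claim needs an additional hypothesis (for instance, second countability of $S$, or any condition making every $A$-open subset of $\prs$ a countable union of basic open sets), and this gap propagates to Corollary \ref{transfer corollary 1} and to the definition \eqref{definition of P} of $P_N$ on all of ${^*}\mathcal{B}(\prs)$.
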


\begin{proof}
We only need to prove the first assertion, as the second follows from it by composing with evaluation maps (see Lemma \ref{Radon evaluation of Borel sets'}). 

For each $n, k \in \mathbb{N}$, given any open subsets $G_1, \ldots, G_k \subseteq S$ and real numbers $\alpha_1, \ldots, \alpha_k \in \mathbb{R}$, we have:
\begin{align}
   \bigcap_{i \in [k]} \{\omega \in \Omega: \mu_{\omega, n}(G_i) >  \alpha_i\} = \bigcap_{i \in [k]} \left\{\omega \in \Omega: \sum_{j\in [n]} \mathbbm{1}_{G_i}(X_j(\omega)) > n\alpha_i \right\}, 
\end{align}
where the latter set belongs to $\mathcal{F}$, since for each $i \in [k]$ and $j \in [n]$, the composition  $\mathbbm{1}_{G_i} \circ X_j \colon \Omega \to \mathbb{R}$ is measurable. This implies that for each $n \in \mathbb{N}$, we have ${\mu_{\cdot, n}}^{-1}(B) \in \mathcal{F}$ for all $B \in \mathbb{B}(\prs)$. Since the collection $\{B \subseteq \prs: {\mu_{\cdot, n}}^{-1}(B) \in \mathcal{F}\}$ is a sigma algebra, it must contain $\sigmabprs$, thus completing the proof.
\end{proof}

By transfer, we obtain the following immediate consequence
\begin{corollary}\label{transfer corollary 1}
For each $N \in {^*}\mathbb{N}$, the map $\mu_{\cdot, N} \co ({^*}\Omega, {^*}\mathcal{F}) \rightarrow ({^*}\prs, {^*}\sigmabprs)$ is an internally measurable function. That is, $\mu_{\cdot, N} \co {^*}\Omega \rightarrow {^*}\prs$ is internal and the set $\{\omega \in {^*}\Omega: \mu_{\omega, N} \in \mathfrak{B}\}$ belongs to ${^*}\mathcal{F}$ whenever $\mathfrak{B} \in {^*}\sigmabprs$. Furthermore, for each $B \in {^*}\mathcal{B}(S)$, the map $\mu_{\cdot, N}(B) \co ({^*}\Omega, {^*}\mathcal{F}) \rightarrow ({^*}[0,1], {^*}\mathcal{B}([0,1]))$ is internally measurable. 
\end{corollary}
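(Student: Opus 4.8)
The plan is to obtain both conclusions as literal instances of the transfer principle applied to the two measurability assertions of Lemma \ref{measurable}, once the standard families involved have been packaged as genuine objects of the superstructure. First I would encode the family $(\mu_{\cdot, n})_{n \in \mathbb{N}}$ as a single standard function. Define $\Phi \co \mathbb{N} \rightarrow \prs^{\Omega}$ by $\Phi(n) \defeq \mu_{\cdot, n}$, where $\prs^{\Omega}$ denotes the set of all maps $\Omega \rightarrow \prs$. Since $\Phi$ is standard, its nonstandard extension ${^*}\Phi \co {^*}\mathbb{N} \rightarrow {^*}(\prs^{\Omega})$ assigns to each $N \in {^*}\mathbb{N}$ an element of ${^*}(\prs^{\Omega})$, which is an internal map ${^*}\Omega \rightarrow {^*}\prs$ (using the standard fact that the nonstandard extension of a full function space is the collection of internal functions between the extended domains). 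Unwinding the defining formula \eqref{definition of mu} under transfer—equivalently, reading off \eqref{definition of mu_N}—identifies ${^*}\Phi(N)$ with $\mu_{\cdot, N}$, which establishes the internality of $\mu_{\cdot, N}$ for every $N \in {^*}\mathbb{N}$.

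Next I would write the first conclusion of Lemma \ref{measurable} (that each $\mu_{\cdot, n}$ is $(\mathcal{F}, \mathcal{B}(\prs))$-measurable) as the first-order sentence
\[
\forall n \in \mathbb{N} \; \forall \mathfrak{B} \in \mathcal{B}(\prs) : \{\omega \in \Omega : \Phi(n)(\omega) \in \mathfrak{B}\} \in \mathcal{F},
\]
in which $\Phi$, $\mathcal{B}(\prs)$, and $\mathcal{F}$ all name standard entities. Applying transfer converts this verbatim into
\[
\forall N \in {^*}\mathbb{N} \; \forall \mathfrak{B} \in {^*}\mathcal{B}(\prs) : \{\omega \in {^*}\Omega : \mu_{\omega, N} \in \mathfrak{B}\} \in {^*}\mathcal{F},
\]
which is precisely the asserted internal Borel measurability of $\mu_{\cdot, N}$. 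Note that the transferred quantifier ranges over ${^*}\mathcal{B}(\prs)$, exactly as the statement of the corollary requires.

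For the evaluation maps I would repeat the identical procedure on the second conclusion of Lemma \ref{measurable}. Encoding the family $\bigl( (\omega \mapsto \mu_{\omega, n}(B)) \bigr)_{n \in \mathbb{N},\, B \in \mathcal{B}(S)}$ as a standard object and transferring the sentence asserting that for every $n \in \mathbb{N}$, every $B \in \mathcal{B}(S)$, and every $A \in \mathcal{B}(\mathbb{R})$ the preimage $\{\omega \in \Omega : \mu_{\omega, n}(B) \in A\}$ lies in $\mathcal{F}$ yields the corresponding internal statement quantifying over ${^*}\mathbb{N}$, ${^*}\mathcal{B}(S)$, and ${^*}\mathcal{B}(\mathbb{R})$. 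This delivers the internal Borel measurability of $\omega \mapsto \mu_{\omega, N}(B)$ for all $B \in {^*}\mathcal{B}(S)$.

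The only point demanding any care—and hence the \emph{main obstacle}, though it is a mild bookkeeping matter rather than a genuine difficulty—is setting up the encodings so that the families are bona fide standard objects, so that their extensions are internal and the transferred set quantifiers range over ${^*}\mathcal{B}(\prs)$ and ${^*}\mathcal{B}(S)$ (rather than $\mathcal{B}({^*}\prs)$ or $\mathcal{B}({^*}S)$) and the index quantifiers range over ${^*}\mathbb{N}$. Once these are fixed, both statements follow by transfer with no further computation.
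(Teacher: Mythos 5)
Your proposal is correct and is essentially the paper's own argument: the paper derives Corollary \ref{transfer corollary 1} from Lemma \ref{measurable} with the single phrase ``by transfer,'' and your write-up simply makes explicit the bookkeeping (encoding $(\mu_{\cdot,n})_{n\in\mathbb{N}}$ as a standard object and transferring the two measurability sentences) that the paper leaves implicit. No gap; the extra care about the quantifiers ranging over ${^*}\mathcal{B}(\prs)$ and ${^*}\mathcal{B}(S)$ rather than $\mathcal{B}({^*}\prs)$ or $\mathcal{B}({^*}S)$ is exactly the right point to attend to.
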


By the usual Loeb measure construction, we have a collection of complete probability spaces indexed by ${^*}\Omega$, namely $({^*}S, L_{\omega, N}({^*}\mathcal{B}(S)), L\mu_{\omega, N})_{\omega \in {^*}\Omega}$. 

We now prove that with respect to the Loeb measure $L{^*}\mathbb{P}$, almost all of the measures $L\mu_{\omega, N}$ assign full mass to the set $\ns({^*}S)$ of \textit{nearstandard} elements of ${^*}S$. This implicitly requires us to first show that for all $\omega$ in an $L{^*}\mathbb{P}$ almost sure subset of ${^*}\Omega$, the set $\ns({^*}S)$ is in the Loeb sigma algebra $L_{\omega, N}({^*}\mathcal{B}(S))$ corresponding to the internal probability space $({^*}S, {^*}\mathcal{B}(S), \mu_{\omega, N})$.

\begin{lemma}\label{neastandard for almost all omega}
Let $S$ be a Hausdorff space and $N \in {^*}\mathbb{N}$. There is a set $E_N \in L({^*}\mathcal{F})$ with $L{^*}\mathbb{P}(E_N) = 1$ such that for any $\omega \in E_N$, we have $L\mu_{\omega, N} (\ns({^*}S)) = 1$.
\end{lemma}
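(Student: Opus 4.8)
The plan is to exploit the tightness-derived increasing sequence of compact sets $(K_n)_{n \in \mathbb{N}}$ from \eqref{K_n} together with the fact that each $\st^{-1}(K_n)$ is universally Loeb measurable (Lemma \ref{Albeverio Lemma}), and to upgrade an $L^1$-type bound on the masses $\mu_{\cdot, N}({}^*K_n)$ into an almost-sure statement by a monotone convergence argument on the Loeb space. The only genuinely delicate point will be the Loeb measurability of $\ns({}^*S)$ itself, which I will extract from completeness at the very end.

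First I would fix $n \in \mathbb{N}$ and consider the internal random variable $Y_n \co \omega \mapsto \mu_{\omega, N}({}^*K_n)$ on $({}^*\Omega, {}^*\mathcal{F}, {}^*\mathbb{P})$; it is internally measurable and $[0,1]$-valued by Corollary \ref{transfer corollary 1}, since ${}^*K_n \in {}^*\mathcal{B}(S)$. Transferring the finite-sum representation \eqref{summation form} and using linearity of the internal expectation gives
$${}^*\mathbb{E}_{{}^*\mathbb{P}}[Y_n] = \frac{1}{N}\sum_{i \in [N]} {}^*\mathbb{P}(X_i \in {}^*K_n) = {}^*\mathbb{P}(X_1 \in {}^*K_n) > 1 - \frac{1}{n},$$
where the middle equality uses that the $X_i$ are identically distributed (transferred to all $i \in [N]$) and the final inequality is the transfer of \eqref{K_n}. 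Since $Y_n$ is bounded and internally measurable, it is $S$-integrable, so its Loeb integral equals the standard part of its internal integral, yielding $\mathbb{E}_{L{}^*\mathbb{P}}[\st(Y_n)] \geq 1 - 1/n$.

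Next I would produce a \emph{single} almost-sure set valid for all $n$ simultaneously, using monotonicity. Because $K_n \subseteq K_{n+1}$ implies ${}^*K_n \subseteq {}^*K_{n+1}$, the value $Y_n(\omega)$ is nondecreasing in $n$, hence so is $\st(Y_n(\omega))$; therefore $Z \defeq \lim_{n \to \infty} (1 - \st(Y_n))$ exists pointwise and is a nonnegative, Loeb-measurable function dominated by $1$. By the dominated convergence theorem on the Loeb space, $\mathbb{E}_{L{}^*\mathbb{P}}[Z] = \lim_n \mathbb{E}_{L{}^*\mathbb{P}}[1 - \st(Y_n)] \leq \lim_n 1/n = 0$, so $Z = 0$ outside a Loeb-null set. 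Let $E_N$ be the complement of this null set; then $E_N \in L({}^*\mathcal{F})$, $L{}^*\mathbb{P}(E_N) = 1$, and $\sup_n \st\big(\mu_{\omega, N}({}^*K_n)\big) = \lim_n \st(Y_n(\omega)) = 1$ for every $\omega \in E_N$.

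Finally I would convert this control of the masses of the ${}^*K_n$ into the conclusion about $\ns({}^*S)$. Fix $\omega \in E_N$. Since $K_n$ is compact, ${}^*K_n \subseteq \st^{-1}(K_n)$ by Theorem \ref{topological characterizations}\ref{compact}, and $\st^{-1}(K_n) \in L_{\omega,N}({}^*\mathcal{B}(S))$ by Lemma \ref{Albeverio Lemma}, so $\st(\mu_{\omega,N}({}^*K_n)) = L\mu_{\omega,N}({}^*K_n) \leq L\mu_{\omega,N}(\st^{-1}(K_n))$. The sets $\st^{-1}(K_n)$ increase with $n$ and satisfy $\bigcup_n \st^{-1}(K_n) \subseteq \ns({}^*S)$, so continuity from below gives $L\mu_{\omega,N}\big(\bigcup_n \st^{-1}(K_n)\big) = \sup_n L\mu_{\omega,N}(\st^{-1}(K_n)) \geq \sup_n \st(\mu_{\omega,N}({}^*K_n)) = 1$. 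The main obstacle is that $\ns({}^*S)$ need not be manifestly Loeb measurable; I would resolve this by completeness of the Loeb space. Indeed $\ns({}^*S)$ contains the measurable set $\bigcup_n \st^{-1}(K_n)$ of full measure $1$, so $\ns({}^*S) \setminus \bigcup_n \st^{-1}(K_n)$ is a subset of a Loeb-null set; by completeness $\ns({}^*S) \in L_{\omega,N}({}^*\mathcal{B}(S))$ and $L\mu_{\omega,N}(\ns({}^*S)) = 1$. Since this holds for every $\omega \in E_N$, the lemma follows, and I note that only tightness (through \eqref{K_n}) was used, consistent with the remark preceding the statement.
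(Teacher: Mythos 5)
Your proof is correct and follows essentially the same route as the paper's: the tightness sequence $(K_n)$, the $S$-integrability computation giving $\mathbb{E}_{L{^*}\mathbb{P}}\left[L\mu_{\cdot, N}({^*}K_n)\right] \geq 1 - \frac{1}{n}$, and a convergence theorem on $({^*}\Omega, L({^*}\mathcal{F}), L{^*}\mathbb{P})$ producing an almost-sure set $E_N$ that coincides with the paper's (your dominated-convergence argument on $1 - \st(Y_n)$ is interchangeable with the paper's monotone-convergence argument on $L\mu_{\cdot,N}(\cup_n {^*}K_n)$). The only divergence is the final step: the paper concludes directly from the inner-measure definition of Loeb measurability (since ${^*}K_n \subseteq \ns({^*}S)$, the inner measure of $\ns({^*}S)$ is already $1$, forcing inner $=$ outer $=1$), whereas you detour through the universally measurable sets $\st^{-1}(K_n)$ of Lemma \ref{Albeverio Lemma} and completeness of the Loeb space---both valid, the paper's being marginally more economical since it never needs $\st^{-1}$ at this stage.
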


\begin{proof}
Let $(K_n)_{n \in \mathbb{N}}$ be as in \eqref{K_n}. By the second part of Corollary \ref{transfer corollary 1}, the function $\omega \mapsto \mu_{\omega, N}({^*}K_n)$ is an internal random variable for each $n \in \mathbb{N}$. Since it is finitely bounded, it is $\mathbf{S}$-integrable with respect to the Loeb measure $L{^*}\mathbb{P}$. Thus, for each $n \in \mathbb{N}$, the $[0,1]$-valued function $L\mu_{\cdot, N}({^*}K_n)$, defined by $\omega \mapsto L\mu_{\omega, N}({^*}K_n)$, is Loeb measurable, and furthermore we have:
\begin{align*}
   \mathbb{E}_{L{^*}\mathbb{P}} (L\mu_{\cdot, N}({^*}K_n)) &\approx {^*}\mathbb{E}_{{^*}\mathbb{P}} (\mu_{\cdot, N}({^*}K_n)) \\
   &= {^*}\mathbb{E}_{{^*}\mathbb{P}} \left[ \sum_{i = 1}^N \frac{1}{N} \mathbbm{1}_{{^*}K_n}(X_i) \right] \\
   &= \frac{1}{N} \left[ \sum_{i = 1}^N {^*}\mathbb{P}(X_i \in {^*}K_n) \right] \\
   &> \frac{1}{N}\left[ N \left(1 - \frac{1}{n^3} \right)\right] = 1 - \frac{1}{n^3},
\end{align*}
where the last line follows from \eqref{K_n} and the fact that each $X_i$ has the same distribution. 

For each $\omega \in {^*}\Omega$, the upper monotonicity of the measure $L_{\omega, N}$ implies that $\lim_{n \rightarrow \infty} L\mu_{\omega, N}({^*}K_n) = L\mu_{\omega, N} \left( \cup_{n \in \mathbb{N}} {^*}K_n\right)$. Thus, being a limit of Loeb measurable functions, $\lim_{n \rightarrow \infty} L\mu_{\cdot, N}({^*}K_n) = L\mu_{\cdot, N} \left( \cup_{n \in \mathbb{N}} {^*}K_n\right)$, is also Loeb measurable. Therefore, by the monotone convergence theorem, we obtain: 

\begin{align}
    \mathbb{E}_{L{^*}\mathbb{P}} \left[L\mu_{\cdot, N} \left( \cup_{n \in \mathbb{N}} {^*}K_n\right) \right] &= \mathbb{E}_{L{^*}\mathbb{P}} \left[ \lim_{n \rightarrow \infty} L\mu_{\cdot, N}({^*}K_n) \right] \nonumber \\
    &=\lim_{n \rightarrow \infty} \mathbb{E}_{L{^*}\mathbb{P}}(L\mu_{\cdot, N}({^*}K_n)) \nonumber \\
    &\geq \lim_{n \rightarrow \infty} \left( 1 - \frac{1}{n^3}\right) = 1. \label{expectation is 1} 
\end{align}
But $0\leq L\mu_{\omega, N} \left[ \cup_{n \in \mathbb{N}} {^*}K_n\right] \leq 1$ for all $\omega \in {^*}\Omega$. Therefore, by \eqref{expectation is 1}, we get:

\begin{align}
    L{^*}\mathbb{P}(E_N) = 1, 
\end{align}
where 
\begin{align}\label{almost sure set}
    E_N = \left\{ \omega: L\mu_{\omega, N} \left[ \cup_{n \in \mathbb{N}} {^*}K_n \right] = 1 \right\} \in L({^*}\mathcal{F}).
\end{align}

Since each $K_n$ is compact, we have ${^*}K_n \subseteq \ns({^*}S)$ for all $n \in \mathbb{N}$. Thus for each $\omega \in E_N$, we have the following inequality for the inner measure with respect to $\mu_{\omega, N}$ (see \eqref{inner and outer} for the definition of the inner measure):

\begin{align*}
    \underline{\mu_{\omega, N}} \left[  \ns({^*}S) \right] \geq L\mu_{\omega, N}({^*}K_n) \text{ for all } n \in \mathbb{N}.
\end{align*}

By taking the limit as $n \rightarrow \infty$ on the right side and using the definition of $E_N$, we obtain:
\begin{align*}
     \underline{\mu_{\omega, N}} \left[  \ns({^*}S) \right] \geq \lim_{n \rightarrow \infty} L\mu_{\omega, N}({^*}K_n) = L\mu_{\omega, N} \left[ \cup_{n \in \mathbb{N}} {^*}K_n \right] = 1 \text{ for all } \omega \in E_N.
\end{align*}
Since $$1 = \underline{\mu_{\omega, N}} \left[  \ns({^*}S) \right] \leq \overline{\mu_{\omega, N}} \left[  \ns({^*}S) \right] \leq 1,$$
it follows that $\ns({^*}S)$ is Loeb measurable, and that $L\mu_{\omega, N}\left[ \ns({^*}S) \right] = 1$ for all $\omega \in E_N$. 
\end{proof}

The specific form of the set $E_N$ obtained in the above proof allows us to use Theorem \ref{Albeverio theorem} to show that for each $N \in {^*}\mathbb{N}$, the measure $L\mu_{\omega, N} \circ \st^{-1}$ is Radon for all $\omega \in E_N$, and that $\mu_{\omega, N}$ is nearstandard in ${^*}\prs$ to this measure. This is proved in the next lemma. 

\begin{lemma}\label{standard part exists for almost all}
Let $S$ be a Hausdorff space. Let $N \in {^*}\mathbb{N}$ and $E_N$ be as in \eqref{almost sure set}. For all $\omega \in E_N$, we have:
\begin{enumerate}[(i)]
    \item\label{T21} $L\mu_{\omega, N} \circ \st^{-1} \in \prs$.
    \item\label{T22} $\mu_{\omega, N} \in \ns({^*}\prs)$, with $\st(\mu_{\omega, N}) = L\mu_{\omega, N} \circ \st^{-1}$. 
\end{enumerate}
\end{lemma}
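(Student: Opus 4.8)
The plan is to deduce both parts from the machinery already assembled in Section~\ref{section 2}, applying the two transfer results Theorem~\ref{Albeverio theorem} and Theorem~\ref{Radon Landers and Rogge main result} with $T = S$ and the internal measure $\nu = \mu_{\omega, N}$. The only genuine work is to verify the tightness hypothesis \eqref{nonstandard tightness assumption} of Theorem~\ref{Albeverio theorem} from the defining property of $E_N$; everything else is matching hypotheses.

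First I would unwind the definition \eqref{almost sure set} of $E_N$. For $\omega \in E_N$ we have $L\mu_{\omega, N}\left[\cup_{n \in \mathbb{N}}{^*}K_n\right] = 1$, and since $(K_n)_{n \in \mathbb{N}}$ is increasing the sets ${^*}K_n$ are increasing as well, so upper monotonicity of the Loeb measure gives $\lim_{n \to \infty} L\mu_{\omega, N}({^*}K_n) = 1$. Hence, given any $\epsilon \in \mathbb{R}_{>0}$, there is some $n$ with $L\mu_{\omega, N}({^*}K_n) > 1 - \epsilon$. Next I would bring in Lemma~\ref{Albeverio Lemma}: each $K_n$ is compact, so $\st^{-1}(K_n) \in L({^*}\mathcal{B}(S))$ and ${^*}K_n \subseteq \st^{-1}(K_n)$, whence $L\mu_{\omega, N}(\st^{-1}(K_n)) \geq L\mu_{\omega, N}({^*}K_n) > 1 - \epsilon$. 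By the formula in Lemma~\ref{Albeverio Lemma}, $L\mu_{\omega, N}(\st^{-1}(K_n)) = \inf\{L\mu_{\omega, N}({^*}O) : K_n \subseteq O,\ O \in \tau\}$, where $\tau$ is the topology on $S$. Taking $K_\epsilon = K_n$ therefore verifies exactly the hypothesis \eqref{nonstandard tightness assumption} of Theorem~\ref{Albeverio theorem}, and that theorem yields that $L\mu_{\omega, N} \circ \st^{-1}$ is a Radon probability measure on $S$, i.e.\ $L\mu_{\omega, N} \circ \st^{-1} \in \prs$, establishing part \ref{T21}.

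For part \ref{T22}, I would note first that $\mu_{\omega, N} \in {^*}\prs$: being internally supported on the hyperfinite set $\{X_1(\omega), \ldots, X_N(\omega)\}$, it is internally Radon, this being the transfer of the elementary fact that finitely supported measures are Radon (cf.\ Corollary~\ref{transfer corollary 1}). Since part \ref{T21} shows that the pushed-down measure $L\mu_{\omega, N} \circ \st^{-1}$ is Radon on the Hausdorff space $S$, Theorem~\ref{Radon Landers and Rogge main result} applies directly with $\nu = \mu_{\omega, N}$, giving that $\mu_{\omega, N}$ is nearstandard in ${^*}\prs$ with $\st(\mu_{\omega, N}) = L\mu_{\omega, N} \circ \st^{-1}$. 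As $\prs$ is Hausdorff by Theorem~\ref{Topsoe theorem}, this standard part is unique, which is precisely part \ref{T22}.

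The argument is essentially bookkeeping once the correct hypotheses are matched; the one place demanding care is the identification of the tightness condition \eqref{nonstandard tightness assumption}, where one must remember that the relevant infimum is over open supersets (and equals $L\mu_{\omega, N}(\st^{-1}(K_n))$ by Lemma~\ref{Albeverio Lemma}) rather than the raw internal value $L\mu_{\omega, N}({^*}K_n)$. It is the inclusion ${^*}K_n \subseteq \st^{-1}(K_n)$ that lets the bound on the latter suffice to bound the former from below, closing the gap.
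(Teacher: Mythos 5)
Your proof is correct and follows essentially the same route as the paper: both verify the tightness hypothesis \eqref{nonstandard tightness assumption} from the definition \eqref{almost sure set} of $E_N$ via upper monotonicity of $L\mu_{\omega,N}$ along the increasing sequence $({^*}K_n)_{n \in \mathbb{N}}$, and then invoke Theorem \ref{Albeverio theorem}. Your write-up is in fact slightly more explicit than the paper's terse ``Theorem \ref{Albeverio theorem} now completes the proof,'' since you separately record that $\mu_{\omega,N} \in {^*}\prs$ (via transfer of the Radonness of finitely supported measures) and cite Theorem \ref{Radon Landers and Rogge main result} together with the Hausdorffness of $\prs$ (Theorem \ref{Topsoe theorem}) to obtain part \ref{T22}; these steps are left implicit in the paper.
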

\begin{proof}
By the proof of Lemma \ref{neastandard for almost all omega}, we know that 
\begin{align}\label{upper monotonicity}
    \lim_{n \rightarrow \infty} L\mu_{\omega, N} \left({^*} K_n \right) = 1 \text{ for all } \omega \in E_N,
\end{align}
where the $K_n$ are compact subsets of $S$.

Therefore, given $\epsilon \in \mathbb{R}_{>0}$, there exists an $n_{\epsilon}$ such that $L\mu_{\omega, N} \left({^*}K_n \right) > 1 - \epsilon$ for all $\omega \in E_N$ and $n \in \mathbb{N}_{>n_{\epsilon}}$. Thus the tightness condition \eqref{nonstandard tightness assumption} holds for $\mu_{\omega, N}$ whenever $\omega \in E_N$. Theorem \ref{Albeverio theorem} now completes the proof. 
\end{proof}

So far, we have used the idea that the expected value of a (random) probability being one implies that the probability is almost surely equal to one. This can be easily turned around and used to show that a certain probability is almost surely zero, by showing that the expected value of that probability is zero. We use this idea to prove next that almost surely, the measures $L\mu_{\omega, N}$ treat the nonstandard extension of a countable disjoint union as if it were the disjoint union of the nonstandard extensions, the leftover portion being assigned zero mass.

\begin{lemma}\label{leftover has zero mass}
Let $S$ be a Hausdorff space and $N \in {^*}\mathbb{N}$. Let $(B_n)_{n \in \mathbb{N}}$ be a sequence of disjoint Borel sets. There is a set $E_{(B_n)_{n \in \mathbb{N}}} \in L({^*}\mathcal{F})$ with $L{^*}\mathbb{P}(E_{(B_n)_{n \in \mathbb{N}}}) = 1$ such that 
\begin{align}\label{leftover mass equation}
    L\mu_{\omega, N} \left[ {^*}\left(\sqcup_{n \in \mathbb{N}} B_n \right) \right] = \sum_{n \in \mathbb{N}} L\mu_{\omega, N}\left({^*}B_n \right) \text{ for all } \omega \in E_{(B_n)_{n \in \mathbb{N}}},
\end{align}
where $\sqcup$ denotes a disjoint union.
\end{lemma}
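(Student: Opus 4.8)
The plan is to reduce the claimed identity \eqref{leftover mass equation} to the statement that a ``leftover tail mass'' vanishes almost surely, and then to prove that vanishing by the expected-value-is-zero technique described just before the lemma, exactly paralleling the structure of Lemma \ref{neastandard for almost all omega}. Write $B \defeq \sqcup_{n \in \mathbb{N}} B_n$. Since the $B_n$ are disjoint standard Borel sets, the internal sets ${^*}B_n$ are disjoint, and for each standard $m$ we have ${^*}\!\left(\sqcup_{n=1}^m B_n\right) = \sqcup_{n=1}^m {^*}B_n$ because nonstandard extension commutes with finite unions. Applying the (transfer of) finite additivity of the internal measure $\mu_{\omega, N}$ to the internal partition ${^*}B = {^*}\!\left(\sqcup_{n \leq m} B_n\right) \sqcup {^*}\!\left(\sqcup_{n > m} B_n\right)$ and passing to Loeb measures gives, for every $\omega \in {^*}\Omega$,
\[
L\mu_{\omega, N}\!\left({^*}B\right) - \sum_{n=1}^m L\mu_{\omega, N}\!\left({^*}B_n\right) = L\mu_{\omega, N}\!\left({^*}\!\left(\sqcup_{n > m} B_n\right)\right) \eqdef LT_m(\omega).
\]
Thus the lemma is equivalent to the assertion that $LT_m(\omega) \to 0$ as $m \to \infty$ for all $\omega$ in an $L{^*}\mathbb{P}$-full set.

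First I would fix a standard $m$ and analyze $T_m \co {^*}\Omega \to {^*}[0,1]$, $\omega \mapsto \mu_{\omega, N}\!\left({^*}\!\left(\sqcup_{n>m}B_n\right)\right)$. By the transfer of Lemma \ref{measurable} this is an internal, finitely bounded random variable, hence $S$-integrable, so $LT_m = \st \circ T_m$ is Loeb measurable and $\mathbb{E}_{L{^*}\mathbb{P}}(LT_m) = \st\!\left({^*}\mathbb{E}_{{^*}\mathbb{P}}(T_m)\right)$. Using the summation form \eqref{summation form} and linearity of the internal expectation, ${^*}\mathbb{E}_{{^*}\mathbb{P}}(T_m) = \frac{1}{N}\sum_{i=1}^N {^*}\mathbb{P}\!\left(X_i \in {^*}\!\left(\sqcup_{n>m}B_n\right)\right)$. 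The transfer of the statement ``all $X_i$ are identically distributed'' makes each summand (even for $i \in [N]$ with $N$ hyperfinite) equal to the standard tail probability $r_m \defeq \sum_{n>m}\mathbb{P}(X_1 \in B_n)$, so that $\mathbb{E}_{L{^*}\mathbb{P}}(LT_m) = r_m$. Since the standard pushforward $\mathbb{P} \circ X_1^{-1}$ is countably additive and the $B_n$ are disjoint, the tails satisfy $r_m \to 0$ as $m \to \infty$.

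Finally I would close the argument with monotone convergence. The sequence $(LT_m)_m$ is nonnegative and decreasing in $m$, because $\sqcup_{n>m} B_n \supseteq \sqcup_{n>m+1} B_n$ forces the corresponding internal extensions to be nested. Hence its pointwise limit $T_\infty \defeq \inf_m LT_m$ is Loeb measurable, and the dominated convergence theorem (the integrands are bounded by $1$) yields $\mathbb{E}_{L{^*}\mathbb{P}}(T_\infty) = \lim_{m \to \infty} r_m = 0$. As $T_\infty \geq 0$, it must vanish off a Loeb-null set; letting $E_{(B_n)_{n \in \mathbb{N}}}$ be the complement of that null set, we get $L{^*}\mathbb{P}\!\left(E_{(B_n)_{n \in \mathbb{N}}}\right) = 1$, and for each $\omega \in E_{(B_n)_{n \in \mathbb{N}}}$ letting $m \to \infty$ in the displayed identity gives $\sum_{n \in \mathbb{N}} L\mu_{\omega, N}({^*}B_n) = L\mu_{\omega, N}({^*}B)$, which is precisely \eqref{leftover mass equation}.

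The only delicate step is the identification $\mathbb{E}_{L{^*}\mathbb{P}}(LT_m) = r_m$: it requires simultaneously invoking the $S$-integrability of the finitely bounded internal average and transferring the identical-distribution hypothesis onto the hyperfinite index range $[N]$, after which the standard countable additivity of $\mathbb{P}\circ X_1^{-1}$ supplies $r_m \to 0$. Everything else is routine bookkeeping with internal finite additivity and a monotone convergence argument of the same flavor as in Lemma \ref{neastandard for almost all omega}.
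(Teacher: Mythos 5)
Your proposal is correct and follows essentially the same route as the paper's own proof: both reduce the identity to showing that the Loeb mass of the tail set ${^*}\!\left(\sqcup_{n>m}B_n\right) = {^*}\!\left(B \setminus B_{(m)}\right)$ vanishes almost surely as $m \to \infty$, compute its expectation via $S$-integrability and the transferred identical-distribution hypothesis, and conclude by dominated convergence that the nonnegative limit has zero expectation. The only cosmetic difference is that you write the expectation directly as the tail series $\sum_{n>m}\mathbb{P}(X_1 \in B_n)$, whereas the paper writes it as $\mathbb{P}(X_1 \in B) - \mathbb{P}(X_1 \in B_{(m)})$ and invokes upper monotonicity; these are the same fact.
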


\begin{remark}
Note that the above lemma does not follow from the disjoint additivity of the measure $L\mu_{\omega, N}$ alone, because $\sqcup_{n \in \mathbb{N}}{^*}B_n \subseteq {^*}\left(\sqcup_{n \in \mathbb{N}} B_n \right)$ with equality if and only if the $B_n$ are empty for all but finitely many $n$. Also, the almost sure set $ E_{(B_n)_{n \in \mathbb{N}}}$ depends on the sequence $(B_n)_{n \in \mathbb{N}}$. Since there are potentially uncountably many such sequences, therefore we cannot expect in the general situation to find a single $L{^*}\mathbb{P}$-almost sure set on which equation \eqref{leftover mass equation} is always valid for all disjoint sequences $(B_n)_{n \in \mathbb{N}}$ of Borel sets. 
\end{remark}

\begin{proof}[Proof of Lemma \ref{leftover has zero mass}]
Let $(B_n)_{n \in \mathbb{N}}$ be a disjoint sequence of Borel sets and let $$B \defeq \sqcup_{n \in \mathbb{N}} B_n.$$
For each $m \in \mathbb{N}$, let $B_{(m)} \defeq \sqcup_{n \in [m]} B_n$. Consider the map $\omega \mapsto \mu_{\omega, N} \left[{^*}\left(B \backslash B_{(m)}\right)\right]$, which is internally Borel measurable by Corollary \ref{transfer corollary 1}.  Since this map is finitely bounded, it is $\mathbf{S}$-integrable with respect to the Loeb measure $L{^*}\mathbb{P}$. In particular, for each $m \in \mathbb{N}$, the $[0,1]$-valued function $L\mu_{\cdot, N}\left[{^*}\left(B \backslash B_{(m)}\right)\right]$, defined by $\omega \mapsto L\mu_{\omega, N}\left[{^*}\left(B \backslash B_{(m)}\right)\right]$, is Loeb measurable. Taking expected values and using $\mathbf{S}$-integrability, we obtain:
\begin{align}
   \mathbb{E}_{L{^*}\mathbb{P}} \left[ L\mu_{\cdot, N} \left[{^*}\left(B \backslash B_{(m)}\right)\right] \right] &\approx {^*}\mathbb{E}_{{^*}\mathbb{P}} \left[\mu_{\cdot, N} \left[{^*}\left(B \backslash B_{(m)}\right)\right] \right] \nonumber \\
   &= {^*}\mathbb{E}_{{^*}\mathbb{P}} \left[ \sum_{i = 1}^N \frac{1}{N} \mathbbm{1}_{{^*}\left(B \backslash B_{(m)}\right)}(X_i) \right] \nonumber \\
   &= \frac{1}{N} \left[ \sum_{i = 1}^N {^*}\mathbb{P}(X_i \in {^*}\left(B \backslash B_{(m)}\right)) \right] \nonumber \\
   &= \frac{1}{N}\left[ N {^*}\mathbb{P}(X_1 \in {^*}\left(B \backslash B_{(m)}\right)) \right] \nonumber \\
   &=  {^*}\mathbb{P}(X_1 \in {^*}\left(B \backslash B_{(m)}\right)) \nonumber \\
   &= \mathbb{P}(X_1 \in B \backslash B_{(m)}) \nonumber \\
   &= \mathbb{P}(X_1 \in B) - \mathbb{P}(X_1 \in B_{(m)}). \label{real number}
\end{align}
Since the expression in \eqref{real number} is a real number, we have the following equality:
\begin{align}\label{equality}
      \mathbb{E}_{L{^*}\mathbb{P}} \left[ L\mu_{\cdot, N} \left[{^*}\left(B \backslash B_{(m)}\right)\right] \right] = \mathbb{P}(X_1 \in B) - \mathbb{P}(X_1 \in B_{(m)}) \text{ for all } m \in \mathbb{N}.
\end{align}
Note that for each $\omega \in {^*}\Omega$, the limit $$\lim_{m \rightarrow \infty} L\mu_{\omega, N} \left[{^*}\left(B \backslash B_{(m)}\right)\right]$$ exists and is equal to $L\mu_{\omega, N}\left[\cap_{m \in \mathbb{N}}{^*}\left(B \backslash B_{(m)}\right)\right]$, because $({^*}(B\backslash B_{(m)}))_{m \in \mathbb{N}}$ is a decreasing sequence of measurable sets. Also, by the upper monotonicity of the measure induced by $X_1$ on $S$, we know that $$\lim_{m \rightarrow \infty} \mathbb{P}(X_1 \in B_{(m)}) = \mathbb{P} \left(X_1 \in \cup_{m \in \mathbb{N}}B_{(m)} \right) = \mathbb{P}(X_1 \in B).$$

Using this in \eqref{equality}, followed by an application of the dominated convergence theorem, we thus obtain the following:
\begin{align}
    0 &= \lim_{m \rightarrow \infty}\mathbb{E}_{L{^*}\mathbb{P}} \left[ L\mu_{\cdot, N} \left[{^*}\left(B \backslash B_{(m)}\right)\right] \right] \nonumber \\
    &= \mathbb{E}_{L{^*}\mathbb{P}} \left[ \lim_{m \rightarrow \infty} L\mu_{\cdot, N} \left[{^*}\left(B \backslash B_{(m)}\right)\right] \right]. \label{DCT}
\end{align}

Also, since $\lim_{m \rightarrow \infty} L\mu_{\omega, N} \left[{^*}\left(B \backslash B_{(m)}\right)\right] \geq 0$, it follows from \eqref{DCT} that there is an $L{^*}\mathbb{P}$-almost sure set
$E_{(B_n)_{n \in \mathbb{N}}}$ such that
\begin{align}\label{almost sure set 2}
\lim_{m \rightarrow \infty} L\mu_{\omega, N} \left[{^*}\left(B \backslash B_{(m)}\right)\right] = 0 \text{ for all } \omega \in E_{(B_n)_{n \in \mathbb{N}}}.    
\end{align}
But for each $\omega \in E_{(B_n)_{n \in \mathbb{N}}}$, we have the following:
\begin{align}
    L\mu_{\omega, N} \left[{^*}\left(B \backslash B_{(m)}\right)\right] &=  L\mu_{\omega, N}({^*}B) -  L\mu_{\omega, N}\left(B_{(m)} \right) \nonumber \\
    &= L\mu_{\omega, N}({^*}B) -  L\mu_{\omega, N}\left(\sqcup_{n \in [m] B_n} \right) \nonumber\\
    &= L\mu_{\omega, N}({^*}B) - \sum_{n \in [m]} L\mu_{\omega, N}({^*}B_m)
    \text{ for all } m \in \mathbb{N}. \label{what happens for each omega}
\end{align}
The proof is completed by letting $m \rightarrow \infty$ in \eqref{what happens for each omega}, followed by an application of \eqref{almost sure set 2}.
\end{proof}

Before the next technical lemma, we need to state some notation. Let $\tprs$ denote the $A$-topology on $\prs$, which is generated by the base $\mathbb{B}(\prs)$ defined in \eqref{Bprs}. For $\mu \in \prs$, let $\tau_\mu$ denote the set of all open neighborhoods of $\mu$ in $\prs$. That is,
\begin{align*}
    \tau_\mu \defeq \{\mathfrak{U} \in \tprs: \mu \in \mathfrak{U}\}. 
\end{align*}
Also, for any open set $\mathfrak{U} \in \tprs$, let $\tau_\mathfrak{U}$ be the subspace topology on $\mathfrak{U}$. In other words, we define
\begin{align*}
    \tau_\mathfrak{U} \defeq \{\mathfrak{V} \in \tprs: \mathfrak{V} = \mathfrak{W} \cap \mathfrak{U} \text{ for some } \mathfrak{W} \in \tprs\} = \{\mathfrak{V} \in \tprs: \mathfrak{V} \subseteq \mathfrak{U}\}. 
\end{align*}

For internal sets $A, B$, we use $\mathfrak{F}(A, B)$ to denote the internal set of all internal functions from $A$ to $B$. 

\begin{lemma}\label{internal definition principle}
Let $S$ be Hausdorff and $N \in {^*}\mathbb{N}$. Let $E_N$ be as defined in \eqref{almost sure set}. For each internal subset $E \subseteq E_N$, there exists an internal function $U_{\cdot} \co E \rightarrow {^*}\bprs$ such that $$\mu_{\omega, N} \in U_{\omega} \subseteq \st^{-1}(L\mu_{\omega, N} \circ \st^{-1}) \text{ for all } \omega \in E.$$ 
\end{lemma}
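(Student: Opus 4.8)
The plan is to produce the internal function $U_\cdot$ by a single comprehensiveness (saturation) argument, rather than selecting a neighborhood $U_\omega$ separately for each $\omega$ and then hoping the resulting assignment is internal. Write $\nu_\omega \defeq L\mu_{\omega, N} \circ \st^{-1}$ for $\omega \in E_N$. By Lemma \ref{standard part exists for almost all} (applicable since $E \subseteq E_N$), each $\nu_\omega$ lies in $\prs$ and $\mu_{\omega, N} \in \st^{-1}(\nu_\omega)$; since $\prs$ is Hausdorff (Theorem \ref{Topsoe theorem}), $\nu_\omega = \st(\mu_{\omega, N})$ is the unique standard part. By \eqref{standard inverse} the monad is $\st^{-1}(\nu_\omega) = \bigcap\{{^*}\mathfrak{U}: \mathfrak{U} \in \tprs,\ \nu_\omega \in \mathfrak{U}\}$, so in particular $\mu_{\omega, N} \in {^*}\mathfrak{U}$ for every open $\mathfrak{U} \subseteq \prs$ with $\nu_\omega \in \mathfrak{U}$. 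The target containment $U_\omega \subseteq \st^{-1}(\nu_\omega)$ will be secured by forcing $U_\omega \subseteq {^*}\mathfrak{U}$ for all such $\mathfrak{U}$ simultaneously.

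Concretely, I would set $\Sigma \defeq \{V \in \mathfrak{F}(E, {^*}\tprs): \mu_{\omega, N} \in V_\omega \text{ for all } \omega \in E\}$, and for each open $\mathfrak{U} \subseteq \prs$ define $B_{\mathfrak{U}} \defeq \{V \in \Sigma: \forall \omega \in E\ (\mu_{\omega, N} \in {^*}\mathfrak{U} \Rightarrow V_\omega \subseteq {^*}\mathfrak{U})\}$. Since $E$ is internal, $\mu_{\cdot, N}$ is internal (Corollary \ref{transfer corollary 1}), and ${^*}\mathfrak{U}$ is internal for standard $\mathfrak{U}$, each $B_{\mathfrak{U}}$ is defined by a bounded quantification over the internal set $E$ of an internal predicate, hence is internal by the internal definition principle; likewise $\Sigma$ is internal. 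I would then check that $\{\Sigma\} \cup \{B_{\mathfrak{U}}: \mathfrak{U} \subseteq \prs \text{ open}\}$ has the finite intersection property: given open sets $\mathfrak{U}_1, \ldots, \mathfrak{U}_m$, put $V_\omega \defeq \bigcap\{{^*}\mathfrak{U}_j: j \in [m],\ \mu_{\omega, N} \in {^*}\mathfrak{U}_j\}$, with the empty intersection read as ${^*}\prs$. As a finite intersection of internally open sets, $V_\omega \in {^*}\tprs$; it contains $\mu_{\omega, N}$ by construction, and whenever $\mu_{\omega, N} \in {^*}\mathfrak{U}_j$ one has $V_\omega \subseteq {^*}\mathfrak{U}_j$. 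Moreover $\omega \mapsto V_\omega$ is internal (again by the internal definition principle, the internal parameters being the tuple $({^*}\mathfrak{U}_1, \ldots, {^*}\mathfrak{U}_m)$), so $V \in \Sigma \cap B_{\mathfrak{U}_1} \cap \cdots \cap B_{\mathfrak{U}_m}$.

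Because the family $\{B_{\mathfrak{U}}\}$ is indexed by the open subsets of $\prs$ — a collection of standard cardinality — sufficient saturation then produces some $U_\cdot \in \Sigma \cap \bigcap_{\mathfrak{U}} B_{\mathfrak{U}}$, and this is the required internal function: membership in $\Sigma$ gives $\mu_{\omega, N} \in U_\omega$ for all $\omega \in E$, while for each open $\mathfrak{U} \ni \nu_\omega$ one has $\mu_{\omega, N} \in {^*}\mathfrak{U}$, so membership in $B_{\mathfrak{U}}$ forces $U_\omega \subseteq {^*}\mathfrak{U}$; intersecting over all such $\mathfrak{U}$ yields $U_\omega \subseteq \st^{-1}(\nu_\omega)$. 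The main obstacle — and the reason for routing the constraints through $B_{\mathfrak{U}}$ instead of selecting neighborhoods pointwise — is exactly that the assignment $\omega \mapsto U_\omega$ must be internal: indexing the saturation directly by the $\omega$-dependent neighborhood filters $\tau_{\nu_\omega}$ would both wreck internality of the finite witnesses and swell the index set to the external cardinality of $E$. Passing everything through standard open subsets of $\prs$ and the internal definition principle is what simultaneously keeps the witnesses internal and keeps the saturation of standard character.
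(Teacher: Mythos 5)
Your proof is correct and follows essentially the same route as the paper's: both arguments apply the internal definition principle to form internal sets of internal functions indexed by the standard open subsets of $\prs$, establish the finite intersection property by assigning to each $\omega$ the intersection of those ${^*}\mathfrak{U}_j$ containing $\mu_{\omega,N}$ (defaulting to ${^*}\prs$), and then invoke saturation. Your split into $\Sigma$ and the sets $B_{\mathfrak{U}}$ is just a repackaging of the paper's single family $\mathcal{G}_{\mathfrak{U}}$, whose membership clause on $E \cap {\mu_{\cdot,N}}^{-1}({^*}\mathfrak{U})$ recovers your $\Sigma$ upon taking $\mathfrak{U} = \prs$.
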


\begin{proof}
Fix an internal set $E \subseteq E_N$. For each $\mathfrak{U} \in \bprs$, define the following set of internal functions:
\begin{align*}
    \mathcal{G}_\mathfrak{U} \defeq \left\{f \in \mathfrak{F}(E, {^*}\bprs) : f(\omega) \in {^*}\tau_\mathfrak{U} \text{ and }\mu_{\omega, N} \in f(\omega) \text{ for all } \omega \in E \cap {\mu_{\cdot, N}}^{-1}({^*}\mathfrak{U})\right\}
.\end{align*}

Since $E$ is internal and ${\mu_{\cdot, N}}^{-1}({^*}\mathfrak{U})$ is internal by Corollary \ref{transfer corollary 1}, therefore the set $\mathcal{G}_\mathfrak{U}$ is internal for all $\mathfrak{U} \in \bprs$ by the internal definition principle (see, for example, Loeb \cite[Theorem 2.8.4, p. 54]{Loeb-introduction}). Also, $\mathcal{G}_\mathfrak{U}$ is nonempty for each $\mathfrak{U} \in \bprs$. Indeed, if $E \cap {\mu_{\cdot, N}}^{-1}({^*}\mathfrak{U}) = \emptyset$, then $\mathcal{G}_\mathfrak{U} = \mathfrak{F}(E, {^*}\tprs)$; otherwise, if $\omega \in E \cap {\mu_{\cdot, N}}^{-1}({^*}\mathfrak{U})$,  then define $f(\omega) \defeq {^*}\mathfrak{U}$, and define $f$ (internally) arbitrarily on the remainder of $E$---it is clear that the function $f$ so defined is an element of $\mathcal{G}_\mathfrak{U}$. 

Now let $\mathfrak{U}_1, \mathfrak{U}_2$ be two distinct members of $\bprs$. Define a function $f$ on $E$ as follows:
\begin{align*}
    f(\omega) \defeq \left\{
        \begin{array}{ll}
            {^*}\mathfrak{U}_1 \cap {^*}\mathfrak{U}_2 & \text{ if } \quad \omega \in E \cap {\mu_{\cdot, N}}^{-1}({^*}\mathfrak{U}_1) \cap {\mu_{\cdot, N}}^{-1}({^*}\mathfrak{U}_2) \\
            {^*}\mathfrak{U}_1 & \text{ if } \quad \omega \in [E \cap {\mu_{\cdot, N}}^{-1}({^*}\mathfrak{U}_1)] \backslash {\mu_{\cdot, N}}^{-1}({^*}\mathfrak{U}_2) \\
            {^*}\mathfrak{U}_2 & \text{ if } \quad \omega \in [E \cap {\mu_{\cdot, N}}^{-1}({^*}\mathfrak{U}_2)] \backslash {\mu_{\cdot, N}}^{-1}({^*}\mathfrak{U}_1) \\
            {^*}\prs & \text{ if } \quad \omega \in E \backslash \left[ {\mu_{\cdot, N}}^{-1}({^*}\mathfrak{U}_1) \cup {\mu_{\cdot, N}}^{-1}({^*}\mathfrak{U}_2) \right].
        \end{array}
    \right.
\end{align*}

The above function is clearly in $\mathcal{G}_{\mathfrak{U}_1} \cap \mathcal{G}_{\mathfrak{U}_2}$, in view of the fact that $\bprs$ is closed under taking intersections of any two members. In general, to show the finite intersection property of the collection $\{\mathcal{G}_{\mathfrak{U}}: \mathfrak{U} \in \bprs\}$, the same recipe of ``disjointifying'' the union of finitely many basic open sets $\mathfrak{U}_1, \ldots, \mathfrak{U}_k$ works. More precisely, for a subset $\mathfrak{A} \subseteq \prs$, let $\mathfrak{A}^{(0)}$ denote $\mathfrak{A}$ and $\mathfrak{A}^{(1)}$ denote the complement $\prs \backslash \mathfrak{A}$. If $\mathfrak{U}_1, \ldots, \mathfrak{U}_k$ are finitely many members of $\bprs$, then for each $\omega \in E$, define $(i_1(\omega), \ldots, i_k(\omega)) \in \{0,1\}^k \text{ to be the unique tuple such that } \omega \in E \cap \left( \cap_{j \in [k]} {\mu_{\cdot, N}}^{-1}({^*}{\mathfrak{U}_j}^{(i_j(\omega))}) \right)$. Then the function $f$ on $E$ defined as follows is immediately seen to be a member of $\cap_{j \in [k]} \mathcal{G}_{\mathfrak{U}_j}$:
\begin{align*}
    f(\omega) \defeq \bigcap_{\{j \in [k]: i_j(\omega) = 1\}} {^*}{\mathfrak{U}_j} \text{ for all } \omega \in E.
\end{align*}

Thus the collection $\{\mathcal{G}_\mathfrak{U} : \mathfrak{U} \in \bprs\}$ has the finite intersection property. Pick a map $U_{\cdot}$ in the intersection of the $\mathcal{G}_\mathfrak{U}$ (which is nonempty by saturation). It is clear from the definition of the sets $\mathcal{G}_\mathfrak{U}$ that $\mu_{\omega, N} \in U_{\omega}$ for all $\omega \in E$. We now show that $U_{\omega} \subseteq \st^{-1}(L\mu_{\omega, N} \circ \st^{-1})$ for all $\omega \in E$

By Lemma \ref{standard part exists for almost all}, we know that $\mu_{\omega, N} \in \st^{-1}(L\mu_{\omega, N} \circ \st^{-1})$ for all $\omega \in E$. Thus for each $\omega \in E$, we have $\mu_{\omega, N} \in {^*}\mathfrak{U}$ for all $\mathfrak{U} \in \tau_{L\mu_{\omega, N} \circ \st^{-1}}$. Hence, for each $\omega \in E$, we have $\omega \in E \cap {\mu_{\cdot, N}}^{-1}({^*}\mathfrak{U})$ for all $\mathfrak{U} \in \tau_{L\mu_\omega \circ \st^{-1}}$. 

Fix $\omega \in \Omega$ and $\mathfrak{V} \in \tau_{L\mu_{\omega, N} \circ \st^{-1}}$. Then there exists a basic open set $\mathfrak{U} \in \tau_{L\mu_\omega \circ \st^{-1}} \cap \bprs$, such that $L\mu_{\omega, N} \circ \st^{-1} \in \mathfrak{U} \subseteq \mathfrak{V}$. In particular, this also implies that $\omega \in E \cap {\mu_{\cdot, N}}^{-1}({^*}\mathfrak{U})$. Since $U_{\cdot} \in \mathcal{G}_{\mathfrak{U}}$, we thus obtain the following from the definition of $\mathcal{G}_{\mathfrak{U}}$:
\begin{align}
    \mu_{\omega, N} \in U_{\omega} \in {^*}\tau_{\mathfrak{U}} \subseteq {^*}\tau_{\mathfrak{V}}.
\end{align}

Since $\omega \in E$ was arbitrarily chosen in the previous paragraph, this implies that $U_{\omega} \in {^*}\tau_\mathfrak{U}$ for all $\omega \in E$ and $\mathfrak{U} \in \tau_{L\mu_{\omega, N} \circ \st^{-1}}$. In particular, we have: $$U_{\omega} \subseteq \cap_{\mathfrak{U} \in \tau_{L\mu_{\omega, N} \circ \st^{-1}}} {^*}\mathfrak{U} = \st^{-1}(L\mu_{\omega, N} \circ \st^{-1}) \text{ for all } \omega \in E,$$ 
as desired.
\end{proof}

For each $N \in {^*}\mathbb{N}$, since $E_N$ is a Loeb measurable set of measure $1$, there exists an increasing sequence $(E_{N, n})_{n \in \mathbb{N}}$ of internal subsets of $E_N$ such that the following holds:

\begin{align}\label{F_n}
    {^*}\mathbb{P}(E_{N, n}) > 1 - \frac{1}{n} \text{ for all } n \in \mathbb{N}.
\end{align}

Lemma \ref{internal definition principle} applied to the internal sets $E_{N, n}$ will imply that the pushforward internal measure on ${^*}\prs$ induced by the internal random variable $\mu_{\cdot, N}$ is such that its Loeb measure assigns full measure to $\ns({^*}\prs)$. This will be the content of our next result. 

More precisely, for each $N \in {^*}\mathbb{N}$, define an internal finitely additive probability $P_N$ on $({^*}\prs, {^*}\sigmabprs)$ as follows: 
\begin{align}\label{definition of P}
P_N(\mathfrak{B}) \defeq {^*}\mathbb{P}\left(\{\omega \in {^*}\Omega : \mu_{\omega, N} \in \mathfrak{B}\}\right) = {^*}\mathbb{P}\left({\mu_{\cdot, N}}^{-1}(\mathfrak{B}) \right) \text{ for all } \mathfrak{B} \in {^*}{\sigmabprs}.
\end{align}
That this is indeed an internal probability follows from Corollary \ref{transfer corollary 1}. As promised, we now show that the corresponding Loeb measure $LP_N$ is concentrated on nearstandard elements of ${^*}\prs$.

\begin{theorem}\label{P has everything nearstandard}
Let $S$ be a Hausdorff space. Let $N \in {^*}\mathbb{N}$ and let $P_N$ be as in \eqref{definition of P}. Let $$({^*}\prs, {L_{P_N}}({^*}{\sigmabprs}), LP_N)$$ be the associated Loeb space. Then the set $\ns({^*}\prs)$ is Loeb measurable, with $$LP_N(\ns({^*}\prs)) = 1.$$
\end{theorem}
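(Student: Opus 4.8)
The plan is to show that $\ns({^*}\prs)$ has inner Loeb measure equal to $1$ with respect to $P_N$; since the outer measure of any subset of ${^*}\prs$ is at most $P_N({^*}\prs) = 1$, and the inner measure never exceeds the outer measure (any internal set below $A$ is contained in any internal set above $A$), this forces $\underline{P_N}(\ns({^*}\prs)) = \overline{P_N}(\ns({^*}\prs)) = 1$, which gives both Loeb measurability (by \eqref{loeb measurable}) and the value $LP_N(\ns({^*}\prs)) = 1$. In view of \eqref{inner and outer}, it therefore suffices to exhibit, for each $n \in \mathbb{N}$, an internally Borel set $\mathfrak{B}_n \in {^*}\mathcal{B}(\prs)$ with $\mathfrak{B}_n \subseteq \ns({^*}\prs)$ and $\st(P_N(\mathfrak{B}_n)) \geq 1 - \frac{1}{n}$.

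To build these sets I would combine Lemma \ref{internal definition principle} with the internal exhaustion $(E_{N,n})_{n \in \mathbb{N}}$ of $E_N$ from \eqref{F_n}. Fixing $n \in \mathbb{N}$ and applying Lemma \ref{internal definition principle} to the internal set $E_{N,n} \subseteq E_N$ yields an internal map $U^{(n)}_{\cdot} \co E_{N,n} \to {^*}\tprs$ with $\mu_{\omega, N} \in U^{(n)}_{\omega} \subseteq \st^{-1}(L\mu_{\omega, N} \circ \st^{-1})$ for every $\omega \in E_{N,n}$. I would then set $\mathfrak{B}_n \defeq \bigcup_{\omega \in E_{N,n}} U^{(n)}_{\omega}$. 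Being an internal union of an internal family of internally open sets, $\mathfrak{B}_n$ is internal and internally open (by transfer of the fact that arbitrary unions of open sets are open), hence $\mathfrak{B}_n \in {^*}\mathcal{B}(\prs)$, so that $P_N(\mathfrak{B}_n)$ is defined.

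The containment $\mathfrak{B}_n \subseteq \ns({^*}\prs)$ is the crux: any $x \in \mathfrak{B}_n$ lies in some $U^{(n)}_{\omega} \subseteq \st^{-1}(L\mu_{\omega, N} \circ \st^{-1})$, and by Lemma \ref{standard part exists for almost all} the point $L\mu_{\omega, N} \circ \st^{-1}$ is a genuine element of $\prs$, so $x$ is nearstandard. For the measure estimate, note that $\mu_{\omega, N} \in U^{(n)}_{\omega} \subseteq \mathfrak{B}_n$ for all $\omega \in E_{N,n}$, whence $E_{N,n} \subseteq {\mu_{\cdot, N}}^{-1}(\mathfrak{B}_n)$. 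Using monotonicity of the internal measure ${^*}\mathbb{P}$ together with \eqref{F_n} gives $P_N(\mathfrak{B}_n) = {^*}\mathbb{P}({\mu_{\cdot, N}}^{-1}(\mathfrak{B}_n)) \geq {^*}\mathbb{P}(E_{N,n}) > 1 - \frac{1}{n}$, and therefore $\st(P_N(\mathfrak{B}_n)) \geq 1 - \frac{1}{n}$. Letting $n \to \infty$ yields $\underline{P_N}(\ns({^*}\prs)) = 1$, completing the argument.

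I expect the only real obstacle to lie in the internality of $\mathfrak{B}_n$ and its containment in $\ns({^*}\prs)$: everything hinges on Lemma \ref{internal definition principle} producing the neighborhood assignment $\omega \mapsto U^{(n)}_{\omega}$ \emph{internally}—so that its union is an internal set to which $P_N$ applies—while simultaneously trapping each $U^{(n)}_{\omega}$ inside the monad $\st^{-1}(L\mu_{\omega, N} \circ \st^{-1})$ of a standard Radon measure. Once that lemma is invoked, the layering \eqref{F_n} of $E_N$ by internal sets of measure approaching $1$ supplies the quantitative estimate, and no further delicate measure theory is required.
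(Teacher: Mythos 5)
Your proposal is correct and follows essentially the same route as the paper's own proof: both apply Lemma \ref{internal definition principle} to the internal exhaustion $(E_{N,n})_{n\in\mathbb{N}}$ from \eqref{F_n}, form the internal open union $\bigcup_{\omega \in E_{N,n}} U_\omega$ (internal and open by transfer), note it lies inside $\ns({^*}\prs)$ because each $U_\omega$ sits in the monad of $L\mu_{\omega,N}\circ\st^{-1} \in \prs$, and use $E_{N,n} \subseteq {\mu_{\cdot,N}}^{-1}(\mathfrak{B}_n)$ together with \eqref{definition of P} to drive the inner measure to $1$. The only cosmetic difference is that the paper phrases the measure estimate through $LP_N$ and $L{^*}\mathbb{P}$ rather than directly through ${^*}\mathbb{P}$, which is the same computation.
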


\begin{proof}
Let $E_N$ be as in \eqref{almost sure set} and let $(E_{N, n})_{n \in \mathbb{N}} \subseteq E_N$ be as in \eqref{F_n}. Fix $n \in \mathbb{N}$. With $E \defeq E_{N, n}$, apply Lemma \ref{internal definition principle} to obtain an internal function $U_{\cdot} \co E_{N,n} \rightarrow {^*}\bprs$ such that $$\mu_{\omega, N} \in U_{\omega} \text{ and } U_\omega \subseteq \st^{-1}(L\mu_{\omega, N} \circ \st^{-1}) \text{ for all } \omega \in E_{N, n}.$$ 

In particular, $U_{\omega} \subseteq \ns({^*}\prs)$ for all $\omega \in E_{N, n}$, so that $U \defeq \cup_{\omega \in E_{N,n}} U_{\omega} \subseteq \ns({^*}\prs)$. Since $\mu_{\omega, N} \in U_{\omega}$ for all $\omega \in E_{N, n}$, we have $E_{N, n} \subseteq {\mu_{\cdot, N}}^{-1}(U)$. Hence it follows from \eqref{definition of P} that  
\begin{align*}
    \underline{P_N}(\ns({^*}\prs)) &\geq \underline{P_N}(U) \\
    &= \sup\{\st(P_N(\mathfrak{B})): \mathfrak{B} \in {^*}{\sigmabprs} \text{ and } \mathfrak{B} \subseteq U\} \\
    &= \sup\{\st({^*}\mathbb{P}({\mu_{\cdot, N}}^{-1}(\mathfrak{B}))): \mathfrak{B} \in {^*}{\sigmabprs} \text{ and } \mathfrak{B} \subseteq U\} \\
    &\geq \underline{{^*}\mathbb{P}}\left({\mu_{\cdot, N}}^{-1}(U) \right) \\
    &\geq L{^*}\mathbb{P}(E_{N, n}).
\end{align*}
Using \eqref{F_n} and observing that $n \in \mathbb{N}$ was arbitrary, we thus obtain the following:
\begin{align*}
    \underline{P_N}(\ns({^*}\prs)) \geq 1 - \frac{1}{n} \text{ for all } n \in \mathbb{N}.
\end{align*}
This clearly implies that
$$1 =  \underline{P_N}(\ns({^*}\prs)) \leq  \overline{P_N}(\ns({^*}\prs)) \leq 1,$$
so that $\underline{P_N}(\ns({^*}\prs)) =  \overline{P_N}(\ns({^*}\prs)) = 1$. As a consequence, $\ns({^*}\prs)$ is Loeb measurable with $LP_N(\ns({^*}\prs)) = 1$, completing the proof. 
\end{proof}

The next lemma provides a useful dictionary between Loeb integrals with respect to $LP_N$ and those with respect to $L{^*}\mathbb{P}$: 

\begin{lemma}\label{bounded function transfer result}
Let $S$ be a Hausdorff space and $N \in {^*}\mathbb{N}$. Let $P_N$ be as in \eqref{definition of P}. For any bounded $LP_N$-measurable function $f \co {^*}\prs \rightarrow \mathbb{R}$, we have:
\begin{align}\label{bounded function of mu}
   \int_{{^*}\prs} f(\mu) dLP_N(\mu) = \int_{{^*}\Omega} f(\mu_{\omega, N}) dL{^*}\mathbb{P}(\omega). 
\end{align}
\end{lemma}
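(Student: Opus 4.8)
The plan is to read \eqref{bounded function of mu} as the change-of-variables (image-measure) formula for the Loeb measure $LP_N$, which by \eqref{definition of P} is manufactured from the internal pushforward of ${^*}\mathbb{P}$ along the internally measurable map $\mu_{\cdot, N}$ (Corollary \ref{transfer corollary 1}). The strategy is the usual three-step escalation—indicators, simple functions, then a limit—with essentially all of the work concentrated in the indicator case at the level of Loeb-measurable sets.

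First I would establish the set-theoretic pushforward identity: for every $LP_N$-measurable set $\mathfrak{B} \subseteq {^*}\prs$, the preimage ${\mu_{\cdot, N}}^{-1}(\mathfrak{B})$ is $L{^*}\mathbb{P}$-measurable and
\[
LP_N(\mathfrak{B}) = L{^*}\mathbb{P}\!\left({\mu_{\cdot, N}}^{-1}(\mathfrak{B})\right).
\]
For internal $\mathfrak{B} \in {^*}\mathcal{B}(\prs)$ this is immediate from \eqref{definition of P} upon taking standard parts, since ${\mu_{\cdot, N}}^{-1}(\mathfrak{B}) \in {^*}\mathcal{F}$ is itself internal. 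To pass to a general Loeb-measurable $\mathfrak{B}$, I would invoke the inner/outer measure description \eqref{inner and outer}: given $\epsilon \in \mathbb{R}_{>0}$, choose internal sets $\mathfrak{B}_- \subseteq \mathfrak{B} \subseteq \mathfrak{B}_+$ with $\st(P_N(\mathfrak{B}_+ \backslash \mathfrak{B}_-)) < \epsilon$. Since ${\mu_{\cdot, N}}^{-1}$ preserves internal sets and inclusions, and since \eqref{definition of P} gives $P_N(\mathfrak{B}_\pm) = {^*}\mathbb{P}({\mu_{\cdot, N}}^{-1}(\mathfrak{B}_\pm))$, the internal sets ${\mu_{\cdot, N}}^{-1}(\mathfrak{B}_\pm)$ sandwich ${\mu_{\cdot, N}}^{-1}(\mathfrak{B})$ to within $L{^*}\mathbb{P}$-measure $\epsilon$. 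Letting $\epsilon \to 0$ shows ${\mu_{\cdot, N}}^{-1}(\mathfrak{B})$ is $L{^*}\mathbb{P}$-measurable with the displayed measure; in particular, preimages of $LP_N$-null sets are $L{^*}\mathbb{P}$-null.

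This identity gives \eqref{bounded function of mu} at once for $f = \mathbbm{1}_{\mathfrak{B}}$, and hence for every $LP_N$-measurable simple function by linearity. It also guarantees that $\omega \mapsto f(\mu_{\omega, N})$ is $L{^*}\mathbb{P}$-measurable whenever $f$ is $LP_N$-measurable (because $(f \circ \mu_{\cdot, N})^{-1}(A) = {\mu_{\cdot, N}}^{-1}(f^{-1}(A))$), so both integrals in \eqref{bounded function of mu} are well-defined. For the final step I would take a bounded $LP_N$-measurable $f$ and choose simple functions $s_k$ converging to $f$ uniformly; applying the simple-function case to each $s_k$ and passing to the limit—where both sides converge by the uniform convergence together with the finiteness of the probability measures $LP_N$ and $L{^*}\mathbb{P}$—yields \eqref{bounded function of mu} in full generality.

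The step I expect to be the main obstacle is the measure-preserving measurability of preimages of general Loeb-measurable sets, i.e.\ the first displayed identity. The internal case is trivial and the simple-function and limiting arguments are routine; it is the bookkeeping with inner and outer measures—especially confirming that $LP_N$-null sets pull back to $L{^*}\mathbb{P}$-null sets—that requires care, since Loeb-measurable sets need not be internal and the identity \eqref{definition of P} is only postulated for internal sets.
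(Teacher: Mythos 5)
Your proposal is correct and follows essentially the same route as the paper's proof: first establish $LP_N(\mathfrak{B}) = L{^*}\mathbb{P}({\mu_{\cdot,N}}^{-1}(\mathfrak{B}))$ for internal sets directly from \eqref{definition of P}, then extend to arbitrary Loeb-measurable sets by sandwiching with internal sets via the inner/outer measure description, and finally pass from indicators to simple functions to bounded functions by uniform approximation. The paper even records your key intermediate identity separately (as its Corollary \ref{corollary}), so the two arguments match step for step.
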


\begin{proof}
First fix an internally measurable set $\mathfrak{B} \in {^*}{\sigmabprs}$ and let $f = \mathbbm{1}_{\mathfrak{B}}$. Then the left side of \eqref{bounded function of mu} is equal to $LP_N(\mathfrak{B}) = \st(P_N(\mathfrak{B}))$, which also equals the following by \eqref{definition of P}:  $$\st \left[{^*}\mathbb{P}\left({\mu_{\cdot, N}}^{-1}(\mathfrak{B})\right) \right] = L{^*}\mathbb{P} \left[\{\omega \in {^*}\Omega: \mu_{\omega, N} \in \mathfrak{B} \} \right] = \int_{{^*}\Omega} \mathbbm{1}_{\mathfrak{B}}(\mu_{\omega, N}) dL{^*}\mathbb{P}(\omega).$$ 

Thus \eqref{bounded function of mu} is true when $f$ is the indicator function of an internally measurable subset of ${^*}\prs$. That is:
\begin{align}\label{LP(B)}
    LP_N(\mathfrak{B}) = L{^*}\mathbb{P}\left({\mu_{\cdot, N}}^{-1}(\mathfrak{B})\right) \text{ for all } \mathfrak{B} \in {^*}{\sigmabprs}. 
\end{align}

Now, let $\mathfrak{A} \in L_{P_N}({^*}{\sigmabprs})$ be a Loeb measurable set and let $f = \mathbbm{1}_{\mathfrak{A}}$. By the fact that the Loeb measure of a Loeb measurable set equals its inner and outer measure with respect to the internal algebra ${^*}{\sigmabprs}$, we obtain sets $\mathfrak{A}_{\epsilon}, \mathfrak{A}^{\epsilon} \in {^*}{\sigmabprs}$ for each $\epsilon \in \mathbb{R}_{>0}$, such that $\mathfrak{A}_{\epsilon} \subseteq \mathfrak{A} \subseteq \mathfrak{A}^{\epsilon}$ and such that the following holds:
\begin{align}\label{B_epsilon}
    LP_N(\mathfrak{A}) - \epsilon < LP_N(\mathfrak{A}_{\epsilon}) \leq LP_N(\mathfrak{A}) \leq LP_N(\mathfrak{A}^{\epsilon}) < LP_N(\mathfrak{A}) + \epsilon.    
\end{align}
Using \eqref{LP(B)} in \eqref{B_epsilon} yields the following for each $\epsilon \in \mathbb{R}_{>0}$:
\begin{align}\label{sandwich}
    LP_N(\mathfrak{A}) - \epsilon <  L{^*}\mathbb{P}\left({\mu_{\cdot, N}}^{-1}(\mathfrak{A}_{\epsilon})\right) \leq LP_N(\mathfrak{A}) \leq  L{^*}\mathbb{P}\left({\mu_{\cdot, N}}^{-1}(\mathfrak{A}^{\epsilon})\right) < LP_N(\mathfrak{A}) + \epsilon.  
\end{align}

Since ${\mu_{\cdot, N}}^{-1}(\mathfrak{A}_{\epsilon}), {\mu_{\cdot, N}}^{-1}(\mathfrak{A}^{\epsilon})$ are members of ${^*}\mathcal{F}$ by Lemma \ref{measurable}, it follows from \eqref{sandwich} that for any $\epsilon \in \mathbb{R}_{>0}$ we have:
\begin{align*}
    LP_N(\mathfrak{A}) - \epsilon &\leq \sup\{L{^*}\mathbb{P}(E): E \in {^*}\mathcal{F} \text{ and } E \subseteq {\mu_{\cdot, N}}^{-1}(\mathfrak{A_{\epsilon}})\} \\
    &\leq \sup\{L{^*}\mathbb{P}(E): E \in {^*}\mathcal{F} \text{ and } E \subseteq {\mu_{\cdot, N}}^{-1}(\mathfrak{A})\}\\
    &= \underline{{^*}\mathbb{P}}\left({\mu_{\cdot, N}}^{-1}(\mathfrak{A})\right), 
\end{align*}
and
\begin{align*}
LP_N(\mathfrak{A}) + \epsilon &\geq \inf\{L{^*}\mathbb{P}(E): E \in {^*}\mathcal{F} \text{ and } {\mu_{\cdot, N}}^{-1}(\mathfrak{A}^{\epsilon}) \subseteq E\} \\
     &\geq \inf\{L{^*}\mathbb{P}(E): E \in {^*}\mathcal{F} \text{ and } {\mu_{\cdot, N}}^{-1}(\mathfrak{A}) \subseteq E\} \\
     &= \overline{{^*}\mathbb{P}}\left({\mu_{\cdot, N}}^{-1}(\mathfrak{A})\right).
\end{align*}

Since $\epsilon \in \mathbb{R}_{>0}$ is arbitrary, it thus follows that $\underline{{^*}\mathbb{P}}\left({\mu_{\cdot, N}}^{-1}(\mathfrak{A})\right) = \overline{{^*}\mathbb{P}}\left({\mu_{\cdot, N}}^{-1}(\mathfrak{A})\right)$, both being equal to $ LP_N(\mathfrak{A})$. This shows that ${\mu_{\cdot, N}}^{-1}(\mathfrak{A})$ is Loeb measurable and that the following holds: 
\begin{align}\label{proving Loeb measurability}
    LP_N(\mathfrak{\mathfrak{A}}) = L{^*}\mathbb{P}\left[{\mu_{\cdot, N}}^{-1}(\mathfrak{A}) \right] \text{ for all } \mathfrak{A} \in {L_{P_N}}({^*}{\sigmabprs}).
\end{align}

This proves \eqref{bounded function of mu} for indicator functions of Loeb measurable sets. Since the functions $f$ satisfying \eqref{bounded function of mu} are clearly closed under taking $\mathbb{R}$-linear combinations, the result is true for simple functions (that is, those Loeb measurable functions that take finitely many values). The result for general bounded Loeb measurable functions follows from this (and the dominated convergence theorem) since any bounded measurable function can be uniformly approximated by a sequence of simple functions. 
\end{proof}

The result in \eqref{proving Loeb measurability} is interesting and useful in its own right. We record this observation as a corollary of the above proof.

\begin{corollary}\label{corollary}
Let $S$ be a Hausdorff space and let $N \in {^*}\mathbb{N}$. Let $P_N$ be as in \eqref{definition of P}. For any $\mathfrak{A} \in {L_{P_N}}({^*}{\sigmabprs})$, the set ${\mu_{\cdot, N}}^{-1}(\mathfrak{A})$  is $L{^*}\mathbb{P}$-measurable. Furthermore, we have:
\begin{align*}
    LP_N(\mathfrak{A}) = L{^*}\mathbb{P}\left[{\mu_{\cdot, N}}^{-1}(\mathfrak{A}) \right] \text{ for all } \mathfrak{A} \in {L_{P_N}}({^*}\sigmabprs).
\end{align*}
\end{corollary}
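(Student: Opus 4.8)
The plan is to observe that this corollary is not genuinely a new result but a distillation of what was already established \emph{inside} the proof of Lemma \ref{bounded function transfer result}: the displayed equation \eqref{proving Loeb measurability} is exactly the asserted identity, and the argument immediately preceding it is precisely the verification that ${\mu_{\cdot, N}}^{-1}(\mathfrak{A})$ is $L{^*}\mathbb{P}$-measurable. So the proof amounts to isolating that portion of the earlier argument and recording it as a standalone statement.

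To keep the reasoning transparent, I would first recall the base case for internal sets. For $\mathfrak{B} \in {^*}\mathcal{B}(\prs)$, the definition \eqref{definition of P} of $P_N$ gives $P_N(\mathfrak{B}) = {^*}\mathbb{P}({\mu_{\cdot, N}}^{-1}(\mathfrak{B}))$, and taking standard parts yields $LP_N(\mathfrak{B}) = L{^*}\mathbb{P}({\mu_{\cdot, N}}^{-1}(\mathfrak{B}))$, which is exactly \eqref{LP(B)}; here Corollary \ref{transfer corollary 1} guarantees that each preimage ${\mu_{\cdot, N}}^{-1}(\mathfrak{B})$ lies in ${^*}\mathcal{F}$. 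Then, for a general Loeb measurable set $\mathfrak{A} \in {L_{P_N}}({^*}\mathcal{B}(\prs))$, I would invoke the inner/outer approximation by internal sets: for each $\epsilon \in \mathbb{R}_{>0}$ choose $\mathfrak{A}_\epsilon, \mathfrak{A}^\epsilon \in {^*}\mathcal{B}(\prs)$ with $\mathfrak{A}_\epsilon \subseteq \mathfrak{A} \subseteq \mathfrak{A}^\epsilon$ satisfying \eqref{B_epsilon}. Applying the monotone operation ${\mu_{\cdot, N}}^{-1}$ together with the base case \eqref{LP(B)}, the inner and outer $L{^*}\mathbb{P}$-measures of ${\mu_{\cdot, N}}^{-1}(\mathfrak{A})$ are both trapped within $\epsilon$ of $LP_N(\mathfrak{A})$; this is the content of \eqref{sandwich} and the two displays that follow it. Letting $\epsilon \to 0$ forces the inner and outer measures to coincide, so ${\mu_{\cdot, N}}^{-1}(\mathfrak{A})$ is $L{^*}\mathbb{P}$-measurable and the common value equals $LP_N(\mathfrak{A})$.

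There is essentially no obstacle here. The only point that requires any care---and it is already dispatched in the earlier proof---is confirming that the internal preimages ${\mu_{\cdot, N}}^{-1}(\mathfrak{A}_\epsilon)$ and ${\mu_{\cdot, N}}^{-1}(\mathfrak{A}^\epsilon)$ genuinely belong to ${^*}\mathcal{F}$, which follows from Lemma \ref{measurable} via transfer, so that they are legitimate test sets in the suprema and infima defining the inner and outer measures. Accordingly, the cleanest presentation is simply to cite equation \eqref{proving Loeb measurability} and the measurability verification preceding it in the proof of Lemma \ref{bounded function transfer result}, rather than to reproduce the computation in full.
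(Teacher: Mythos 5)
Your proposal is correct and matches the paper's own treatment: the paper explicitly records this corollary as an observation extracted from the proof of Lemma \ref{bounded function transfer result}, where the identity appears as \eqref{proving Loeb measurability}, established via the base case \eqref{LP(B)} for internal sets and the inner/outer approximation argument \eqref{B_epsilon}--\eqref{sandwich} that you reproduce. Nothing is missing, and citing that portion of the earlier proof is exactly how the paper handles it.
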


\subsection{An internal measure induced on the space of all internal Radon probability measures}\label{3.2}
Equipped with a way to compute the $LP_N$ measure of a large collection of sets, we are now in a position to use Prokhorov's theorem (Theorem \ref{Radon Prokhorov}) to verify that $P_N$ satisfies the conditions from Theorem \ref{Albeverio theorem}. Recall the base $\bprs$ for the $A$-topology on $\prs$ (see \ref{Bprs}) and observe that the set $\finunbprs$ of all finite unions of members of $\bprs$ is also a base for the $A$-topology on $\prs$, while this base is closed under finite unions by construction, and, furthermore, the nonstandard extension of any member of this base belongs to ${^*}{\sigmabprs}$ by construction as well. Thus only the tightness condition from Theorem \ref{Albeverio theorem} in this context remains to be verified, which we do in the next theorem.

\begin{theorem}\label{Prokhorov for P_N}
Let $S$ be a Hausdorff space and let $N \in {^*}\mathbb{N}$. Let $P_N$ be as in \eqref{definition of P} and let $({^*}\prs, {L_{P_N}}({^*}{\sigmabprs}), LP_N)$ be the associated Loeb space.  

Given $\epsilon \in \mathbb{R}_{>0}$, there exists a compact set $\mathfrak{K}_{(\epsilon)} \subseteq \prs$ satisfying the following:
$$LP_N({^*}\mathfrak{U}) \geq 1 - \epsilon \text{ for all } \mathfrak{U} \in \finunbprs \text{ such that }  \mathfrak{K}_{(\epsilon)} \subseteq \mathfrak{U}.$$
\end{theorem}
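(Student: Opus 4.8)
The plan is to verify that the internal measure $P_N$ satisfies the tightness hypothesis \eqref{nonstandard tightness assumption} of Theorem \ref{Albeverio theorem}, with the ambient Hausdorff space being $\prs$ (Hausdorff by Theorem \ref{Topsoe theorem}). The compact set $\mathfrak{K}_{(\epsilon)}$ will be produced as the closure of a uniformly tight family of Radon measures on $S$, which is compact by our Prokhorov theorem (Theorem \ref{Radon Prokhorov}). The bridge between $P_N$ and the Loeb measure $L{^*}\mathbb{P}$ on ${^*}\Omega$ is Corollary \ref{corollary}: for every internally Borel $\mathfrak{B}$ one has $LP_N(\mathfrak{B}) = L{^*}\mathbb{P}({\mu_{\cdot,N}}^{-1}(\mathfrak{B}))$. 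Applying this to $\mathfrak{B} = {^*}\mathfrak{U}$ (note ${^*}\mathfrak{U} \in {^*}\mathcal{B}(\prs)$ since $\mathfrak{U}$ is open, hence Borel) reduces the desired estimate to bounding below the $L{^*}\mathbb{P}$-measure of $\{\omega : \mu_{\omega,N} \in {^*}\mathfrak{U}\}$.

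First I would fix $\epsilon \in \mathbb{R}_{>0}$ and, using tightness of the common distribution of $X_1$, choose for each $m \in \mathbb{N}$ a compact set $L_m \subseteq S$ with $\mathbb{P}(X_1 \in L_m) > 1 - \eta_m$, where the $\eta_m$ are small relative to thresholds $\theta_m \downarrow 0$; concretely, taking $\theta_m = \frac{1}{m}$ and $\eta_m = \frac{\epsilon}{m 2^m}$ gives $\sum_m \eta_m/\theta_m = \epsilon$. I then set $\mathfrak{A} = \{\mu \in \prs : \mu(L_m) \geq 1 - \theta_m \text{ for all } m \in \mathbb{N}\}$. This family is uniformly tight, since for any $\eta \in \mathbb{R}_{>0}$ any $L_m$ with $\theta_m < \eta$ witnesses tightness; hence Theorem \ref{Radon Prokhorov} makes $\mathfrak{K}_{(\epsilon)} \defeq \overline{\mathfrak{A}}$ compact in $\prs$.

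Next I would run the averaging computation from the proof of Lemma \ref{neastandard for almost all omega} verbatim to obtain $\mathbb{E}_{L{^*}\mathbb{P}}(L\mu_{\cdot,N}({^*}L_m)) = \mathbb{P}(X_1 \in L_m) > 1 - \eta_m$ (equality holding since the left side is a standard real infinitely close to a standard real), so that $\mathbb{E}_{L{^*}\mathbb{P}}(1 - L\mu_{\cdot,N}({^*}L_m)) < \eta_m$. Markov's inequality then bounds $L{^*}\mathbb{P}(\{\omega : L\mu_{\omega,N}({^*}L_m) < 1 - \theta_m\}) < \eta_m/\theta_m$, and countable subadditivity over $m$ shows that the Loeb-measurable set
$$G \defeq E_N \cap \bigcap_{m \in \mathbb{N}} \{\omega : L\mu_{\omega,N}({^*}L_m) \geq 1 - \theta_m\}$$
satisfies $L{^*}\mathbb{P}(G) \geq 1 - \epsilon$, using $L{^*}\mathbb{P}(E_N) = 1$ from Lemma \ref{neastandard for almost all omega}.

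Finally I would close the loop. For $\omega \in G$, Lemma \ref{standard part exists for almost all} gives $\st(\mu_{\omega,N}) = L\mu_{\omega,N}\circ\st^{-1} \in \prs$, and since ${^*}L_m \subseteq \st^{-1}(L_m)$ for the compact $L_m$ (Theorem \ref{topological characterizations}\ref{compact}), monotonicity yields $\st(\mu_{\omega,N})(L_m) \geq L\mu_{\omega,N}({^*}L_m) \geq 1 - \theta_m$ for every $m$; thus $\st(\mu_{\omega,N}) \in \mathfrak{A} \subseteq \mathfrak{K}_{(\epsilon)}$. Now let $\mathfrak{U}$ be any open set with $\mathfrak{K}_{(\epsilon)} \subseteq \mathfrak{U}$: it is an open neighborhood of $\st(\mu_{\omega,N})$, and nearstandardness $\mu_{\omega,N} \in \st^{-1}(\st(\mu_{\omega,N}))$ forces $\mu_{\omega,N} \in {^*}\mathfrak{U}$ by the definition of $\st^{-1}$. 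Hence $G \subseteq {\mu_{\cdot,N}}^{-1}({^*}\mathfrak{U})$, and Corollary \ref{corollary} gives $LP_N({^*}\mathfrak{U}) = L{^*}\mathbb{P}({\mu_{\cdot,N}}^{-1}({^*}\mathfrak{U})) \geq L{^*}\mathbb{P}(G) \geq 1 - \epsilon$, as required. The main obstacle is precisely the bookkeeping in the construction step: the decay $\mathbb{E}_{L{^*}\mathbb{P}}(1 - L\mu_{\cdot,N}({^*}K_n)) < \frac{1}{n}$ afforded by the fixed sequence $(K_n)$ of \eqref{K_n} is too weak here, because $\sum_n \frac{1}{n}$ diverges, so one must use the full strength of tightness to pick fresh compact sets $L_m$ whose deficiencies $\eta_m$ are summable against the tightness thresholds $\theta_m$ while still keeping $\theta_m \to 0$ so that $\mathfrak{A}$ remains uniformly tight.
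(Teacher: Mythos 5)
Your proof is correct, and it reaches the theorem by a genuinely different route than the paper. The shared skeleton is Theorem \ref{Radon Prokhorov} (to produce the compact set) and Corollary \ref{corollary} (to convert $LP_N({^*}\mathfrak{U})$ into $L{^*}\mathbb{P}\left({\mu_{\cdot,N}}^{-1}({^*}\mathfrak{U})\right)$), but the two arguments diverge in how they obtain the uniform measure bound. The paper keeps the fixed sequence $(K_n)$ of \eqref{K_n} with thresholds $1-\tfrac{1}{n}$, and extracts the bound from the structure of the almost sure set $E_N$ by monotone-convergence manipulations in $\ell$ and $m$, an overflow argument to control a hyperfinite intersection, and a passage to the limit over finite intersections of the closed sets $\mathfrak{F}_n = \{\gamma : \gamma(K_n)\geq 1-\tfrac{1}{n}\}$; its compact set $\mathfrak{K}_{(\epsilon)} = \bigcap_{n\geq n_\epsilon}\mathfrak{F}_n$ is closed by construction, and the final step runs through Lemma \ref{compactness lemma} together with $LP_N(\ns({^*}\prs))=1$ (Theorem \ref{P has everything nearstandard}). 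You instead work on the $\omega$-side of Corollary \ref{corollary}: you choose \emph{fresh} compacts $L_m$ whose deficiencies $\eta_m$ are summable against the thresholds $\theta_m$, get a large Loeb set $G\subseteq{^*}\Omega$ by Markov's inequality plus countable subadditivity, and then use the pointwise nearstandardness of Lemma \ref{standard part exists for almost all} to push each $\mu_{\omega,N}$, $\omega \in G$, into ${^*}\mathfrak{U}$. This avoids overflow, Lemma \ref{compactness lemma}, and Theorem \ref{P has everything nearstandard} entirely, at the cost of the bookkeeping in choosing $(L_m,\theta_m,\eta_m)$ — and your diagnosis of why that bookkeeping is forced is exactly right: a union bound with the paper's fixed $(K_n)$ fails because $\sum_n \tfrac{1}{n}$ diverges, which is precisely the obstruction the paper's limit/overflow argument is designed to sidestep. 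Two minor remarks: your set $\mathfrak{A}$ is already closed (each condition $\mu(L_m)\geq 1-\theta_m$ is the complement of a subbasic open set, $L_m$ being closed in the Hausdorff space $S$), so taking the closure is harmless but unnecessary; and the equality $\mathbb{E}_{L{^*}\mathbb{P}}\left(L\mu_{\cdot,N}({^*}L_m)\right)=\mathbb{P}(X_1\in L_m)$ you invoke is justified exactly as in Lemma \ref{neastandard for almost all omega}, by $S$-integrability of the finitely bounded internal function $\omega\mapsto\mu_{\omega,N}({^*}L_m)$ and the fact that both sides are standard reals.
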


\begin{proof}\footnote{\label{main footnote}An error in the previously announced proof of this result was identified and corrected in the 2024 preprint \cite{AlamPLMS}. This correction, which is presented here, required a slight modification to the definition of the compact sets $(K_n)_{n \in \mathbb{N}}$ defined at the beginning of this appendix --- compare with their definition in Section 3 of the original preprint \cite{Alam-deFinetti-Hewitt-Savage}. 

The statement of the theorem being presented in the current revision has also undergone a slight change (compare with Theorem 3.11 in the earlier iterations of this manuscript and Theorem B.11 in \cite{AlamPLMS}), in view of the fact that $P_N$ can be thought of as an internal measure only on $({^*}\prs, {^*}{\sigmabprs})$, but not necessarily on $({^*}\prs, {^*}\mathcal{B}(\prs))$ as was incorrectly claimed in the previous iterations of this work. The new statement is intended to make use of the full strength of the result on pushing down Loeb measures from Albeverio et al. (see Theorem \ref{Albeverio theorem} and Footnote \ref{Albeverio footnote}).

The relevant amendment to take care of this technical obstacle in the present revision occurred in the statement of Lemma \ref{measurable} with ramifications on all subsequent results, including this one, that depend on it.}
Let $(K_n)_{n \in \mathbb{N}}$ be the increasing sequence of compact subsets of $S$ fixed in \eqref{K_n}, and let $\epsilon \in \mathbb{R}_{>0}$ be fixed. 

For each $n \in \mathbb{N}$, let $\alpha_n = {^*}\mbp \left(\left\{\omega \in {^*}\Omega: \mu_{\omega, N}({^*}K_n) \geq 1 - \frac{1}{n} \right\}\right)$. Then, by the linearity of internal expectation and the fact that the internal random variable $\mu_{\cdot, N}$ is bounded above by $1$, we have the following inequality for each $n \in \mathbb{N}$:
\begin{gather}
    1 - \frac{1}{n^3} < {^*}\mathbb{E}\left(\mu_{\cdot, N}({^*}K_n)\right) \leq 1\cdot \alpha_n + \left(1 - \frac{1}{n} \right)\cdot(1 - \alpha_n), \nonumber
\end{gather}
from which we obtain the following after simplification:
\begin{gather}
    \alpha_n = {^*}\mbp \left(\left\{\omega \in {^*}\Omega: \mu_{\omega, N}({^*}K_n) \geq 1 - \frac{1}{n} \right\}\right) \geq 1 - \frac{1}{n^2}. \label{alpha_n inequality}
\end{gather}

For each $n \in \mathbb{N}$, let us define the following \textit{closed} set\footnote{The sets $\mathfrak{F}_n$ defined here are closed, since their complements are open (see, for instance, Lemma \ref{characterization of weak topology 2}).} : 
\begin{align}\label{definition of F_n}
    \mathfrak{F}_n \defeq \left\{\gamma \in \prs : \gamma(K_n) \geq 1 - \frac{1}{n}\right\}. 
\end{align}
Given $\epsilon \in \mathbb{R}_{>0}$ that was fixed in the beginning, there exists $n_{\epsilon} \in \mathbb{N}$ such that 
\begin{align}\label{definition of n_epsilon}
    \sum_{n \in \mathbb{N}_{\geq n_\epsilon}} \frac{1}{n^2} < \epsilon. 
\end{align}
We now define $\mathfrak{K}_{(\epsilon)}$ as follows:
\begin{align}\label{K_epsilon}
    \mathfrak{K}_{(\epsilon)} = \bigcap_{n \in \mathbb{N}_{\geq n_{\epsilon}}} \mathfrak{F}_n
\end{align}
Being an intersection of closed sets, the set $\mathfrak{K}_{(\epsilon)}$ is closed. It is also relatively compact by Theorem \ref{Radon Prokhorov}. Being a closed set that is relatively compact, it follows that $\mathfrak{K}_{(\epsilon)}$ is a compact subset of $\prs$. 

Now let $\mathfrak{U} \in \finunbprs$ be any basic open subset of $\prs$ containing $\mathfrak{K}_{(\epsilon)}$. The proof will be completed once we show that $LP_N({^*}\mathfrak{U}) > 1 - \epsilon$. Toward that end, we first make the following immediate observation using Lemma \ref{compactness lemma}:
\begin{align}\label{K_epsilon observation}
{^*}\mathfrak{K}_{(\epsilon)} \subseteq \left[ \left(\bigcap_{n \in \mathbb{N}_{\geq n_{\epsilon}}}{^*}\mathfrak{F}_n\right) \cap \ns({^*}\prs) \right] \subseteq {^*}\mathfrak{U}.
\end{align}

Note that $\left(\bigcap_{n \in \mathbb{N}_{\geq n_{\epsilon}}}{^*}\mathfrak{F}_n\right) \in L_{P_N}({^*}{\sigmabprs})$, since ${^*}\mathfrak{F}_n \in {^*}{\sigmabprs}$ for all $n \in \mathbb{N}$---this follows from the fact that each $\mathfrak{F}_n$ is the complement of an element of $\bprs$ by construction. By \eqref{K_epsilon observation} and Theorem \ref{P has everything nearstandard}, we thus obtain:
\begin{align}
   LP_N({^*}\mathfrak{U}) \geq LP_N \left[ \left(\bigcap_{n \in \mathbb{N}_{\geq n_{\epsilon}}}{^*}\mathfrak{F}_n\right) \cap \ns({^*}\prs) \right] = LP_N \left(\bigcap_{n \in \mathbb{N}_{\geq n_{\epsilon}}}{^*}\mathfrak{F}_n\right). \label{pre-union bound}
\end{align}

The proof is now completed by the following elementary calculation which shows (after taking complements) that the right side of \eqref{pre-union bound} is indeed greater than or equal to $(1 - \epsilon)$:
\begin{align}
     LP_N \left({^*}\prs \backslash \bigcap_{n \in \mathbb{N}_{\geq n_{\epsilon}}}{^*}\mathfrak{F}_n\right) &=  LP_N \left(\bigcup_{n \in \mathbb{N}_{\geq n_{\epsilon}}}{^*}\left(\prs \backslash \mathfrak{F}_n\right)\right) \nonumber \\
     &\leq \sum_{n \in \mathbb{N}_{\geq n_{\epsilon}}} LP_N\left( {^*}\left(\prs \backslash \mathfrak{F}_n\right)\right) \nonumber \\
     &= \sum_{n \in \mathbb{N}_{\geq n_{\epsilon}}} L{^*}\mathbb{P}\left[{\mu_{\cdot, N}}^{-1}({^*}\left(\prs \backslash \mathfrak{F}_n\right)) \right] \label{fourth to last step} \\
     &= \sum_{n \in \mathbb{N}_{\geq n_{\epsilon}}} L{^*}\mathbb{P}\left[\left\{\omega \in {^*}\Omega: \mu_{\omega, N}({^*}K_n) < 1 - \frac{1}{n}\right\} \right] \label{third to last step} \\
     &\leq \sum_{n \in \mathbb{N}_{\geq n_\epsilon}} \frac{1}{n^2} \label{second to last step} \\
     &< \epsilon, \label{last step}
\end{align}
where \eqref{fourth to last step} follows from Corollary \ref{corollary}, \eqref{third to last step} follows from the definition \eqref{definition of F_n} of $\mathfrak{F}_n$, while the last two steps \eqref{second to last step} and \eqref{last step} follow respectively from \eqref{alpha_n inequality} and \eqref{definition of n_epsilon}. This completes the proof. 
\end{proof}

Theorems \ref{Prokhorov for P_N} and \ref{Albeverio theorem} now immediately lead to the following result. 

\begin{theorem}\label{second order nearstandard}\footnote{In the previous iterations of this manuscript, it was incorrectly claimed that $P_N$ is nearstandard to $LP_N \circ \st^{-1}$ in ${^*}\mathfrak{P}(\prs)$. This is not true since we cannot, in the general situation, even view $P_N$ as an element of ${^*}\mathfrak{P}(\prs)$---see also Footnote \ref{main footnote}.}
Suppose that $S$ is a Hausdorff space. Let $N \in {^*}\mathbb{N}$ and let $P_N$ be as in \eqref{definition of P}. Let $({^*}\prs, {L_{P_N}}({^*}{\sigmabprs}), LP_N)$ be the associated Loeb space. Then $LP_N \circ \st^{-1}$ is a Radon measure on the Hausdorff space $\prs$. 
\end{theorem}

Note that since $LP_N \circ \st^{-1}$ is a measure on $\mathcal{B}(\prs)$, it is (in particular) the case that $\st^{-1}(\mathfrak{B})$ is $LP_N$-measurable for all $\mathfrak{B} \in \mathcal{B}(\prs)$. This observation is useful enough that we record it as a corollary. 

\begin{corollary}\label{measurability corollary}
Let $S$ be a Hausdorff space and let $P_N$ be as in \eqref{definition of P}. For each $\mathfrak{B} \in \mathcal{B}(\prs)$, we have $\st^{-1}(\mathfrak{B}) \in {L_{P_N}}({^*}{\sigmabprs})$. 
\end{corollary}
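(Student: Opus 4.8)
The plan is to read off this measurability directly from Theorem \ref{second order nearstandard}, which does all the substantive work; this corollary is essentially a matter of unpacking what that theorem already asserts. First I would recall that Theorem \ref{second order nearstandard} concludes that $LP_N \circ \st^{-1}$ is a Radon measure on the Hausdorff space $\prs$ (the Hausdorffness of $\prs$ being supplied by Theorem \ref{Topsoe theorem}, since $S$ is Hausdorff). The key observation is that the very statement ``$LP_N \circ \st^{-1}$ is a measure on $\mathcal{B}(\prs)$'' presupposes that the quantity $LP_N(\st^{-1}(\mathfrak{B}))$ is meaningful for every $\mathfrak{B} \in \mathcal{B}(\prs)$.

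Second, I would make that presupposition explicit. The measure $LP_N$ is defined precisely on the Loeb sigma algebra $L_{P_N}({^*}\mathcal{B}(\prs))$ attached to the internal probability space $({^*}\prs, {^*}\mathcal{B}(\prs), P_N)$. Hence, for $LP_N(\st^{-1}(\mathfrak{B}))$ to be defined at all, the set $\st^{-1}(\mathfrak{B})$ must lie in $L_{P_N}({^*}\mathcal{B}(\prs))$; that is, it must be $LP_N$-measurable. This is exactly the (implicit) reasoning already invoked in the proof of Theorem \ref{Landers and Rogge main result}, where the hypothesis that $L\nu \circ \st^{-1}$ is a Borel probability measure was noted to ``implicitly also require that $\st^{-1}(B) \in L({^*}\mathcal{B}(T))$ for all $B \in \mathcal{B}(T)$.''

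Putting these together: since Theorem \ref{second order nearstandard} guarantees that $LP_N \circ \st^{-1}$ is a genuine Radon (hence Borel) measure on $(\prs, \mathcal{B}(\prs))$, applying the preceding observation to an arbitrary Borel set $\mathfrak{B}$ yields that $\st^{-1}(\mathfrak{B})$ is $LP_N$-measurable, which is the assertion to be proved. I do not anticipate any genuine obstacle in this step: all the difficulty is concentrated upstream in Theorem \ref{second order nearstandard}, which itself rests on the Prokhorov-type tightness estimate of Theorem \ref{Prokhorov for P_N} together with Lemma \ref{Albeverio Lemma} and Theorem \ref{Landers and Rogge main result}. The corollary merely isolates and records a byproduct of those results, and the only thing to be careful about is to phrase it as a consequence of $LP_N \circ \st^{-1}$ being well-defined as a measure, rather than attempting to reprove the measurability of $\st^{-1}(\mathfrak{B})$ from scratch.
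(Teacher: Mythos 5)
Your proposal is correct and matches the paper's own treatment: the paper records this corollary precisely as the observation that, since Theorem \ref{second order nearstandard} makes $LP_N \circ \st^{-1}$ a (Radon) measure on $\mathcal{B}(\prs)$, the sets $\st^{-1}(\mathfrak{B})$ must already lie in the Loeb sigma algebra $L_{P_N}({^*}\mathcal{B}(\prs))$ for every $\mathfrak{B} \in \mathcal{B}(\prs)$. Your attribution of all the substantive work to Theorem \ref{second order nearstandard} (resting on Theorem \ref{Prokhorov for P_N}, Lemma \ref{Albeverio Lemma}, and Theorem \ref{Landers and Rogge main result}) is exactly how the paper structures the argument.
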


\subsection{Almost sure standard parts of hyperfinite empirical measures}\label{3.3}
We now return to studying properties of the measures $L\mu_{\omega, N}$ for $N \in {^*}\mathbb{N}$. Corollary \ref{measurability corollary} immediately leads us to the following. 

\begin{lemma}\label{measurability lemma}
Let $S$ be a Hausdorff space. Let $N \in {^*}\mathbb{N}$ and let $E_N$ be the $L{^*}\mathbb{P}$-almost sure set fixed in \eqref{almost sure set}. Then for each $B \in \mathcal{B}(S)$, the set $\st^{-1}(B)$ is $L\mu_{\omega, N}$-measurable for all $\omega \in E_N$. Furthermore, for each $B \in \mathcal{B}(S)$, the function $\omega \mapsto L\mu_{\omega, N}(\st^{-1}(B))$ thus defines a $[0,1]$-valued random variable almost everywhere on $({^*}\Omega, L({^*}\mathcal{F}), L{^*}\mathbb{P})$. 
\end{lemma}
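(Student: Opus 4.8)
The plan is to treat the two assertions in turn, the first being an immediate consequence of earlier work and the second requiring the pushforward machinery of Section \ref{3.2}. For the first assertion, I would invoke Lemma \ref{standard part exists for almost all}\ref{T21}: for every $\omega \in E_N$ the set function $L\mu_{\omega, N} \circ \st^{-1}$ is a genuine (Radon) probability measure on $(S, \mathcal{B}(S))$. But for $L\mu_{\omega, N} \circ \st^{-1}$ even to be defined on $\mathcal{B}(S)$, the number $L\mu_{\omega, N}(\st^{-1}(B))$ must make sense for every $B \in \mathcal{B}(S)$; that is, $\st^{-1}(B)$ must be $L\mu_{\omega, N}$-measurable for all $B \in \mathcal{B}(S)$ and all $\omega \in E_N$ (compare the parenthetical remark in the proof of Theorem \ref{Landers and Rogge main result}). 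This is exactly the first claim, so nothing further is needed.

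For the second assertion, the key idea is to factor the map $\omega \mapsto L\mu_{\omega, N}(\st^{-1}(B))$ through the standard-part map on the space $\prs$ itself. By Lemma \ref{standard part exists for almost all}\ref{T22}, for $\omega \in E_N$ we have $\mu_{\omega, N} \in \ns({^*}\prs)$ with $\st(\mu_{\omega, N}) = L\mu_{\omega, N} \circ \st^{-1}$, and since $\prs$ is Hausdorff (Theorem \ref{Topsoe theorem}) this standard part is unique. Writing $e_B \co \prs \rightarrow [0,1]$ for the evaluation map $\mu \mapsto \mu(B)$, I would record the identity
\begin{align*}
    L\mu_{\omega, N}(\st^{-1}(B)) = e_B\left(\st(\mu_{\omega, N})\right) \text{ for all } \omega \in E_N,
\end{align*}
where the inner $\st^{-1}$ refers to ${^*}S$ and the outer $\st$ refers to ${^*}\prs$. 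I would then verify Loeb measurability by computing preimages. Fixing a Borel set $A \subseteq [0,1]$ and putting $\mathfrak{B} \defeq e_B^{-1}(A)$, one has $\mathfrak{B} \in \mathcal{B}(\prs)$ because $e_B$ is $\mathcal{B}(\prs)$-measurable by Theorem \ref{Radon evaluation of Borel sets'}. Using that nearstandard points of ${^*}\prs$ have unique standard parts, for $\omega \in E_N$ the condition $\st(\mu_{\omega, N}) \in \mathfrak{B}$ is equivalent to $\mu_{\omega, N} \in \st^{-1}(\mathfrak{B})$, whence
\begin{align*}
    \{\omega \in E_N : L\mu_{\omega, N}(\st^{-1}(B)) \in A\} = E_N \cap {\mu_{\cdot, N}}^{-1}\left(\st^{-1}(\mathfrak{B})\right).
\end{align*}
By Corollary \ref{measurability corollary} the set $\st^{-1}(\mathfrak{B}) \subseteq {^*}\prs$ is $LP_N$-measurable, and by Corollary \ref{corollary} its preimage ${\mu_{\cdot, N}}^{-1}(\st^{-1}(\mathfrak{B}))$ is therefore $L{^*}\mathbb{P}$-measurable. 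Intersecting with the Loeb measurable set $E_N$ and letting $A$ range over all Borel subsets of $[0,1]$ then shows that $\omega \mapsto L\mu_{\omega, N}(\st^{-1}(B))$ is $L{^*}\mathbb{P}$-measurable on $E_N$; since $L{^*}\mathbb{P}(E_N) = 1$, it is a $[0,1]$-valued random variable defined almost everywhere.

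The routine part is the set-theoretic bookkeeping of the preimage, which simply chains together measurability facts already in hand: Theorem \ref{Radon evaluation of Borel sets'} for $e_B$, Corollary \ref{measurability corollary} for $\st^{-1}$ on $\prs$, and Corollary \ref{corollary} for the pushforward along $\mu_{\cdot, N}$. The one point that I expect to require genuine care—and which I regard as the crux—is the equivalence $\st(\mu_{\omega, N}) \in \mathfrak{B} \iff \mu_{\omega, N} \in \st^{-1}(\mathfrak{B})$ on $E_N$. This rests essentially on the Hausdorffness of $\prs$, so that nearstandard points of ${^*}\prs$ have \emph{unique} standard parts, together with the fact, supplied by Lemma \ref{standard part exists for almost all}\ref{T22}, that every $\mu_{\omega, N}$ with $\omega \in E_N$ is indeed nearstandard in ${^*}\prs$. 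Without both of these inputs the factorization through $e_B \circ \st$ would break down.
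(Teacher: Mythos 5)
Your proposal is correct and follows essentially the same route as the paper: the first assertion is read off from Lemma \ref{standard part exists for almost all} (since $L\mu_{\omega,N}\circ\st^{-1}$ being a measure on $\mathcal{B}(S)$ presupposes the measurability of each $\st^{-1}(B)$), and the second is proved by factoring $\omega \mapsto L\mu_{\omega,N}(\st^{-1}(B))$ through $e_B \circ \st$ and chaining Theorem \ref{Radon evaluation of Borel sets'}, Corollary \ref{measurability corollary}, and Corollary \ref{corollary}, with Hausdorffness of $\prs$ justifying the equivalence $\st(\mu_{\omega,N}) \in \mathfrak{B} \iff \mu_{\omega,N} \in \st^{-1}(\mathfrak{B})$. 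The only cosmetic difference is that you test preimages of arbitrary Borel sets $A \subseteq [0,1]$, whereas the paper (using completeness of the Loeb space) only checks the sets $\{\nu \in \prs : \nu(B) > \alpha\}$; the underlying argument is identical.
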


\begin{proof}
It was proved as part of Lemma \ref{standard part exists for almost all} that for each $B \in \mathcal{B}(S)$, the set $\st^{-1}(B)$ is $L\mu_{\omega, N}$-measurable for all $\omega \in E_N$. Thus, the function $\omega \mapsto L\mu_{\omega, N}(\st^{-1}(B))$ is defined $L^{*}\mathbb{P}$-almost surely on ${^*}\Omega$ for all $B \in \mathcal{B}(S)$. 

Now fix $B \in \mathcal{B}(S)$. Since $L{^*}\mathbb{P}(E_N) = 1$ and $({^*}\Omega, L({^*}\mathcal{F}), L{^*}\mathbb{P})$ is a complete probability space, showing that the map $\omega \mapsto L\mu_{\omega, N}(\st^{-1}(B))$ is Loeb measurable is equivalent to showing that for any $\alpha \in \mathbb{R}$, the set $\{\omega \in E_N: L\mu_{\omega, N}\left[ \st^{-1}(B) \right] > \alpha \}$ is Loeb measurable. Toward that end, fix $\alpha \in \mathbb{R}$. Note that by Lemma \ref{standard part exists for almost all}, we obtain the following:
 \begin{align*}
    \{\omega \in E_N: L\mu_{\omega, N}\left[ \st^{-1}(B) \right] > \alpha \} &= \{\omega \in E_N: \left[\st(\mu_{\omega, N})\right](B) > \alpha \} \\
    &= E_N \cap \left[{\mu_{\cdot, N}}^{-1} \left(\st^{-1}\left(\{\nu \in \prs: \nu(B) > \alpha\} \right) \right) \right].
 \end{align*}

By Lemma \ref{evaluation of Borel sets'} and Corollary \ref{measurability corollary}, we have that $\st^{-1}\left(\{\nu \in \prs: \nu(B) > \alpha\} \right) \in {L_{P_N}}({^*}{\sigmabprs})$, which completes the proof by Corollary \ref{corollary}. 
\end{proof}

The next two lemmas are preparatory for Theorem \ref{standard part commutativity}, which will show that for each Borel set $B \in \mathcal{B}(S)$, the $L\mu_{\omega, N}$ measures of $\st^{-1}(B)$ and ${^*}B$ are almost surely equal to each other.

\begin{lemma}\label{compact set standard inverse}
Let $S$ be a Hausdorff space and let $N \in {^*}\mathbb{N}$. Let $K$ be a compact subset of $S$. Then,
\begin{align*}
    L\mu_{\omega, N}(\st^{-1}(K)) = L\mu_{\omega, N}({^*}K) \text{ for } L{^*}\mathbb{P} \text{-almost all } \omega \in {^*}\Omega.
\end{align*}
\end{lemma}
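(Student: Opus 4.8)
The plan is to sandwich the quantity $\mathbb{E}_{L{^*}\mathbb{P}}\bigl(L\mu_{\cdot, N}(\st^{-1}(K))\bigr)$ between $\mathbb{P}(X_1 \in K)$ on both sides, and then use nonnegativity to upgrade an equality of expectations into an almost sure equality of the integrands. First I would record the two facts that make the relevant functions of $\omega$ well-defined and Loeb measurable. Since $K$ is compact and $S$ is Hausdorff, $K$ is closed, hence Borel, so by Lemma \ref{measurability lemma} the map $\omega \mapsto L\mu_{\omega, N}(\st^{-1}(K))$ is Loeb measurable (and defined off a null set); the map $\omega \mapsto L\mu_{\omega, N}({^*}K)$ is Loeb measurable because ${^*}K$ is internally Borel and $\mu_{\cdot, N}({^*}K)$ is a finitely bounded (hence $S$-integrable) internal random variable. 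Moreover, by the nonstandard characterization of compactness (Theorem \ref{topological characterizations}\ref{compact}) we have ${^*}K \subseteq \st^{-1}(K)$, while $\st^{-1}(K)$ is universally Loeb measurable by Lemma \ref{Albeverio Lemma}. Monotonicity of each Loeb measure $L\mu_{\omega, N}$ then gives $L\mu_{\omega, N}({^*}K) \leq L\mu_{\omega, N}(\st^{-1}(K))$ for every $\omega$ where both are defined, so that $g_{\omega} \defeq L\mu_{\omega, N}(\st^{-1}(K)) - L\mu_{\omega, N}({^*}K)$ is a nonnegative Loeb measurable function.

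The heart of the argument is the computation of the two expectations. For the inner set, the standard $S$-integrability computation used in the proofs of Lemma \ref{neastandard for almost all omega} and Lemma \ref{leftover has zero mass} (together with the identical distribution of the $X_i$, and the fact that the resulting quantity is a genuine real number) gives $\mathbb{E}_{L{^*}\mathbb{P}}(L\mu_{\cdot, N}({^*}K)) = \mathbb{P}(X_1 \in K)$. For $\st^{-1}(K)$, I would invoke Lemma \ref{Albeverio Lemma} in the form $\st^{-1}(K) \subseteq {^*}G$ for every open $G \supseteq K$. Monotonicity yields $L\mu_{\omega, N}(\st^{-1}(K)) \leq L\mu_{\omega, N}({^*}G)$ for all $\omega$, so taking $L{^*}\mathbb{P}$-expectations and computing as before gives $\mathbb{E}_{L{^*}\mathbb{P}}(L\mu_{\cdot, N}(\st^{-1}(K))) \leq \mathbb{P}(X_1 \in G)$. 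Taking the infimum over all open $G \supseteq K$ and using that the common distribution of $X_1$ is outer regular on the compact set $K$, I obtain $\mathbb{E}_{L{^*}\mathbb{P}}(L\mu_{\cdot, N}(\st^{-1}(K))) \leq \mathbb{P}(X_1 \in K)$. Combined with the reverse inequality coming directly from ${^*}K \subseteq \st^{-1}(K)$, this forces $\mathbb{E}_{L{^*}\mathbb{P}}(g_{\cdot}) = 0$.

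To finish, since $g_{\omega} \geq 0$ and $\mathbb{E}_{L{^*}\mathbb{P}}(g_{\cdot}) = 0$, elementary measure theory gives $g_{\omega} = 0$ for $L{^*}\mathbb{P}$-almost all $\omega$, which is exactly the asserted equality $L\mu_{\omega, N}(\st^{-1}(K)) = L\mu_{\omega, N}({^*}K)$ almost surely. I expect the main obstacle to be the passage $\mathbb{E}_{L{^*}\mathbb{P}}(L\mu_{\cdot, N}(\st^{-1}(K))) \leq \inf_{G \supseteq K} \mathbb{P}(X_1 \in G) = \mathbb{P}(X_1 \in K)$: this is precisely where the hypothesis of outer regularity on compact sets (and not merely tightness) enters in an essential way, matching the remark preceding this lemma that outer regularity on compacts is first needed here. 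Everything else reduces to monotonicity of the Loeb measure, $S$-integrability of finitely bounded internal functions, and the nonstandard characterizations of compactness and of $\st^{-1}(K)$.
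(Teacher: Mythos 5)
Your proposal is correct and takes essentially the same route as the paper's own proof: the paper bounds $L\mu_{\omega, N}\left(\st^{-1}(K) \backslash {^*}K\right)$ by $L\mu_{\omega, N}\left({^*}(O \backslash K)\right)$ for open $O \supseteq K$, computes expectations via $S$-integrability, and then uses outer regularity on compact sets together with nonnegativity to get the almost sure vanishing. Your sandwich of $\mathbb{E}_{L{^*}\mathbb{P}}\bigl(L\mu_{\cdot, N}(\st^{-1}(K))\bigr)$ between $\mathbb{P}(X_1 \in K)$ on both sides is the same computation, merely organized as a difference of measures rather than a difference of sets.
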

\begin{proof}
Let $K \subseteq S$ be a compact set. Let $E_N \subseteq {^*}\Omega$ be as in \eqref{almost sure set}. By Lemma \ref{standard part exists for almost all}, we know that $\st^{-1}(K)$ is $L\mu_{\omega, N}$-measurable for all $\omega \in E_N$. Since $K$ is compact, we also have ${^*}K \subseteq \st^{-1}(K)$. It is thus clear from the definition of standard parts that the following holds: 
\begin{align}\label{definition of standard part}
    \st^{-1}(K) \backslash {^*}K \subseteq {^*}O \backslash {^*}K = {^*}(O \backslash K) \text{ for all open sets } O \text{ such that } K \subseteq O.
\end{align}

Using Lemma \ref{measurability lemma} and Corollary \ref{transfer corollary 1} respectively, we know that the maps $\omega \mapsto L\mu_{\omega, N} \left[\st^{-1}(K) \backslash {^*}K \right]$ and $\omega \mapsto L\mu_{\omega, N}({^*}O\backslash {^*}K)$ are $L{^*}\mathbb{P}$-measurable for all open sets $O$ containing $K$. Taking expected values and using \eqref{definition of standard part}, we obtain the following for any open set $O$ containing $K$:
\begin{align}\label{O minus C}
    \mathbb{E}_{L{^*}\mathbb{P}} \left[ L\mu_{\cdot, N} \left(\st^{-1}(K) \backslash {^*}K \right) \right] \leq \mathbb{E}_{L{^*}\mathbb{P}} \left[ L\mu_{\cdot, N} \left({^*}O \backslash {^*}K \right) \right].
\end{align}

But, by $\mathbf{S}$-integrability of the map $\omega \rightarrow \mu_{\omega, N}({^*}O \backslash {^*}K)$, we also obtain the following: 
\begin{align*}
    \mathbb{E}_{L{^*}\mathbb{P}} \left[ L\mu_{\cdot, N} \left({^*}O \backslash {^*}K \right) \right] &\approx {^*}\mathbb{E}(\mu_{\cdot}({^*}O \backslash {^*}K)) \\
    &= \frac{1}{N} \sum_{i \in [N]} \mathbb{P}[X_i \in O \backslash K] \\
    &= \mathbb{P}[X_1 \in O \backslash K].
\end{align*}

Using this in \eqref{O minus C}, taking infimum as $O$ varies over open sets containing $K$, and using the fact that the distribution of $X_1$ is outer regular on compact sets, we obtain the following:
\begin{align}
     \mathbb{E}_{L{^*}\mathbb{P}} \left[ L\mu_{\cdot, N} \left(\st^{-1}(K) \backslash {^*}K \right) \right] = 0. \label{used outer regularity}
\end{align}

As a result, there exists a Loeb measurable set $E_{K, N} \in L({^*}\mathcal{F})$ such that
$$L\mu_{\omega, N} \left(\st^{-1}(K) \backslash {^*}K \right) = 0 \text{ for all } \omega \in E_{K, N},$$
completing the proof.
\end{proof}

\begin{remark}\label{tight and outer regular on compact remark}
So far, we have only used the facts that the common distribution of the random variables $X_1, X_2, \ldots$ is tight and that it is outer regular on compact subsets of $S$. Tightness was used in \eqref{K_n} and all subsequent results that depended on it, while outer regularity on compact subsets was used to obtain \eqref{used outer regularity}, and will be used going forward. When this work was initially posted on the arXiv as the preprint \cite{Alam-deFinetti-Hewitt-Savage} in 2020, it was mentioned that our assumption of Radonness of the common distribution was only for ease of presentation as we only had ``occasion to use the fact that this distribution is tight and outer regular on compact subsets of $S$''. However, as recently observed by Potaptchik, Roy, and Schrittesser \cite{PotaptchikRoySchrittesser}, we are not losing any generality by only assuming Radnonness. Indeed, \cite[Proposition 4.1]{PotaptchikRoySchrittesser} proves that a Borel probability measure on a Hausdorff space is Radon if and only if it is both tight and outer regular on compact subsets.
\end{remark}

We can strengthen Lemma \ref{compact set standard inverse} to now be applicable for all closed sets, as we show next. 

\begin{lemma}\label{closed subset and standard inverse}
Let $S$ be a Hausdorff space and let $N \in {^*}\mathbb{N}$. Let $F$ be a closed subset of $S$. Then we have the following:
\begin{align}\label{closed set lemma}
    L\mu_{\omega, N}(\st^{-1}(F)) = L\mu_{\omega, N}({^*}F) \text{ for } L{^*}\mathbb{P} \text{-almost all } \omega \in {^*}\Omega.
\end{align}
\end{lemma}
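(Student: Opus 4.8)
The plan is to bootstrap from the compact case already established in Lemma~\ref{compact set standard inverse} to the closed case, using the fixed increasing sequence of compact sets $(K_n)_{n \in \mathbb{N}}$ from \eqref{K_n} to exhaust $F$ from the inside. The guiding observation is that, for $\omega$ in the almost sure set $E_N$ of \eqref{almost sure set}, essentially all of the $L\mu_{\omega, N}$-mass sits on the compact pieces: by the definition \eqref{almost sure set} of $E_N$ together with Lemma~\ref{neastandard for almost all omega}, we have $L\mu_{\omega, N}(\ns({^*}S)) = 1$ and $L\mu_{\omega, N}\left(\bigcup_{n} {^*}K_n\right) = 1$, and since ${^*}K_n \subseteq \st^{-1}(K_n) \subseteq \ns({^*}S)$ by Theorem~\ref{topological characterizations}\ref{compact}, the intermediate set $\bigcup_{n} \st^{-1}(K_n)$ also has full Loeb measure for every $\omega \in E_N$.

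First I would record the set identity $\st^{-1}(F \cap K_n) = \st^{-1}(F) \cap \st^{-1}(K_n)$, which holds because $S$ is Hausdorff and hence each nearstandard point of ${^*}S$ has a unique standard part (this is the disjointness built into Lemma~\ref{Hausdorff lemma}): a point lies in $\st^{-1}(F \cap K_n)$ exactly when its standard part lies in both $F$ and $K_n$. Since $F \cap K_n$ is a closed subset of a compact set, it is compact and hence Borel, so each $\st^{-1}(F \cap K_n)$ is $L\mu_{\omega, N}$-measurable for $\omega \in E_N$ by Lemma~\ref{measurability lemma}, and Lemma~\ref{compact set standard inverse} applies to it. Both sequences $\left(\st^{-1}(F \cap K_n)\right)_{n}$ and $\left({^*}(F\cap K_n)\right)_{n} = \left({^*}F \cap {^*}K_n\right)_{n}$ are increasing.

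The heart of the argument is then two applications of upper monotonicity on $E_N$. On the standard-part side, $\st^{-1}(F) \setminus \bigcup_{n} \st^{-1}(F \cap K_n) = \st^{-1}(F) \setminus \bigcup_{n}\st^{-1}(K_n) \subseteq \ns({^*}S) \setminus \bigcup_{n}\st^{-1}(K_n)$, which is $L\mu_{\omega, N}$-null, so that $L\mu_{\omega, N}(\st^{-1}(F)) = \lim_{n} L\mu_{\omega, N}(\st^{-1}(F \cap K_n))$. On the internal-extension side, ${^*}F \setminus \bigcup_{n}({^*}F \cap {^*}K_n) = {^*}F \setminus \bigcup_{n} {^*}K_n \subseteq {^*}S \setminus \bigcup_{n} {^*}K_n$ is likewise null, giving $L\mu_{\omega, N}({^*}F) = \lim_{n} L\mu_{\omega, N}({^*}(F \cap K_n))$. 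Feeding in the compact-case equalities from Lemma~\ref{compact set standard inverse} term by term then yields the claim on the almost sure set $E_N \cap \bigcap_{n} E_{F \cap K_n, N}$.

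The main thing to be careful about is measurability and the countable bookkeeping rather than any deep new idea: I must check that each set entering the monotone limits is Loeb measurable (supplied by Lemma~\ref{measurability lemma} and Lemma~\ref{standard part exists for almost all}) and that intersecting the countably many almost sure sets $E_{F \cap K_n, N}$ produced by Lemma~\ref{compact set standard inverse} still leaves an almost sure set. This last point is exactly where it matters that $(K_n)_{n}$ is a single fixed countable sequence, so we avoid the uncountable-family obstruction noted in the remark following Lemma~\ref{leftover has zero mass}.
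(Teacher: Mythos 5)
Your proposal is correct and follows essentially the same route as the paper's own proof: both reduce to the compact case by applying Lemma~\ref{compact set standard inverse} to the compact sets $F \cap K_n$, use upper monotonicity along the fixed exhaustion $(K_n)_{n \in \mathbb{N}}$ on both the $\st^{-1}$ side and the ${^*}$-extension side, show the leftover pieces are $L\mu_{\omega,N}$-null on $E_N$, and intersect the countably many almost sure sets. The only differences are cosmetic (you split $\st^{-1}(F \cap K_n)$ as $\st^{-1}(F) \cap \st^{-1}(K_n)$ where the paper manipulates $\st^{-1}\bigl(F \cap (\cup_n K_n)\bigr)$ directly), so no gap to report.
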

\begin{proof}
Let $(K_n)_{n \in \mathbb{N}}$ be the increasing sequence of compact subsets of $S$ fixed in \eqref{K_n}, and let $E_N$ be as in \eqref{almost sure set}. Thus, we have:
\begin{align*}
    L\mu_{\omega, N}\left(\cup_{n \in \mathbb{N}} {^*}K_n \right) = 1 \text{ for all } \omega \in E_N.
\end{align*}
Using the upper monotonicity of $L\mu_{\omega, N}$, we rewrite the above as follows:
\begin{align}\label{limit statement}
   \lim_{n \rightarrow \infty} L\mu_{\omega, N}\left({^*}K_n \right) = 1 \text{ for all } \omega \in E_N.
\end{align}

Let $F \subseteq S$ be closed. Since $F \cap K_n$ is compact for all $n \in \mathbb{N}$, by Lemma \ref{compact set standard inverse}, there exist $L{^*}\mathbb{P}$-almost sure sets $(E^{(n)})_{n \in \mathbb{N}}$ such that the following holds:
\begin{align}\label{K_n statement}
    L\mu_{\omega, N}(\st^{-1}(F \cap K_n)) = L\mu_{\omega, N}({^*}F \cap {^*}K_n) \text{ for all } \omega \in E^{(n)}, \text{ where } n \in \mathbb{N}.
\end{align}

Let $E_F \defeq E_N \cap \left(\cap_{n \in \mathbb{N}}E^{(n)} \right)$. Being a countable intersection of almost sure sets, $E_F$ is also $L{^*}\mathbb{P}$-almost sure. Letting $\omega \in E_F$ and taking limits as $n \rightarrow \infty$ on both sides of \eqref{K_n statement}, we obtain the following in view of \eqref{limit statement}:
\begin{align}\label{limit statement 2}
    \lim_{n \rightarrow \infty}  L\mu_{\omega, N}(\st^{-1}(F \cap K_n)) = L\mu_{\omega, N}({^*}F) \text{ for all } \omega \in E_F.
\end{align}

Using the upper monotonicity of the measure $L\mu_{\omega, N}$ on the left side of \eqref{limit statement 2}, we obtain the following:
\begin{align}\label{limit statement 3}
     L\mu_{\omega, N}\left(\cup_{n \in \mathbb{N}}\st^{-1}(F \cap K_n)\right) = L\mu_{\omega, N}({^*}F) \text{ for all } \omega \in E_F.
\end{align}
But, we also have the following: 
\begin{align*}
    \cup_{n \in \mathbb{N}}\st^{-1}(F \cap K_n) &= \st^{-1} \left(\cup_{n \in \mathbb{N}} (F \cap K_n) \right) \\
    &= \st^{-1}\left(F \cap (\cup_{n \in \mathbb{N}}K_n) \right),
\end{align*}
so that
\begin{align*}
    \st^{-1}(F) \backslash \cup_{n \in \mathbb{N}}\st^{-1}(F \cap K_n) &= \st^{-1}(F) \backslash \st^{-1}\left(F \cap (\cup_{n \in \mathbb{N}}K_n) \right) \\
    &= \st^{-1}\left(F \cap \left(\cap_{n \in \mathbb{N}} S \backslash K_n \right) \right) \\
    &\subseteq \cap_{n \in \mathbb{N}} \st^{-1}(S \backslash K_n) \\
    &= \cap_{n \in \mathbb{N}} \left[\st^{-1}(S) \backslash \st^{-1}(K_n) \right].
\end{align*}
Thus, for any $\omega \in E_F$, the following holds:
\begin{align}
     L\mu_{\omega, N}\left[\st^{-1}(F) \backslash \cup_{n \in \mathbb{N}}\st^{-1}(F \cap K_n)\right] &\leq \lim_{n \rightarrow \infty}  L\mu_{\omega, N}\left[\st^{-1}(S) \backslash \st^{-1}(K_n)) \right] \nonumber \\
     &= \lim_{n \rightarrow \infty} \left[ L\mu_{\omega, N}(\ns({^*}S)) - L\mu_{\omega, N}(\st^{-1}(K_n)) \right] \nonumber \\
     &= \lim_{n \rightarrow \infty} \left[ 1 - L\mu_{\omega, N}({^*}K_n) \right], \label{limit statement 4}
\end{align}
where the last line follows from Lemma \ref{compact set standard inverse} and the fact that $L\mu_{\omega, N}(\ns({^*}S)) = 1$ for all $\omega \in E_F \subseteq E_N$. Using \eqref{limit statement} and \eqref{limit statement 4}, we thus obtain the following:
\begin{align*}
     L\mu_{\omega, N}\left[\st^{-1}(F) \backslash \cup_{n \in \mathbb{N}}\st^{-1}(F \cap K_n)\right] &= 0.
\end{align*}
Since $\cup_{n \in \mathbb{N}}\st^{-1}(F \cap K_n) \subseteq \st^{-1}(F)$, we thus conclude that 
\begin{align}\label{limit statement 5}
    L\mu_{\omega, N}\left[\cup_{n \in \mathbb{N}}\st^{-1}(F \cap K_n) \right] = L\mu_{\omega, N}(\st^{-1}(F)). 
\end{align}
Using \eqref{limit statement 5} in \eqref{limit statement 3} completes the proof. 
\end{proof}

Having proved \eqref{closed set lemma} for closed sets, it is easy to generalize it for all Borel sets using the standard measure theory trick of showing that the collection of sets satisfying equation \eqref{closed set lemma} forms a sigma algebra. This is the next result. 

\begin{theorem}\label{standard part commutativity}
Let $S$ be a Hausdorff space and let $N \in {^*}\mathbb{N}$. Let $B$ be a Borel subset of $S$. Then we have the following:
\begin{align}\label{Borel set lemma}
    L\mu_{\omega, N}(\st^{-1}(B)) = L\mu_{\omega, N}({^*}B) \text{ for } L{^*}\mathbb{P} \text{-almost all } \omega \in {^*}\Omega.
\end{align}
\end{theorem}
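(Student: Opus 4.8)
The plan is to fix $N \in {^*}\mathbb{N}$ and run a Dynkin system ($\pi$-$\lambda$) argument on the collection
\[
\mathcal{D} \defeq \{B \in \mathcal{B}(S) : L\mu_{\omega, N}(\st^{-1}(B)) = L\mu_{\omega, N}({^*}B) \text{ for } L{^*}\mathbb{P}\text{-almost all } \omega \in {^*}\Omega\}.
\]
By Lemma \ref{measurability lemma}, for each $B \in \mathcal{B}(S)$ the set $\st^{-1}(B)$ is $L\mu_{\omega,N}$-measurable for all $\omega \in E_N$, and ${^*}B \in {^*}\mathcal{B}(S)$ is internally, hence Loeb, measurable; so the defining equality is meaningful on the almost sure set $E_N$. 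By Lemma \ref{closed subset and standard inverse}, every closed subset of $S$ lies in $\mathcal{D}$. Since finite intersections of closed sets are closed (so the closed sets form a $\pi$-system) and the closed sets generate $\mathcal{B}(S)$, it will suffice to show that $\mathcal{D}$ is a Dynkin system and then invoke Dynkin's $\pi$-$\lambda$ theorem to conclude $\mathcal{D} = \mathcal{B}(S)$.

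First I would verify $S \in \mathcal{D}$: here $\st^{-1}(S) = \ns({^*}S)$ has full $L\mu_{\omega,N}$-measure on $E_N$ by Lemma \ref{neastandard for almost all omega}, while $L\mu_{\omega,N}({^*}S) = 1$ trivially. Next, closure under proper differences: if $A \subseteq B$ with $A, B \in \mathcal{D}$, then applying Lemma \ref{Hausdorff lemma} to the disjoint decomposition $B = A \sqcup (B \setminus A)$ gives $\st^{-1}(B \setminus A) = \st^{-1}(B) \setminus \st^{-1}(A)$, while transfer gives ${^*}(B \setminus A) = {^*}B \setminus {^*}A$. Intersecting the two almost sure sets witnessing $A, B \in \mathcal{D}$ together with $E_N$, and subtracting the respective (finite) measures using $\st^{-1}(A) \subseteq \st^{-1}(B)$ and ${^*}A \subseteq {^*}B$, yields $B \setminus A \in \mathcal{D}$. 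All operations here are finite, so no leftover-mass issue arises.

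The crucial and only delicate step is closure under countable disjoint unions, and this is where the main obstacle lies: unlike $\st^{-1}$, the nonstandard extension does \emph{not} commute with countable unions, since in general $\sqcup_{n \in \mathbb{N}} {^*}B_n \subsetneq {^*}(\sqcup_{n \in \mathbb{N}} B_n)$. Given pairwise disjoint $(B_n)_{n \in \mathbb{N}}$ in $\mathcal{D}$ with $B = \sqcup_{n \in \mathbb{N}} B_n$, Lemma \ref{Hausdorff lemma} gives the clean identity $\st^{-1}(B) = \sqcup_{n \in \mathbb{N}} \st^{-1}(B_n)$, so countable additivity of $L\mu_{\omega,N}$ yields $L\mu_{\omega,N}(\st^{-1}(B)) = \sum_{n \in \mathbb{N}} L\mu_{\omega,N}(\st^{-1}(B_n))$. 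On the ${^*}$ side, Lemma \ref{leftover has zero mass} supplies an almost sure set on which $L\mu_{\omega,N}({^*}B) = \sum_{n \in \mathbb{N}} L\mu_{\omega,N}({^*}B_n)$, precisely absorbing the leftover portion ${^*}B \setminus \sqcup_{n \in \mathbb{N}} {^*}B_n$ into a null set. Intersecting the countably many almost sure sets witnessing each $B_n \in \mathcal{D}$ with the almost sure set from Lemma \ref{leftover has zero mass} and with $E_N$ (a countable intersection of full-measure sets, hence still of full measure) and matching the two series term by term gives $L\mu_{\omega,N}(\st^{-1}(B)) = L\mu_{\omega,N}({^*}B)$ almost surely, so $B \in \mathcal{D}$.

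With the three Dynkin properties established, Dynkin's $\pi$-$\lambda$ theorem forces $\mathcal{D} \supseteq \sigma(\{\text{closed sets}\}) = \mathcal{B}(S)$, which is exactly the claim. I expect the countable-union step to be the only real content: the finite steps are routine once one has the Hausdorff disjoint-union lemma and transfer, and the entire difficulty is isolated in the interplay between the failure of ${^*}(\cdot)$ to commute with countable unions and its repair via Lemma \ref{leftover has zero mass}.
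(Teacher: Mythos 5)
Your proposal is correct and follows essentially the same route as the paper's proof: a Dynkin system argument seeded by the closed sets (Lemma \ref{closed subset and standard inverse}), with Lemma \ref{Hausdorff lemma} handling the $\st^{-1}$ side and Lemma \ref{leftover has zero mass} repairing the failure of ${^*}(\cdot)$ to commute with countable disjoint unions. The only cosmetic differences are that you close $\mathcal{D}$ under proper differences where the paper uses complements (equivalent $\lambda$-system axioms), and you are in fact slightly more explicit than the paper in intersecting the countably many almost sure sets witnessing $B_n \in \mathcal{D}$ in the disjoint-union step.
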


\begin{proof}
Let $E_N$ be as in \eqref{almost sure set}. By Lemma \ref{standard part exists for almost all}, we know that $\st^{-1}(B)$ is $L\mu_{\omega, N}$-measurable for all $\omega \in E_N$ and $B \in \mathcal{B}(S)$. Consider the following collection:
\begin{align}
    \mathcal{G} \defeq \{B \in \mathcal{B}(S): &~~\exists E_B \in L({^*}\mathcal{F}) \nonumber \\
    &[\left(L{^*}\mathbb{P}(E_B) = 1\right) \land \left(\forall \omega \in E_B \cap E_N \left( L\mu_{\omega, N}(\st^{-1}(B)) = L\mu_{\omega, N}({^*}B)\right) \right)] \} \label{sigma algebra argument}.
\end{align}

By Lemma \ref{closed subset and standard inverse}, we know that $\mathcal{G}$ contains all closed sets. In order to show that $\mathcal{G}$ contains all Borel sets, by Dynkin's $\pi\text{-}\lambda$ theorem, it thus suffices to show that $\mathcal{G}$ is a Dynkin system. In other words, it suffices to show the following:
\begin{enumerate}[(i)]
    \item\label{D1} $S \in \mathcal{G}$.
    \item\label{D2} If $B \in \mathcal{G}$, then $S \backslash B \in \mathcal{G}$ as well. 
    \item\label{D3} If $(B_n)_{n \in \mathbb{N}}$ is a sequence of mutually disjoint elements of $\mathcal{G}$, then $\cup_{n \in \mathbb{N}} B_n \in \mathcal{G}$.
\end{enumerate}

\ref{D1} is immediate from Lemma \ref{closed subset and standard inverse}, with $E_S \defeq E_N$. To see \ref{D2}, take $B \in \mathcal{G}$ and let $E_B$ be as \eqref{sigma algebra argument}. Note that for any $\omega \in E_B \cap E_N$, we have:
\begin{align*}
    L\mu_{\omega, N}\left({^*}(S \backslash B) \right) &=   L\mu_{\omega, N} \left({^*}S \backslash {^*}B\right)\\
    &=   L\mu_{\omega, N}({^*}S) -   L\mu_{\omega, N}({^*}B)\\
    &= L\mu_{\omega, N}(\st^{-1}(S)) -   L\mu_{\omega, N}(\st^{-1}(B)) \\
    &= L\mu_{\omega, N} \left(\st^{-1}(S) \backslash \st^{-1}(B)\right) \\
    &= L\mu_{\omega, N} \left(\st^{-1}(S \backslash B)\right). 
\end{align*}

In the above argument, the third line used the fact that $S$ and $B$ are in $\mathcal{G}$, the fourth line used the fact that $\st^{-1}(B) \subseteq \st^{-1}(S)$, and the fifth line used the fact that $\st^{-1}(S) \backslash \st^{-1}(B) = \st^{-1}(S \backslash B)$  (which can be seen to follow from Lemma \ref{Hausdorff lemma} since $S$ is Hausdorff). 

We now prove \ref{D3}. Let $(B_n)_{n \in \mathbb{N}}$ be a sequence of mutually disjoint members of $\mathcal{G}$ and let $B \defeq \sqcup_{n \in \mathbb{N}}B_n$. By Lemma \ref{Hausdorff lemma} and the fact that $B_n \in \mathcal{G}$ for all $n \in \mathbb{N}$, we have the following for all $\omega \in {^*}\Omega$:
\begin{align}
    L\mu_{\omega, N} \left(\st^{-1}\left( B \right) \right) &=  L\mu_{\omega, N} \left(\st^{-1}\left( \sqcup_{n \in \mathbb{N}}B_n \right) \right) \nonumber \\
    &= L\mu_{\omega, N} \left( \sqcup_{n \in \mathbb{N}} \st^{-1}(B_n) \right) \nonumber \\
    &= \sum_{n \in \mathbb{N}} L\mu_{\omega, N} \left( \st^{-1}(B_n) \right) \nonumber \\
    &= \sum_{n \in \mathbb{N}} L\mu_{\omega, N} \left({^*}B_n \right). \label{star of countable union}
\end{align}

Let $E_{(B_n)_{n \in \mathbb{N}}}$ be as in Lemma \ref{leftover has zero mass} and define $E_B \defeq E_{(B_n)_{n \in \mathbb{N}}}$. Using \eqref{star of countable union} and \eqref{leftover mass equation}, we thus obtain the following:
\begin{align*}
    L\mu_{\omega, N} \left(\st^{-1}\left( B \right) \right) = L\mu_{\omega, N} \left[{^*} \left(\sqcup_{n \in \mathbb{N}} B_n \right)\right] = L\mu_{\omega, N}({^*}B) \text{ for any } \omega \in E_B \cap E_N,
\end{align*}
completing the proof.
\end{proof}

Recall that, by Lemma \ref{standard part exists for almost all}, if $S$ is Hausdorff then $\mu_{\omega, N} \in \ns({^*}\prs)$, with $\st(\mu_{\omega, N}) =  L\mu_{\omega, N} \circ \st^{-1}$  for all $\omega \in E_N$. Thus Theorem \ref{standard part commutativity} shows the following:

\begin{theorem}\label{commutativity theorem}
Let $S$ be a Hausdorff space. For any Borel set $B \in \mathcal{B}(S)$, we have \begin{align}\label{commutativity}
    \st(\mu_{\omega, N}({^*}B)) = (\st(\mu_{\omega, N}))(B) \text{ for almost all } \omega \in {^*}\Omega.
\end{align}
\end{theorem}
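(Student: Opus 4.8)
The plan is to recognize that the asserted identity is merely a restatement of Theorem \ref{standard part commutativity} once both sides are translated into the language of Loeb measures, so almost all of the real work has already been done. First I would fix a Borel set $B \in \mathcal{B}(S)$ and restrict attention to the $L{^*}\mathbb{P}$-almost sure set $E_N$ from \eqref{almost sure set}. For the left-hand side, since ${^*}B \in {^*}\mathcal{B}(S)$ is internally measurable, the very definition of the Loeb measure (see \eqref{inner and outer}--\eqref{loeb measure}) gives $\st(\mu_{\omega, N}({^*}B)) = L\mu_{\omega, N}({^*}B)$ for every $\omega \in {^*}\Omega$; this is just the statement that the standard part of the internal measure of an internally measurable set is its Loeb measure.

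Next I would rewrite the right-hand side using Lemma \ref{standard part exists for almost all}, which asserts that for all $\omega \in E_N$ one has $\mu_{\omega, N} \in \ns({^*}\prs)$ with $\st(\mu_{\omega, N}) = L\mu_{\omega, N} \circ \st^{-1}$. Evaluating this Radon measure at $B$ yields $(\st(\mu_{\omega, N}))(B) = L\mu_{\omega, N}(\st^{-1}(B))$ for every $\omega \in E_N$, the set $\st^{-1}(B)$ being $L\mu_{\omega, N}$-measurable there by Lemma \ref{measurability lemma}, so that the expression is well defined. Combining the two translations, the claimed equality \eqref{commutativity} becomes, for $\omega \in E_N$, the identity $L\mu_{\omega, N}({^*}B) = L\mu_{\omega, N}(\st^{-1}(B))$.

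Finally I would invoke Theorem \ref{standard part commutativity} directly, which supplies an $L{^*}\mathbb{P}$-almost sure set on which $L\mu_{\omega, N}(\st^{-1}(B)) = L\mu_{\omega, N}({^*}B)$. Intersecting this set with $E_N$, which also has full $L{^*}\mathbb{P}$-measure, produces a full-measure set of $\omega$ on which both translations are valid and the two sides agree, completing the argument. The only genuine mathematical content lives inside Theorem \ref{standard part commutativity}, whose proof ascends from compact sets (Lemma \ref{compact set standard inverse}, using outer regularity on compacta) to closed sets (Lemma \ref{closed subset and standard inverse}) and then to all Borel sets by a Dynkin-system argument; consequently the main obstacle is not in the present deduction at all, but merely in correctly matching the two bookkeeping identities to that already-established result.
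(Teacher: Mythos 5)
Your proposal is correct and is essentially identical to the paper's own derivation: the paper obtains Theorem \ref{commutativity theorem} precisely by combining Lemma \ref{standard part exists for almost all} (giving $\st(\mu_{\omega, N}) = L\mu_{\omega, N} \circ \st^{-1}$ on $E_N$) with Theorem \ref{standard part commutativity}, using the definitional fact that $\st(\mu_{\omega, N}({^*}B)) = L\mu_{\omega, N}({^*}B)$ for internally measurable sets. Your bookkeeping, including the intersection of the two full-measure sets, matches the paper's reasoning exactly.
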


We point out an interesting interpretation of Theorem \ref{commutativity theorem}. For each Borel set $B \in \mathcal{B}(S)$, the Loeb measure $L\mu_{\omega, N}({^*}B)$ can almost surely be computed by either of the following two-step procedures:
\begin{enumerate}[(i)]
    \item First find $\mu_{\omega, N}({^*}B) \in {^*}[0,1]$ and then take the standard part of this finite nonstandard real number, which is the direct way. 
    \item First take the standard part of the internal measure $\mu_{\omega, N} \in {^*}\prs$, and then compute the measure $(\st(\mu_{\omega, N}))(B)$ of $B$ with respect to this standard part. 
\end{enumerate}

Since the intersection of countably many almost sure sets is almost sure, we have thus shown the almost sure commutativity of the following diagram for any countable subset $\mathcal{C} \subseteq \mathcal{B}(S)$:

\begin{center}
\begin{tikzcd}[row sep=0.7in]
& {}^*[0,1] \arrow{dr}{\st} & \\
\mathcal{C} \arrow{ur}{B \mapsto \mu_{\omega, N}({^*}B)} \arrow{rr}{\st(\mu_{\omega, N})} & & {[0,1]}
\end{tikzcd}
\end{center}

It is also interesting to remark that equation \eqref{Borel set lemma} in the conclusion of Theorem \ref{standard part commutativity} is related to the notion of the so-called \textit{standardly distributed} internal measures, first defined in Anderson \cite[Definition 8.1, p. 683]{Anderson-transactions} as a concept motivated by an application to mathematical economics \'a la Anderson \cite{Anderson--Econometrica}. 
\begin{definition}\label{standardly distributed definition}
An internal probability measure $\nu$ on $({^*}S, {^*}\mathcal{B}(S))$ is said to be \textit{standardly distributed} if the following holds:
\begin{align}\label{standardly distributed equation}
    L\nu({^*}B) = L\nu(\st^{-1}(B)) \text{ for all } B \in \mathcal{B}(S).
\end{align}
\end{definition}

Theorem \ref{standard part commutativity} shows that given a particular $B \in \mathcal{B}(S)$ and $N \in {^*}N$, equation \eqref{standardly distributed equation} holds for $\nu$ of the type $\mu_{\omega, N}$ for $L{^*}\mathbb{P}$-almost all $\omega$. Using a more quantitative approach, Anderson \cite[Theorem 8.7(i), p. 685]{Anderson-transactions} shows a stronger version of this result, though with the added hypothesis that the $(X_n)_{n \in \mathbb{N}}$ are independent.

\subsection{Pushing down certain Loeb integrals on the space of all Radon probability measures}\label{3.4}
We finish this appendix by relating certain nonstandard integrals over the space $({^*}\prs, {L_{P_N}}({^*}{\sigmabprs}), P_N)$ to those over the space $(\prs, \mathcal{B}(\prs), LP_N \circ \st^{-1})$.  
\begin{theorem}\label{theorem 1}
Suppose $S$ is a Hausdorff space. Let $N \in {^*}\mathbb{N}$ and let $P_N$ be as in \eqref{definition of P}. Let $({^*}\prs, {L_{P_N}}({^*}\mathcal{B}(\prs)), LP_N)$ be the associated Loeb space. Then for any Borel subset ${B}$ of $S$, we have:
\begin{align}\label{closed subsets of ps}
\starint_{{^*}\prs} \mu({^*}B) dP_N(\mu) \approx \int_{\prs} \mu(B) d\mathscr{P}_N(\mu),    
\end{align}
where $\mathscr{P}_N = LP_N \circ \st^{-1} \in \prs$.
\end{theorem}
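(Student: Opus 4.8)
The plan is to transport the whole identity to the Loeb space $({^*}\prs, L_{P_N}({^*}\mathcal{B}(\prs)), LP_N)$ and then push the resulting Loeb integral down to $(\prs, \mathcal{B}(\prs), \mathscr{P}_N)$ along the standard part map. (Throughout I read $B$ as a Borel subset of $S$, as is forced by the evaluations $\mu({^*}B)$ for $\mu \in {^*}\prs$ and $\mu(B)$ for $\mu \in \prs$; likewise $\mathscr{P}_N = LP_N \circ \st^{-1}$ is a Radon measure \emph{on} $\prs$, which is legitimate by Theorem \ref{second order nearstandard}.) First I would note that by the transfer of Theorem \ref{Radon evaluation of Borel sets'}, the map $\mu \mapsto \mu({^*}B)$ is an internal $[0,1]$-valued (hence finitely bounded) function on ${^*}\prs$; being finitely bounded and internally measurable it is $S$-integrable with respect to $P_N$, so that
\begin{align*}
    \starint_{{^*}\prs} \mu({^*}B)\, dP_N(\mu) \approx \int_{{^*}\prs} \st\!\left(\mu({^*}B)\right) dLP_N(\mu).
\end{align*}
The remaining task is to identify the right-hand Loeb integral with $\int_{\prs} \mu(B)\, d\mathscr{P}_N(\mu)$.

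The crux is to replace the integrand $\mu \mapsto \st(\mu({^*}B))$ by $\mu \mapsto e_B(\st(\mu)) = (\st(\mu))(B)$ up to an $LP_N$-null set. Both functions are $LP_N$-measurable: the first as a standard part of an internal function, and the second because $e_B$ is $\mathcal{B}(\prs)$-measurable by Theorem \ref{Radon evaluation of Borel sets'} while $\st$ is $L_{P_N}({^*}\mathcal{B}(\prs))/\mathcal{B}(\prs)$-measurable by Corollary \ref{measurability corollary} (recall $LP_N(\ns({^*}\prs)) = 1$ from Theorem \ref{P has everything nearstandard}). Hence the set
\begin{align*}
    \mathfrak{A} \defeq \left\{\mu \in {^*}\prs : \st(\mu({^*}B)) \neq (\st(\mu))(B)\right\}
\end{align*}
is $LP_N$-measurable, and by Corollary \ref{corollary} we have $LP_N(\mathfrak{A}) = L{^*}\mathbb{P}({\mu_{\cdot, N}}^{-1}(\mathfrak{A}))$. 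But ${\mu_{\cdot, N}}^{-1}(\mathfrak{A})$ is precisely the set of $\omega$ for which $\st(\mu_{\omega, N}({^*}B)) \neq (\st(\mu_{\omega, N}))(B)$, which is $L{^*}\mathbb{P}$-null by Theorem \ref{commutativity theorem}. Thus $LP_N(\mathfrak{A}) = 0$ and the two integrands agree $LP_N$-almost everywhere. I expect this transfer of the almost-sure commutativity from the $\omega$-picture to the $\mu$-picture---marrying Theorem \ref{commutativity theorem} with the measure correspondence of Corollary \ref{corollary}---to be the main obstacle, since it is the only place where the probabilistic content (rather than bookkeeping) enters.

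Finally, with the integrands agreeing $LP_N$-a.e., I would apply the change-of-variables formula for the pushforward $\mathscr{P}_N = LP_N \circ \st^{-1}$ (valid precisely because $\st$ is measurable in the above sense and $\ns({^*}\prs)$ is $LP_N$-conull): for the bounded Borel function $e_B$ on $\prs$,
\begin{align*}
    \int_{{^*}\prs} e_B(\st(\mu))\, dLP_N(\mu) = \int_{\prs} e_B(\nu)\, d(LP_N \circ \st^{-1})(\nu) = \int_{\prs} \nu(B)\, d\mathscr{P}_N(\nu).
\end{align*}
Chaining the three displays then yields $\starint_{{^*}\prs} \mu({^*}B)\, dP_N(\mu) \approx \int_{\prs} \mu(B)\, d\mathscr{P}_N(\mu)$, as desired. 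The only routine points left are the standard $S$-integrability lifting in the first display and the elementary pushforward identity in the last, both of which are immediate once the measurability facts are in place.
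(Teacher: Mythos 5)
Your proposal is correct, and while it runs on the same fuel as the paper's proof---$S$-integrability of the bounded internal integrand, the almost-sure commutativity of Theorem \ref{commutativity theorem}, and the measure correspondence of Corollary \ref{corollary}---it assembles these pieces along a genuinely different route. The paper transports the entire Loeb integral from ${^*}\prs$ to ${^*}\Omega$ via Lemma \ref{bounded function transfer result}, applies Theorem \ref{commutativity theorem} to the integrand there, and then returns to $\prs$ by a layer-cake computation: it rewrites the $\Omega$-integral as a Lebesgue integral of tail probabilities over $[0,1]$ and applies Corollary \ref{corollary} one level set at a time, in effect verifying the pushforward identity by hand. You instead keep the integral on ${^*}\prs$ throughout and transport only the exceptional \emph{set}: your $\mathfrak{A}$ pulls back under $\mu_{\cdot, N}$ to precisely the null set of Theorem \ref{commutativity theorem}, so Corollary \ref{corollary} forces $LP_N(\mathfrak{A}) = 0$, the integrands $\st(\mu({^*}B))$ and $(\st(\mu))(B)$ agree $LP_N$-almost everywhere, and a single application of the abstract image-measure (change-of-variables) theorem for $\st$ finishes. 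What your route buys is modularity: you never need the integral form of the correspondence (Lemma \ref{bounded function transfer result}), only its set form, and your intermediate a.e.\ identity on ${^*}\prs$ is of independent interest---it is the natural reformulation, in the $\mu$-picture, of the paper's own remark following Theorem \ref{commutativity theorem} that the ``standardly distributed'' identity \eqref{standardly distributed equation} holds at a fixed $B$ for almost every hyperfinite empirical measure. What the paper's route buys is self-containment: the tail-probability manipulation establishes the change of variables using only tools already proved in the paper, with no appeal to a general pushforward formula. Two minor points, both of which you in effect handled: $(\st(\mu))(B)$ only makes sense on $\ns({^*}\prs)$, so $\mathfrak{A}$ should be read inside that conull set, with completeness of the Loeb measure covering the difference; and your reading of the statement is the intended one---$B$ is a Borel subset of $S$, not of $\prs$, and $\mathscr{P}_N$ is a Radon measure \emph{on} $\prs$ rather than an element of $\prs$---exactly as the paper's own proof confirms by opening with ``Fix $B \in \mathcal{B}(S)$.''
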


\begin{proof}
Fix $B \in \mathcal{B}(S)$. By Corollary \ref{transfer corollary 1} and \eqref{definition of P}, the function $\mu \mapsto \mu({^*}B)$ is internally measurable on ${^*}\prs$. Since it is finitely bounded (by one), it is $\mathbf{S}$-integrable. Using this and Lemma \ref{bounded function transfer result}, we thus obtain the following:
\begin{align*}
     {^*}\mathbb{E}_{P_N}(\mu({^*}B)) &\approx \int_{{^*}\prs} \st(\mu({^*}B)) dLP_N(\mu) \\
     &= \int_{{^*}\Omega} \st(\mu_{\omega, N}({^*}B)) dL{^*}\mathbb{P}(\omega) \\
     &=  \int_{{^*}\Omega} (\st(\mu_{\omega, N}))(B) dL{^*}\mathbb{P}(\omega),    
\end{align*}
where we used Theorem \ref{commutativity theorem} in the last line. Writing the last integral as a Lebesgue integral of tail probabilities, we make the following conclusion:
\begin{align*}
      {^*}\mathbb{E}_{P_N}(\mu({^*}B)) &\approx \int_{[0,1]} L{^*}\mathbb{P}((\st(\mu_{\omega, N}))(B) > y) 
      d\lambda(y) \\
      &= \int_{[0,1]} L{^*}\mathbb{P} \left[ {\mu_{\cdot, N}}^{-1} \left(\st^{-1}\left(\{\nu \in \prs: \nu(B) > y\} \right) \right) \right] d\lambda(y) \\
      &= \int_{[0,1]} LP_N \left(\st^{-1}\left(\{\nu \in \prs: \nu(B) > y\} \right) \right) d\lambda(y),
\end{align*}
where the last line follows from Corollaries \ref{corollary} and \ref{measurability corollary}. (This also uses the fact that the set $\{\nu \in \prs: \nu(B) > y\}$ is Borel measurable, in view of Lemma \ref{evaluation of Borel sets'}.)

Defining $\mathscr{P}_N \defeq LP_N \circ \st^{-1}$ and noting that $\mathscr{P}_N$ is a Radon probability measure on $\prs$ (by Theorem \ref{second order nearstandard}), we obtain the following:
\begin{align*}
     {^*}\mathbb{E}_{P_N}(\mu({^*}B)) &\approx \int_{[0,1]} \mathscr{P}_N \left(\{\nu \in \prs: \nu(B) > y\} \right) d\lambda(y) \\
     &= \int_{\ps} \mu(B) d\mathscr{P}_N(\mu),
\end{align*}
thus completing the proof.
\end{proof}

Note that the same proof idea can be used to prove the version of \eqref{closed subsets of ps} for multiple Borel sets. Indeed, we have the following theorem.

\begin{theorem}\label{theorem 2}
Suppose $S$ is a Hausdorff space. Let $N \in {^*}\mathbb{N}$ and let $P_N$ be as in \eqref{definition of P}. Let $({^*}\prs, {L_{P_N}}({^*}{\sigmabprs}), LP_N)$ be the associated Loeb space. Then for finitely many Borel subsets $B_1, \ldots, B_k$ of $S$, we have:
\begin{align}\label{many closed subsets of ps}
\starint_{{^*}\prs} \mu({^*}B_1)\cdots\mu({^*}B_k) dP_N(\mu) \approx \int_{\prs} \mu(B_1) \cdots \mu(B_k) d\mathscr{P}_N(\mu),    
\end{align}
where $\mathscr{P}_N = LP_N \circ \st^{-1}$.
\end{theorem}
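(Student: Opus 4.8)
The plan is to follow the proof of Theorem \ref{theorem 1} essentially verbatim, replacing the single evaluation map $\mu \mapsto \mu({^*}B)$ by the product $\mu \mapsto \mu({^*}B_1)\cdots\mu({^*}B_k)$, and at the final step replacing the evaluation function $e_B$ on $\prs$ by the product $g \defeq \prod_{i=1}^k e_{B_i}$. First I would observe that the internal function $\mu \mapsto \mu({^*}B_1)\cdots\mu({^*}B_k)$ on ${^*}\prs$ is internally Borel measurable, being a product of finitely many internally Borel measurable functions (by Corollary \ref{transfer corollary 1}), and takes values in ${^*}[0,1]$; hence it is finitely bounded and therefore $S$-integrable. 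Since the standard part map is multiplicative on finite nonstandard reals, we have $\st(\mu({^*}B_1)\cdots\mu({^*}B_k)) = \prod_{i=1}^k \st(\mu({^*}B_i))$ pointwise, so $S$-integrability yields
\begin{align*}
\starint_{{^*}\prs} \mu({^*}B_1)\cdots\mu({^*}B_k)\, dP_N(\mu) \approx \int_{{^*}\prs} \prod_{i=1}^k \st(\mu({^*}B_i))\, dLP_N(\mu).
\end{align*}

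Next I would apply Lemma \ref{bounded function transfer result} to the bounded $LP_N$-measurable function $\mu \mapsto \prod_{i=1}^k \st(\mu({^*}B_i))$ in order to transfer the integral to $({^*}\Omega, L({^*}\mathcal{F}), L{^*}\mathbb{P})$, obtaining $\int_{{^*}\Omega} \prod_{i=1}^k \st(\mu_{\omega, N}({^*}B_i))\, dL{^*}\mathbb{P}(\omega)$. Theorem \ref{commutativity theorem} gives, for each fixed $i$, that $\st(\mu_{\omega, N}({^*}B_i)) = (\st(\mu_{\omega, N}))(B_i)$ for $L{^*}\mathbb{P}$-almost all $\omega$; since there are only finitely many indices, the intersection of these $k$ almost sure sets is again almost sure, and on it the two products agree. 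Thus the integral equals $\int_{{^*}\Omega} \prod_{i=1}^k (\st(\mu_{\omega, N}))(B_i)\, dL{^*}\mathbb{P}(\omega)$.

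Finally I would recognize $\prod_{i=1}^k (\st(\mu_{\omega, N}))(B_i) = g(\st(\mu_{\omega, N}))$, where $g \defeq \prod_{i=1}^k e_{B_i}$ is a $[0,1]$-valued bounded $\mathcal{B}(\prs)$-measurable function on $\prs$ (each $e_{B_i}$ being measurable by Theorem \ref{Radon evaluation of Borel sets'}, and a finite product of measurable functions being measurable). Writing this integral as a Lebesgue integral of tail probabilities exactly as in the proof of Theorem \ref{theorem 1}, and using Corollary \ref{corollary} together with Corollary \ref{measurability corollary} to rewrite $L{^*}\mathbb{P}(\{\omega: g(\st(\mu_{\omega, N})) > y\})$ as $LP_N(\st^{-1}(\{\nu \in \prs: g(\nu) > y\})) = \mathscr{P}_N(\{\nu \in \prs: g(\nu) > y\})$, I would arrive at $\int_{\prs} g(\mu)\, d\mathscr{P}_N(\mu) = \int_{\prs} \mu(B_1)\cdots\mu(B_k)\, d\mathscr{P}_N(\mu)$, completing the chain of approximate equalities.

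The only genuinely new points relative to the $k=1$ case are bookkeeping: the multiplicativity of the standard part on the finite product, the joint use of finitely many almost sure sets from Theorem \ref{commutativity theorem}, and the observation that the product of evaluation maps is itself a bounded Borel measurable function on $\prs$, so that the tail-probability argument of Theorem \ref{theorem 1} applies verbatim with $g$ in place of a single $e_B$. I do not expect any essential obstacle here; the substantive work (nearstandardness of $P_N$, Radonness of $\mathscr{P}_N$ in Theorem \ref{second order nearstandard}, and the almost sure commutativity of standard parts in Theorem \ref{commutativity theorem}) has already been carried out.
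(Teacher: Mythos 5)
Your proposal is correct and follows essentially the same route as the paper: the paper's own proof of Theorem \ref{theorem 2} simply says the argument of Theorem \ref{theorem 1} carries over verbatim once one knows the set $\{\nu \in \prs : \nu(B_1)\cdots\nu(B_k) > y\}$ is Borel measurable, which (as you note) follows from measurability of each evaluation map $e_{B_i}$ and closure of measurable functions under finite products. The bookkeeping points you flag---multiplicativity of the standard part, intersecting the finitely many almost sure sets from Theorem \ref{commutativity theorem}---are exactly the implicit details the paper elides, so your write-up is a faithful (slightly more explicit) version of the intended proof.
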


The proof goes exactly the same way as that of Theorem \ref{theorem 1}, once we know that the set $\{\nu \in \prs: \nu(B_
1) \cdots \nu(B_k) > y\}$ is Borel measurable in $\prs$ for all $y \in [0,1]$. But this follows from the fact that a product of measurable functions is measurable (and that for each $i \in \mathbb[k]$, the function $\nu \mapsto \nu(B_i)$ is measurable by Lemma \ref{evaluation of Borel sets'}). 

Combining with Lemma \ref{bounded function transfer result}, we can interject a ${^*}\mathbb{P}$-integral in the approximate equation \eqref{many closed subsets of ps}, which, after passing down to Loeb measures, yields the following corollary which is a crucial tool in our generalization of the de Finetti--Hewitt--Savage theorem in the main body of the paper. 

\begin{corollary}\label{most useful corollary}
Suppose $S$ is a Hausdorff space. Let $N \in {^*}\mathbb{N}$ and let $P_N$ be as in \eqref{definition of P}. Let $({^*}\ps, {L_{P_N}}({^*}{\sigmabprs}), LP_N)$ be the associated Loeb space. Let $\mathscr{P}_N = LP_N \circ \st^{-1}$, which is a Radon measure on $\prs$. Then for finitely many Borel subsets $B_1, \ldots, B_k$ of $S$, we have:
\begin{align}\label{closed subsets of ps 2}
\int_{{^*}\Omega} L\mu_{\omega, N}({^*}B_1)\cdots L\mu_{\omega, N}({^*}B_k) dL{^*}\mathbb{P}(\omega) = \int_{\prs} \mu(B_1) \cdots \mu(B_k) d\mathscr{P}_N(\mu).  
\end{align}
\end{corollary}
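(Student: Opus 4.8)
The plan is to chain the identity just established in Theorem \ref{theorem 2} with a change-of-variables argument, exploiting the fact that the right-hand side of \eqref{closed subsets of ps 2} is literally the right-hand side of \eqref{many closed subsets of ps}. Thus it suffices to prove
\[
\starint_{{^*}\Omega} \mu_{\omega, N}({^*}B_1)\cdots\mu_{\omega, N}({^*}B_k)\, d{^*}\mathbb{P}(\omega) \approx \starint_{{^*}\prs} \mu({^*}B_1)\cdots\mu({^*}B_k)\, dP_N(\mu),
\]
after which Theorem \ref{theorem 2} finishes the argument immediately.

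First I would record the measurability and boundedness facts needed. By the transfer of Theorem \ref{Radon evaluation of Borel sets'}, the map $\mu \mapsto \mu({^*}B_i)$ is internally Borel measurable on ${^*}\prs$ for each $i$, so the product $g(\mu) \defeq \mu({^*}B_1)\cdots\mu({^*}B_k)$ is internally Borel measurable there; likewise, by Corollary \ref{transfer corollary 1}, the map $\omega \mapsto \mu_{\omega, N}({^*}B_i)$ is internally Borel measurable on ${^*}\Omega$, so $\omega \mapsto g(\mu_{\omega, N})$ is as well. Both functions take values in ${^*}[0,1]$, hence are finitely bounded and therefore $S$-integrable.

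The cleanest route to the displayed near-equality is to note that, by its very definition \eqref{definition of P}, $P_N = {^*}\mathbb{P} \circ {\mu_{\cdot, N}}^{-1}$ is the pushforward of ${^*}\mathbb{P}$ along $\mu_{\cdot, N}$. Transferring the standard change-of-variables formula for pushforward measures to the internal setting gives the exact internal identity
\[
\starint_{{^*}\prs} g(\mu)\, dP_N(\mu) = \starint_{{^*}\Omega} g(\mu_{\omega, N})\, d{^*}\mathbb{P}(\omega),
\]
which is precisely the required relation (with equality, even before taking standard parts). Alternatively, following the hint implicit in the statement, one may pass through Loeb integrals: by $S$-integrability each internal integral is infinitesimally close to the Loeb integral of the standard part of its integrand, and Lemma \ref{bounded function transfer result} applied to the bounded $LP_N$-measurable function $\mu \mapsto \st(g(\mu))$ identifies $\int_{{^*}\prs} \st(g(\mu))\, dLP_N(\mu)$ with $\int_{{^*}\Omega} \st(g(\mu_{\omega, N}))\, dL{^*}\mathbb{P}(\omega)$; since $g(\mu_{\omega, N})$ is exactly the integrand over ${^*}\Omega$, the two internal integrals agree up to an infinitesimal.

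There is no serious obstacle here: the content is bookkeeping of internal measurability and $S$-integrability, together with the observation that $g$ evaluated at $\mu = \mu_{\omega, N}$ reproduces the integrand appearing on ${^*}\Omega$. The only point requiring a moment's care is verifying that $g$ (and its standard part) is internally measurable (respectively $LP_N$-measurable), so that the transfer of the change-of-variables formula, or equivalently Lemma \ref{bounded function transfer result}, legitimately applies; this is handled by the transfer of Theorem \ref{Radon evaluation of Borel sets'} and the closure of measurable functions under finite products. Combining the displayed identity with Theorem \ref{theorem 2} then yields \eqref{closed subsets of ps 2}.
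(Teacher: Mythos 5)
Your proposal is correct, and it actually contains two valid arguments, one of which is the paper's. Your ``alternative'' route---passing through Loeb integrals by $S$-integrability of the finitely bounded internal integrands and applying Lemma \ref{bounded function transfer result} to the bounded $LP_N$-measurable function $\mu \mapsto \st\left(\mu({^*}B_1)\cdots\mu({^*}B_k)\right)$---is precisely how the paper obtains the corollary: it is stated there, without a separate proof environment, as the result of ``interjecting a ${^*}\mathbb{P}$-integral'' into \eqref{many closed subsets of ps} by combining Theorem \ref{theorem 2} with Lemma \ref{bounded function transfer result}. Your primary route is genuinely different and in fact slightly stronger: since $P_N$ is by definition \eqref{definition of P} the internal pushforward ${^*}\mathbb{P} \circ {\mu_{\cdot, N}}^{-1}$, transferring the standard change-of-variables formula (quantified over all probability spaces, measurable maps, and bounded measurable functions, then instantiated at the internal objects $({^*}\Omega, {^*}\mathcal{F}, {^*}\mathbb{P})$, $\mu_{\cdot, N}$, and $g(\mu) = \mu({^*}B_1)\cdots\mu({^*}B_k)$) gives that the two internal integrals are \emph{exactly} equal, not merely infinitesimally close, so \eqref{closed subsets of ps 2} follows from \eqref{many closed subsets of ps} with no further approximation. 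What each approach buys: your transfer argument avoids any appeal to Loeb measurability of $\st(g)$ or to $S$-integrability at this step and yields an exact internal identity; the paper's route stays entirely within the Loeb-measure dictionary it has already built (Lemma \ref{bounded function transfer result} and Corollary \ref{corollary}), which is why it can be dispatched in one sentence there. Your measurability bookkeeping---internal measurability of $\mu \mapsto \mu({^*}B_i)$ via transfer of Theorem \ref{Radon evaluation of Borel sets'} (or via Corollary \ref{transfer corollary 1}, as the paper does), closure under finite products, and boundedness by one---is sound and suffices for both routes.
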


\section{An elementary combinatorial proof of Theorem \ref{internal version}}\label{AppC}
Let us recall the set-up of Theorem \ref{internal version}. We are given an internal probability space $(\Gamma, \mathcal{A}, \mathbf{P})$ and an internal measurable space $(\mathbb{S}, \mathfrak{S})$. For some fixed $N > \mathbb{N}$, we are given a hyperfinite collection $\{X_i : i \in [N]\}$ of exchangeable $\mathbb{S}$-valued internal random variables defined on $\Gamma$. 

For each $\gamma \in \Gamma$, we have defined the $N^{\text{th}}$ empirical mean at $\gamma$ to be the internal function $\mu_{\gamma, N} \colon \mathfrak{S} \to {^*}[0,1]$ satisfying the following formula:
   \begin{align}\label{AppgeneralEmpiricalDefinition}
       \mu_{\gamma, N}(\mathscr{S}) \defeq \frac{\# \{i \in [N]: X_i(\gamma) \in \mathscr{S} \}}{N} \text{ for all } \mathscr{S} \in \mathfrak{S}. 
   \end{align}
   
 It is clear that $\mu_{\gamma, N}$ is an internal probability on $(\mathbb{S}, \mathfrak{S})$ for all $\gamma \in \Gamma$. Rewriting \eqref{AppgeneralEmpiricalDefinition} to say $\mu_{\gamma, N}(\mathscr{S}) = \frac{1}{N}\sum_{i \in [N]} \mathbbm{1}_{\mathscr{S}}(X_i(\gamma))$ shows that for each $\mathscr{S}$, the map $\gamma \mapsto \mu_{\gamma, N}(\mathscr{S})$ is a ${^*}\mathbb{R}$-valued internal random variable defined on $\Gamma$.

It only remains to show \eqref{generalLoebDeFinetti} now. To that end, let $(\mathscr{S}_1, \ldots, \mathscr{S}_k) \in {^*}\mathfrak{S}^k$ be a finite sequence of internal events. Consider the following equation obtained by rewriting the internal product of internal sums on the left as an internal sum of internal products by (the transfer of) distributivity: 
\begin{align}\label{internal product}
    \prod_{i \in [k]} \left(\sum_{j \in [N]} \mathbbm{1}_{\mathscr{S}_i}(X_j)\right) = \sum_{\substack{(\ell_1, \ldots, \ell_k) \in [N]^k}} \left(\prod_{i \in [k]} \mathbbm{1}_{\mathscr{S}_{i}}(X_{\ell_i}) \right).
\end{align}

We separate the terms in the sum on the right of \eqref{internal product} according to whether there is any repetition in $(\ell_1, \ldots, \ell_k)$ or not. Let $$\mathcal{R} \defeq \{(\ell_1, \ldots, \ell_k) \in [N]^k : \ell_\alpha = \ell_\beta \text{ for some } \alpha \neq \beta\}.$$ An exact value of $\#(\mathcal{R})$ can be found using the (internal) inclusion-exclusion principle. However, the following immediate combinatorial estimate will be sufficient for our needs (for each of the $N$ numbers in $[N]$, there are at most $\binom{k}{2}N^{k-2}$ elements of $[N]^k$ in which that number is repeated at least twice):
\begin{align}\label{Combinatorial estimate}
    \#(\mathcal{R}) \leq N\binom{k}{2}N^{k-2} = \binom{k}{2}N^{k-1}.
\end{align}

Dividing both sides of \eqref{internal product} by $N^k$ and noting that $\frac{1}{N}\sum_{j \in [N]} \mathbbm{1}_{\mathscr{S}_i}(X_j)$ is the same as $\mu_{\cdot, N}(\mathscr{S}_i)$ for each $i \in [k]$, we obtain the following:
\begin{align}
\prod_{i \in [k]} \mu_{\cdot, N}(\mathscr{S}_i) = \frac{1}{N^k}\sum_{(\ell_1, \ldots, \ell_k) \in \mathcal{R}} \left(\prod_{i \in [k]} \mathbbm{1}_{\mathscr{S}_{i}}(X_{\ell_i}) \right) + \frac{1}{N^k}\sum_{(\ell_1, \ldots, \ell_k) \in [N]^k \backslash \mathcal{R}} \left(\prod_{i \in [k]} \mathbbm{1}_{\mathscr{S}_{i}}(X_{\ell_i}) \right) \nonumber.
\end{align}

Taking internal expectations with respect to the internal probability $\mbp$, and denoting the corresponding internal expectation operator by `$\mathbf{E}$', we thus obtain:
\begin{align}\nonumber
   0 \leq &\mathbf{E} \left(\prod_{i \in [k]} \mu_{\cdot, N}(\mathscr{S}_i) \right) - \mathbf{E}\left[ \frac{1}{N^k}\sum_{(\ell_1, \ldots, \ell_k) \in [N]^k \backslash \mathcal{R}} \left(\prod_{i \in [k]} \mathbbm{1}_{\mathscr{S}_{i}}(X_{\ell_i}) \right)\right] \\
    = &\mathbf{E} \left[\frac{1}{N^k}\sum_{\ell_1, \ldots, \ell_k \in \mathcal{R}} \left(\prod_{i \in [k]} \mathbbm{1}_{\mathscr{S}_{i}}(X_{\ell_i}) \right) \right] \nonumber \\
  \leq &\frac{\#(\mathcal{R})}{N^k} \nonumber\\
  \leq &\frac{\binom{k}{2}N^{k-1}}{N^k} \nonumber = \frac{\binom{k}{2}}{N} \approx 0,
\end{align}
where \eqref{Combinatorial estimate} is used at the beginning of the last line above. As a consequence, we thus obtain the following:
\begin{align}\label{combinatorial step used 2}
      &\mathbf{E} \left(\prod_{i \in [k]} \mu_{\cdot, N}(\mathscr{S}_i) \right) \approx  \frac{1}{N^k}\sum_{(\ell_1, \ldots, \ell_k) \in [N]^k \backslash \mathcal{R}} \mathbf{E}\left( \prod_{i \in [k]} \mathbbm{1}_{\mathscr{S}_{i}}(X_{\ell_i})\right).
\end{align}
By exchangeability, we also have the following:
\begin{align}\nonumber
   \mathbf{E}\left(\prod_{i \in [k]} \mathbbm{1}_{\mathscr{S}_{i}}(X_{\ell_i})\right) &= \mathbf{P}(X_{\ell_1} \in \mathscr{S}_1, \ldots, X_{\ell_k} \in \mathscr{S}_k) \\
   &= \mathbf{P}(X_{1} \in \mathscr{S}_1, \ldots, X_{k} \in \mathscr{S}_k) \text{ for all } (\ell_1, \ldots, \ell_k) \in [N]^k \backslash \mathcal{R} \label{new equation 2},
\end{align}
which allows us to conclude the following from \eqref{combinatorial step used 2}:
\begin{align}\label{combinatorial step used 3}
      &\mathbf{E} \left(\prod_{i \in [k]} \mu_{\cdot, N}(\mathscr{S}_i) \right) \approx  \frac{\#([N]^k \backslash \mathcal{R})}{N^k} \mathbf{P}(X_{1} \in \mathscr{S}_1, \ldots, X_{k} \in \mathscr{S}_k).
\end{align}

From \eqref{Combinatorial estimate}, it is clear that
\begin{align}
  1 >  \frac{\#([N]^k \backslash \mathcal{R})}{N^k} \geq \frac{N^k - \binom{k}{2}N^{k-1}}{N^k} = 1 - \frac{\binom{k}{2}}{N} \approx 1, \label{new equation 3}
\end{align}
so that 
\begin{align}\label{combinatorial estimate 2}
\frac{\#([N]^k \backslash \mathcal{R})}{N^k} \approx 1.
\end{align}

Using \eqref{combinatorial estimate 2} in \eqref{combinatorial step used 3} yields the following:
\begin{align}\nonumber
      &\mathbf{E} \left(\prod_{i \in [k]} \mu_{\cdot, N}(\mathscr{S}_i) \right) \approx   \mathbf{P}(X_{1} \in \mathscr{S}_1, \ldots, X_{k} \in \mathscr{S}_k),
\end{align}
thus completing the proof after taking standard parts and using $\mathbf{S}$-integrability of the map $\gamma \mapsto \prod_{i \in [k]} \mu_{\cdot, N}(\mathscr{S}_i)$. 

\section{A proof of Theorem \ref{internal version} using conditional probabilities}\label{appendix 2}
In this appendix, we will carry out an alternative proof of Theorem \ref{internal version} using the technique of \textit{conditioning}. The proof that we will present here is a refinement of the key idea from \cite{Alam-deFinetti}. We restate Theorem \ref{internal version} for convenience.  

\begin{customthm}{2.1}\label{new 2.1}
Let $\Gamma$ be an internal set, $\mathcal{A}$ be an internal algebra on $\Gamma$, and $\mathbf{P}$ be an internal hyperfinitely-additive map $\mathbf{P} \colon \mathcal{A} \to {^*}[0,1]$ such that $\mathbf{P}(\Gamma) \approx 1$. Let $\mathbb{S}$ be a (possibly different) internal set equipped with an internal algebra $\mathfrak{S}$. 

Let $N > \mathbb{N}$---that is, $N \in {^*}\mathbb{N} \backslash \mathbb{N}$. Let us denote by $[N]$ the initial segment of $N$ in ${^*}\mathbb{N}$, and let $\{\mathfrak{X}_i : i \in [N]\}$ be a hyperfinite collection of exchangeable $\mathbb{S}$-valued internal random variables defined on $\Gamma$---that is, each $\mathfrak{X}_i \colon \Gamma \to \mathbb{S}$ in this internal collection is an internal map such that the pre-image ${\mathfrak{X}_i}^{-1}(\mathscr{S}) \in \mathcal{A}$ for all $\mathscr{S} \in \mathfrak{S}$, while for any finitely many sets $\mathscr{S}_1, \ldots, \mathscr{S}_k \in \mathfrak{S}$ and any permutation $\sigma \in S_N$ (the internal symmetric group on $[N]$) we have $\mathbf{P}(\mathfrak{X}_1 \in \mathscr{S}_1, \ldots, \mathfrak{X}_k \in \mathscr{S}_k) = \mathbf{P}(\mathfrak{X}_{\sigma(1)} \in \mathscr{S}_1, \ldots, \mathfrak{X}_{\sigma(k)} \in \mathscr{S}_k)$.

For each $\gamma \in \Gamma$, we define the $N^{\text{th}}$ empirical mean at $\gamma$ to be the internal function $\mu_{\gamma, N} \colon \mathfrak{S} \to {^*}[0,1]$ satisfying the following formula:
   \begin{align}\label{generalEmpiricalDefinitionA}
       \mu_{\gamma, N}(\mathscr{S}) \defeq \frac{\# \{i \in [N]: \mathfrak{X}_i(\gamma) \in \mathscr{S} \}}{N} \text{ for all } \mathscr{S} \in \mathfrak{S}. 
   \end{align}
   
 Then $\mu_{\gamma, N}$ is an internal finitely additive probability on $(\mathbb{S}, \mathfrak{S})$ for all $\gamma \in \Gamma$. Furthermore, for each $\mathscr{S}$, the map $\gamma \mapsto \mu_{\gamma, N}(\mathscr{S})$ is a ${^*}\mathbb{R}$-valued internal random variable such that the following is true:

 \begin{align}\label{generalLoebDeFinettiA}
     L\mathbf{P}(\mathfrak{X}_1 \in \mathscr{S}_1, \ldots, \mathfrak{X}_k \in \mathscr{S}_k) = \int_{\Gamma} L\mu_{\gamma, N}(\mathscr{S}_1)\cdots L\mu_{\gamma, N}(\mathscr{S}_k) dL\mathbf{P}(\gamma) \nonumber \\
    \text{ for all } k \in \mathbb{N} \text{ and } \mathscr{S}_1, \ldots, \mathscr{S}_k \in \mathfrak{S}, 
 \end{align}
 where $(\Gamma, L(\mathcal{A}), L\mathbf{P})$ and $(\mathbb{S}, L_{\mu_{\gamma, N}}(\mathfrak{S}), L\mu_{\gamma, N})_{\gamma \in \Gamma}$ are the Loeb spaces induced by $(\Gamma, \mathcal{A}, \mathbf{P})$ and $(\mathbb{S}, \mathfrak{S}, \mu_{\gamma, N})_{\gamma \in \Gamma}$ respectively.
\end{customthm}

It turns out that one difficulty in a direct generalization of the method in \cite{Alam-deFinetti} is that the sets $\mathscr{S}_i$ were all either $\{0\}$ or $\{1\}$ in \cite{Alam-deFinetti}, while they may have intersections in the current context. We get around this difficulty by observing that it suffices to prove \eqref{generalLoebDeFinettiA} for tuples $(\mathscr{S}_1, \ldots, \mathscr{S}_k)$ such that $\mathscr{S}_i$ and $\mathscr{S}_j$ are either disjoint or equal for all $i, j \in [k]$.

\begin{definition}
Call a finite tuple $(\mathscr{S}_1, \ldots, \mathscr{S}_k)$ of sets \textit{disjointified} if for all $i , j \in [k]$, we have $\mathscr{S}_i \cap \mathscr{S}_j = \emptyset$ or $\mathscr{S}_i \cap \mathscr{S}_j = \mathscr{S}_i = \mathscr{S}_j$. In the setting of Theorem \ref{new 2.1}, call an event \textit{disjointified} if it is of the type $\{\mathfrak{X}_1 \in \mathscr{S}_1, \ldots, \mathfrak{X}_k \in \mathscr{S}_k\}$ for some disjointified tuple $(\mathscr{S}_1, \ldots, \mathscr{S}_k)$.
\end{definition}

\begin{lemma}\label{lemma for proposition}
Let $N > \mathbb{N}$. In the setting of Theorem \ref{new 2.1}, suppose that
\begin{align}
    L\mathbf{P}(\mathfrak{X}_1 \in \mathscr{D}_1, \ldots, \mathfrak{X}_k \in \mathscr{D}_k) = \int_{\Gamma} L\mu_{\gamma, N}(\mathscr{D}_1)\cdots\mu_{\gamma, N}(\mathscr{D}_k) dL\mathbf{P}(\gamma) \label{what we want2} \\
    \text{ for all } k \in \mathbb{N} \text{ and } \mathscr{D}_1, \ldots, \mathscr{D}_k \in \mathfrak{S} \text{ such that } (\mathscr{D}_1, \ldots, \mathscr{D}_k) \text{ is disjointified}. \nonumber
\end{align}
Then \eqref{generalLoebDeFinettiA} holds. 
\end{lemma}

\begin{proof}
Suppose \eqref{what we want2} holds. Let $\mathscr{S}_1, \ldots, \mathscr{S}_k \in \mathfrak{S}$ be fixed. We can write the event $\{\mathfrak{X}_1 \in \mathscr{S}_1, \ldots, \mathfrak{X}_k \in \mathscr{S}_k\}$ as a disjoint union of disjointified events. Indeed, for $d \in \{0,1\}$ and a set $\mathscr{S} \subseteq \mathbb{S}$, let $\mathscr{S}^d$ be equal to $\mathscr{S}$ if $d = 1$, and let it be equal to the complement $\mathbb{S} \backslash B$ if $d = 0$. For a tuple $a = (a_1, \ldots, a_k) \in \{0, 1\}^k$ of zeros and ones, define the following set:
\begin{align}\label{zeros and ones}
    [\mathscr{S}_1, \ldots, \mathscr{S}_k]^a \defeq \bigcap_{i \in [k]} {\mathscr{S}_i}^{a_i}.
\end{align}

Being a finite intersection of sets in the internal algebra $\mathfrak{S}$, it follows that $[\mathscr{S}_1, \ldots, \mathscr{S}_k]^{a} \in \mathfrak{S}$ for all $a \in \{0,1\}^k$. For $i \in [k]$, define $\mathfrak{D}_i \defeq \{(a_1, \ldots, a_k) \in \{0,1\}^k: a_i = 1\}$. For a tuple $\tilde{a} = (\tilde{a}_1, \ldots, \tilde{a}_k) \in \mathfrak{D}_1 \times \ldots \times \mathfrak{D}_k$ of $k$-tuples, we define \begin{align}\label{zeros and ones 2}
    [\mathscr{S}_1, \ldots, \mathscr{S}_k]^{\tilde{a}} \defeq \{\mathfrak{X}_1 \in  [\mathscr{S}_1, \ldots, \mathscr{S}_k]^{\tilde{a}_1}, \ldots, \mathfrak{X}_k \in  [\mathscr{S}_1, \ldots, \mathscr{S}_k]^{\tilde{a}_k} \}.
\end{align}

It is clear that the event $[\mathscr{S}_1, \ldots, \mathscr{S}_k]^{\tilde{a}}$ is disjointified for each $\tilde{a} \in \mathfrak{D}_1 \times \ldots \times \mathfrak{D}_k$, and that 
$$[\mathscr{S}_1, \ldots, \mathscr{S}_k]^{\tilde{a}} \cap [\mathscr{S}_1, \ldots, \mathscr{S}_k]^{\tilde{b}} = \emptyset \text{ if } \tilde{a}, \tilde{b} \text{ are distinct elements of } \mathfrak{D}_1 \times \ldots \times \mathfrak{D}_k.$$ 

We thus have the following representation as a disjoint union of disjointified events:
\begin{align}\label{disjointifying}
    \{\mathfrak{X}_1 \in \mathscr{S}_1, \ldots, \mathfrak{X}_k \in \mathscr{S}_k\} = \bigsqcup_{\tilde{a} \in \mathfrak{D}_1 \times \ldots \times \mathfrak{D}_k} [\mathscr{S}_1, \ldots, \mathscr{S}_k]^{\tilde{a}}. 
\end{align}

Since $\mathbf{P}$ is (hyper)finitely additive, we obtain the following by assumption \eqref{what we want2} for disjointified events in view of the linearity of integrals:

\begin{align}\label{key idea}
    &L\mathbf{P}(\mathfrak{X}_1 \in \mathscr{S}_1, \ldots, \mathfrak{X}_k \in \mathscr{S}_k) \\
    = &\sum_{\tilde{a} \in \mathfrak{D}_1 \times \ldots \times \mathfrak{D}_k}L\mathbf{P}\left([\mathscr{S}_1, \ldots, \mathscr{S}_k]^{\tilde{a}} \right) \\
     = &\sum_{\tilde{a} = (\tilde{a}_1, \ldots, \tilde{a}_k) \in \mathfrak{D}_1 \times \ldots \times \mathfrak{D}_k} \int_{\Gamma} \left(\prod_{i \in [k]} L\mu_{\gamma, N} \left(  [\mathscr{S}_1, \ldots, \mathscr{S}_k]^{\tilde{a}_i}\right) \right) dL\mathbf{P}(\gamma)\\
     = & \int_{\Gamma}\left(\sum_{\tilde{a} = (\tilde{a}_1, \ldots, \tilde{a}_k) \in \mathfrak{D}_1 \times \ldots \times \mathfrak{D}_k} \prod_{i \in [k]} L\mu_{\gamma, N} \left(  [\mathscr{S}_1, \ldots, \mathscr{S}_k]^{\tilde{a}_i}\right)\right) dL\mathbf{P}(\gamma) \label{key idea 2}\\
     = & \int_{\Gamma}  L\mu_{\gamma, N}(\mathscr{S}_1)\cdots L\mu_{\gamma, N}(\mathscr{S}_k) dL\mathbf{P}(\gamma),
\end{align}
where the last line follows from an elementary application of finite additivity of $L\mu_{\gamma, N}$. 
\end{proof}

For the rest of the paper, fix $N > \mathbb{N}$, $k \in \mathbb{N}$, and $\mathscr{D}_1, \ldots, \mathscr{D}_k \in \mathfrak{S}$ such that the tuple $(\mathscr{D}_1, \ldots, \mathscr{D}_k)$ is disjointified. By Lemma \ref{lemma for proposition}, it thus suffices (for a proof of Theorem \ref{internal version}) to show that \eqref{what we want2} holds for our fixed disjointified tuple $(\mathscr{D}_1, \ldots, \mathscr{D}_k)$. 

For some $n \in \mathbb{N}$, let $\mathscr{C}_1, \ldots, \mathscr{C}_n$ be the distinct (disjoint) sets appearing in the tuple $(\mathscr{D}_1, \ldots, \mathscr{D}_k)$. For each $i \in [n]$, let $\mathscr{C}_i$ appear in $(\mathscr{D}_1, \ldots, \mathscr{D}_k)$ with a frequency $k_i$. Note that this necessarily implies that $k_1 + \ldots + k_n = k$ and $k_i \geq 1$ for all $i \in [n]$. 

For each $i \in [n]$, let $Y_i \co \Gamma \rightarrow [N]$ be defined as follows:
\begin{align}\label{definition of Y_i}
    Y_i(\gamma) \defeq \#\{j \in [N]: \mathfrak{X}_j(\gamma) \in \mathscr{C}_i\} = \sum_{j \in [N]} \mathbbm{1}_{\mathscr{C}_i}(\mathfrak{X}_j(\gamma)) \text{ for all } \gamma \in \Gamma.
\end{align}
Thus $\mu_{\gamma, N}(\mathscr{C}_i) = \frac{Y_i(\gamma)}{N}$ \text{ for all } $\gamma \in \Gamma$. 

Let $\vec{\mathscr{D}}$, $\vec{\mathfrak{X}}$, and $\vec{Y}$ denote the tuples $(\mathscr{D}_1, \ldots, \mathscr{D}_k)$, $(\mathfrak{X}_1, \ldots, \mathfrak{X}_k)$, and $(Y_1, \ldots, Y_n)$ respectively. The following lemma follows from elementary combinatorial arguments.

\begin{lemma}\label{claim 0}
Suppose that $t_i \in {^*}\mathbb{N}$ are such that $t_i \geq k_i$ for all $i \in [n]$, and such that $\mathbf{P}(\vec{Y} = (t_1, \ldots, t_n)) > 0$. Then we have: 
\begin{align}
    \mathbf{P}(\vec{\mathfrak{X}} \in \vec{\mathscr{D}} \vert \vec{Y} = (t_1, \ldots, t_n))  
    &= \frac{1}{N(N-1)\ldots (N - (k-1))} \cdot \frac{t_1! \ldots t_n!}{(t_1 - k_1)! \ldots (t_n - k_n)!}\label{combinatorial step}.
\end{align}
\end{lemma}

\begin{proof}
Let $t_1, \ldots, t_n$ be as in the statement of the lemma. Define the following event:
\begin{align*}
    E_{t_1, \ldots, t_n} \defeq \{&\mathfrak{X}_1, \ldots, \mathfrak{X}_{t_1} \in \mathscr{C}_1; \\
    &\mathfrak{X}_{t_1 + 1}, \ldots, \mathfrak{X}_{t_1 + t_2} \in \mathscr{C}_2;\\
    &\ldots;\\ 
    &\mathfrak{X}_{t_1 + \ldots + t_{n-1} + 1}, \ldots, \mathfrak{X}_{t_1 + \ldots + t_n} \in \mathscr{C}_n;\\
    &\mathfrak{X}_i \in \mathbb{S} \backslash {\mathscr{C}_1 \sqcup \ldots \sqcup \mathscr{C}_n} \text{ for all other } i \in [N]\}.
\end{align*}

By exchangeability and the fact that the $\mathscr{C}_i$ are disjoint, we have the following:
\begin{align}\label{use of exchangeability}
    \mathbf{P}(\vec{Y} = (t_1, \ldots, t_n)) = N_1\mathbf{P}\left(E_{t_1, \ldots, t_n} \right), \\
    \text{and } \label{use of exchangeability 2}
    \mathbf{P}(\vec{\mathfrak{X}} \in \vec{\mathscr{D}} \text{ and } \vec{Y} = (t_1, \ldots, t_n)) = N_2\mathbf{P}\left(E_{t_1, \ldots, t_n} \right),
\end{align}
where 
\begin{align}
   N_1 &= \text{Number of ways to choose } t_i \text{ spots of the } i^{\text{th}} \text{ kind in } [N] \text{ as } i \text{ varies over } [n] \nonumber \\
   &= \binom{N}{t_1} \binom{N - t_1}{t_2}\cdot \ldots \cdot \binom{N - t_1 - \ldots - t_{n-1}}{t_n}, \label{N_1}
\end{align}
and 
\begin{align}
   N_2 &= \text{Number of ways to choose } (t_i - k_i) \text{ spots of the } i^{\text{th}} \text{ kind in } [N]\backslash [k] \text{ as } i \text{ varies over } [n] \nonumber \\
   &= \binom{N - k}{t_1 - k_1} \binom{N - k - (t_1 - k_1)}{t_2 - k_2}\cdot \ldots \cdot \binom{N - k - (t_1 + \ldots + t_{n-1} - k_1 \ldots - k_{n-1})}{t_n - k_n} \label{N_2}.
\end{align}

Since it is given that $\mathbf{P}(\vec{Y} = (t_1, \ldots, t_n)) > 0$, we thus have $\mathbf{P}\left(E_{t_1, \ldots, t_n} \right) > 0$ by \eqref{use of exchangeability}. By \eqref{use of exchangeability}, \eqref{use of exchangeability 2}, \eqref{N_1}, and \eqref{N_2}, we therefore obtain \eqref{combinatorial step} after elementary simplifications.
\end{proof}

\begin{corollary}\label{corollary of claim 0}
Suppose that $t_i \in {^*}\mathbb{N}$ are such that $\mathbf{P}(\vec{Y} = (t_1, \ldots, t_n)) > 0$. Then we have: 
\begin{align}\label{just close}
    \mathbf{P}(\vec{\mathfrak{X}} \in \vec{\mathscr{D}} \vert \vec{Y} = (t_1, \ldots, t_n)) \approx \left( \frac{t_1}{N}\right)^{k_1}\cdot \ldots \cdot \left( \frac{t_n}{N}\right)^{k_n} \text{ for all such } (t_1, \ldots, t_n) \in [N]^n.
\end{align}
\end{corollary}

\begin{proof}
Suppose that the $t_i \in [N]$ are such that $\mathbf{P}(\vec{Y} = (t_1, \ldots, t_n)) > 0$. If $t_i \geq k_i$ for all $i \in [n]$, then by Lemma \ref{claim 0}, we obtain the following: 
\begin{align}
    \frac{\mathbf{P}(\vec{\mathfrak{X}} \in \vec{\mathscr{D}} \vert \vec{Y} = (t_1, \ldots, t_n))}{\left( \frac{t_1}{N}\right)^{k_1}\cdot \ldots \cdot \left( \frac{t_n}{N}\right)^{k_n}}
    &= \frac{1}{1 - \frac{1}{N}}\ldots \frac{1}{1 - \frac{k-1}{N}} \cdot \prod_{i \in [n]}\left( \prod_{j \in [k_i - 1]} \left(1 - \frac{j}{t_i} \right) \right) \label{ratio close 2} \\
    &< \frac{1}{1 - \frac{1}{N}}\ldots \frac{1}{1 - \frac{k-1}{N}} \approx 1 \label{less than}.
\end{align}

Note that if $t_i > \mathbb{N}$ for all $i \in \mathbb{N}$, then both $\frac{1}{1 - \frac{1}{N}}\ldots \frac{1}{1 - \frac{k-1}{N}} \approx 1$ and $ \prod_{i \in [n]}\left( \prod_{j \in [k_i - 1]} \left(1 - \frac{j}{t_i} \right) \right) \approx 1$, so that \eqref{ratio close 2} implies that 
\begin{align}
    \frac{\mathbf{P}(\vec{\mathfrak{X}} \in \vec{\mathscr{D}} \vert \vec{Y} = (t_1, \ldots, t_n))}{\left( \frac{t_1}{N}\right)^{k_1}\cdot \ldots \cdot \left( \frac{t_n}{N}\right)^{k_n}} \approx 1 \text{ if } t_1, \ldots, t_n > \mathbb{N}, \label{ratio close observation}
\end{align}
which, in particular, implies \eqref{just close} in this case.

Now, if $t_j$ is in $\mathbb{N}$ for some $j \in [n]$ but such that $t_i \geq k$ for all $i \in [n]$ and $\mathbf{P}(\vec{Y} = (t_1, \ldots, t_n)) > 0$, then the inequality in \eqref{less than} implies that 
\begin{align*}
    {\mathbf{P}(\vec{\mathfrak{X}} \in \vec{\mathscr{D}} \vert \vec{Y} = (t_1, \ldots, t_n))} < 2{\left( \frac{t_1}{N}\right)^{k_1}\cdot \ldots \cdot \left( \frac{t_n}{N}\right)^{k_n}} < 2 \left(\frac{t_j}{N} \right)^{k_j}\approx 0, 
\end{align*}
so that 
\begin{align*}
     {\mathbf{P}(\vec{\mathfrak{X}} \in \vec{\mathscr{D}} \vert \vec{Y} = (t_1, \ldots, t_n))} \approx 0 \approx {\left( \frac{t_1}{N}\right)^{k_1}\cdot \ldots \cdot \left( \frac{t_n}{N}\right)^{k_n}}, 
\end{align*}
proving \eqref{just close} in that case as well. 

Finally, if $t_i < k_i$ for any $i \in [n]$, then $\mathbf{P}(\vec{\mathfrak{X}} \in \vec{\mathscr{D}} \vert \vec{Y} = (t_1, \ldots, t_n)) = 0$, while $\left( \frac{t_1}{N}\right)^{k_1}\cdot \ldots \cdot \left( \frac{t_n}{N}\right)^{k_n} \approx 0$ in that case as well. This completes the proof. 
\end{proof}

We record \eqref{ratio close observation} in the proof of Corollary \ref{corollary of claim 0} as its own result.
\begin{corollary}\label{its own result}
Suppose that $t_i > \mathbb{N}$ such that $\mathbf{P}(\vec{Y} = (t_1, \ldots, t_n)) > 0$. Then we have the following approximate equality:
\begin{align}
    \frac{\mathbf{P}(\vec{\mathfrak{X}} \in \vec{\mathscr{D}} \vert \vec{Y} = (t_1, \ldots, t_n))}{\left( \frac{t_1}{N}\right)^{k_1}\cdot \ldots \cdot \left( \frac{t_n}{N}\right)^{k_n}} \approx 1 \text{ if } t_1, \ldots, t_n > \mathbb{N}. \nonumber 
\end{align}
\end{corollary}

By \eqref{less than} and underflow applied to Corollary \ref{its own result}, we obtain the following.

\begin{corollary}\label{m_epsilon}
Given $\epsilon \in \mathbb{R}_{>0}$, there is an  $m_{\epsilon} \in \mathbb{N}$ satisfying the following. 
\begin{align}
    1 - \epsilon < \frac{\mathbf{P}(\vec{\mathfrak{X}} \in \vec{\mathscr{D}} \vert \vec{Y} = (t_1, \ldots, t_n))}{\left( \frac{t_1}{N}\right)^{k_1}\cdot \ldots \cdot \left( \frac{t_n}{N}\right)^{k_n}} < 1 + \epsilon \nonumber \\
    \text{ if } t_1, \ldots, t_n > m_{\epsilon} \text{ are such that } \mathbf{P}(\vec{Y} = (t_1, \ldots, t_n)) > 0. \nonumber 
\end{align}
\end{corollary}

The proof of Corollary \ref{corollary of claim 0} also leads to the following observation.
\begin{corollary}\label{claim 1}
For each $m \in {^*}\mathbb{N}$, define the set 
\begin{align}\label{L_M}
    L_m \defeq \{(t_1, \ldots, t_n) \in [N]^n : \text{ there is } j \in [n] \text{ such that } t_j \leq m\}. 
\end{align}
Then, we have the following for all $m \in \mathbb{N}$:
\begin{align*}
   0 &\approx \sum_{(t_1, \ldots, t_n) \in L_m} \hspace{-5pt}\mathbf{P}(\vec{\mathfrak{X}} \in \vec{\mathscr{D}} \vert \vec{Y} = (t_1, \ldots, t_n))  \mathbf{P}\left(\mu_{\cdot, N}(\mathscr{C}_1) = \frac{t_1}{N}, \ldots, \mu_{\cdot, N}(\mathscr{C}_n) = \frac{t_n}{N} \right) \\
    &\approx  \sum_{(t_1, \ldots, t_n) \in L_m} \left( \frac{t_1}{N}\right)^{k_1}\cdot \ldots \cdot \left( \frac{t_n}{N}\right)^{k_n} \mathbf{P}\left(\mu_{\cdot, N}(\mathscr{C}_1) = \frac{t_1}{N}, \ldots, \mu_{\cdot, N}(\mathscr{C}_n) = \frac{t_n}{N} \right).
\end{align*}
\end{corollary}

\begin{proof}
Let $m \in \mathbb{N}$ and $L_m$ be as in the statement of the corollary. Noting that the event $\left\{\mu_{\cdot, N}(\mathscr{C}_1) = \frac{t_1}{N}, \ldots, \mu_{\cdot, N}(\mathscr{C}_n) = \frac{t_n}{N} \right\}$ is the same as the event $\{\vec{Y} = (t_1, \ldots, t_n)\}$, we obtain the following from \eqref{less than} (we also use the fact that if $t_i < k_i$ for any $i \in [n]$, then $\mathbf{P}(\vec{\mathfrak{X}} \in \vec{\mathscr{D}} \vert \vec{Y} = (t_1, \ldots, t_n)) = 0$):

\begin{align*}
     &\sum_{(t_1, \ldots, t_n) \in L_m} \mathbf{P}(\vec{\mathfrak{X}} \in \vec{A} \vert \vec{Y} = (t_1, \ldots, t_n)) \cdot \mathbf{P}\left(\mu_{\cdot, N}(\mathscr{C}_1) = \frac{t_1}{N}, \ldots, \mu_{\cdot, N}(\mathscr{C}_n) = \frac{t_n}{N} \right) \\
    \leq &~2\sum_{(t_1, \ldots, t_n) \in L_m} \left( \frac{t_1}{N}\right)^{k_1}\cdot \ldots \cdot \left( \frac{t_n}{N}\right)^{k_n} \cdot \mathbf{P}\left(\mu_{\cdot, N}(\mathscr{C}_1) = \frac{t_1}{N}, \ldots, \mu_{\cdot, N}(\mathscr{C}_n) = \frac{t_n}{N} \right) \\
    \leq &~2 \sum_{j \in [n]} \left(\sum_{r \in [m]} \left[\sum_{\substack{(t_1, \ldots, t_n) \in [N]^n \\ t_j = r}} \left(\frac{{t_j}}{N}\right)^{k_j} \cdot \mathbf{P}\left(\mu_{\cdot, N}(\mathscr{C}_1) = \frac{t_1}{N}, \ldots, \mu_{\cdot, N}(\mathscr{C}_n) = \frac{t_n}{N} \right)  \right] \right) \\
     \leq &~2 \sum_{j \in [n]} \left(\left[\sum_{\substack{(t_1, \ldots, t_n) \in [N]^n \\ t_j \leq m}} \frac{m}{N} \cdot \mathbf{P}\left(\mu_{\cdot, N}(\mathscr{C}_1) = \frac{t_1}{N}, \ldots, \mu_{\cdot, N}(\mathscr{C}_n) = \frac{t_n}{N} \right)  \right] \right) \\
    = &~\frac{2m}{N} \cdot \sum_{j \in [n]} \mathbf{P}\left(\mu_{\cdot, N}(\mathscr{C}_j) \leq \frac{m}{N}\right) \\
    \leq &~\frac{2mn}{N} \\
    \approx &~0,
\end{align*}
completing the proof.
\end{proof}

We now have all the ingredients for our proof of Theorem \ref{internal version}.

\begin{proof}[Proof of Theorem \ref{internal version}]
Conditioning on the various possible values of $Y_i$ as $i$ varies in $[n]$, and noting that the event $\left\{\mu_{\cdot, N}(\mathscr{C}_1) = \frac{t_1}{N}, \ldots, \mu_{\cdot, N}(\mathscr{C}_n) = \frac{t_n}{N} \right\}$ is the same as the event $\{\vec{Y} = (t_1, \ldots, t_n)\}$, we obtain:
\begin{align}
     &\mathbf{P}((\mathfrak{X}_1, \ldots, \mathfrak{X}_{k}) \in \vec{\mathscr{D}}) \nonumber \\
     &= \sum_{(t_1, \ldots, t_n) \in [N]^n} \mathbf{P}(\vec{\mathfrak{X}} \in \vec{\mathscr{D}} \vert \vec{Y} = (t_1, \ldots, t_n)) \cdot \mathbf{P}\left(\mu_{\cdot, N}(\mathscr{C}_1) = \frac{t_1}{N}, \ldots, \mu_{\cdot, N}(\mathscr{C}_n) = \frac{t_n}{N} \right) \label{Bayes expansion'}
\end{align}

Now, by $S$-integrability and the definition of internal expected values with respect to internal hyperfinitely additive measures, we have the following equality:
\begin{align}
    &\int_{\Gamma} L\mu_{\gamma, N}(\mathscr{D}_1)\cdots\mu_{\gamma, N}(\mathscr{D}_k) dL\mathbf{P}(\gamma) \\
    \nonumber &= \st\left(\int_{\Gamma} \mu_{\gamma, N}(\mathscr{D}_1)\cdots\mu_{\gamma, N}(\mathscr{D}_k) d\mathbf{P}(\gamma)\right) \\
    &= \st\left(\sum_{(t_1, \ldots, t_n) \in [N]^n} \left( \frac{t_1}{N}\right)^{k_1}\cdot \ldots \cdot \left( \frac{t_n}{N}\right)^{k_n} \cdot \mathbf{P}\left(\mu_{\cdot, N}(\mathscr{C}_1) = \frac{t_1}{N}, \ldots, \mu_{\cdot, N}(\mathscr{C}_n) = \frac{t_n}{N} \right)\right). \label{Expectation expansion'}
\end{align}

Let $\epsilon \in \mathbb{R}_{>0}$ and let $m_{\epsilon} \in \mathbb{N}$ be as in Corollary \ref{m_epsilon}. By that corollary, we obtain:
\begin{align*}
    &\sum_{\substack{(t_1, \ldots, t_n) \in [N]^n}} \mathbf{P}(\vec{\mathfrak{X}} \in \vec{\mathscr{D}} \vert \vec{Y} = (t_1, \ldots, t_n)) \cdot \mathbf{P}\left(\mu_{\cdot, N}(\mathscr{C}_1) = \frac{t_1}{N}, \ldots, \mu_{\cdot, N}(\mathscr{C}_n) = \frac{t_n}{N} \right) \\
    > &\sum_{\substack{(t_1, \ldots, t_n) \in L_{m_{\epsilon}}}} \mathbf{P}(\vec{\mathfrak{X}} \in \vec{\mathscr{D}} \vert \vec{Y} = (t_1, \ldots, t_n)) \cdot \mathbf{P}\left(\mu_{\cdot, N}(\mathscr{C}_1) = \frac{t_1}{N}, \ldots, \mu_{\cdot, N}(\mathscr{C}_n) = \frac{t_n}{N} \right) \\
    &+ (1 - \epsilon) \sum_{\substack{(t_1,\ldots, t_n) \in [N]^n \\ t_1, \ldots, t_n > m_{\epsilon}}} \left( \frac{t_1}{N}\right)^{k_1}\cdot \ldots \cdot \left( \frac{t_n}{N}\right)^{k_n} \cdot \mathbf{P}\left(\mu_{\cdot, N}(\mathscr{C}_1) = \frac{t_1}{N}, \ldots, \mu_{\cdot, N}(\mathscr{C}_n) = \frac{t_n}{N} \right). 
\end{align*}

By taking standard parts and using Corollary \ref{claim 1}, the above yields the following inequality:
\begin{align*}
    &\st \left[\sum_{\substack{(t_1, \ldots, t_n) \in [N]^n}} \mathbf{P}(\vec{\mathfrak{X}} \in \vec{\mathscr{D}} \vert \vec{Y} = (t_1, \ldots, t_n)) \cdot \mathbf{P}\left(\mu_{\cdot, N}(\mathscr{C}_1) = \frac{t_1}{N}, \ldots, \mu_{\cdot, N}(\mathscr{D}_n) = \frac{t_n}{N} \right)\right] \\
    \geq & (1 - \epsilon) \st \left[\sum_{\substack{(t_1,\ldots, t_n) \in [N]^n}} \left( \frac{t_1}{N}\right)^{k_1}\cdot \ldots \cdot \left( \frac{t_n}{N}\right)^{k_n} \cdot \mathbf{P}\left(\mu_{\cdot, N}(\mathscr{C}_1) = \frac{t_1}{N}, \ldots, \mu_{\cdot, N}(\mathscr{C}_n) = \frac{t_n}{N} \right) \right]. 
\end{align*}

Since $\epsilon \in \mathbb{R}_{>0}$ is arbitrary, we thus obtain:  
\begin{align}
    &\st \left[\sum_{\substack{(t_1, \ldots, t_n) \in [N]^n}} \mathbf{P}(\vec{\mathfrak{X}} \in \vec{\mathscr{D}} \vert \vec{Y} = (t_1, \ldots, t_n)) \cdot \mathbf{P}\left(\mu_{\cdot, N}(\mathscr{C}_1) = \frac{t_1}{N}, \ldots, \mu_{\cdot, N}(\mathscr{C}_n) = \frac{t_n}{N} \right)\right] \nonumber \\
    \geq & \st \left[\sum_{\substack{(t_1,\ldots, t_n) \in [N]^n}} \left( \frac{t_1}{N}\right)^{k_1}\cdot \ldots \cdot \left( \frac{t_n}{N}\right)^{k_n} \cdot \mathbf{P}\left(\mu_{\cdot, N}(\mathscr{C}_1) = \frac{t_1}{N}, \ldots, \mu_{\cdot, N}(\mathscr{C}_n) = \frac{t_n}{N} \right) \right] \label{forward inequality}. 
\end{align}

But the reverse inequality to \eqref{forward inequality} is also true because of \eqref{less than} and the fact that ${^*}\mathbb{P}(\vec{\mathfrak{X}} \in \vec{A} \vert \vec{Y} = (t_1, \ldots, t_n))  = 0$ if $t_i < k_i$ for any $i \in [n]$. This completes the proof by \eqref{Bayes expansion'} and \eqref{Expectation expansion'}. 
\end{proof}

\section{Concluding the theorem of Hewitt and Savage from the theorem of Ressel}\label{appendix}
In this supplementary appendix, we prove that the theorem of Ressel showing Radon presentability of completely regular Hausdorff spaces (\cite[Theorem 3, p. 906]{Ressel-harmonic}) implies the theorem of Hewitt and Savage on the presentability of the Baire sigma algebra of compact Hausdorff spaces (\cite[Theorem 7.2, p. 483]{Hewitt-Savage-1955}) --- illustrating the sense in which sense Ressel's generalization (which also motivates our generalization) of de Finetti's Theorem directly superseded the generalization due to Hewitt and Savage, despite it having a slightly different appearance in its form. 

Since we will have occasion to talk about the presentability of Baire sigma algebras and Radon presentability in the same context, it is desirable to reduce the risk of confusion by introducing more precise notation for the relevant sigma algebras. 

\begin{notation}\label{Baire and Borel notation}
For a Hausdorff space $S$, let $\bas$ denote its Baire sigma algebra, the smallest sigma algebra with respect to which all continuous functions $f\co S \rightarrow \mathbb{R}$ are measurable). Let $\mathcal{B}(S)$ denote its Borel sigma algebra, the smallest sigma algebra containing all open subsets of $S$ (it is clear that $\bas \subseteq \mathcal{B}(S)$). Let $\prs$ denote the set of all Radon probability measures on $S$, and let $\pbas$ denote the set of all Baire probability measures on $S$. Let $\mathcal{C}(\prs)$ be the smallest sigma algebra on $\prs$ that makes all maps of the form $\mu \mapsto \mu(B)$ measurable, where $B \in \mathcal{B}(S)$. Let $\mathcal{C}(\pbas)$ be the smallest sigma algebra on $\pbas$ that makes all maps of the form $\mu \mapsto \mu(A)$ measurable, where $A \in \bas$. 
\end{notation}

Note that any compact Hausdorff space is normal (see, for example, Kelley \cite[Theorem 9, chapter 5]{Kelley}), and in particular completely regular. The key idea in going from Ressel's result to that of Hewitt--Savage is that on any completely regular Hausdorff space, a tight Baire measure has a unique extension to a Radon measure (see Bogachev \cite[Theorem 7.3.3, p. 81, vol. 2]{Bogachev-measure}). In particular, since every Baire measure on a $\sigma$-compact space is tight, it follows that every Baire measure on a completely regular $\sigma$-compact Hausdorff space admits a unique extension to a Radon measure on that space. See Bogachev \cite[Corollary 7.3.4, p. 81, vol. 2]{Bogachev-measure} for this result. Bogachev also has a formula for this unique extension on \cite[p. 78, vol. 2]{Bogachev-measure}. We record these facts as a lemma. 

\begin{lemma}\label{Bogachev lemma}
Let $S$ be a  completely regular $\sigma$-compact Hausdorff space. For a subset $A \subseteq S$, let $\tau_A(S)$ denote the collection of those open subsets of $S$ that contain $A$. For every $\mu \in \mathfrak{P}_{\text{Ba}}(S)$, there is a unique element $\hat{\mu} \in \prs$ such that  $\hat{\mu}(A) = \mu(A)$ for all $A \in \bas$. Furthermore, $\hat{\mu}$ is precisely given by the following formula:
\begin{align}\label{hat mu formula}
    \hat{\mu}(B) = \inf_{U \in \tau_B(S)} \sup_{\substack{A \in \bas \\ A \subseteq U}} \mu(A) \text{ for all } B \in \mathcal{B}(S).
\end{align}
\end{lemma}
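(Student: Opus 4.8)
The plan is to prove the lemma by reducing it to two standard facts about Baire measures on completely regular spaces: first, that every Baire probability measure $\mu$ on a $\sigma$-compact Hausdorff space is tight in the sense that its mass can be captured from inside by compact sets (themselves outer-approximated by Baire sets); and second, that such a tight Baire measure extends uniquely to a Radon measure, the extension being given by the displayed formula \eqref{hat mu formula}. I would organize the argument into four steps: (i) establish tightness of $\mu$; (ii) verify that the right-hand side of \eqref{hat mu formula} defines a countably additive Borel measure $\hat{\mu}$ on $\mathcal{B}(S)$; (iii) check that $\hat{\mu}$ is Radon and restricts to $\mu$ on $\bas$; and (iv) prove uniqueness.

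For step (i), I would write $S = \bigcup_{n} K_n$ as an increasing union of compact sets. The crucial use of complete regularity is that for any compact $K$ and open $U \supseteq K$ there is a continuous $f \colon S \to [0,1]$ with $f = 0$ on $K$ and $f = 1$ off $U$; the cozero set $\{x : f(x) < 1\}$ is then a Baire set $V$ with $K \subseteq V \subseteq U$. This lets me approximate compact sets from outside by Baire sets, and, using $\mu(S) = 1$ together with continuity from below of $\mu$ along the Baire sets exhausting $S$, I would conclude that $\sup_n \inf_{U \in \tau_{K_n}(S)} \sup_{A \in \bas,\, A \subseteq U} \mu(A) = 1$, which is precisely the tightness needed downstream.

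For step (ii), I would define $\hat{\mu}(B)$ by the formula and first argue that it is monotone and finitely additive on $\mathcal{B}(S)$, then promote finite additivity to countable additivity by invoking the tightness from step (i): the inner approximation by compact sets, whose Baire neighborhoods are controlled by the inner supremum over Baire subsets, forces continuity from above at $\emptyset$ and hence $\sigma$-additivity. In step (iii), outer regularity of $\hat{\mu}$ is immediate from the outer infimum over open sets in the formula, while inner regularity by compact sets follows from tightness combined with step (i); that $\hat{\mu}$ agrees with $\mu$ on $\bas$ follows because every Baire set is outer-approximated by cozero sets on which the sup–inf expression collapses back to $\mu$ itself. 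Step (iv) is then a soft argument: any two Radon measures agreeing on $\bas$ must agree on all compact sets (a compact set being inner-approximated in measure through its Baire neighborhoods), and since inner regularity by compact sets determines a Radon measure, the two extensions coincide on all of $\mathcal{B}(S)$.

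The main obstacle I anticipate is step (ii): showing that the two-layer expression $\inf_U \sup_A$ is genuinely countably additive, rather than merely monotone and finitely additive. The inf–sup structure does not interact transparently with countable unions, and it is exactly here that the tightness established in step (i) must be used to control the tails of a countable decomposition. Carrying this out carefully—as opposed to quoting Bogachev's extension theorem as a black box—is the technical heart of the argument, whereas the remaining steps are either formal consequences of the formula or standard regularity manipulations.
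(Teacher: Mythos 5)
Your overall strategy (tightness of Baire measures on $\sigma$-compact spaces, followed by unique Radon extension of a tight Baire measure) is the right one, and it is in fact the route the paper takes --- except that the paper does not prove this lemma at all: it records it as a citation to Bogachev (Theorem 7.3.3 and Corollary 7.3.4, vol.~2, together with the formula for the extension on p.~78). Your steps (i), (iii) and (iv) are essentially sound: tightness follows from continuity from below of the Baire outer measure along an exhaustion $K_n \uparrow S$; agreement of $\hat{\mu}$ with $\mu$ on $\bas$ follows from outer regularity of Baire measures by cozero sets together with the fact that $A$ itself competes in the inner supremum; and uniqueness follows by interpolating a cozero set between a compact $K$ and an open $U \supseteq K$ and then invoking inner regularity of Radon measures.

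The genuine gap is your step (ii), and you have in effect conceded it: you defer what you call ``the technical heart'' --- countable additivity of the set function defined by \eqref{hat mu formula} --- but that step \emph{is} the content of the theorem being invoked, and the sketch you offer (``tightness forces continuity from above at $\emptyset$'') is an assertion, not an argument. The difficulty starts earlier than you indicate: even \emph{finite} additivity of $B \mapsto \inf_{U \in \tau_B(S)} \sup_{A \in \bas,\, A \subseteq U} \mu(A)$ is unclear. For subadditivity you must split a Baire set $A \subseteq U_1 \cup U_2$ into Baire pieces subordinate to $U_1$ and $U_2$ (the naive pieces $A \cap U_i$ need not be Baire), which requires a cozero partition argument; for superadditivity you need disjoint Borel sets $B_1, B_2$ to be approximately separated by open sets, which fails for arbitrary disjoint Borel sets and can only be rescued through inner approximation by compact sets --- that is, through exactly the inner regularity of $\hat{\mu}$ that you plan to derive only afterwards, in step (iii). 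This circularity is why the standard proof (Bogachev's) reverses the construction: first define $\lambda(K) \defeq \inf\{\mu(A) : K \subseteq A,\ A \in \bas\}$ on compact sets, verify via complete regularity that $\lambda$ is a regular compact content, extend $\lambda$ inner-regularly to a Radon measure by the content-extension theorem, and only then check that the resulting measure satisfies the outer formula \eqref{hat mu formula}. If you want a self-contained proof rather than the paper's citation, step (ii) must be restructured along those lines; as written, the proposal assumes the key property it needs to prove.
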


As a consequence, we obtain the following lemma.

\begin{lemma}\label{Bogachev consequence}
Let $S$ be a  completely regular $\sigma$-compact Hausdorff space. Consider the map $~\hat{} \co \pbas \rightarrow \prs$ defined by $\hat{~}(\mu) = \hat{\mu}$ for all $\mu \in \pbas$ (where $\hat\mu$ is as in \eqref{hat mu formula}). Then $\hat{~}$ is a bijection. 

Furthermore, for a set $\mathcal{A} \in \mathcal{C}(\pbas)$, define $\hat{\mathcal{A}}$ to be its image under $\hat{}$ (thus $\hat{\mathcal{A}} \defeq \{\hat{\mu}: \mu \in \mathcal{A}\}$). Then $\hat{\mathcal{A}} \in \mathcal{C}(\prs)$ for all $\mathcal{A} \in \mathcal{C}(\pbas)$. 
\end{lemma}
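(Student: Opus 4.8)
The plan is to establish Lemma \ref{Bogachev consequence} in two stages, corresponding to its two assertions. First I would prove that the hat map is a bijection, then I would prove that it carries $\mathcal{C}(\pbas)$-sets to $\mathcal{C}(\prs)$-sets.

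For the bijectivity, I would invoke Lemma \ref{Bogachev lemma} directly. Injectivity is immediate: if $\hat{\mu} = \hat{\nu}$ in $\prs$, then restricting to the Baire sigma algebra gives $\mu(A) = \hat{\mu}(A) = \hat{\nu}(A) = \nu(A)$ for all $A \in \bas$, whence $\mu = \nu$ as Baire measures. For surjectivity, I would take an arbitrary $\rho \in \prs$, restrict it to $\bas$ to obtain a Baire probability measure $\mu \defeq \rho\restriction_{\bas} \in \pbas$, and then argue that $\hat{\mu} = \rho$. This last step uses the \emph{uniqueness} clause of Lemma \ref{Bogachev lemma}: since $\rho$ is a Radon extension of $\mu$ (it agrees with $\mu$ on all of $\bas$ by construction) and $\hat{\mu}$ is \emph{the} unique such Radon extension, we get $\rho = \hat{\mu}$, so $\rho$ lies in the image.

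For the measurability assertion, the key observation is that the hat map interacts nicely with the generators of the two cylinder sigma algebras. I would show that for each fixed $A \in \bas$, the evaluation $\hat{\mu}(A)$ equals $\mu(A)$, so that the generating maps of $\mathcal{C}(\pbas)$ factor through the corresponding generating maps of $\mathcal{C}(\prs)$ via $\hat{}$. Concretely, since $\hat{}$ is a bijection, proving $\hat{\mathcal{A}} \in \mathcal{C}(\prs)$ for all $\mathcal{A} \in \mathcal{C}(\pbas)$ is equivalent to proving that the inverse map $\rho \mapsto \rho\restriction_{\bas}$ is measurable from $(\prs, \mathcal{C}(\prs))$ to $(\pbas, \mathcal{C}(\pbas))$. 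The sigma algebra $\mathcal{C}(\pbas)$ is generated by sets of the form $\{\mu \in \pbas : \mu(A) \in E\}$ for $A \in \bas$ and $E \in \mathcal{B}(\mathbb{R})$; the preimage of such a set under $\rho \mapsto \rho\restriction_{\bas}$ is $\{\rho \in \prs : \rho(A) \in E\}$, which lies in $\mathcal{C}(\prs)$ precisely because the evaluation map $\rho \mapsto \rho(A)$ is $\mathcal{C}(\prs)$-measurable for every Borel (in particular Baire) set $A$. Since the preimages of generators are measurable, the whole map is measurable, giving the claim.

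The main obstacle, though a mild one, is bookkeeping the direction of the measurability argument correctly: one must be careful that $\hat{\mathcal{A}}$ being measurable is the \emph{forward} image under $\hat{}$, and translating this into a statement about the measurability of the inverse map (restriction to $\bas$) requires using that $\hat{}$ is a bijection so that $\hat{\mathcal{A}} = (\text{restriction})^{-1}(\mathcal{A})$. The identity $\hat{\mu}(A) = \mu(A)$ for $A \in \bas$, which is the linchpin of both the factoring and the measurability, is supplied verbatim by Lemma \ref{Bogachev lemma}, so no genuinely hard analysis remains; the content is entirely in assembling these facts in the right order.
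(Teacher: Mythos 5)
Your proof is correct, and the bijectivity half coincides with the paper's argument (injectivity by evaluating on a Baire set where the two measures differ; surjectivity by noting $\rho = \widehat{\rho\restriction_{\bas}}$ via the uniqueness clause of Lemma \ref{Bogachev lemma}). Where you genuinely diverge is the measurability half. The paper works with \emph{forward} images throughout: it forms the collection $\mathfrak{G} = \{\mathcal{A} \in \mathcal{C}(\pbas) : \hat{\mathcal{A}} \in \mathcal{C}(\prs)\}$, verifies by hand that it is a sigma algebra (closure under complements requires the bijectivity identity $\widehat{\pbas \backslash \mathcal{A}} = \prs \backslash \hat{\mathcal{A}}$, and closure under countable unions the corresponding union identity), and then invokes Dynkin's $\pi$-$\lambda$ theorem on the $\pi$-system of cylinder sets, computing explicitly that the image of each cylinder set in $\pbas$ is the corresponding cylinder set in $\prs$. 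You instead dualize once and for all: since $\hat{~}$ is a bijection with inverse the restriction map $r(\rho) = \rho\restriction_{\bas}$, you have $\hat{\mathcal{A}} = r^{-1}(\mathcal{A})$, and the claim becomes measurability of $r \co (\prs, \mathcal{C}(\prs)) \rightarrow (\pbas, \mathcal{C}(\pbas))$, which you check only on the generators $\{\mu : \mu(A) \in E\}$, $A \in \bas$, $E \in \mathcal{B}(\mathbb{R})$; their preimages are $\{\rho \in \prs : \rho(A) \in E\}$, measurable because $\bas \subseteq \mathcal{B}(S)$ and evaluation at Borel sets is $\mathcal{C}(\prs)$-measurable by definition. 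Your route is shorter and avoids Dynkin's theorem entirely, since preimages commute with all set operations for free (bijectivity is used exactly once, to identify image with preimage); the paper's route pays for its directness with the hand-verified closure properties and the explicit cylinder-set computation, but in exchange records the useful identity that $\hat{~}$ maps cylinder sets to cylinder sets. Both arguments ultimately rest on the same two facts from Lemma \ref{Bogachev lemma}: $\hat{\mu}(A) = \mu(A)$ for Baire $A$, and $\rho = \widehat{\rho\restriction_{\bas}}$.
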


\begin{proof}
If $\mu$ and $\nu$ are distinct elements of $\pbas$, then there exists an $A \in \bas$ such that $\mu(A) \neq \nu(A)$, which implies $\hat{\mu}(A) \neq \hat{\nu}(A)$, so that $\hat{\mu} \neq \hat{\nu}$. Thus $\hat{~}$ is an injection. That it is also a surjection follows from the fact that for any $\mu \in \prs$, its restriction $\mu \restriction_{\bas}$ to the Baire sigma algebra is a Baire measure that has a unique Radon extension by Lemma \ref{Bogachev lemma}, so that it must be the case that 
\begin{align}\label{surjection of hat}
    \mu = \widehat{{\mu \restriction_{\bas}}} \text{ for all } \mu \in \prs.
\end{align}

Consider the collection $\mathfrak{G}$ of sets $\mathcal{A} \in \mathcal{C}(\pbas)$ for which $\hat{\mathcal{A}}$ is an element of $\mathcal{C}(\prs)$, that is,
\begin{align}\label{Good hat set}
    \mathfrak{G} \defeq \{\mathcal{A} \in \mathcal{C}(\pbas): \hat{\mathcal{A}} \in \mathcal{C}({\prs})\}.
\end{align}

We want to show that $\mathfrak{G}$ equals $\mathcal{C}(\pbas)$. It is not very difficult to see that for any collection $ (A_n)_{n \in \mathbb{N}} \subseteq \mathcal{C}(\pbas)$, we have the following: $$\reallywidehat{\cup_{n \in \mathbb{N}} \mathcal{A}_n} = \cup_{n \in \mathbb{N}} \hat{\mathcal{A}_n}.$$

Hence, by the fact that $\mathcal{C}(\prs)$ is a sigma algebra, it follows that $\mathfrak{G}$ is closed under countable unions. Furthermore, if $\mathcal{A} \in \mathcal{C}(\pbas)$, then we have the following (the inclusion from left to right follows from the injectivity of $\hat{~}$, while the inclusion from right to left follows from the fact that $\hat{~}$ is a bijection): 
\begin{align}\label{hat of complement}
    \reallywidehat{\pbas \backslash \mathcal{A}} = \prs \backslash \hat{\mathcal{A}}.
\end{align}
This shows that $\mathfrak{G}$ is closed under complements as well. Since $\emptyset \in \mathfrak{G}$, it thus follows that $\mathfrak{G}$ is a sigma algebra. Thus by Dynkin's $\pi$-$\lambda$ theorem, it suffices to show that $\mathfrak{G}$ contains a $\pi$-system (that is, a collection of sets that is closed under finite intersections) that generates $\mathcal{C}(\pbas)$. A convenient $\pi$-system of that type is the following (that this is a $\pi$-system is trivial, and the fact that the smallest sigma algebra containing it coincides with $\mathcal{C}(\pbas)$ follows from the fact that any map on $\pbas$ of the type $\mu \mapsto \mu(A)$ for some $A \in \pbas$ is measurable on the former sigma algebra):  
\begin{align}\label{convenient pi system}
    \mathfrak{A} \defeq \{\mathfrak{A}_{C_1, \ldots, C_n}^{A_1, \ldots, A_n}: n \in \mathbb{N}, A_1, \ldots, A_n \in \bas \text{ and } C_1, \ldots, C_n \in \mathcal{B}(\mathbb{R})\},
\end{align}
where for any $n \in \mathbb{N}$, $A_1, \ldots, A_n \in \bas$ and $C_1, \ldots, C_n \in \mathcal{B}(\mathbb{R})$, the set $  \mathfrak{A}_{C_1, \ldots, C_n}^{A_1, \ldots, A_n}$ is defined as follows: 
\begin{align}
    \mathfrak{A}_{C_1, \ldots, C_n}^{A_1, \ldots, A_n} \defeq \{\mu \in \pbas: \mu(A_1) \in C_1, \ldots, \mu(A_n) \in C_n\}.
\end{align}

For $n \in \mathbb{N}$, consider the sets  $A_1, \ldots, A_n \in \mathcal{B}(S)$ and $C_1, \ldots, C_n \in \mathcal{B}(\mathbb{R})$. Define the collection $\mathfrak{B}_{C_1, \ldots, C_n}^{A_1, \ldots, A_n}$ as follows:
\begin{align}
   \mathfrak{B}_{C_1, \ldots, C_n}^{A_1, \ldots, A_n} \defeq \{\mu \in \prs: \mu(A_1) \in C_1, \ldots, \mu(A_n) \in C_n\} \in \mathcal{C}(\prs).
\end{align} 
It thus suffices to show the following claim.
\begin{claim}\label{claim for hat}
We have $\reallywidehat{\mathfrak{A}_{C_1, \ldots, C_n}^{A_1, \ldots, A_n}} = \mathfrak{B}_{C_1, \ldots, C_n}^{A_1, \ldots, A_n}$ for all $A_1, \ldots, A_n \in \bas$ and $C_1, \ldots, C_n \in \mathcal{B}(\mathbb{R})$.
\end{claim}
\begin{proof}[Proof of Claim \ref{claim for hat}]\renewcommand{\qedsymbol}{}
Note that for any $\mathcal{A}, \mathcal{B} \in \mathcal{C}(\pbas)$, we have the following (the inclusion from left to right is trivial, while the inclusion from right to left follows from the injectivity of the map $\hat{~}$):
$$\reallywidehat{\mathcal{A} \cap \mathcal{B}} = \hat{\mathcal{A}} \cap \hat{\mathcal{B}}.$$

Since $\mathfrak{A}_{C_1, \ldots, C_n}^{A_1, \ldots, A_n} = \cap_{i \in [n]} \mathfrak{A}_{C_i}^{A_i}$ and $\mathfrak{B}_{C_1, \ldots, C_n}^{A_1, \ldots, A_n} = \cap_{i \in [n]} \mathfrak{B}_{C_i}^{A_i}$, it suffices to show the following set equality:
\begin{align}\label{required set equality}
    \reallywidehat{{\mathfrak{A}_{C}^{A}}} = \mathfrak{B}_{C}^{A} \text{ for any } C \in \mathcal{B}(\mathbb{R}) \text{ and } A \in \bas. 
\end{align}

Toward that end, let $C \in \mathcal{B}(\mathbb{R}) \text{ and } A \in \bas$. If $\mu \in \mathfrak{A}_{C}^{A}$, then we have $\hat{\mu}(A) = \mu(A) \in C$, so that $\hat{\mu} \in \mathfrak{B}_{C}^{A}$. Thus the left side of \eqref{required set equality} is contained in the right side of \eqref{required set equality}. Conversely, if $\mu \in \mathfrak{B}_{C}^{A}$, then $\mu = \reallywidehat{\mu \restriction_{\bas}}$, where $\mu \restriction_{\bas} \in \mathfrak{A}_{C}^{A}$, completing the proof. 
\end{proof}
\end{proof}

As a corollary, we now have a way to define a natural measure on $\mathcal{C}(\pbas)$ corresponding to any measure on $\mathcal{C}(\prs)$ in the case when $S$ is completely regular, Hausdorff, and $\sigma$-compact. 

\begin{corollary}\label{inverse hat measure}
Let $S$ be a completely regular $\sigma$-compact Hausdorff space. Let $\hat{~} \co \pbas \rightarrow \prs$ be as in Lemma \ref{Bogachev consequence}. Suppose $\mathscr{P}$ is a probability measure on $\mathcal{C}(\prs)$. Define a map $\check{\mathscr{P}} \co \mathcal{C}(\pbas) \rightarrow [0,1]$ as follows:
\begin{align}\label{definition of inverse hat} 
    \check{\mathscr{P}}(\mathcal{A}) \defeq \mathscr{P}(\hat{\mathcal{A}}) \text{ for all } \mathcal{A} \in \mathcal{C}(\pbas).
\end{align}
Then $\check{\mathscr{P}}$ is a probability measure on $\mathcal{C}(\pbas)$. 
\end{corollary}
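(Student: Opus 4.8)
The plan is to verify the two probability-measure axioms for $\check{\mathscr{P}}$ directly from its definition \eqref{definition of inverse hat}, using the structural properties of the bijection $\hat{~}$ already established in Lemma \ref{Bogachev consequence}. The definition $\check{\mathscr{P}}(\mathcal{A}) = \mathscr{P}(\hat{\mathcal{A}})$ makes sense precisely because Lemma \ref{Bogachev consequence} guarantees $\hat{\mathcal{A}} \in \mathcal{C}(\prs)$ for every $\mathcal{A} \in \mathcal{C}(\pbas)$, so the right-hand side is always defined. Thus the bulk of the work has already been done, and what remains is essentially bookkeeping about how $\hat{~}$ interacts with the Boolean and countable operations that define a measure.

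First I would check normalization: $\check{\mathscr{P}}(\pbas) = \mathscr{P}(\reallywidehat{\pbas})$, and since $\hat{~}$ is a bijection onto $\prs$ (established in Lemma \ref{Bogachev consequence}), we have $\reallywidehat{\pbas} = \prs$, so $\check{\mathscr{P}}(\pbas) = \mathscr{P}(\prs) = 1$. Non-negativity is immediate since $\mathscr{P}$ is a probability measure. The substantive axiom is countable additivity: given a sequence $(\mathcal{A}_n)_{n \in \mathbb{N}}$ of pairwise disjoint sets in $\mathcal{C}(\pbas)$, I would invoke the image-of-union identity $\reallywidehat{\cup_{n \in \mathbb{N}} \mathcal{A}_n} = \cup_{n \in \mathbb{N}} \hat{\mathcal{A}_n}$ (already noted in the proof of Lemma \ref{Bogachev consequence}) together with the fact that injectivity of $\hat{~}$ forces the images $\hat{\mathcal{A}_n}$ to be pairwise disjoint as well. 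Then
\begin{align*}
    \check{\mathscr{P}}\left(\bigsqcup_{n \in \mathbb{N}} \mathcal{A}_n\right) = \mathscr{P}\left(\bigsqcup_{n \in \mathbb{N}} \hat{\mathcal{A}_n}\right) = \sum_{n \in \mathbb{N}} \mathscr{P}(\hat{\mathcal{A}_n}) = \sum_{n \in \mathbb{N}} \check{\mathscr{P}}(\mathcal{A}_n),
\end{align*}
where the middle equality is countable additivity of $\mathscr{P}$.

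The preservation of disjointness under $\hat{~}$ is the one point that warrants an explicit word: because $\hat{~}$ is injective, if $\mathcal{A}_m \cap \mathcal{A}_n = \emptyset$ for $m \neq n$ then no measure $\nu \in \prs$ can lie in both $\hat{\mathcal{A}_m}$ and $\hat{\mathcal{A}_n}$, since its unique preimage under $\hat{~}$ would have to belong to both $\mathcal{A}_m$ and $\mathcal{A}_n$. I do not anticipate any genuine obstacle here; the entire difficulty was front-loaded into Lemma \ref{Bogachev consequence}, where the measurability statement $\hat{\mathcal{A}} \in \mathcal{C}(\prs)$ was proven via a Dynkin-system argument. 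The remaining verification is purely formal, so the proof should be short, essentially amounting to transporting the measure-space axioms across the measurable bijection $\hat{~}$ and citing the already-established set-theoretic identities for images of unions and complements under $\hat{~}$.
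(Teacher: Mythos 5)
Your proposal is correct and follows essentially the same route as the paper's own proof: well-definedness from Lemma \ref{Bogachev consequence}, normalization from surjectivity of $\hat{~}$ (via $\reallywidehat{\pbas} = \prs$), and countable additivity from that of $\mathscr{P}$ together with injectivity of $\hat{~}$, which is exactly what guarantees the images $\hat{\mathcal{A}_n}$ remain pairwise disjoint. You have merely spelled out the disjointness-preservation step that the paper leaves implicit.
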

\begin{proof}
The fact that $\check{\mathscr{P}}$ is well-defined follows from Lemma \ref{Bogachev consequence}. Its countable additivity follows from that of $\mathscr{P}$ and the fact that the map $\hat{~}$ is injective. Finally, the fact that $\check{\mathscr{P}}(\pbas) = 1$ follows from the surjectivity of the map $\hat{~}$ (as we have $\reallywidehat{\pbas} = \prs$, whose measure with respect to $\mathscr{P}$ is one).  
\end{proof}

We are now able to show that the main result in Hewitt--Savage \cite{Hewitt-Savage-1955} is a direct consequence of the theorem of Ressel on the Radon presentability of completely regular Hausdorff spaces. 

\begin{theorem}[Hewitt--Savage {\cite[Theorem 7.2, p. 483]{Hewitt-Savage-1955}}] \label{reprove Hewitt and Savage}
Suppose all completely regular spaces are Radon presentable as in Definition \ref{Radon presentable definition}. Let $S$ be a compact Hausdorff space equipped with its Baire sigma algebra $\bas$. Suppose $(\Omega, \mathcal{F}, \mathbb{P})$ is a probability space and let $(X_n)_{n \in \mathbb{N}}$ be a sequence of exchangeable random variables (with respect to the Baire sigma algebra $\bas$). In other words, suppose the following holds:
\begin{align}
    \mathbb{P}(X_1 \in A_1, \ldots, X_k \in A_k) = \mathbb{P}(X_{\sigma(1)} \in A_1, \ldots, X_{\sigma(k)} \in A_k) \nonumber \\
    \text{ for all } k \in \mathbb{N}, \sigma \in S_k, \text{ and } A_1, \ldots, A_k \in \bas. \label{Baire exchangeability}
\end{align}
Then there is a unique probability measure $\mathscr{Q}$ on $\mathcal{C}(\pbas)$ such that
\begin{align}
\mathbb{P}(X_1 \in A_1, \ldots, X_k \in A_k) = \int\limits_{\pbas} \mu(A_1)\cdot \ldots \cdot \mu(A_k) d\mathscr{Q}(\mu) \nonumber \\
    \text{ for all } A_1, \ldots, A_k \in \mathcal{B}_a(S). \label{generalized de Finetti Baire sets}
\end{align}
\end{theorem}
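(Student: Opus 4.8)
The plan is to transfer the given Baire-exchangeable sequence into an honest Borel-measurable, jointly Radon distributed exchangeable sequence on a canonical space, invoke the assumed Radon presentability of the (completely regular) state space $S$ there, and then transport the resulting Radon mixing measure back down to $\mathcal{C}(\pbas)$ using the correspondence $\check{~}$ of Corollary \ref{inverse hat measure}.

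First I would form the law $\mathbb{P}_\infty$ of $(X_n)_{n\in\mathbb{N}}$ on the countable product $S^\infty$. Since each $X_n$ is $\bas$-measurable and the Baire sigma algebra of a countable product of compact Hausdorff spaces is the product of the factor Baire sigma algebras, the map $\omega \mapsto (X_n(\omega))_{n}$ is measurable into $(S^\infty, \mathcal{B}_a(S^\infty))$, so $\mathbb{P}_\infty$ is a Baire probability measure. Now $S^\infty$ is compact Hausdorff by Tychonoff's theorem, hence normal, completely regular, and $\sigma$-compact, so Lemma \ref{Bogachev lemma} furnishes a unique Radon extension $\hat{\mathbb{P}}_\infty$ of $\mathbb{P}_\infty$ to $\mathcal{B}(S^\infty)$. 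On the probability space $(S^\infty, \mathcal{B}(S^\infty), \hat{\mathbb{P}}_\infty)$ I take the coordinate process $(Y_n)_{n \in \mathbb{N}}$, which is continuous, hence Borel measurable, and is jointly Radon distributed because its law is precisely the Radon measure $\hat{\mathbb{P}}_\infty$. To see that $(Y_n)$ is exchangeable at the Borel level, I fix a finite coordinate permutation $T_\sigma$ of $S^\infty$; it is a homeomorphism, so $(T_\sigma)_*\hat{\mathbb{P}}_\infty$ is again Radon, and since $T_\sigma$ preserves Baire sets, the Baire restriction of $(T_\sigma)_*\hat{\mathbb{P}}_\infty$ equals $(T_\sigma)_*\mathbb{P}_\infty = \mathbb{P}_\infty$ by \eqref{Baire exchangeability}. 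The uniqueness clause of Lemma \ref{Bogachev lemma} then forces $(T_\sigma)_*\hat{\mathbb{P}}_\infty = \hat{\mathbb{P}}_\infty$.

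Because the state space $S$ is completely regular, the hypothesis supplies a unique Radon measure $\mathscr{P}$ on $(\prs, \mathcal{C}(\prs))$ with $\hat{\mathbb{P}}_\infty(Y_1\in B_1,\ldots,Y_k\in B_k) = \int_{\prs}\mu(B_1)\cdots\mu(B_k)\,d\mathscr{P}(\mu)$ for all Borel $B_i$. I then set $\mathscr{Q} \defeq \check{\mathscr{P}}$, which is a probability measure on $\mathcal{C}(\pbas)$ by Corollary \ref{inverse hat measure}. For Baire sets $A_1,\ldots,A_k$, the event $\{Y_1\in A_1,\ldots,Y_k\in A_k\}$ is a Baire cylinder, so the left-hand side of the displayed identity equals $\mathbb{P}(X_1\in A_1,\ldots,X_k\in A_k)$ by the agreement of $\hat{\mathbb{P}}_\infty$ with $\mathbb{P}_\infty$ on Baire sets. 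On the right-hand side, the change of variables along the bijection $\hat{~}$ of Lemma \ref{Bogachev consequence}, using that $\hat{~}^{-1}(\nu) = \nu\restriction_{\bas}$ agrees with $\nu$ on every Baire set, converts $\int_{\prs}\nu(A_1)\cdots\nu(A_k)\,d\mathscr{P}(\nu)$ into $\int_{\pbas}\mu(A_1)\cdots\mu(A_k)\,d\mathscr{Q}(\mu)$, establishing \eqref{generalized de Finetti Baire sets}. For uniqueness I would argue directly on $\mathcal{C}(\pbas)$ rather than transporting through $\hat{~}$ (whose inverse need not respect the Borel-generated part of $\mathcal{C}(\prs)$): any two measures satisfying \eqref{generalized de Finetti Baire sets} assign equal values to all mixed moments of the evaluation maps $\mu \mapsto \mu(A)$ over $A \in \bas$, so the monotone class argument recorded in Theorem \ref{non-Radon de Finetti uniqueness} (through Theorem \ref{Dellacherie's theorem}) forces them to agree on the generating $\pi$-system of cylinder sets, and hence everywhere by Dynkin's $\pi$-$\lambda$ theorem.

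I expect the main obstacle to be the lifting step: verifying that the coordinate process on $S^\infty$ is genuinely exchangeable and jointly Radon distributed. This hinges on correctly identifying the Baire sigma algebra of the countable product as the product sigma algebra, and on pushing Baire-level permutation invariance up to the Borel level through the uniqueness of the Radon extension in Lemma \ref{Bogachev lemma}. The subsequent descent to $\mathcal{C}(\pbas)$ is then bookkeeping built on the correspondence $\check{~}$ already established, and the uniqueness reduces to the cylinder-algebra argument of Theorem \ref{non-Radon de Finetti uniqueness}, which is insensitive to whether the measures involved are Baire, Borel, or Radon.
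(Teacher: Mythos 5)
Your proposal is correct, and its skeleton coincides with the paper's proof: both lift the sequence to the compact space $S^{\infty}$ via the identification $\mathcal{B}_a(S^{\infty})=\bigotimes\mathcal{B}_a(S)$ (the paper's \eqref{product Baire}), extend the Baire law to its unique Radon extension by Lemma \ref{Bogachev lemma}, apply the assumed Radon presentability to the coordinate process on $S^{\infty}$, and descend to $\mathcal{C}(\pbas)$ through the correspondence of Corollary \ref{inverse hat measure}. You deviate in two sub-steps, both to good effect. First, for exchangeability of the coordinate process you argue globally: the permutation homeomorphism $T_\sigma$ pushes $\hat{\mathbb{P}}_\infty$ forward to a Radon measure with the same Baire restriction, so the uniqueness clause of Lemma \ref{Bogachev lemma} forces $(T_\sigma)_*\hat{\mathbb{P}}_\infty=\hat{\mathbb{P}}_\infty$; the paper instead compares the finite-dimensional marginals $\psi$ and $\psi_\sigma$ on $S^k$ and checks their equality through the explicit outer-regularization formula \eqref{hat mu formula}. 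Your version is cleaner and yields invariance of the entire joint law rather than only of the $k$-dimensional marginals; just make explicit the (routine) $\pi$-$\lambda$ step showing that \eqref{Baire exchangeability}, which is stated only for Baire rectangles, forces $(T_\sigma)_*\mathbb{P}_\infty=\mathbb{P}_\infty$ on all of $\mathcal{B}_a(S^{\infty})$ --- the paper glosses the corresponding point as well. Second, for uniqueness the paper simply defers to Hewitt--Savage \cite[Theorem 9.4, p. 489]{Hewitt-Savage-1955}, whereas you rerun the moment/monotone-class argument of Theorem \ref{non-Radon de Finetti uniqueness} (via Theorem \ref{Dellacherie's theorem}) directly on $(\pbas,\mathcal{C}(\pbas))$; that argument is purely measure-theoretic and transfers verbatim from Borel to Baire evaluations, so your proof is self-contained where the paper's is not. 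Your caution about not transporting uniqueness through $\hat{~}$ is also well placed: Lemma \ref{Bogachev consequence} controls only images $\hat{\mathcal{A}}$ of sets $\mathcal{A}\in\mathcal{C}(\pbas)$, not preimages of arbitrary elements of $\mathcal{C}(\prs)$, so the pushforward under $\hat{~}$ of a measure on $\mathcal{C}(\pbas)$ need not be defined on all of $\mathcal{C}(\prs)$, and the direct argument avoids this issue entirely.
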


\begin{proof}
We will only prove the existence of a probability measure $\mathscr{Q}$ on $\mathcal{C}(\bas)$ satisfying \eqref{generalized de Finetti Baire sets}, with uniqueness following more elementarily from Hewitt--Savage \cite[Theorem 9.4, p. 489]{Hewitt-Savage-1955}.

Since $S$ is compact Hausdorff, so is the countable product $S^{\infty}$ under the product topology (this follows from Tychonoff's theorem). Furthermore, Bogachev \cite[Lemma 6.4.2 (iii), p. 14, vol. 2]{Bogachev-measure} implies the following: 
\begin{align}\label{product Baire}
    \mathcal{B}_a(S^{\infty}) = \bigotimes \mathcal{B}_a(S),
\end{align}
where $\bigotimes \mathcal{B}_a(S)$ denotes the product sigma algebra on $S^{\infty}$ induced by the Baire sigma algebra $S$ (thus $\bigotimes \mathcal{B}_a(S)$ is the smallest sigma algebra on $S^{\infty}$ that makes the projection $\pi_i \co S^{\infty} \rightarrow S$ Baire measurable for each $i \in \mathbb{N}$). Let $\nu \in \pbasinf$ be the distribution of the $S^{\infty}$-valued Baire measurable random variable $(X_n)_{n \in \mathbb{N}}$ (the Baire measurability of this random variable follows from the Baire measurability of the $X_i$ together with \eqref{product Baire}).

Let  $\hat{~} \co \pbasinf \rightarrow \prsinf$ be as in Lemma \ref{Bogachev consequence}. Consider $\hat{\nu} \in \prsinf$. We show in the next claim that the Baire exchangeability of the sequence $(X_n)_{n \in \mathbb{N}}$ implies the exchangeability of the measure $\hat{\nu}$. In particular, let $\Omega' \defeq S^{\infty}$, $\mathcal{F}' \defeq \mathcal{B}(S^{\infty})$, and $\mathbb{P}' \defeq \hat{\nu}$. Consider the sequence of Borel measurable $S$-valued random variables $(Y_n)_{n \in \mathbb{N}}$ where, for each $n \in \mathbb{N}$, the map $Y_n \co \Omega' \rightarrow S$ is the projection onto the $n^{\text{th}}$ coordinate. Then we have the following claim:

\begin{claim}\label{extension to radon}
The sequence $(Y_n)_{n \in \mathbb{N}}$ is a jointly Radon distributed sequence of exchangeable random variables taking values in a completely regular Hausdorff space.
\end{claim}
\begin{proof}[Proof of Claim \ref{extension to radon}]\renewcommand{\qedsymbol}{}
The fact that $(Y_n)_{n \in \mathbb{N}}$ is a jointly Radon distributed sequence is immediate from the construction. Thus we only need to check the exchangeability of the $(Y_n)_{n \in \mathbb{N}}$ as Borel measurable random variables. 

To that end, suppose $k \in \mathbb{N}$ and $B \in \mathcal{B}(\mathbb{R}^k)$. Let $\psi \in \prsk$ be the Borel distribution of $(Y_1, \ldots, Y_k)$. That is, $\psi$ is the measure on $(\mathbb{R}^k, \mathcal{B}(\mathbb{R}^k))$ given by the pushforward $\mathbb{P}' \circ (Y_1, \ldots, Y_k)^{-1}$ (which is Radon, being the marginal of a Radon distribution on $S^{\infty}$). Let $\psi'$ be its restriction to the Baire sigma algebra on $S^k$---that is, $\psi' \defeq \psi \restriction_{\bask}$. Let $\sigma \in S_k$, and let $\psi_{\sigma}$ be the pushforward $\mathbb{P}' \circ (Y_{\sigma(1)}, \ldots, Y_{\sigma(k)}) \in \prsk$ induced by the permuted random vector $(Y_{\sigma(1)}, \ldots, Y_{\sigma(k)})$, with $\psi'_{\sigma} \defeq \psi_{\sigma}\restriction_{\bask}$ being its restriction to the Baire sigma algebra on $S^k$. It suffices to show that $\psi = \psi_{\sigma}$.

Note that for any $A \in \bask$, we have the following chain of equalities:
\begin{align}
    \psi'(A) &= \mathbb{P}' ((Y_1, \ldots, Y_k) \in A) \nonumber \\
    &= \hat{\nu}(A) \nonumber \\
    &= \nu(A) \nonumber \\
    &= \mathbb{P}((X_1, \ldots, X_k) \in A)  \nonumber \\
    &= \mathbb{P}((X_{\sigma(1)}, \ldots, X_{\sigma(k)}) \in A) \label{Borel to Baire exchangeability} 
    \\
    &= \mathbb{P}'((Y_{\sigma(1)}, \ldots, Y_{\sigma(k)}) \in A), \nonumber \\
    &= \psi_{\sigma}(A) \nonumber \\
    &= \psi'_{\sigma}(A). \label{permuted Baire}
\end{align}
In the above, equation \eqref{Borel to Baire exchangeability} follows from the Baire-exchangeability of $(X_1, \ldots, X_k)$, while the other lines follow from the fact that $A \in \bask$. 

Note that  by Lemma \ref{Bogachev lemma}, we have $\psi = \hat{\psi'}$ and $\psi_{\sigma} = \hat{\psi'_{\sigma}}$. By \eqref{hat mu formula}, we thus have the following for any $B \in \mathcal{B}(S^k)$ (where we use \eqref{permuted Baire} in the third line):
\begin{align*}
     \psi(B) &= \hat{\psi'}(B) \\
     &= \inf_{U \in \tau_B(S^k)} \sup_{\substack{A \in \mathcal{B}_a(S^k) \\ A \subseteq U}} \psi'(A) \\
    &= \inf_{U \in \tau_B(S^k)} \sup_{\substack{A \in \mathcal{B}_a(S^k) \\ A \subseteq U}} \psi'_{\sigma}(A) \\
    &= \hat{\psi'_{\sigma}}(A) \\
    &= \psi_{\sigma}(B) \text{ for all } B \in \mathbb{R}^k,
\end{align*}
which completes the proof of the claim. 
\end{proof}

Since completely regular Hausdorff spaces are Radon presentable, we obtain a unique Radon measure $\mathscr{P}$ on $(\prs, \mathcal{C}(\prs))$ such that the following holds:
\begin{align}\label{Ressel's conclusion}
    \mathbb{P}'(Y_1 \in B_1, \ldots, Y_k \in B_k) = \int\limits_{\prs} \mu(B_1)\cdot \ldots \cdot \mu(B_k) d\mathscr{P}(\mu) \nonumber \\
    \text{ for all } B_1, \ldots, B_k \in \mathcal{B}(S). 
\end{align}

Define $\mathscr{Q} \defeq \check{\mathscr{P}} \co \mathcal{C}(\pbasinf) \rightarrow [0,1]$ as in Lemma \ref{inverse hat measure}. We claim that $\mathscr{Q}$ satisfies \eqref{generalized de Finetti Baire sets}. Indeed, if $k \in \mathbb{N}$ and $A_1, \ldots, A_k \in \mathcal{B}_a(S)$, then we have:
\begin{align*}
    \mathbb{P}(X_1 \in A_1, \ldots, X_k \in A_k) &= \nu (A_1 \times \ldots \times A_k) \\
    &= \hat{\nu} (A_1 \times \ldots \times A_k) \\
    &=  \mathbb{P}'(Y_1 \in A_1, \ldots, Y_k \in A_k) \\
    &= \int\limits_{\prs} \mu(A_1)\cdot \ldots \cdot \mu(A_k) d\mathscr{P}(\mu) \\
    &= \int_{[0,1]} \mathscr{P}(\{ \mu \in \prs: \mu(A_1)\cdot \ldots \cdot \mu(A_k) > y \}) d\lambda(y) \\
    &= \int_{[0,1]} \mathscr{P}(\reallywidehat{\mathfrak{A}_y}) d\lambda(y), 
\end{align*}
where $$\mathfrak{A}_y \defeq \{ \mu \in \pbas: \mu(A_1)\cdot \ldots \cdot \mu(A_k) > y \}.$$

As a consequence, we have the following:
\begin{align*}
    \mathbb{P}(X_1 \in A_1, \ldots, X_k \in A_k) &= \int_{[0,1]} \check{\mathscr{P}}({\mathfrak{A}_y}) d\lambda(y) \\
    &= \int_{[0,1]} \mathscr{Q}(\{ \mu \in \pbas: \mu(A_1)\cdot \ldots \cdot \mu(A_k) > y \}) d\lambda(y) \\
    &= \int\limits_{\pbas} \mu(A_1)\cdot \ldots \cdot \mu(A_k) d\mathscr{Q}(\mu), 
\end{align*}
which completes the proof.
\end{proof}

\end{appendix}

\bibliography{References}

\newcommand{\noop}[1]{}
\providecommand{\bysame}{\leavevmode\hbox to3em{\hrulefill}\thinspace}
\providecommand{\MR}{\relax\ifhmode\unskip\space\fi MR }
\providecommand{\MRhref}[2]{%
  \href{http://www.ams.org/mathscinet-getitem?mr=#1}{#2}
}
\providecommand{\href}[2]{#2}
\begin{thebibliography}{10}

\bibitem{Alam-deFinetti-Hewitt-Savage}
Irfan Alam, \emph{{Generalizing the de Finetti--Hewitt--Savage theorem}}, arXiv e-prints (2020), arXiv:2008.08754.

\bibitem{Alam-deFinetti}
\bysame, \emph{A nonstandard proof of de {F}inetti's theorem for {B}ernoulli random variables}, J. Stoch. Anal. \textbf{1} (2020), no.~4, Art. 15, 18. \MR{4188596}

\bibitem{AlamPLMS}
\bysame, \emph{{On the de--Finetti--Hewitt--Savage Theorem}},  (2024), Submitted to \textit{Proceedings of the London Mathematical Society}; preprint available at \href{https://drive.google.com/file/d/1udPQyOMtQXqaXzKykkeC-zD25ZbDqcEF/view?usp=sharing}{this link}.

\bibitem{Albeverio}
Sergio Albeverio, Raphael H{\o}egh-Krohn, Jens~Erik Fenstad, and Tom Lindstr{\o}m, \emph{Nonstandard methods in stochastic analysis and mathematical physics}, Pure and Applied Mathematics, vol. 122, Academic Press, Inc., Orlando, FL, 1986. \MR{859372}

\bibitem{Aldaz}
J.~M. Aldaz, \emph{A characterization of universal {L}oeb measurability for completely regular {H}ausdorff spaces}, Canad. J. Math. \textbf{44} (1992), no.~4, 673--690. \MR{1178563}

\bibitem{Aldous-book}
David~J. Aldous, \emph{Exchangeability and related topics}, \'{E}cole d'\'{e}t\'{e} de probabilit\'{e}s de {S}aint-{F}lour, {XIII}---1983, Lecture Notes in Math., vol. 1117, Springer, Berlin, 1985, pp.~1--198. \MR{883646}

\bibitem{ALexandroff}
A.~D. Alexandroff, \emph{Additive set-functions in abstract spaces}, Rec. Math. [Mat. Sbornik] N. S. \textbf{8 (50)} (1940), 307--348. \MR{0004078}

\bibitem{Anderson-thesis}
Robert~M. Anderson, \emph{S{TAR}-{FINITE} {PROBABILITY} {THEORY}}, ProQuest LLC, Ann Arbor, MI, 1977, Thesis (Ph.D.)--Yale University. \MR{2627217}

\bibitem{Anderson-transactions}
\bysame, \emph{Star-finite representations of measure spaces}, Trans. Amer. Math. Soc. \textbf{271} (1982), no.~2, 667--687. \MR{654856}

\bibitem{Anderson--Econometrica}
\bysame, \emph{Strong core theorems with nonconvex preferences}, Econometrica \textbf{53} (1985), no.~6, 1283--1294. \MR{809911}

\bibitem{Anderson--Rashid}
Robert~M. Anderson and Salim Rashid, \emph{A nonstandard characterization of weak convergence}, Proc. Amer. Math. Soc. \textbf{69} (1978), no.~2, 327--332. \MR{480925}

\bibitem{NATO}
Leif~O Arkeryd, Nigel~J Cutland, and C~Ward Henson, \emph{Nonstandard analysis: Theory and applications}, vol. 493, Springer Science \& Business Media, 2012.

\bibitem{arthan2004eudoxus}
R.~D. {Arthan}, \emph{{The Eudoxus Real Numbers}}, arXiv Mathematics e-prints (2004), math/0405454.

\bibitem{eightfold}
Vieri Benci, Marco Forti, and Mauro Di~Nasso, \emph{The eightfold path to nonstandard analysis}, Nonstandard methods and applications in mathematics, Lect. Notes Log., vol.~25, Assoc. Symbol. Logic, La Jolla, CA, 2006, pp.~3--44. \MR{2209073}

\bibitem{Billingsley1968}
Patrick Billingsley, \emph{Convergence of probability measures}, John Wiley \& Sons, Inc., New York-London-Sydney, 1968. \MR{233396}

\bibitem{Bogachev-measure}
Vladimir~I. Bogachev, \emph{Measure theory. {V}ol. {I}, {II}}, Springer-Verlag, Berlin, 2007. \MR{2267655}

\bibitem{Model_Theory}
C.~C. Chang and H.~J. Keisler, \emph{Model theory}, third ed., Studies in Logic and the Foundations of Mathematics, vol.~73, North-Holland Publishing Co., Amsterdam, 1990. \MR{1059055}

\bibitem{CifarelliRegazzini}
Donato~Michele Cifarelli and Eugenio Regazzini, \emph{De {F}inetti's contribution to probability and statistics}, Statist. Sci. \textbf{11} (1996), no.~4, 253--282. \MR{1445983}

\bibitem{Cohen}
Paul~J. Cohen, \emph{Set theory and the continuum hypothesis}, W. A. Benjamin, Inc., New York-Amsterdam, 1966. \MR{0232676}

\bibitem{Cutland}
Nigel Cutland (ed.), \emph{Nonstandard analysis and its applications}, London Mathematical Society Student Texts, vol.~10, Cambridge University Press, Cambridge, 1988, Papers from a conference held at the University of Hull, Hull, 1986. \MR{971063}

\bibitem{Dacunha-Castelle}
D.~Dacunha-Castelle, \emph{A survey on exchangeable random variables in normed spaces}, Exchangeability in probability and statistics ({R}ome, 1981), North-Holland, Amsterdam-New York, 1982, pp.~47--60. \MR{675964}

\bibitem{definetti1}
Bruno de~Finetti, \emph{Funzione caratteristica di un fenomeno aleatorio}, Atti del Congresso Internazionale dei Matematici: Bologna del 3 al 10 de settembre di 1928, 1929, pp.~179--190.

\bibitem{definetti2}
\bysame, \emph{La pr\'{e}vision : ses lois logiques, ses sources subjectives}, Ann. Inst. H. Poincar\'{e} \textbf{7} (1937), no.~1, 1--68. \MR{1508036}

\bibitem{Dellacherie}
Claude Dellacherie and Paul-Andr\'{e} Meyer, \emph{Probabilities and potential}, North-Holland Mathematics Studies, vol.~29, North-Holland Publishing Co., Amsterdam-New York; North-Holland Publishing Co., Amsterdam-New York, 1978. \MR{521810}

\bibitem{combinatorial}
Mauro Di~Nasso, Isaac Goldbring, and Martino Lupini, \emph{Nonstandard methods in {R}amsey theory and combinatorial number theory}, Lecture Notes in Mathematics, vol. 2239, Springer, Cham, 2019. \MR{3931702}

\bibitem{Diaconis-Freedman-Finite}
P.~Diaconis and D.~Freedman, \emph{Finite exchangeable sequences}, Ann. Probab. \textbf{8} (1980), no.~4, 745--764. \MR{577313}

\bibitem{Dubins-Freedman}
Lester~E. Dubins and David~A. Freedman, \emph{Exchangeable processes need not be mixtures of independent, identically distributed random variables}, Z. Wahrsch. Verw. Gebiete \textbf{48} (1979), no.~2, 115--132. \MR{534840}

\bibitem{Dynkin-exchangeable}
E.~B. Dynkin, \emph{Classes of equivalent random quantities}, Uspehi Matem. Nauk (N.S.) \textbf{8} (1953), no.~2(54), 125--130. \MR{0055601}

\bibitem{ely2010nonstandard}
Robert Ely, \emph{Nonstandard student conceptions about infinitesimals}, Journal for Research in Mathematics Education \textbf{41} (2010), no.~2, 117--146.

\bibitem{ely2021teaching}
\bysame, \emph{Teaching calculus with infinitesimals and differentials}, ZDM--Mathematics Education \textbf{53} (2021), 591--604.

\bibitem{Fenstad1985}
Jens~Erik Fenstad, \emph{Is nonstandard analysis relevant for the philosophy of mathematics?}, vol.~62, 1985, The present state of the problem of the foundations of mathematics (Florence, 1981), pp.~289--301. \MR{788511}

\bibitem{Frayne-1962}
T.~Frayne, A.~C. Morel, and D.~S. Scott, \emph{Reduced direct products}, Fund. Math. \textbf{51} (1962/63), 195--228. \MR{142459}

\bibitem{Frayne-1958}
T.E. Frayne, D.~S. Scott, and A.~Tarski, \emph{Reduced products}, Notices, Amer. Math. Soc. \textbf{5} (1958), 673--674.

\bibitem{Fremlin-rvmc}
D.~H. Fremlin, \emph{Real-valued-measurable cardinals}, Set theory of the reals ({R}amat {G}an, 1991), Israel Math. Conf. Proc., vol.~6, Bar-Ilan Univ., Ramat Gan, 1993, updated version available at author's website: \url{https://www1.essex.ac.uk/maths/people/fremlin/rvmc.pdf}, pp.~151--304. \MR{1234282}

\bibitem{Fremlin-4}
\bysame, \emph{Measure theory. {V}ol. 4}, Torres Fremlin, Colchester, 2006, Topological measure spaces. Part I, II, Corrected second printing of the 2003 original; individual chapters cited from the repository on author's website: \url{https://www1.essex.ac.uk/maths/people/fremlin/mtcont.htm}, pp.~Part I: 528 pp.; Part II: 439+19 pp. (errata). \MR{2462372}

\bibitem{Gartside-Reznichenko}
P.~M. Gartside and E.~A. Reznichenko, \emph{Near metric properties of function spaces}, Fund. Math. \textbf{164} (2000), no.~2, 97--114. \MR{1784703}

\bibitem{Gartside-chapter}
Paul Gartside, \emph{e-11 - generalized metric spaces, part i}, Encyclopedia of General Topology (Klaas~Pieter Hart, Jun iti Nagata, Jerry~E. Vaughan, Vitaly~V. Fedorchuk, Gary Gruenhage, Heikki~J.K. Junnila, Krystyna~M. Kuperberg, Jan {van Mill}, Tsugunori Nogura, Haruto Ohta, Akihiro Okuyama, Roman Pol, and Stephen Watson, eds.), Elsevier, Amsterdam, 2003, pp.~273 -- 275.

\bibitem{gerhardt1960}
CJ~Gerhardt, \emph{Die philosophischen schriften v. gottfried wilhelm leibniz: 7 bd}, Georg Olms, 1960.

\bibitem{Goldblatt}
Robert Goldblatt, \emph{Lectures on the hyperreals}, Graduate Texts in Mathematics, vol. 188, Springer-Verlag, New York, 1998, An introduction to nonstandard analysis. \MR{1643950}

\bibitem{Hamkins_2024}
Joel~David Hamkins, \emph{How the continuum hypothesis could have been a fundamental axiom}, Journal for the Philosophy of Mathematics \textbf{1} (2024), 113–126.

\bibitem{Henson}
C.~Ward Henson, \emph{Analytic sets, {B}aire sets and the standard part map}, Canadian J. Math. \textbf{31} (1979), no.~3, 663--672. \MR{536371}

\bibitem{Henson--Keisler}
C.~Ward Henson and H.~Jerome Keisler, \emph{On the strength of nonstandard analysis}, J. Symbolic Logic \textbf{51} (1986), no.~2, 377--386. \MR{840415}

\bibitem{Herrlich}
Horst Herrlich, \emph{Wann sind alle stetigen {A}bbildungen in {$Y$} konstant?}, Math. Z. \textbf{90} (1965), 152--154. \MR{185565}

\bibitem{Hewitt}
Edwin Hewitt, \emph{On two problems of {U}rysohn}, Ann. of Math. (2) \textbf{47} (1946), 503--509. \MR{17527}

\bibitem{HewittRings}
\bysame, \emph{Rings of real-valued continuous functions. {I}}, Trans. Amer. Math. Soc. \textbf{64} (1948), 45--99. \MR{26239}

\bibitem{Hewitt-Savage-1955}
Edwin Hewitt and Leonard~J. Savage, \emph{Symmetric measures on {C}artesian products}, Trans. Amer. Math. Soc. \textbf{80} (1955), 470--501. \MR{76206}

\bibitem{Hoover-row-column-exchangeabiliy}
D.~N. Hoover, \emph{Row-column exchangeability and a generalized model for probability}, Exchangeability in probability and statistics ({R}ome, 1981), North-Holland, Amsterdam-New York, 1982, pp.~281--291. \MR{675982}

\bibitem{Hoover-IAS}
Douglas~N Hoover, \emph{Relations on probability spaces and arrays of random variables}, Preprint, Institute for Advanced Study, Princeton, NJ \textbf{2} (1979).

\bibitem{HrbacekFoundations}
Karel Hrb\'{a}\v{c}ek, \emph{Axiomatic foundations for nonstandard analysis}, Fund. Math. \textbf{98} (1978), no.~1, 1--19. \MR{528351}

\bibitem{Jech}
Thomas Jech, \emph{Set theory}, Springer Monographs in Mathematics, Springer-Verlag, Berlin, 2003, The third millennium edition, revised and expanded. \MR{1940513}

\bibitem{Kallenberg-symmetries}
Olav Kallenberg, \emph{Probabilistic symmetries and invariance principles}, Probability and its Applications (New York), Springer, New York, 2005. \MR{2161313}

\bibitem{KatzSherry}
Mikhail~G. Katz and David Sherry, \emph{Leibniz?s infinitesimals: Their fictionality, their modern implementations, and their foes from berkeley to russell and beyond}, Erkenntnis \textbf{78} (2013), no.~3, 571--625.

\bibitem{KeislerOnline}
H.~Jerome Keisler, \emph{Elementary calculus: An infinitesimal approach}, available at author's website \url{https://people.math.wisc.edu/~hkeisler/calc.html}.

\bibitem{keisler1976foundations}
\bysame, \emph{Foundations of infinitesimal calculus}, vol.~20, Prindle, Weber \& Schmidt Boston, 1976.

\bibitem{keisler2013elementary}
\bysame, \emph{Elementary calculus: An infinitesimal approach}, Dover Publications, 2013.

\bibitem{Kelley}
John~L. Kelley, \emph{General topology}, Springer-Verlag, New York-Berlin, 1975, Reprint of the 1955 edition [Van Nostrand, Toronto, Ont.], Graduate Texts in Mathematics, No. 27. \MR{0370454}

\bibitem{Kingman-survey}
J.~F.~C. Kingman, \emph{Uses of exchangeability}, Ann. Probability \textbf{6} (1978), no.~2, 183--197. \MR{494344}

\bibitem{Landers-Rogge}
D.~Landers and L.~Rogge, \emph{Universal {L}oeb-measurability of sets and of the standard part map with applications}, Trans. Amer. Math. Soc. \textbf{304} (1987), no.~1, 229--243. \MR{906814}

\bibitem{demystifying}
Ambrose Lo, \emph{Demystifying the integrated tail probability expectation formula}, The American Statistician \textbf{73} (2019), no.~4, 367--374.

\bibitem{Loeb-1975}
Peter~A. Loeb, \emph{Conversion from nonstandard to standard measure spaces and applications in probability theory}, Trans. Amer. Math. Soc. \textbf{211} (1975), 113--122. \MR{390154}

\bibitem{Loeb-1976}
\bysame, \emph{Applications of nonstandard analysis to ideal boundaries in potential theory}, Israel J. Math. \textbf{25} (1976), no.~1-2, 154--187. \MR{457757}

\bibitem{Loeb-1979}
\bysame, \emph{Weak limits of measures and the standard part map}, Proc. Amer. Math. Soc. \textbf{77} (1979), no.~1, 128--135. \MR{539645}

\bibitem{Loeb-introduction}
\bysame, \emph{An introduction to general nonstandard analysis}, Nonstandard analysis for the working mathematician, Springer, Dordrecht, 2015, pp.~37--78. \MR{3409513}

\bibitem{working-mathematician}
Peter~A. Loeb and Manfred P.~H. Wolff (eds.), \emph{Nonstandard analysis for the working mathematician}, second ed., Springer, Dordrecht, 2015. \MR{3381849}

\bibitem{Los-1955}
Jerzy {\L}o{\'s}, \emph{Quelques remarques, th{\`e}or{\'e}mes et probl{\'e}mes sur les classes d{\`e}finissables d'alg{\'e}bres}, Mathematical Interpretation of Formal Systems, Studies in Logic and the Foundations of Mathematics, North-Holland Publishing Company, 1955, pp.~98--113.

\bibitem{Lundell-Weingram}
Albert~T. Lundell and Stephen Weingram, \emph{The topology of {CW} complexes}, The University Series in Higher Mathematics, Van Nostrand Reinhold Co., New York, 1969. \MR{3822092}

\bibitem{luxemburgCalTech}
Wilhelmus Anthonius~Josephus Luxemburg, \emph{Non-standard analysis: lectures on a. robinson's theory of infinitesimals and infinitely large numbers},  (1966).

\bibitem{MaddyBelieving}
Penelope Maddy, \emph{Believing the axioms. {I}}, J. Symbolic Logic \textbf{53} (1988), no.~2, 481--511. \MR{947855}

\bibitem{NelsonInroduction}
Edward Nelson, \emph{Internal set theory: a new approach to nonstandard analysis}, Bull. Amer. Math. Soc. \textbf{83} (1977), no.~6, 1165--1198. \MR{469763}

\bibitem{NelsonREPT}
\bysame, \emph{Radically elementary probability theory}, Annals of Mathematics Studies, vol. 117, Princeton University Press, Princeton, NJ, 1987. \MR{906454}

\bibitem{nelsonMathFaith}
\bysame, \emph{Mathematics and faith}, URL: http://www. math. princeton. edu/\~{} nelson/papers. html (2002).

\bibitem{Pantsulaia}
G.~Pantsulaia, \emph{On separation properties for families of probability measures}, vol.~10, 2003, Dedicated to the memory of Professor Revaz Chitashvili, pp.~335--341. \MR{2009981}

\bibitem{Parthasarathy1967Book}
K.~R. Parthasarathy, \emph{Probability measures on metric spaces}, Probability and Mathematical Statistics, vol. No. 3, Academic Press, Inc., New York-London, 1967. \MR{226684}

\bibitem{PotaptchikRoySchrittesser}
Peter {Potaptchik}, Daniel~M. {Roy}, and David {Schrittesser}, \emph{{de Finetti's theorem and the existence of regular conditional distributions and strong laws on exchangeable algebras}}, arXiv e-prints (2023), arXiv:2312.16349.

\bibitem{Prokhorov}
Yu.~V. Prokhorov, \emph{Convergence of random processes and limit theorems in probability theory}, Teor. Veroyatnost. i Primenen. \textbf{1} (1956), 177--238. \MR{0084896}

\bibitem{Ressel-harmonic}
Paul Ressel, \emph{De {F}inetti-type theorems: an analytical approach}, Ann. Probab. \textbf{13} (1985), no.~3, 898--922. \MR{799427}

\bibitem{robinson1961}
Abraham Robinson, \emph{Non-standard analysis}, Proc. Roy. Acad. Sci. \textbf{64} (1961), 432--440.

\bibitem{Robinson}
\bysame, \emph{Non-standard analysis}, North-Holland Publishing Co., Amsterdam, 1966. \MR{0205854}

\bibitem{RobinsonZakon}
Abraham Robinson and Elias Zakon, \emph{A set-theoretical characterization of enlargements}, Applications of {M}odel {T}heory to {A}lgebra, {A}nalysis, and {P}robability ({I}nternat. {S}ympos., {P}asadena, {C}alif., 1967), Holt, Rinehart and Winston, New York, 1969, pp.~109--122. \MR{0239965}

\bibitem{Ross_NATO}
David~A. Ross, \emph{Loeb measure and probability}, Nonstandard analysis ({E}dinburgh, 1996), NATO Adv. Sci. Inst. Ser. C Math. Phys. Sci., vol. 493, Kluwer Acad. Publ., Dordrecht, 1997, pp.~91--120. \MR{1603231}

\bibitem{SchwartzAnnouncement}
Laurent Schwartz, \emph{Radon measures on {Souslin} spaces}, Proc. {Symp}. {Analysis}, {Queen}'s {Univ}. 1967, {Queen}'s {Pap}. {Pure} {Appl}. {Math}. 10, 157-168 (1967)., 1967.

\bibitem{Schwartz}
\bysame, \emph{Radon measures on arbitrary topological spaces and cylindrical measures}, Published for the Tata Institute of Fundamental Research, Bombay by Oxford University Press, London, 1973, Tata Institute of Fundamental Research Studies in Mathematics, No. 6. \MR{0426084}

\bibitem{skolem1934}
Th~Skolem, \emph{{\"U}ber die nicht-charakterisierbarkeit der zahlenreihe mittels endlich oder abz{\"a}hlbar unendlich vieler aussagen mit ausschliesslich zahlenvariablen}, Fundamenta mathematicae \textbf{23} (1934), no.~1, 150--161.

\bibitem{Slaughter}
F.~G. Slaughter, Jr., \emph{The closed image of a metrizable space is {$M_{1}$}}, Proc. Amer. Math. Soc. \textbf{37} (1973), 309--314. \MR{310832}

\bibitem{Taocompletion}
Terence Tao, \emph{Nonstandard analysis as a completion of standard analysis}, blog post at \url{https://terrytao.wordpress.com/2010/11/27/nonstandard-analysis-as-a-completion-of-standard-analysis/}, Nov 2010.

\bibitem{tarski1930}
Alfred Tarski, \emph{Une contribution {\`a} la th{\'e}orie de la mesure}, Fundamenta Mathematicae \textbf{15} (1930), no.~1, 42--50.

\bibitem{Topsoe-compactness}
Flemming Tops{\o}e, \emph{Compactness in spaces of measures}, Studia Math. \textbf{36} (1970), 195--222. (errata insert). \MR{268347}

\bibitem{Topsie}
\bysame, \emph{Topology and measure}, Lecture Notes in Mathematics, Vol. 133, Springer-Verlag, Berlin-New York, 1970. \MR{0422560}

\bibitem{towsner2023aldoushoover}
Henry {Towsner}, \emph{{An Aldous--Hoover Theorem for Radon Distributions}}, arXiv e-prints (2023), arXiv:2306.03057.

\bibitem{ulam1930}
Stanislaw~Marcin Ulam, \emph{Zur masstheorie in der allgemeinen mengenlehre}, Uniwersytet, seminarjum matematyczne, 1930.

\bibitem{Urysohn}
Paul Urysohn, \emph{\"{U}ber die {M}\"{a}chtigkeit der zusammenh\"{a}ngenden {M}engen}, Math. Ann. \textbf{94} (1925), no.~1, 262--295. \MR{1512258}

\bibitem{Vakhania-Tariladze-Chobanyan}
N.~N. Vakhania, V.~I. Tarieladze, and S.~A. Chobanyan, \emph{Probability distributions on {B}anach spaces}, Mathematics and its Applications (Soviet Series), vol.~14, D. Reidel Publishing Co., Dordrecht, 1987, Translated from the Russian and with a preface by Wojbor A. Woyczynski. \MR{1435288}

\bibitem{varadarajan}
V.~S. Varadarajan, \emph{Measures on topological spaces}, Mat. Sb. (N.S.) \textbf{55 (97)} (1961), 35--100. \MR{0148838}

\bibitem{Varadarajan-Borel}
\bysame, \emph{Groups of automorphisms of {B}orel spaces}, Trans. Amer. Math. Soc. \textbf{109} (1963), 191--220. \MR{159923}

\bibitem{Winkler}
Gerhard Winkler, \emph{Simplexes of measures with closed extreme boundary and presentability of {H}ausdorff spaces}, Math. Nachr. \textbf{146} (1990), 47--56. \MR{1069046}

\end{thebibliography}
\bibliographystyle{amsplain}
\end{document}